\title{Hitchin grafting representations I: Geometry}
\date{\today}
\author{Pierre-Louis Blayac, Ursula Hamenstädt, and Théo Marty}
\begin{document}

	\maketitle

\begin{abstract}
We give a geometric interpretation of Fock--Goncharov positivity and show that bending deformations of Fuchsian representations stabilize a uniform Finsler quasi-convex disk in the symmetric space $\mathrm{PSL}_d(\mathbb R)/\mathrm{PSO}(d)$.
     \end{abstract}

 { 
    \hypersetup{linkcolor=black}
    \renewcommand{\baselinestretch}{0.85}
    \normalsize
\setcounter{tocdepth}{2}
    \tableofcontents
    \renewcommand{\baselinestretch}{1.0}
    \normalsize
 }

\section*{Introduction}
\addcontentsline{toc}{section}{Introduction}

The \emph{Teichm\"uller space} ${\mathcal T}(S)$ of a closed oriented surface $S$ 
of genus $g\geq 2$ is the space of \emph{marked} hyperbolic structures on $S$, homeomorphic to $\R^{6g-6}$. 
Equivalently, it can be described as a distinguished component of the space of conjugacy classes of homomorphisms $\pi_1(S)\to \PSL_2(\mathbb{R})$, with target the group $\PSL_2(\mathbb{R})$ of orientation preserving isometries of the hyperbolic plane. 
It was discovered by Hitchin \cite{Hitchin} that 
an analog of the Teichm\"uller space also exists for conjugacy classes of 
representations of $\pi_1(S)$ into simple split real Lie groups of higher rank, which is also homeomorphic to a Euclidean space.

The so-called \emph{Hitchin component} ${\rm Hit}(S)$ for the target group $\PSL_d(\mathbb{R})$ $(d\geq 3)$
is the component of the \emph{character variety} containing conjugacy classes of 
so-called \emph{Fuchsian representations}, namely 
discrete representations 
which factor through an irreducible embedding $\PSL_2(\mathbb{R})\to \PSL_d(\mathbb{R})$. 
Hitchin~\cite{Hitchin} showed that the Hitchin component is homeomorphic to $\R^m$ for 
$m=(2g-2){\rm \dim}(\PSL_d(\R))$, and later Labourie~\cite{Lab06} and Fock--Goncharov~\cite{FG06} independently proved that all representations in the Hitchin component are faithful with discrete image. Thus each point in the Hitchin component defines a locally 
symmetric manifold with fundamental group isomorphic to $\pi_1(S)$. 
The mapping class group ${\rm Mod}(S)$ of $S$ of isotopy classes of diffeomorphisms of $S$ 
acts properly by precomposition on the Hitchin component. 
%


In this article we study the geometry of a distinguished class of 
these locally symmetric manifolds,
obtained by deforming Fuchsian representations via the famous \emph{bending procedure} described below, which is in our opinion the simplest kind of deformation.
We will see that the geometry of such manifolds is governed by the geometry of an explicit piecewise totally geodesic embedded subsurface.
This subsurface will be constructed by grafting flat cylinders into a hyperbolic surface.
In a sequel to this work \cite{BHMM25}, this will be used to investigate the so-called pressure metric on the Hitchin component defined by Bridgeman--Canary--Labourie--Sambarino \cite{BCLS15}. 

Given a hyperbolic metric $X$ on $S$ and a simple closed geodesic $\gamma\subset X$, one can define two types of deformations of $X$. 
The first consists in shrinking the length of $\gamma$ (this depends on the choice of 
a pair of pant in $S$ on each side of $\gamma$).
Another deformation consists in shearing the metric 
along $\gamma$, that is, rotating  along $\gamma$ the two components of $A\setminus \gamma$ where $A\subset S$ is an annulus containing $\gamma$ as its core curve.
This is the type of deformation we study here, adapted to the higher rank setting.
Beyond their simplicity, shearing deformations have many interesting features: they are used to define the famous Fenchel--Nielsen coordinates on the Teichmüller space; they define a symplectic flow on the Teichmüller space; they can be generalised into \emph{earthquakes} (shearing along measured geodesic laminations), which give another coordinate system for the Teichmüller space.

There is a more group theoretic interpretation of shearing.
Namely, 
if $\gamma$ is separating, then $\pi_1(S)$ splits into an amalgamated product and shearing can be thought of as a partial conjugation (conjugate only one factor of the product) of the representation of $\pi_1(S)$ 
into $\PSL_2(\R)$ defining $X$,
under an element in the infinite cyclic centralizer of $\gamma$ in $\PSL_2(\R)$.
If $\gamma$ is non-separating, then $\pi_1(S)$ is an HNN-extension and there is an analogous interpretation of shearing. 
This interpretation of shearing immediately extends to the Hitchin component, where 
now the (identity component of the) centralizer of $\gamma\in\PSL_d(\R)$ 
is conjugate to the group of matrices $\exp(z)$ where $z$ is a trace-free diagonal matrix.
The resulting deformations of Fuchsian representations are called
\emph{bending} or \emph{bulging} deformations,
and we call \emph{bending parameter} the matrix $z$ used to perform the bending.
Such deformations have frequently been looked at in the literature, see for example 
\cite{Go86, FK16, AZ23}, and they correspond to the \emph{rational} deformations of Fock and Goncharov \cite{FG06}. 
Bending can also be carried out along a simple geodesic multicurve $\gamma$, and then the bending transformation consists 
of a $k$-tuple of transformations where $k$ is the number of components of $\gamma$.
Bending techniques have been used in other geometric contexts as well, for example to study 
Kleinian groups (see, e.g. \cite{ThurstonNotes}),  pseudohyperbolic geometry \cite{Mess90}, or projective geometry \cite{JM86}.


Our geometric study involves, instead of the usual Riemannian metric, a $\PSL_d(\R)$-invariant Finsler
metric $\mf F$ on the symmetric space $\X=\PSL_d(\R)/{\rm PO}(d)$.
Consider the convex cone $\overline{\mathfrak a}^+$ of trace-free diagonal matrices with entries $x_1\geq \dots\geq x_d$ in descending order.
It can be seen as a subset of the tangent bundle $T\X$ (in the tangent space of a fixed basepoint), where it is a fundamental set for the action of $\PSL_d(\R)$.
Then any positive linear functional $\alpha_0$ on 
$\overline{\mathfrak a}^+$ satisfying symmetry and convexity assumptions (see Notation~\ref{nota:alpha}), for instance
\begin{equation}\label{eq:ex of alpha0}\alpha_0(x)=(d-1)x_1+(d-3)x_2+\dots + (1-d)x_d,\end{equation}
determines a $\PSL_d(\R)$-invariant Finsler metric $\mf F$ on $\X$.
We denote by $d^{\mf F}$ the distance function induced by ${\mf F}$; this function is bi-Lipschitz equivalent to the 
distance function of the usual Riemannian metric. 

Following Thurston (see \cite{T97} for details), define the \emph{abstract grafing} of a hyperbolic 
surface $X$ along a simple geodesic multi-curve $\gamma^*=\gamma_1\cup \cdots \cup \gamma_k$ with 
\emph{grafting heights} $L_1,\dots,L_k$ to be the surface obtained from $X$ by cutting $X$ open along
the geodesics $\gamma_i$ and inserting a flat cylinder of height $L_i$. Thus the abstract grafted surface
can be thought of as a geometric structure on $X$ which is piecewise flat or of constant curvature $-1$.
It will be convenient to allow that the flat metric on the cylinders is defined by a non-eulidean norm on $\mathbb{R}^2$. 

The following is our main result. It shows that in a very precise sense, abstract grafted surfaces serve 
as geometric models for Hitchin representations obtained from Fuchsian representations by bending. 
In its formulation, 
the \emph{cylinder height} of a bending representation at a component $\gamma_i$ of the bending locus $\gamma$
is the Finsler distance between $\gamma_i$ and its image under the bending transformation.
The grafted surface mentioned below is defined in Definition~\ref{def:abstractgraf}, and illustrated in Figure~\ref{fig-hitchin-rep}.

   \begin{maintheorem}\label{main1}
     For every $\sigma>0$, 
     there exists $C_{\sigma}>0$ such that
     the following holds.
    
     Consider a closed hyperbolic surface $S$, a multicurve $\gamma^*\subset S$ whose components have length at most $\sigma$, 
     and a bending parameter $z$ such that all cylinder heights of the resulting bending representation are
     bounded 
     from below by 
     some number $L>0$. 
     
     Then there exists an abstract grafted surface $S_z$, with universal covering $\tilde S_z$, and  
     a $\pi_1(S)$-equivariant embedding $\tilde Q_z:\tilde S_z\to (\X,d^{\mf F})$ which is quasi-isometric 
     with multiplicative constant $(1+C_\sigma/(L+1))$ and additive constant $C_{\sigma}$.
     Moreover, the image $\tilde S_z^\iota=\tilde Q_z(\tilde S_z)$ is $C_\sigma$-Finsler-quasiconvex in the sense that for all $x,y\in\tilde Q_z(\tilde S_z)$, there is \emph{at least one} Finsler geodesic from $x$ to $y$ at distance at most $C_\sigma$ from $\tilde S_z^\iota$.
    \end{maintheorem}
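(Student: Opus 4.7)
The plan is to build the embedding $\tilde Q_z$ piece by piece, using the decomposition of $\tilde S_z$ into lifts of hyperbolic components of $X\smallsetminus\gamma^*$ on the one hand, and lifts of the grafted flat cylinders on the other. First, I fix a $\pi_1(S)$-equivariant tessellation of $\tilde S_z$ matching this decomposition, indexed by (i) a family of lifts of the pants-like hyperbolic pieces and (ii) a family of lifts of the cylinders, one for each lift $\tilde\gamma_i$ of a component of $\gamma^*$. On each hyperbolic piece I define $\tilde Q_z$ as the isometric embedding into a totally geodesic copy of $\H^2\subset\X$ coming from the irreducible $\PSL_2(\R)\to\PSL_d(\R)$, transported by the appropriate product of bending conjugations to keep the global map equivariant under $\rho_z$. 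On each flat strip of height $L_i$ associated to $\tilde\gamma_i$, I define $\tilde Q_z$ to be the affine map into the two-dimensional flat of $\X$ spanned by the axis of $\rho(\gamma_i)$ and the one-parameter subgroup $\exp(tz_i)$, with the flat metric on the strip precisely pulled back from the Finsler norm $\mathfrak F$ restricted to that flat (this is where using a non-Euclidean flat metric on the cylinder in Definition~\ref{def:abstractgraf} becomes essential). The cylinder height in the abstract surface is then, by construction, the Finsler distance $d^{\mathfrak F}$ between the two boundary hyperbolic pieces it separates.

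Equivariance and continuity across the interfaces are then built-in consequences of the bending procedure: the identification of the hyperbolic piece attached on one side of $\tilde\gamma_i$ with the piece on the other side is exactly the Finsler-isometry given by $\exp(z_i)$ acting along the axis, which is also what glues the flat strip to its two hyperbolic neighbours in $\tilde S_z$.

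Next I turn to the quasi-isometric embedding estimate. The Lipschitz (upper) bound is immediate, since $\tilde Q_z$ is piecewise isometric and the pieces glue continuously. The lower bound is the substantial step: I must show that for $p,q\in\tilde S_z$ crossing many pieces, the Finsler distance $d^{\mathfrak F}(\tilde Q_z(p),\tilde Q_z(q))$ cannot be much smaller than the distance along $\tilde S_z$. The strategy is to use the functional $\alpha_0$ from Notation~\ref{nota:alpha} and its associated Busemann-like cocycle to obtain an additive lower bound: the $\alpha_0$-length of the piecewise concatenation of totally geodesic segments equals the intrinsic $\tilde S_z$-length on each piece, and any Finsler geodesic in $\X$ realising the distance between the endpoints projects monotonically (with respect to $\alpha_0$) onto the combinatorial arrangement of pieces, so it cannot skip pieces cheaply. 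The lower bound $L$ on cylinder heights separates consecutive hyperbolic pieces enough that the projection losses at each interface are absorbed into a uniform additive constant, whose relative weight decays like $1/(L+1)$ along a geodesic that crosses many cylinders, yielding the multiplicative constant $1+C_\sigma/(L+1)$.

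Finally, for the Finsler-quasiconvexity of $\tilde S_z^\iota$, given $x,y\in\tilde S_z^\iota$ I construct a candidate Finsler geodesic by concatenating: segments inside each totally geodesic hyperbolic plane of $\X$ containing a hyperbolic piece traversed, together with segments inside each flat of $\X$ containing a traversed cylinder image. Since hyperbolic planes and flats are Finsler-convex with a uniform constant depending only on $\alpha_0$, and the total Finsler length of this concatenation matches $d^{\mathfrak F}(x,y)$ up to an additive error $C_\sigma$ coming from the interface corrections (by the previous paragraph), at least one genuine Finsler geodesic from $x$ to $y$ stays $C_\sigma$-close to the concatenation and hence to $\tilde S_z^\iota$. \textbf{The main obstacle} is the quasi-isometric lower bound, and in particular controlling exactly how Finsler geodesics in $\X$ interact with the assembled totally geodesic pieces: Finsler geodesics in $\X$ are not unique and the pieces are only \emph{weakly} convex for $d^{\mathfrak F}$, so the proof must rely on a careful analysis of the $\alpha_0$-cocycle, of projections onto parallel sets of the axes $\rho(\gamma_i)$, and of the linear $\sigma$-bounded length of the hyperbolic seams separating the pieces; this is where the interplay between Fock--Goncharov positivity and the specific structure of $\mathfrak F$ enters in a nontrivial way.
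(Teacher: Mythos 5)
Your construction of $\tilde Q_z$ (piecewise totally geodesic, gluing $\H^2$-pieces across flats along axes, equivariance built in) is correct and matches the paper's Proposition~\ref{piecewise-isometry}, including the choice of the Finsler norm on the cylinders and the upper (Lipschitz) bound. But the proof of the lower bound and of quasi-convexity has a genuine gap exactly where you flag the ``main obstacle,'' and the mechanism you sketch is not the one that works.

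You claim that ``any Finsler geodesic in $\X$ realising the distance between the endpoints projects monotonically (with respect to $\alpha_0$) onto the combinatorial arrangement of pieces.'' No such projection-monotonicity is established, and it is not clear how to justify it: Finsler geodesics in $(\X,d^{\mf F})$ are highly non-unique, the individual pieces are only weakly $d^{\mf F}$-convex (as you note), and an $\alpha_0$-cocycle argument by itself does not prevent a geodesic from taking a shortcut through the ambient symmetric space between non-adjacent pieces. The paper's actual argument is of a different nature: it never tries to analyze arbitrary Finsler geodesics directly. Instead it proves that the image of an admissible path is \emph{Finsler quasi-ruled with uniform constant} (Proposition~\ref{prop:triangle equality for admissible paths}), and the key to that is Lusztig/Fock--Goncharov total positivity: the piece-endpoints $c(t),c(s),c(u)$ of a $(\omega,0)$-admissible path satisfy $c(s)^{-1}c(u)\in G_{\geq 0}$ and $c(s)^{-1}c(t)\in a'_{-\omega}G_{\leq 0}$ (Corollary~\ref{cor:admissible paths are positive}), which by a compactness/transversality argument on the positive part of the flag variety (Lemma~\ref{lem:triangle equality pos}) gives a uniform bound on the triangle-inequality defect $d^{\mf F}(c(t),c(s))+d^{\mf F}(c(s),c(u))-d^{\mf F}(c(t),c(u))$. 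The displacement lower bound that turns ``quasi-ruled'' into ``quasi-geodesic with the right multiplicative constant'' is also a positivity statement (Proposition~\ref{prop:inj quantique}, via quantitative loxodromicity of products of totally positive matrices, Proposition~\ref{fact:quantitative Pos are Lox}); your $\alpha_0$-length bookkeeping does not supply this.

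For quasi-convexity, you are also missing a crucial ingredient: from a quasi-ruled path one still has to \emph{produce} a Finsler geodesic staying uniformly close. This is precisely the Finsler Morse lemma (Theorem~\ref{morseintro} / Theorem~\ref{thm:morse}), which the paper proves via a study of diamonds, crowns, and cores for polyhedral norms on $\mathbb R^{d-1}$ (Proposition~\ref{prop:finding a geod at bdd dist}) and a reduction to a flat via Kapovich--Leeb horoball geometry (Proposition~\ref{prop:coarse diamond}). Your assertion that ``hyperbolic planes and flats are Finsler-convex with a uniform constant'' is not enough: the concatenated path passes through many pieces, and one needs a Morse-type statement to find \emph{one} genuine Finsler geodesic tracking the whole path, not just each piece. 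In short, the construction is right, but the two load-bearing ingredients of the paper's proof — quantitative Fock--Goncharov positivity controlling backtracking, and the Finsler Morse lemma — are absent from your argument, and the replacement heuristics (monotone $\alpha_0$-projection, per-piece weak convexity) do not close the gap.
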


The result motivates us to mostly use the term \emph{grafting representations} instead of talking about bending.
Theorems~\ref{thm:quasiisom} and \ref{thm:quasiisom lengths} are more precise versions of this result.

In Section 12 of~\cite{KL18}, Kapovich and Leeb proved that for any Hitchin representation, all orbit maps are Finsler quasi-convex.
The main point of our result is that constants are independent of the representation, a result which cannot
be obtained from Kapovich--Leeb's approach.

The results of Kapovich--Leeb rely on a version of a Morse lemma 
in higher rank symmetric spaces (Theorem 1.3 of~\cite{KLPMorse}, see also \cite{KL18}).  
The proof of Theorem~\ref{main1} is independent of the results in~\cite{KLPMorse, KL18}, but also embarks from a 
(different) Morse-type lemma for Finsler metrics. 

\begin{maintheorem}\label{morseintro}
For every $C>0$ there exists a number $C^\prime >0$ with the following property.
Let $c:[a,b]\to (\X,d^{\mf F})$ be a map such that 
\[d^{\mf F}(c(s),c(u))+d^{\mf F}(c(u),c(t))\leq d^{\mf F}(c(s),s(t))+C\]
for all $a\leq s\leq u\leq t\leq b$.
Then there exists a Finsler geodesic connecting 
$c(a)$ to $c(b)$ at Hausdorff distance at most $C^\prime$ to $c$.
\end{maintheorem}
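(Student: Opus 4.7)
Applying the hypothesis with $s=a$ and $t=b$ places, for every $u\in[a,b]$, the point $c(u)$ in the ``$C$-thickened Finsler diamond''
\[
D_C := \bigl\{z\in\X \,:\, d^{\mf F}(c(a),z) + d^{\mf F}(z,c(b)) \le d^{\mf F}(c(a),c(b)) + C\bigr\}.
\]
My plan is to reduce the theorem to a purely Finsler-geometric lemma: there exist $C'' = C''(C)$ and a single Finsler geodesic $\gamma$ from $c(a)$ to $c(b)$ such that $D_C$ is contained in the $C''$-Finsler-neighborhood of $\gamma$. Granted this, the image of $c$ sits in a uniform Finsler neighborhood of $\gamma$, which yields one inclusion of the Hausdorff bound.

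To establish the geometric lemma I would work inside the parallel set $P=P(c(a),c(b))$: the union of maximal flats containing the Riemannian geodesic joining $c(a)$ and $c(b)$. Every Finsler geodesic from $c(a)$ to $c(b)$ lies in $P$. Two ingredients are needed. (i) $D_C$ lies within bounded Finsler distance of $P$; this uses a coarse contraction property of the Finsler nearest-point projection onto the totally geodesic symmetric subspace $P$. (ii) Inside $P$, which (up to compact factors) splits as a product of a Euclidean factor with a rank-one symmetric space, the Finsler diamond $\{z: d^{\mf F}(c(a),z)+d^{\mf F}(z,c(b))=d^{\mf F}(c(a),c(b))\}$ is a convex polytope with respect to the polyhedral $\alpha_0$-norm on the flat factor, and any $C$-thickening of such a polytope in the $\alpha_0$-norm has uniformly bounded Finsler width, independently of the polytope's size. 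This second point crucially exploits that $\alpha_0$ is \emph{polyhedral}; for a strictly convex norm such as the Euclidean one, the analogous width would instead grow like $\sqrt{C\cdot d^{\mf F}(c(a),c(b))}$.

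The reverse Hausdorff inclusion---that every point of $\gamma$ is Finsler-close to some $c(u)$---follows from coarse monotonicity: applying the hypothesis with $s=a$ shows that $u\mapsto d^{\mf F}(c(a),c(u))$ is non-decreasing up to an additive $C$. Combined with the continuity of $c$, this forces the image of $c$ to fill the Finsler-distance range $[0,d^{\mf F}(c(a),c(b))]$ without gaps larger than $C''$, so every point on $\gamma$ is Finsler-close to some $c(u)$.

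The main obstacle is the Finsler-geometric ingredient (i): proving the coarse contraction of Finsler projection onto parallel sets with constants depending only on $C$. In the Riemannian setting, CAT(0) convexity gives a $1$-Lipschitz projection for free, but the $\alpha_0$-Finsler metric is not CAT(0), and the argument requires a careful analysis of flats and Weyl chambers, using the polyhedral nature of $\alpha_0$. The key quantitative challenge is ensuring that all constants depend only on $C$, and in particular are independent of $d^{\mf F}(c(a),c(b))$, which may be arbitrarily large.
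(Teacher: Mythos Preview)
Your central geometric lemma --- that there exists a single Finsler geodesic $\gamma$ from $c(a)$ to $c(b)$ whose $C''$-neighborhood contains the entire thickened diamond $D_C$ --- is false, and this is a genuine gap rather than a technical obstacle. Already in $\R^2$ with the $\ell^\infty$ norm, take $x=(0,0)$ and $y=(N,N)$: the Finsler diamond $D(x,y)$ is the full square $[0,N]^2$, and no single $\ell^\infty$-geodesic from $x$ to $y$ (which is a monotone staircase) has a $C''$-neighborhood containing this square once $N$ is large. The diamond is the union of \emph{all} Finsler geodesics between the endpoints, and these can be pairwise very far apart. Applying the hypothesis only with $s=a$ and $t=b$ throws away exactly the information needed to select which geodesic tracks the path.

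The paper's proof uses the quasi-ruled hypothesis at all intermediate triples. Step (i) of your plan is essentially correct and corresponds to the paper's Proposition~\ref{prop:coarse diamond}: the thickened diamond $D_C$ lies within bounded distance of the exact diamond $D(c(a),c(b))$, which sits in a single maximal flat (not merely a parallel set --- your product decomposition of $P$ is also not right for $\PSL_d$ when the segment is singular). One then projects $c$ to this flat, obtaining a new continuous quasi-ruled path with a worse but still controlled constant. The heart of the argument is Proposition~\ref{prop:finding a geod at bdd dist}: inside a flat with a polyhedral norm, a $C$-quasi-ruled path is close to some geodesic. The proof is a double induction (on the dimension of the flat and on the length of the path), and at each step one uses the quasi-ruled property at an \emph{interior} time --- where the path meets a core or approaches a crown of the diamond --- to split the path into two shorter quasi-ruled pieces whose diamonds sit in lower-dimensional faces. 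This splitting is precisely what cannot be done from the endpoint information alone.
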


Theorem \ref{morseintro} does not hold for the Riemannian metric. We refer to 
Section \ref{sec:Morse} for more information. 

A key ingredient in the
proof of Theorem \ref{main1} is 
the construction of paths, with the properties stated in Theorem \ref{morseintro} 
for Hitchin representations obtained by bending a Fuchsian representation.
This construction relies
on positivity in the sense of Fock--Goncharov \cite{FG06} and can be viewed as a geometric
interpretation of positivity. It gives an alternative approach towards
the understanding of Hitchin representations via uniformly Morse paths in the sense of 
\cite{KLPMorse}, with geometric control of a different nature.

Theorem \ref{main1} can be used to study the degeneration of a Hitchin grafting ray, letting the grafting parameter go to infinity.
For instance, one can study the limit in the compactification of the Hitchin component introduced by Parreau \cite{Pa12,Pa00}.
A representation is identified with the projectivization of its $\bar {\mathfrak a}^+$-valued length function (the Jordan projection), living in a compact infinite-dimensional projective space.
A limit point of this compactification
can be seen, via a rescaling procedure, as the length function of an action of $\pi_1(S)$ on an affine building, which is an asymptotic cone of $\X$.
In \cite{BHMM25}, we examine other bordifications of the Hitchin component such as Loftin's \cite{LoftinNeck}.


The limit in Parreau's compactification of certain representations obtained via bending has already been studied by Parreau in her thesis (Section V.5 of \cite{Pa00}).
More precisely, Parreau assumes $d=3$ and deforms a Fuchsian representation by pinching a curve and bending along it.
She obtains an explicit formula for the limit length function.
Our results can be used to reprove this, and also to get formulas in other cases, for instance when no pinching is performed, and with $d\geq 3$.
This is the content Theorem~\ref{main2} stated below, after we mention a few other works.

Note that Parreau also defined in her context the grafted surface studied here (see Figure V.4 of \cite{Pa00}), although she did not prove quasi-convexity.
In \cite{Pa22,LTW22,CReid2023}, limits in Parreau's compactification of other kinds of deformations are studied.
In the first two papers, the authors construct an explicit invariant Finsler-convex subset for the limiting action on an affine building.
(Finsler-convex means any two points of the subset can be connected by \emph{at least one} Finsler geodesic; Parreau uses the terminology ``weakly convex''.)
Our results also give such a natural invariant convex set.

For simplicity, suppose $\gamma^*\subset S$ is an oriented simple closed curve and $S_0$ is one of the component of $S-\gamma^*$.
Following Parreau, one can define positive and negative intersection numbers $\iota^\pm(\gamma,\gamma^*)$ of a closed oriented curve $\gamma\subset S$ with $\gamma^*$.

\begin{maintheorem}\label{main2}
Let $\rho_t$ be a Hitchin grafting ray with grafting locus $\gamma^*$ and grafting parameter $tz$, where $z\in \mf a$ with nonzero cylinder height.
Then the following properties hold true.
\begin{enumerate}
\item $(\rho_t)_t$ converge, as $t$ goes to infinity, to the point of Parreau's compactification given by the projectivization of the $\mf a^+$-valued length function
$$\gamma \mapsto \iota^+(\gamma,\gamma^*)\kappa(z) + \iota^-(\gamma,\gamma^*)\kappa(-z),$$
where $\kappa$ is the Cartan projection.
\item For the corresponding $\pi_1(S)$-action on an asymptotic cone $B$ of $\X$, there is an invariant Finsler-convex subset $T\subset B$ isometric to the Bass--Serre tree of the graph of groups decomposition defined by $\gamma^*$, with edges decorated with $\kappa(z)$ or $\kappa(-z)$.  
\item For a suitable choice of basepoint, the quotient manifolds
$\rho_t(\pi_1(S))\backslash \X$ converge in the pointed 
Gromov--Hausdorff topology to the Fuchsian manifold defined by
the bordered hyperbolic surface $S_0$.
\end{enumerate}
\end{maintheorem}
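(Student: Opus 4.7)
The plan is to derive all three conclusions from Theorem~\ref{main1} applied to the family $\rho_t$, exploiting that the cylinder heights of the grafted surface $\tilde S_{tz}$ grow linearly in $t$ while the hyperbolic pieces remain independent of $t$. Consequently, the quasi-isometry constants of Theorem~\ref{main1} become $(1+O(1/t), C_\sigma)$, so after rescaling by $1/t$ the geometry of $\rho_t$ is captured by the grafted surface $\tilde S_{tz}$ with vanishing error. The closed geodesic in $\rho_t(\pi_1(S))\backslash\tilde S_{tz}$ representing a class $\gamma\in\pi_1(S)$ decomposes into hyperbolic arcs inside components of $S\setminus\gamma^*$ and segments crossing cylinders above $\gamma^*$, with $\iota^\pm(\gamma,\gamma^*)$ counting the positive and negative crossings.

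\textbf{Step 1 (Jordan projection formula, part (1)).} Under the embedding $\tilde Q_{tz}$, each cylinder sits in a maximal flat of $\X$ stabilized by a bending element $\rho_t(\gamma_i^*)=\rho_0(\gamma_i^*)\exp(\pm tz)$, and each oriented crossing contributes the $\mf a$-valued translation $\kappa(\pm tz)=t\kappa(\pm z)$ along the axis of $\rho_t(\gamma)$. Theorem~\ref{main1} together with the $t$-independence of the hyperbolic pieces controls the hyperbolic contributions by an error depending only on $\gamma$ and $\sigma$, yielding
\[
  \kappa(\rho_t(\gamma)) = t\bigl[\iota^+(\gamma,\gamma^*)\kappa(z) + \iota^-(\gamma,\gamma^*)\kappa(-z)\bigr] + O_\gamma(1);
\]
dividing by $t$ and projectivizing gives~(1). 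The main obstacle here is that the Finsler metric $d^{\mf F}$ only records the functional $\alpha_0$ of the Jordan projection, so Theorem~\ref{main1} alone is not enough to recover the full $\mf a^+$-valued translation vector; the identification of cylinders with specific flats stabilized by bending elements, combined with Fock--Goncharov positivity to align the axes of $\rho_t(\gamma)$ with these flats, is what ensures that successive cylinder crossings add coherently in $\mf a$ rather than only in $\alpha_0$.

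\textbf{Step 2 (parts (2) and (3)).} For (2), rescale the Finsler metric on $\X$ by $1/t$ and take an ultralimit to obtain an asymptotic cone $B$. The hyperbolic pieces of $\tilde S_{tz}^\iota$, having $t$-independent diameter, collapse to points, while each cylinder of height proportional to $t$ rescales to a Finsler geodesic segment of $\mf a$-valued direction $\kappa(\pm z)$. The combinatorial structure of how these segments are glued under the $\pi_1(S)$-action recovers the Bass--Serre tree of $\pi_1(S)$ along $\gamma^*$; the uniform $C_\sigma$-Finsler quasi-convexity in Theorem~\ref{main1} is scale-invariant once divided by $t$ and passes to genuine Finsler-convexity of $T\subset B$ in the limit. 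For (3), fix a basepoint $x_0$ inside the hyperbolic piece $\tilde S_0^\iota\subset\tilde S_{tz}^\iota$ covering $S_0$. For any $R>0$, once $t$ is so large that every cylinder meeting the $2R$-ball around $x_0$ has height exceeding $3R$, the cylinders separate the $R$-ball around the image of $x_0$ in $\rho_t(\pi_1(S))\backslash\X$ from the other pieces, so only the action of $\rho_0|_{\pi_1(S_0)}$ is seen near the basepoint; Theorem~\ref{main1} then provides the pointed Gromov--Hausdorff convergence to the Fuchsian manifold defined by $\rho_0|_{\pi_1(S_0)}$.
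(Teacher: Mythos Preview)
Your outline for parts (2) and (3) is essentially the paper's argument. For (2), the paper explicitly builds a tree $\mc T_t\subset\tilde S_{tz}^\iota$ by choosing one point per hyperbolic piece and connecting them through the flat strips; since each strip is, by the construction in Proposition~\ref{piecewise-isometry}, conjugate to a segment $[0,v+tz]\subset\mf a$ in a maximal flat, after rescaling by $1/t$ these edges limit to segments conjugate to $[0,z]$ while the hyperbolic pieces collapse, and the $t$-independent quasi-convexity constant becomes genuine Finsler convexity in the ultralimit. For (3), the paper argues via Theorem~\ref{thm:quasiisom lengths} that any $\gamma$ with $\iota(\gamma,\gamma^*)>0$ has $\ell^{\mf F}(\rho_t(\gamma))\to\infty$, which forces the normal injectivity radius of the totally geodesic $S_0\subset\rho_t\backslash\X$ to diverge; your phrasing ``the cylinders separate the $R$-ball'' is misleading, since two-dimensional cylinders separate nothing in $\X$---what is actually needed, and what Theorem~\ref{main1} gives, is that no $\gamma\notin\pi_1(S_0)$ moves the basepoint by less than $2R$.

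The genuine gap is in your Step~1. You correctly flag the obstacle yourself: Theorem~\ref{main1} only controls $\alpha_0(\lambda(\rho_t(\gamma)))$, not the full vector $\lambda(\rho_t(\gamma))\in\overline{\mf a^+}$ (and your displayed formula should have $\lambda$, not $\kappa$). Your proposed remedy, that ``positivity aligns the axes of $\rho_t(\gamma)$ with these flats'' so that cylinder crossings add coherently in $\mf a$, is a gesture rather than an argument; making it precise would require showing that the admissible loop stays within bounded \emph{$\mf a$-valued} distance of the axis of $\rho_t(\gamma)$, which is strictly stronger than anything Theorem~\ref{main1} asserts. The paper sidesteps this entirely by reversing the logical order: it proves (2) first, and then (1) is immediate, because the $\mf a$-valued translation length of $\gamma$ on the limiting tree $\mc T_\infty\subset B$ is read off combinatorially from the edge decorations $\kappa(\pm z)$ as $\iota^+(\gamma,\gamma^*)\kappa(z)+\iota^-(\gamma,\gamma^*)\kappa(-z)$. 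The crucial point is that the $\mf a$-valued information comes from the \emph{explicit} identification of the flat strips with segments in maximal flats in Proposition~\ref{piecewise-isometry}, not from any quasi-isometry estimate.
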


Compare the formula for the length function with Proposition~V.5.9 of \cite{Pa00}.

\vspace{10pt}
\noindent 
{\bf Organization of the article and outline of the proofs.}
All main results in this article build on the concept of \emph{admissible paths} in 
the Lie group $\PSL_d(\R)$. Such a path can be thought of as a (continuous) 
path which projects to a piecewise geodesic in the symmetric space $\X$. 
Each geodesic piece corresponds either to a geodesic arc in a totally geodesic
embedded $\mathbb{H}^2$, 
or to an arc in a maximal flat, and these two types can be read off from the path in 
$\PSL_d(\R)$. Furthermore, the construction is done in such a way that it 
encodes positivity properties of the path in the sense of \cite{FG06}. 
This positivity then guarantees quantitative non-backtracking.

Admissible paths in $\PSL_d(\R)$ and $\X$ and their analogs for abstract grafted surfaces
are introduced in Section \ref{sec:admissible-paths}. Their natural appearance for
grafting deformations of Fuchsian representations is via so-called 
\emph{characteristic surfaces}, introduced in Section \ref{sec-HG}. 

In Section \ref{sec:Morse}, we introduce a very general class of invariant Finsler metrics
on $\X$ defined by \emph{polyhedral norms} in a Cartan subalgebra. We study geometric properties 
of these norms on ${\mathbb {R}}^{d-1}$ in Section \ref{sec:MorseRd}. The 
remainder of Section \ref{sec:Morse} is devoted to the
proof of Theorem \ref{morseintro}.

In Section \ref{sec:fockgoncharov}, the relation of admissible paths in $\PSL_d(\R)$ 
to positivity in the sense of \cite{FG06} is established. These results are applied
in Section \ref{QI-proof} to prove Theorem \ref{main1} and Theorem \ref{main2}.

\vspace{10pt}
{\bf Acknowledgement:} This project started as a working seminar in Fall 2021, during the pandemic,
held in person at the Max Planck Institute for Mathematics in Bonn.
We thank the MPI for the hospitality and financial support, and we thank
Gianluca Faraco, Elia Fioravanti, Frieder J\"ackel, Yannick Krifka, Andrea Egidio Monti, Laura Monk and Yongquan Zhang for 
many enjoyable discussions and good company.
P.-L.B. is grateful to Dick Canary, Fanny Kassel and Ralph Spatzier for helpful discussions, and 
U.H. and P.-L.B. thank Andr\'es Sambarino for 
helpful discussions.
All authors thank Beatrice Pozzetti for pointing out an error in an earlier version of this article.

\section{Lie groups and symmetric spaces}\label{sec:lietheoryI}

            
    This section collects some basic facts on Lie groups and symmetric spaces and introduces conventions and notations
    used later on.
    
    Consider the simple Lie group $G=\PSL_d(\R)$ and a representation $\tau:\PSL_2(\R)\to G$, that is, a locally injective Lie group homomorphism. 
    Many (but not all) of our results work for other semisimple Lie groups, and their proofs are easier to write using the abstract language of semisimple Lie groups, which we recall below.


    Recall that the Lie algebra $\mathfrak{sl}(2,\mathbb{R})$ of $\PSL_2(\R)$  
    is the Lie algebra of trace free real $(2,2)$-matrices.
    Denote by $\mathfrak g=T_{\id}G$ the Lie algebra of $G$, by $\mathfrak a\subset\mathfrak g$ a Cartan subalgebra 
    (maximal abelian subalgebra when $G$ is split) that contains $d\tau\hsl$ and by $\mathfrak a^+\subset\mathfrak a$ an 
    open Weyl cone whose closure $\overline{\mathfrak {a}^+}$ contains $d\tau\hsl$ (the definition of Weyl cone is recalled later in this section). 
    
    We require the representation $\tau$ to be \emph{regular}, that is, $d\tau\hsl$ belongs to the interior 
    $\mathfrak a^+$ of the Weyl chamber, or equivalently, there is a unique Cartan subspace $\mathfrak a$ containing $d\tau\hsl$.

    \medskip\noindent 
  {\bf Maximal compact subgroup and symmetric space}
  
    Let $K\subset G$ be a maximal compact subgroup which contains $\tau(\PSO(2))$ and whose Lie algebra $\mathfrak k$ is orthogonal (for the Killing form) to $\mathfrak a$.
    
    The symmetric space of $G$ is $\X=G/K$ with basepoint $\basepoint=[\id]=K\in G/K$
. Denote by $\pi_\X\colon G\to\X$ the projection.
    The space $\X$ is endowed with a nonpositively curved $G$-invariant Riemannian metric whose induced norm is denoted by $\Vert\cdot\Vert$, and whose distance function is denoted by $d_\X$. 
    The metric is normalised so that $\Vert d\tau\hsl\Vert=2$.

    \medskip\noindent 
  {\bf Maximal Flats}
  
    The subspace $\exp(\mathfrak a)\cdot\basepoint$ is a totally geodesic embedded 
    Euclidean subspace of $\X$ of maximal dimension. This flat 
    will often be identified with the abelian subgroup $A=\exp(\mf a)$ which acts simply transitively on it.
    Each maximal Euclidean subspace of $\X$ can be represented as $g\cdot A$ for some $g\in G$. These maximal Euclidean subspaces are called \emph{maximal flats}.

    \medskip\noindent 
  {\bf Root systems}
  
    Let $\mc R\subset\mathfrak a^*$ be the set of restricted roots of $G$, that is,
    the set of non-zero linear one-forms $\alpha$ on $\mathfrak a$ such that 
    $$\mathfrak g_\alpha:=\{X\in \mathfrak g \mid  [a,X]=\alpha(a)X \ \forall a\in \mathfrak a\}\neq 0.$$
    Recall that $G$ being split means that
    \[\mathfrak g=\mathfrak a\oplus\bigoplus_{\alpha\in\mc R}\mathfrak g_\alpha.\]

    The kernels of the restricted roots are the \emph{walls} of $\mf a$, and the \emph{open Weyl cones} are the connected components of the complements of the walls.
    By our regularity assumption on~$\tau$ there is a unique open Weyl cone containing $d\tau\hsl$.
    
    Let $\mc R^+:=\{\alpha\in\mc R:\alpha(d\tau\hsl)>0\}$ be the set of positive roots, and $\mc R^-=-\mc R^+$ the set of negative roots.
    Let $\Delta\subset\mc R^+$ be the set of simple roots (the positive roots that are not sums of several other positive roots).

    \medskip\noindent 
  {\bf Minimal parabolic subgroups and flag variety}
  
    The normalizer $P:=N_G(\bigoplus_{\alpha\in\mc R^+}\mathfrak g_\alpha)\subset G$ 
    for the adjoint representation is a minimal parabolic subgroup.
    Its Lie algebra is $\mathfrak p:=\mathfrak a\oplus\bigoplus_{\alpha\in\mc R^+}\mathfrak g_\alpha$.
    The \emph{opposite} parabolic subgroup is $P^-:=N_G(\bigoplus_{\alpha\in\mc R^-}\mathfrak g_\alpha)$.
       
    The \emph{flag variety} $\mc F:=G/P$ is compact.
    In fact, $K$ acts transitively on it, with finite point stabilizer.

    A notable subgroup of $P$ is $U=\exp\left(\bigoplus_{\alpha\in\mc R^+}\mathfrak g_\alpha\right)$.
    The dynamics coming from the geometry of the symmetric space and other homogeneous spaces of $G$ has some contraction properties that are recorded in the following algebraic fact:
    for any $u=\exp(\sum_{\alpha\in\mc R^+}X_\alpha)$ in $U$, for any  sequence $(a_n)_n\subset \mf a^+$ that diverges from the walls (i.e.\ $\alpha(a_n)\to+\infty$ for any~$\alpha\in \mc R_+$), we have 
    \begin{equation}\label{eq:basic contraction}\exp({-a_n})\cdot u\cdot \exp({a_n})=\exp\left(\sum_{\alpha\in \mc R_+} e^{-\alpha(a_n)}X_\alpha\right)\underset{n\to\infty}{\longrightarrow}\id.\end{equation}
    If $(a_n)_n$ does not diverge from all the walls but only some of them, then there is still a more complicated weaker contraction property.

\medskip\noindent 
  {\bf Maps induced by $\tau$}
  
    Let $T\subset\PSL_2(\R)$ be the subgroup of upper triangular matrices.
    The \emph{ideal boundary} 
    $\partial_\infty\H^2=\PSL_2(\R)/T$ of the hyperbolic plane $\H^2=\PSL_2(\R)/{\rm PSO}(2)$ 
    is naturally homeomorphic to the circle $\R\cup\{\infty\}$ under the map 
    $t\mapsto \left[\begin{psmallmatrix}1&t\\0&1\end{psmallmatrix}\cdot \begin{psmallmatrix}1&0\\1&1\end{psmallmatrix}\right]$ and $\infty\mapsto [\id]=T\in\PSL_2(\R)/T$.
    
    One can check that $\tau(T)\subset P$ and that $\tau$ induces an embedding 
    \[\partial\tau:\partial_\infty\H^2\hookrightarrow \mc F=G/P.\]

  \medskip\noindent 
  {\bf Transversality in the flag variety}
      
      Two flags $\xi,\eta\in\mc F$ are said to be \emph{transverse} if there exists $g\in G$ such that 
      $g\xi=\partial\tau(0)$ and $g\eta=\partial\tau(\infty)$; in this case we write $\xi\pitchfork \eta$.
      The set of transverse pairs of flags is an open dense subset of $\mc F^2$. 
      
      The set of flags transverse to $\partial\tau(0)$ is 
      \begin{equation}\label{eq:param transv}\partial\tau(0)^\pitchfork:=P^-\cdot \partial\tau(\infty) = \exp\left(\bigoplus_{\alpha\in\mc R^-}\mathfrak g_\alpha\right)\cdot\partial\tau(\infty).\end{equation}
      Similarly, for any flag $\xi$ denote by $\xi^\pitchfork$ the set of flags transverse to $\xi$.
      It is an open dense subset of $\mc F$. Note that by our convention, $P^-$ equals the stabilizer of 
      $\partial \tau(0)$ in $\mc F$. 

  Any two transverse flags are contained in the boundary of a unique maximal flat. The 
 maximal flat asymptotic to the transverse flags $\partial\tau(0)$ and $\partial\tau(\infty)$ equals  
      $A\cdot\basepoint\subset\X$.

      More generally, the flat between (that is, asymptotic to) 
      any two transverse flags $(\xi,\eta)=g(\partial\tau(0),\partial\tau(\infty))$ is
      $$F(\xi,\eta):=gA\cdot\basepoint= gA\subset\X.$$

    \medskip\noindent 
  {\bf Jordan and Cartan projections, and loxodromic elements}

  For any $g\in G$, the \emph{Cartan projection} is the unique element $\kappa(g)\in \overline{\mf a^+}$ such that 
  $g\in K\exp(\kappa(g))K$. Putting $A^+=\exp(\overline{\mathfrak{a}^+})$, 
  it is characterized by the fact that $\exp(\kappa(g))\basepoint$ is the unique intersection point of 
  $A^+\basepoint$ with the $K$-orbit of $g\basepoint$.
  Note that $d(\basepoint,g\basepoint)=||\kappa(g)||$ (here $d$ is the distance of the symmetric metric on $\X$).

Similarly, the $G$-orbit of any vector $v\in TX$ intersects $\overline{\mf a^+}\subset \mathfrak{p}$ in 
precisely one point $\kappa(v)$ which is called the \emph{Cartan projection} of $v$.

  For the \emph{Jordan projection} $\lambda(g)\in \overline{\mf a^+}$ we choose the following unnatural but convenient definition (see Remark 5.31 of~\cite{Benoist2016})
  $$\lambda(g):=\lim_{n\to\infty}\frac1n \kappa(g^n).$$

  The element $g\in G$ is called \emph{loxodromic} if $\lambda(g)$ 
  is contained in the interior $\mf a^+$ of 
  $\overline{\mf a^+}$, which is equivalent to saying that $g$ has an attracting/repelling fixed pair of transverse flags $(g^-,g^+)$. 
  Then $g$ acts as a translation on the flat $F(g^-,g^+)$.

  That the representation $\tau$ is regular means that the image $\tau(g)$ of any loxodromic $g\in \PSL_2(\R)$ is loxodromic in $G$.
      
    \medskip\noindent 
  {\bf Weyl Chambers and special directions}

      Since the symmetric space $\X$ is nonpositively curved, it admits a \emph{visual boundary} $\partial_\infty\X$, which is 
      naturally identified with the set of unit speed infinite geodesic rays starting at the basepoint $\basepoint$.
      
      By the normalization of the metric on $\X$, the representation $\tau$ induces an isometric embedding 
      $\H^2 \hookrightarrow \X$. 
      The isometric embeddings 
      $A\hookrightarrow \X,\, \H^2\hookrightarrow \X $ extend to embeddings of the visual boundaries 
      $\partial_\infty A \hookrightarrow\partial_\infty \X,\, \partial_\infty\H^2 \hookrightarrow\partial_\infty\X$.
      
      For any $g\in G$, we identify  $\xi=g\partial\tau(\infty)\in\mc F$ with a compact subset of the visual boundary $\partial_\infty\X$, 
      called a (closed) Weyl Chamber:
      $$\xi=g\partial_\infty A^+=g\cdot\{\lim_{t\to\infty}
      \exp({tv})\basepoint:v\in \overline{{\mathfrak a}^+}\}\subset g\partial_\infty A\subset\partial_\infty\X.$$
      It is the boundary at infinity of the \emph{Weyl Cone} $gA^+$ based at $g\basepoint$.
      The facets of $\xi=g\partial\tau(\infty)\in\mc F$ are the subsets of the form
      $$g \cdot \partial_\infty \left( A^+\cap \bigcap_{\alpha\in S}\ker\alpha \right)\subset\xi,$$
      where $S$ is a subset of $\Delta$; they are boundaries at infinity of facets of the Weyl Cone $gA^+$.

      Every $G$-orbit in $\partial_\infty\X$ intersects exactly \emph{once} every Weyl Chamber.
      In particular, to every Weyl Chamber $\xi\in\mc F$ and every point $p$ in the standard Weyl Chamber $\partial\tau(\infty)$ one can associate a point of $\xi$, which is the intersection point of $\xi$ with $G\cdot p$.
      The embedding $\partial_\infty \H^2\to \partial_\infty \X$ determines a special point of $\partial\tau(\infty)$.
      Its orbit under $G$ determines a special point in every Weyl Chamber.

\medskip\noindent 
  {\bf The Weyl group}
    
Let us recall the definition of the Weyl group, denoted by $\weyl$. 
It is the intersection of the maximal compact subgroup $K$ of $G$ with the stabilizer of 
$\mf a$, in restriction to $\mf a$ (so quotiented out by the fixator of $\mf a$ in $G$).
It can also be described as the group of orthogonal transformations of $\mf a$ generated by the reflections along the walls $(\ker\alpha)_{\alpha\in \Delta^+}$ 
of the closed Weyl Chamber $\overline{\mf a^+}$.
The Weyl group is finite, and any $\weyl$-orbit in $\mf a$ intersects $\mf a^+$ exactly once. 


\medskip\noindent 
  {\bf A Finsler metric coming from a linear functional on $\mathfrak a$}

\begin{notation}\label{nota:alpha}
   We fix a linear functional $\alpha_0$ on $\mf a$ which is positive on 
   $\overline{\mf a^+}$ and such that $\alpha_0(gv)< \alpha_0(v)$ for all 
   $v\in {\mf a^+}$ and $g\in\weyl$.

   We assume that $\alpha_0$ is symmetric in the sense that if $g$ is the transformation in 
   the Weyl group that maps $\mf a^+$ to its opposite $-\mf a^+$ then $\alpha_0(gv)=-\alpha_0(v)$ for any $v\in \mf a$.

   Let us denote by $\alpha_0^\myhash $ the vector in $\mf a$ such that $\alpha_0(v)=\langle v,\alpha_0^\myhash \rangle$ 
   for any $v\in\mf a$, where $\langle \cdot,\cdot\rangle $ is the inner product on $\mf a$ defined by the Riemannian metric on $\X$.
   Then the assumption above on $\alpha_0$ is equivalent to asking that $\alpha_0^\myhash \in\mf a^+$.
   We also denote by $\alpha_0^\myhash \in\partial_\infty\mf a^+$ the point at infinity to which the 
   ray spanned by $\alpha_0^\myhash $ limits.
\end{notation}

An example of a linear functional satisfying the above conditions is given in Equation~\ref{eq:ex of alpha0}.

Let $\mathfrak{k}\subset \mathfrak g$ be the Lie algebra of the maximal compact subgroup $K$ of $G$.
The orthogonal complement $\mathfrak{p}$ of $\mathfrak{k}$ 
with respect to the Killing form is naturally isomorphic to $T_\basepoint\X$, and it contains $\mathfrak{a}$. 
For any vector $v\in T\X$ we set
\begin{equation}\label{eq:mfF}\mf F(v)=\alpha_0(\kappa(v))\end{equation}
where as before, $\kappa(v)$ is the Cartan projection of $v$.

\begin{proposition}
[{Lemmas 5.9-10 of~\cite{KL18}}]\thlabel{ex:admissible finsler}
The following hold.
\begin{enumerate}
\item 
$\mf F$ defines a $G$-invariant Finsler metric on $\X$.
\item The unparameterized Riemannian geodesics of $\X$ are also geodesics for $\mf F$.
\item 
The translation length for $\mf F$ of any element $g\in G$ acting on $\X$ is given by $\ell^{\mf F}(g):=\alpha_0(\lambda(g))$ where $\lambda(g)\in\mf a^+$ is the Jordan projection.
\end{enumerate}
\end{proposition}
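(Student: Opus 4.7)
The three parts flow naturally from one key identity:
\[
d^{\mf F}(\basepoint, g\basepoint) = \alpha_0(\kappa(g)) \qquad \text{for all } g \in G.
\]
My plan is to establish (1) first, then prove this identity, and finally derive (2) and (3) as corollaries. For (1), the starting observation is the support-functional representation
\[
\mf F(v) = \max_{k \in K} \langle v, \Ad(k)\alpha_0^\myhash\rangle \qquad (v \in \mathfrak p),
\]
where $\langle\cdot,\cdot\rangle$ denotes the inner product on $\mathfrak p$ induced by the Riemannian metric. To verify this, I would combine Kostant's convexity theorem (the projection of $\Ad(K)\alpha_0^\myhash$ to $\mf a$ equals the convex hull of $\weyl \cdot \alpha_0^\myhash$) with the classical dominance inequality $\langle u, wv\rangle \leq \langle u, v\rangle$ for $u,v \in \overline{\mf a^+}$ and $w \in \weyl$, which relies precisely on the hypothesis $\alpha_0^\myhash \in \overline{\mf a^+}$. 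Writing $\mf F$ as a sup of linear functionals makes positive homogeneity, subadditivity and convexity automatic; positivity follows from $\alpha_0 > 0$ on $\overline{\mf a^+}\setminus\{0\}$ together with $\kappa(v) \neq 0$ for $v \neq 0$; and $G$-invariance is built into $\kappa$.

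For the identity, the upper bound is straightforward: with $g = k_1\exp(\kappa(g))k_2$ the KAK decomposition, the Riemannian geodesic $t \mapsto k_1\exp(t\kappa(g))\basepoint$ from $\basepoint$ to $g\basepoint$ has Finsler length $\alpha_0(\kappa(g))$ by direct integration. The lower bound is the heart of the argument. My plan is to build an $\mf F$-$1$-Lipschitz function on $\X$. Let $\xi := \lim_{t \to \infty} \exp(t\alpha_0^\myhash)\basepoint \in \partial_\infty \X$ and $b_\xi$ the associated Riemannian Busemann function. Taking $k = \id$ in the sup-representation yields $\langle w, \alpha_0^\myhash\rangle \leq \mf F(w)$, while the symmetry assumption on $\alpha_0$ forces $\mf F(-w) = \mf F(w)$; together they imply $|db_\xi(w)| \leq \mf F(w)/\|\alpha_0^\myhash\|$ at the basepoint. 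By $G$-equivariance of $b_\xi$ (up to additive constants) and of $\mf F$, the bound propagates to every point, so $\phi := -\|\alpha_0^\myhash\|\, b_\xi$ is $\mf F$-$1$-Lipschitz on all of $\X$. Computing $\phi$ explicitly on the standard flat gives $\phi(\exp(v)\basepoint) - \phi(\basepoint) = \alpha_0(v)$ for $v \in \overline{\mf a^+}$, and $K$-invariance of both sides finishes the lower bound.

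Part (2) now drops out immediately: a Riemannian geodesic $c(t) = k\exp(tv)\basepoint$ with $v \in \overline{\mf a^+}$ satisfies $d^{\mf F}(c(0), c(t)) = \alpha_0(tv) = t\alpha_0(v)$, which exactly matches its Finsler length $\int_0^t \mf F(\dot c(s))\, ds$, so $c$ is $\mf F$-distance-realizing and therefore a Finsler geodesic. For (3), the sequence $n \mapsto d^{\mf F}(\basepoint, g^n\basepoint)$ is subadditive by the triangle inequality and $g$ acting by $\mf F$-isometries, so Fekete's lemma gives $\ell^{\mf F}(g) = \lim_n d^{\mf F}(\basepoint, g^n\basepoint)/n$; using the key identity and continuity of $\alpha_0$, this equals $\lim_n \alpha_0(\kappa(g^n))/n = \alpha_0(\lambda(g))$ by the definition of the Jordan projection.

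The main obstacle I anticipate is the lower bound in the key identity. A naive argument with any Busemann function fails because a Riemannian-$1$-Lipschitz function is not automatically $\mf F$-Lipschitz with a useful constant; the specific choice of $\xi$ asymptotic to the $\alpha_0^\myhash$-direction is what makes the sup-representation sharp at $k=\id$ and aligns $\phi$ optimally with Weyl-chamber geodesics. The symmetry of $\alpha_0$ is also indispensable for handling the $\pm$ in $|db_\xi(w)|$; without it, the constructed $\phi$ would have distinct forward and backward Lipschitz constants and the argument would break.
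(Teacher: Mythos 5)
The paper does not prove this proposition at all --- it is cited wholesale from Lemmas 5.9--10 of Kapovich--Leeb~\cite{KL18}, and the identity $d^{\mf F}(x,y)=\alpha_0(\kappa(x,y))$ then just appears as equation~\eqref{finslerdist}. So there is no in-paper argument to compare against; what follows is an assessment of your self-contained argument.

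Parts (1) and (2) and the central identity $d^{\mf F}(\basepoint,g\basepoint)=\alpha_0(\kappa(g))$ are argued correctly. The support-functional representation $\mf F(v)=\max_{k\in K}\langle v,\Ad(k)\alpha_0^\myhash\rangle$ follows as you say from Kostant's convexity theorem and the dominance inequality $\langle u,wv\rangle\le\langle u,v\rangle$ for $u,v\in\overline{\mf a^+}$, $w\in\weyl$; this is where $\alpha_0^\myhash\in\mf a^+$ enters, and it packages the norm axioms and $G$-invariance at once. For the calibration, the point worth spelling out is that the Riemannian gradient of $b_\xi$ has Cartan projection $\alpha_0^\myhash/\|\alpha_0^\myhash\|$ at \emph{every} point of $\X$, not only at $\basepoint$; this holds because $\xi$ was chosen regular, and it is what lets $G$-invariance of $\mf F$ propagate $|d\phi|\le\mf F$ across all of $\X$. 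After that, computing $\phi$ on $A^+$ and invoking $K$-invariance of $d^{\mf F}(\basepoint,\cdot)$ and of $\kappa$ closes the lower bound, as you sketch.

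Part (3) contains a real gap. Fekete's lemma together with the key identity gives $\lim_n d^{\mf F}(\basepoint,g^n\basepoint)/n=\alpha_0(\lambda(g))$, which is the \emph{stable} translation length; by the triangle inequality this is always $\le\inf_x d^{\mf F}(x,gx)$. You wrote that Fekete's lemma gives $\ell^{\mf F}(g)=\lim_n d^{\mf F}(\basepoint,g^n\basepoint)/n$, silently promoting the stable translation length to the true translation length, but that step is not for free. The converse inequality $\inf_x d^{\mf F}(x,gx)\le\alpha_0(\lambda(g))$ still needs an argument: for loxodromic $g$ take $x$ on the translation flat $F(g^-,g^+)$, where $g$ translates by $\lambda(g)$ and the displacement is exactly $\alpha_0(\lambda(g))$; for general $g$ use the Jordan decomposition $g=g_sg_u$ and push $x$ toward the attracting face of a $g_s$-invariant flat, where the unipotent contribution to the displacement decays by the contraction property~\eqref{eq:basic contraction}. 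Without this, your argument establishes only one of the two inequalities in (3).
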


In the sequel we always normalize the functional $\alpha_0$ in such a way that 
the embedding $\H^2\to \X$ which is isometric for the symmetric metric also is
isometric for the Finsler metric $\mf F$.

\medskip\noindent 
  {\bf Busemann functions}

The Busemann functions, or horofunctions, are generalizations of distance functions on $\X$: they record relative distances to a point at infinity.
Geometrically, their level sets, called horospheres, are limits of spheres whose centers go to infinity.
Since there are several kinds of metrics on $\X$, there are also several kinds of horofunctions.

Because of the contraction property of the subgroup $U\subset G$ explained in \eqref{eq:basic contraction}, horospheres centered at a point $p$ in the interior of the Weyl chamber $\partial \tau(\infty)$ will always be $U$-invariant, since for any $u\in U$, for any $(x_n)_n$ converging to $p$ we have $d(x_n,ux_n)\to0$ for any $G$-invariant metric $d$.
In fact, every distance sphere of $\X$ (for any $G$-invariant metric) is foliated by orbits of the stabilizer of the center, and $U$-orbits can be described as limits of these leaves when the center tends to a point in the interior of $\partial\tau(\infty)$.

The leaves foliating the spheres centered at a given point can be parameterized by vectors of $\mf a^+$, which one may think of as vector-valued distances.
Namely, the $G$-orbit of any pair $(x,y)\in \X^2$ intersects $\{\basepoint\}\times A^+$ exactly once, and the intersection is denoted $(\basepoint,\exp(\kappa(x,y)))$,
where $\kappa(x,y)\in\overline{\mf a^+}$ is thought of as a vector-valued distance from $x$ to~$y$.
The orbits of the stabilizer of $x$ are the level sets of $\kappa(x,\cdot)$.
The Riemannian distance from $x$ to~$y$ can be expressed as $||\kappa(x,y)||$, and
the Finsler distance as defined in (\ref{eq:mfF}) can be expressed as
\begin{equation}\label{finslerdist} 
d^{\mf F}(x,y)=\alpha_0(\kappa(x,y)).
\end{equation}

Using the $U$-orbits one can define a vector-valued Busemann function centered at $\partial\tau(\infty)$: for any $x\in\X$, the $U$-orbit $U\cdot x$ intersects the standard flat $A\subset\X$ in exactly one point, and taking the logarithm we get a vector
$b^{\mf a}_{\partial\tau(\infty)}(\basepoint,x)\in\mf a,$
that records the relative distance from $x$ to $\partial\tau(\infty)$ compared to the basepoint $\basepoint$.
One can check using the contraction property of $U$ that if $(y_n)_n\subset A^+$ converge to a point in the interior of the Weyl Chamber $\partial\tau(\infty)$ then the $\Stab(y_n)$-orbits converge to level sets of $b_{\partial\tau(\infty)}^{\mf a}(\basepoint,\cdot)$.

Using the action of $K$, one can extend these vector-valued Busemann functions to Weyl chambers other than $\partial\tau(\infty)$.
For any Weyl Chamber $\xi=k\partial\tau(\infty)\in\mc F=G/P$, the \emph{vector-valued Busemann function} or horofunction
centered at $\xi$ between $\basepoint$ and $x$ is $b^{\mf a}_\xi(\basepoint,x)=b^{\mf a}_{\partial\tau(\infty)}(\basepoint,k^{-1}x)$, and more generally for $x,y\in \X$ we have: 
$$b^{\mf a}_\xi(x,y) = -b_\xi^{\mf a}(\basepoint,x)+b^{\mf a}_\xi(\basepoint,y) = \lim_{n\to\infty}\kappa(z_n,x)-\kappa(z_n,y)\in \mf a,$$
where $(z_n)_n\subset\X$ is any sequence converging to a point of the visual boundary in the interior of $\xi$.
One way to check the above formula is to find $k_n\in K$ such that $k_nz_n\in A^+$, use that $\kappa(z_n,x)=\kappa(k_nz_n,k_nx)$, and 
pass to a subsequence to make $k_n$ converge to some $k\in K$.

A horosphere in $\X$ based at a point in the visual boundary $\partial_\infty A$ of $A=A\cdot \basepoint$ equals the
$U$-obit of a horosphere in $A$.
Given a sequence $(z_n)_n$ in $A$ going to a point $p$ in the visual boundary of $A$, 
in the interior of the Weyl chamber $\partial_\infty A^+$, the sequence of Riemannian spheres 
in $A$ which are centered at $z_n$ and pass through the origin $0$ 
converges to the hyperplane in $A$ containing $0$ and perpendicular to the ray from $0$ to $p$.
One can check that on the other hand, the sequence of Finsler spheres, that is, the  
level sets of $d^{\mf F}$,  converges to the kernel of $\alpha_0$, which does not depend on $p$ and which, 
in the notation~\ref{nota:alpha}, is the hyperplane perpendicular to $\alpha_0^\myhash \in\partial_\infty A^+$.

The $\mathbb{R}$-valued 
Busemann function $b_p(x,y)$ associated to the Riemannian metric on~$\X$ and centered at 
a point $p$ in the interior of some Weyl chamber $\xi\subset\partial_\infty\X$  is the limit $\lim_{n\to\infty}d(x,z_n)-d(y,z_n)$ where $z_n\to p$. If $q$ is the intersection point of $\partial\tau(\infty)$ 
with the $G$-orbit of $p$ then we have 
$b_p(x,y)=\langle b^{\mf a}_\xi(x,y),v\rangle$ where $v\in\mf a$ is the unit vector pointing at $q$ and 
$\langle \cdot,\cdot\rangle$ is the inner product on $\mf a$.

The \emph{Busemann function} associated to our choice of Finsler metric is given by 
\begin{equation}\label{busemann}
b^{\mf F}_\xi(x,y)=\alpha_0(b^{\mf a}_\xi(x,y))=\lim_{n\to\infty}d^{\mf F}(x,z_n)-d^{\mf F}(y,z_n)\in\R.
\end{equation}

Here the last equality in the identity (\ref{busemann}) is valid since ${\mf F}$ is defined
by a linear functional on $\overline{\mathfrak{a}^+}$. 
Moreover, 
for any loxodromic element $g\in G$ with attracting fixed point $\xi\in \mc F$, the translation length of $g$ acting on $\X$ endowed with the Finsler metric $d^{\mf F}$ equals the quantity 
$ \vert b_\xi^{\mf F}(x, gx)\vert$. 

Note that using Notation~\ref{nota:alpha} we have the following link between the Finsler and Riemannian Busemann functions: 
if $p\in \partial_\infty \X$ is the intersection point of 
$\xi\in {\mc F}$ with the $G$-orbit of the point in $\xi\subset \partial_\infty \X$ corresponding to 
$\alpha_0^\myhash \in \mf a^+$ then
\begin{equation}\label{eq:finspheres are riemspheres}b_\xi^{\mf F}(x,y)=
\langle b_\xi^{\mf a}(x,y),\alpha_0^\myhash \rangle =b_p(x,y),\end{equation}
in other words, Finsler horospheres are Riemannian horospheres.

When $z_n$ converges to a point of the visual boundary which is not regular, 
that is, not in the interior of a Weyl Chamber, then the Riemannian Busemann functions are still well defined, the limit  $\lim_{n\to\infty}d(x,z_n)-d(y,z_n)$ still exists. For the Finsler metric 
the situation is more complicated: up to passing to a subsequence of $(z_n)_n$,
the limit $\lim_{n\to\infty}d^{\mf F}(x,z_n)-d^{\mf F}(y,z_n)$ is still well defined for all $x,y\in\X$ (we say that $z_n$ converge in the horoboundary), but it will give a more complicated, less algebraic, function of $x$ and $y$.
This was described by Kapovich--Leeb in Lemma 5.18 of~\cite{KL18}, and will be used in Section~\ref{sec:proj on diamond}.

Before we state the result let us analyze geometrically the limits of spheres in the flat $A$.
A Finsler ball is a convex polyhedron whose faces are contained in hyperplanes 
parallel to $\ker(\alpha_0)$ and their images under the action of the Weyl group by reflections.
If the centers of a sequence of such convex polyhedra tend to infinity 
away from the walls of the Weyl chambers, then the convex polyhedra converge to a halfspace 
bounded by the image of $\ker(\alpha_0)$ under an element of the Weyl group.
If the centers of such a sequence 
tend to infinity away from all walls \emph{but one}, 
then the convex polyhedra converge to the intersection of two such halfspaces.
For example, they could be $\{x:b_\xi^{\mf F}(x,0)\leq 0\}$ and $\{x:b_\eta^{\mf F}(x,0)\leq 0\}$ 
determined by two Weyl chambers $\xi$ and $\eta$ that share a codimension~1 face.
So the associated Busemann function associated to this intersection should be $f(x)=\max(b_\xi^{\mf F}(x,0),b_\eta^{\mf F}(x,0))$.



The precise statement is as follows.
Let $(z_n)_n\subset \X$ be a sequence converging to a point $p\in\partial_\infty\X$.
The point $p$ is contained in possibly infinitely many closed Weyl chambers, let $\mc B'\subset\mc F$ be the set of such Weyl Chambers. 
Let $\xi\in {\mc F}$ and let $C$ be the Weyl cone based at $\basepoint$ and asymptotic to $\xi$.
If $\xi\not\in {\mc B}$ then  $d(z_n,C)\to\infty$.
It could happen that $d(z_n,C)\to\infty$ even if $\xi\in \{\xi_1,\dots,\xi_\ell\}$.
Let $\mc B'\subset \mc B$ be the set of $\xi$ such that $d(z_n,C)$ remains bounded.
In this case let $p_\xi$ be the intersection point of $\xi$ with $G\cdot \alpha_0^\myhash $ where we view $\alpha_0^\myhash $ as a point in $\partial_\infty A^+$.
Up to passing to a subsequence, there exists $x_0\in \X$ such that for any $x\in\X$ we have
\begin{equation}\label{eq:nonreg busemann}
    d^{\mf F}(x,z_n)-d^{\mf F}(x_0,z_n) \underset{n\to\infty}{\rightarrow} 
    \max_{\xi\in {\mc B}'}b^{\mf F}_\xi(x,x_0)=\max_{\xi\in {\mc B}'}b_{p_\xi}(x,x_0).
\end{equation}
In other words, the Finsler balls centered at $z_n$ whose boundary contain $x_0$ converge to the intersections of the Riemannian horoballs centered at $p_\xi$ for $\xi\in \mc B'$ whose boundary contains $x_0$.

%
%
%

\medskip\noindent 
{\bf Concrete description of the above objects for $G=\PSL_d(\mathbb R)$}

    The Lie algebra $\mathfrak{g}$ is the algebra of trace free $(d,d)$-matrices. As Cartan subspace 
    $\mathfrak{a}$ we choose the linear subspace of 
    diagonal $(d,d)$-matrices with vanishing trace, and the open Weyl chamber $\mathfrak{a}^+$ is the open cone
    of diagonal matrices whose entries $(\lambda_1,\dots,\lambda_d)$ fulfill 
    $\lambda_1> \lambda_2> \cdots > \lambda_d$.

  The subgroup $K\subset\PSL_d(\R)$ is chosen as the group $\PSO_d(\R)$, and $P\subset\PSL_d(\R)$ is taken 
  as the image in $\PSL_d(\R)$ of the set of upper triangular matrices with positive entries on the diagonal and determinant one.

The flag variety $\mc F$ has the following explicit description. 
Namely, a \emph{full flag} in $\mathbb{R}^d$ is a sequence
\[\xi=(\xi_1\subset \xi_2\subset \cdots \subset \xi_{d}=\mathbb{R}^d)\]
where $\xi_i$ is a linear subspace of $\mathbb{R}^d$ of dimension $i$ for each $i\leq d$.
Clearly $\PSL_d(\mathbb{R})$ acts transitively on the space of all full flags, with point stabilizer
a minimal parabolic subgroup. Thus $\mc F$ is just the space of full flags in $\mathbb{R}^d$.

The Busemann functions also have a concrete description, 
using the identification between $\X$ and the set of inner products on $\R^d$ that induce the standard volume form. Namely,
given $x,y\in\X$, let $||\cdot||_x$ and $||\cdot||_y$ denote the norms of the associated 
inner products on $\R^d$ and on the exterior products $\bwedge^k\R^d$ $(1\leq k\leq d)$.
Let $\xi=(\xi_1\subset\xi_2\subset\dots\subset\xi_d)$ be a full flag in $\R^d$.
Let $v=(v_1,\dots,v_d)=b^{\mf a}_\xi(x,y)\in\mf a$ with $v_1+\dots+v_d=0$.
Then for all $k\leq d$, we have
\begin{equation*}
    v_1+\dots+v_k=\log\frac{||X||_x}{||X||_y}
\end{equation*}
where $X\in\bwedge^k\R^d$ is any representative of the $k$-plane $\xi_k$.

The homomorphism $\tau$ is obtained as follows. 
For $d\geq 3$ there exists up to conjugation a unique irreducible representation 
of $\SL_2(\R)$ on $\R^d$. This representation determines the following embedding 
    $\tau\colon\PSL_2(\R)\to\PSL_d(\R)$. 
    Let $\R^h_{d-1}[X,Y]$ be the set of degree $d-1$ homogeneous polynomials with real coefficients. 
    A matrix $M=\smallpmatrix{a}{b}{c}{e}\in\SL_2(\R)$ acts on the vector space 
    $\R[X,Y]$ of polynomials in two variables 
    by $M\cdot X=aX+cY$ and $M\cdot Y=bX+eY$. This action $\SL_2(\R)\acts\R[X,Y]$ preserves the $d$-dimensional linear
    subspace $\R^h_{d-1}[X,Y]$, with determinant one elements. 
    So it induces an embedding $\PSL_2(\R)\to\PSL_d(\R)$ which is just the representation $\tau$. 
    
    Using a suitable basis we have $\tau(\SO(2))\subset K$, the representation $\tau$ induces an isometric embedding 
    $\H^2=\PSL_2(\R)/\PSO(2)
    \hookrightarrow \X$. Denote by $\bH^2=\pi_\X\circ \tau(\PSL_2(\R))$ the image of~$\H^2$ inside $\X$. 
    
    The following statement is a consequence of the fact
    that $d\tau (T^1\mathbb{H}^2)$ consists of regular vectors contained in a single $G$-orbit, and each such vector
    is tangent to a unique maximal flat. It is well known and immediate from the above discussion.
    
    \begin{fact}\thlabel{fact:unique-flat}
   \begin{enumerate}
       \item 
        Every geodesic in $\bH^2$ lies in a unique maximal flat.
     \item    
        For any hyperbolic element $g\in \PSL_2(\R)$, the centraliser of $\tau(g)$ in $G$ acts by translations on the unique flat containing the image under $\pi_\X$ of the axis of $g$ in~$\H^2$.
   \end{enumerate}
    \end{fact}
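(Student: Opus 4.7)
The hint suggests the following approach. For part (i), I would first observe that $\mathrm{PSL}_2(\R)$ acts transitively on $T^1\H^2$, so the tangent vectors along any geodesic in $\bH^2=\pi_\X\circ\tau(\mathrm{PSL}_2(\R))$ all lie in the single $G$-orbit of $d\tau\hsl$. By assumption $d\tau\hsl\in\mathfrak a^+$ is regular, so every unit tangent vector to $\bH^2$ is a regular vector of $T\X$. The key purely Lie-theoretic fact is that a regular vector $v\in T_p\X$ is contained in a unique Cartan subspace, because a regular element of $\mathfrak p$ centralizes a \emph{unique} Cartan subalgebra. Hence $v$ is tangent at $p$ to a unique maximal flat $F_p$.

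To conclude, let $\gamma\subset\bH^2$ be a geodesic and $p\in\gamma$. The unique Riemannian geodesic through $p$ tangent to $\gamma$ lies in $F_p$ and, by uniqueness of geodesics in $\X$, must coincide with $\gamma$ on a neighbourhood of $p$. Thus each point of $\gamma$ has a neighbourhood contained in some maximal flat. Since two distinct maximal flats intersect in a proper affine subspace (in particular in a set that contains no regular geodesic segment), the flats $F_p$ must locally agree, and by connectedness of $\gamma$ we obtain a single maximal flat $F$ containing all of $\gamma$.

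For part (ii), let $\gamma\subset\H^2$ be the axis of $g$ and $F$ the unique flat containing $\pi_\X(\gamma)$ given by (i). Since $\tau(g)$ stabilises $\pi_\X(\gamma)$ setwise and $F$ is the \emph{unique} flat containing it, $\tau(g)$ stabilises $F$. By regularity of $\tau$, the element $\tau(g)$ is loxodromic with attracting/repelling pair of transverse flags $(\tau(g)^-,\tau(g)^+)$, and $F=F(\tau(g)^-,\tau(g)^+)$; on $F$ the element $\tau(g)$ acts as translation by $\lambda(\tau(g))\in\mathfrak a^+$, which is a \emph{regular} vector. Any $h\in Z_G(\tau(g))$ permutes the fixed flag pair; but composing with $h$ must preserve the translation vector, which rules out the swap since the swap would reverse the direction. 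Thus $h$ fixes $\tau(g)^\pm$ individually and therefore preserves $F$. The group of isometries of $F$ induced by its stabilizer in $G$ is the affine Weyl group $\Weyl\ltimes A$, and the linear Weyl part of $h$ must fix the regular vector $\lambda(\tau(g))$, which forces it to be trivial. Hence $h$ acts on $F$ as a pure translation.

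The main technical point is really the two underlying Lie-theoretic facts --- that a regular tangent vector determines a unique Cartan subspace (hence a unique maximal flat), and that the stabilizer of a maximal flat induces on it the affine Weyl group. Both are standard for semisimple (and in our case split) real Lie groups, so no serious obstacle is expected; the argument is essentially a careful bookkeeping of regularity plus the uniqueness of flats between transverse flags recalled earlier in the section.
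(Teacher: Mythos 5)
Your proposal is correct and is essentially a detailed version of what the paper treats as "well known and immediate": the paper's entire justification for this Fact is the one-sentence remark that $d\tau(T^1\H^2)$ consists of regular vectors in a single $G$-orbit, each tangent to a unique maximal flat. For part (i) your route is exactly the paper's intended one; for part (ii) the paper gives no details, but your argument (centralizer fixes the attracting/repelling flags, hence preserves $F$, and the linear Weyl part must fix the regular translation vector, hence is trivial) is a legitimate way to fill the gap. The paper's introduction hints at a slightly more computational alternative: for $G=\PSL_d(\R)$ one can observe directly that the centralizer of a regular diagonalizable element is (the projectivization of) the diagonal torus, which manifestly acts on the flat by translation.

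Two minor simplifications worth noting. In part (i), the local/connectedness argument is superfluous: once one point $p\in\gamma$ determines a unique flat $F_p$ containing the tangent vector at $p$, the entire complete geodesic $\gamma$ already lies in $F_p$ because $F_p$ is totally geodesic, and then $F_q=F_p$ for every $q\in\gamma$ by uniqueness. In part (ii), you need not rule out a swap of $\tau(g)^\pm$: since $\tau(g)$ is loxodromic, its attracting fixed flag is uniquely characterized by the dynamics of forward iteration, so any $h$ commuting with $\tau(g)$ automatically fixes $\tau(g)^+$ (and likewise $\tau(g)^-$) individually.
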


\section{Hitchin grafting representations}\label{section-HG}

The \emph{Hitchin component} ${\rm Hit}(S)$ 
for conjugacy classes of representations 
$\pi_1(S)\to \PSL_d(\mathbb{R})$ is the connected component of the set of conjugacy classes of representations which 
factor through an irreducible representation $\PSL_2(\mathbb{R})\to \PSL_d(\mathbb{R})$.
In the sequel we always work with explicit representations rather than with conjugacy classes. 

The Hitchin representations we are interested in are the familiar \emph{bending} or \emph{bulging} deformations
of \emph{Fuchsian} representations, that is, representations  
which factor through the embedding $\tau:\PSL_2(\mathbb{R})\to \PSL_d (\mathbb{R})$. We refer to~\cite{Go86,AZ23} for an account
on the bending construction. 
The goal of this section is to introduce these representations as well as an abstract geometric model for them, and 
we establish some first geometric properties of the representations and the model.
The precise relation between the geometry of bending representations and the geometry of the model will be established  in 
Section~\ref{QI-proof} and constitutes the main result of this article.

The material in Subsections~\ref{sec:Abstract grafting} --~\ref{sec:graphofgroups} is well known, and
the purpose is to summarize the properties and the viewpoint we are going to pursue.

 \subsection{Abstract grafting}\label{sec:Abstract grafting}

    In this subsection we introduce 
    abstract grafting of a hyperbolic surface as initiated by Thurston. We refer to~\cite{T97} for an early account on this 
    construction.
    Contrary to the common definition in the literature, our grafting contains a twist which is needed for our purpose.
 
    Consider a closed oriented surface $S$ of genus $g\geq 2$ endowed with a hyperbolic metric.
	A \emph{simple (geodesic) multi-curve} $\gamma^*$ is the union of pairwise disjoint essential 
 mutually not freely homotopic simple closed curves (geodesics) on $S$. 
 We fix moreover an orientation on each component of $\gamma^*$.

 Consider the special direction $u=d\tau\hsl\in \mf a$ given by $\tau$.
 For any $z\in \mf a$ and $\ell>0$, let $\Cyl(\ell,z)\subset \mf a/\ell u$ be the cylinder obtained by quotienting  the strip $\{tu+sz: t\in\R,\ s\in[0,1]\}\subset\mf a$ under the translation by $\ell u$.
 The (Finsler) \emph{height} of such cylinder is defined as 
 \begin{equation}\label{eq:cylinder height}
     \mathrm{height} = \min\{\mf F(tu+z):t\in\R\}.
 \end{equation}
 
 We fix for every $\gamma\in \gamma^*$ a vector $z_\gamma\in \mf a$; the collection $z=(z_\gamma)_{\gamma\in\gamma^*}$ is interpreted as a grafting parameter.
 
	\begin{definition}\label{def:abstractgraf}
         The \emph{abstract grafting} of $S$ along the geodesic multi-curve 
         $\gamma^*$ is the surface $S_z$ obtained by cutting $S$ open along
         each of the components $\gamma$ of $\gamma^*$, inserting 
         flat cylinders $C_\gamma=\Cyl(\ell_S(\gamma),z_\gamma)$  and gluing the surface back with the translation by $z_\gamma$.

         If $z_\gamma$ is not parallel to $u$ for any $\gamma\in\gamma^*$, then this grafting comes with a natural homotopy equivalence $\pi_z:S_z\to S$ projecting the flat cylinders onto $\gamma^*$, which allow us to identify $\pi_1(S_z)$ and $\pi_1(S)$.
	\end{definition}

	More precisely, for each $\gamma\in \gamma^*$, the metric completion of $S-\gamma$ is a surface whose boundary consists of 
 two geodesics $\gamma_1,\gamma_2$ of the same length $\ell_S(\gamma)$. 
 The choice of a parameterisation $\gamma(t)$ defines parameterisations $\gamma_1(t)$, $\gamma_2(t)$. 
 Attach the flat cylinder $C_\gamma$ to $\gamma_1$ and $\gamma_2$ by identifying $[tu]\in C_\gamma$ with $\gamma_1(t)$ and $[tu+z_\gamma]$ with $\gamma_2(t)$.

    Let $C=\bigcup_{\gamma\in\gamma^*}C_\gamma\subset S_z$ and $S'$ be the metric completion of $S-\gamma^*$, so that 
    $S_z=(S'\sqcup C)/\sim$ where $\sim$ is the gluing procedure explained above.
    The projection map $\pi_z:S_z\to S$ satisfies the following.
    Its restriction $S'\to S$ is the continuous extension of the inclusion $S-\gamma^*\hookrightarrow S$.
    It projects each $[tu+sz_\gamma]\in C_\gamma$ to $\gamma(t)\in S$.

	We call this operation \emph{abstract grafting} to distinguish it from the \emph{Hitchin grafting} 
    that we will introduce for Hitchin representations. We shall refer to $S_z$ as a \emph{grafted surface}. 
	
    Note  that if $z_\gamma = 0$ for every $\gamma$ in $\gamma^*$, then the grafting is trivial and $S_z = S$. 
    If all $z_\gamma$ are parallel to $u$, then the grafted surface is hyperbolic and obtained from $S$ by shearing along $\gamma^*$
    with shearing length given by the size of the parameters $z_\gamma$.
    
    More generally, $S_z$ is an orientable surface which admits a canonical 
    piecewise smooth structure as well as a natural conformal structure which in turn 
    induces a global $C^1$-structure on $S$. 
    Any norm on $\mf a$ which coincides with the norm induced by the hyperbolic metric on the distinguished direction
    $u$ 
    induces a Finsler metric on $S_z$ which coincides with the hyperbolic metric on $S^\prime$ and whose restriction
    to the cylinders $C_\gamma$ is flat. 
    
    In particular, the norm defined by 
    the Riemannian metric of $\X$ can be used to endow the $C^1$-surface
    $S_z$ with a $C^0$ Riemannian metric which is smooth everywhere except at the gluing locus, 
    has constant curvature $-1$ in $S_z-\cup_\gamma C_\gamma$ and has constant curvature $0$ in the interior of the cylinders $C_\gamma$. 
    Since the curvature of this metric 
    is non-positive whereever it is defined 
    and the gluing is performed along geodesics, $S_z$ is non-positively curved in the sense of 
    Alexandrov and hence its universal covering $\tilde S_z$ is a ${\rm CAT}(0)$-space.
    
    Thus in this case every free homotopy class has a Riemannian 
    geodesic representative whose length is minimal in the free homotopy class. Such a Riemannian geodesic is unique unless it is a core curve
    of a flat cylinder. 
    If all the $z_\gamma$ are orthogonal to the special direction $u$, then the natural homotopy equivalence $\pi_z:S_z\to S$ is $1$-Lipschitz and hence in this case,
    free homotopy classes have bigger lengths in $S_z$ than in $S$. Moreover, 
    the unit tangent bundle $T^1S_z$ of $S_z$ is well defined, and there is a geodesic flow which is topologically mixing and
    admits a unique measure of maximal entropy~\cite{Kn98}. 


    As we are interested in Finsler metrics on 
    $\X$ 
    using $\alpha_0$ (see \eqref{eq:mfF}) rather than the Riemannian one, we also endow $S_z$ with a Finsler metric by equipping each cylinder $C_\gamma$ with the quotient of the non-Euclidean norm $\mf F$ on $\mf a$.
    Observe that in general, 
    for a given $C^1$-structure on $S_z$ as constructed above, 
    this metric is \emph{discontinuous} at the gluing locus between the flat cylinders and the hyperbolic part.
    Additionally 
    the metric on the flat part is sensitive in the direction of $z$, and does not depend only on the height of the grafting (contrarily to the Riemannian metric).
    Nevertheless it induces a well defined path metric on $S_z$.

    The following observation will be useful later on when estimating lengths.

    \begin{lemma}\label{lem:Lipschitz}
        If all $z_e$ are in $\ker(\alpha_0)$,
        then the natural projection $\pi_{z}:S_{z}\to S$ is $1$-Lipschitz for the Finsler metric on $S_{z}$.
        In particular, all free homotopy classes of curves have bigger Finsler lengths in $S_{z}$ than in $S$.
    \end{lemma}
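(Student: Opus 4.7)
The plan is to reduce the global 1-Lipschitz claim to separate verifications on the two natural pieces of $S_z$, namely the hyperbolic part $S'$ (the metric completion of $S\setminus\gamma^*$) and each inserted cylinder $C_\gamma$, and then to combine these using the fact that the Finsler metric on $S_z$ is the path metric induced by these pieces. On $S'$ there is nothing to prove, since the restriction $\pi_z|_{S'}$ is the continuous extension of the inclusion $S\setminus\gamma^*\hookrightarrow S$, and hence is isometric.

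The heart of the argument is on each cylinder $C_\gamma$. I identify its tangent spaces with $\mr{Span}(u,z_\gamma)\subset\mf a$. Then $d\pi_z$ annihilates $z_\gamma$ (since $\pi_z$ collapses that direction to a single point of $\gamma$) and sends $u$ to the tangent $\gamma'(t)\in T_{\gamma(t)}S$, which is a unit vector in the hyperbolic (equivalently, Finsler) metric on $S$ by the arc-length parametrization of $\gamma$ and the Finsler-isometric embedding $\H^2\hookrightarrow\X$. In particular $\mf F(d\pi_z(\alpha u+\beta z_\gamma))=|\alpha|$.

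The key linear estimate needed is the inequality $|\alpha_0(w)|\leq \mf F(w)$ valid for every $w\in\mf a$. This follows directly from the definition $\mf F(w)=\alpha_0(\kappa(w))$ and Notation~\ref{nota:alpha}: the monotonicity assumption forces $\alpha_0$ to attain its maximum $\mf F(w)$ on each Weyl orbit at the chamber representative, and the symmetry assumption $\alpha_0\circ w_0=-\alpha_0$ then forces its minimum on the orbit to be $-\mf F(w)$. Since $z_\gamma\in\ker(\alpha_0)$, applying this to $v=\alpha u+\beta z_\gamma$ and using linearity of $\alpha_0$ gives
\begin{equation*}
\mf F(v)\;\geq\;|\alpha_0(v)|\;=\;|\alpha|\,\alpha_0(u)\;=\;|\alpha|\,\mf F(u)\;\geq\;|\alpha|,
\end{equation*}
where the last step uses the normalization $\mf F(u)=\|u\|=2$ coming from $\|d\tau\hsl\|=2$. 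Hence $\mf F(d\pi_z v)\leq\mf F(v)$ on each cylinder as well, and combining the two pieces yields the global 1-Lipschitz bound.

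The second assertion then follows immediately: $\pi_z$ is a homotopy equivalence by construction, so any representative $\tilde c$ of a free homotopy class in $S_z$ projects to a representative $\pi_z(\tilde c)$ of the corresponding class in $S$ of no greater Finsler length, and passing to the infimum over representatives gives the desired length comparison. I anticipate no genuine obstacle; the only mild subtlety is that the Finsler metric on $S_z$ is discontinuous at the gluing locus, but this does not affect the path metric on which the statement is phrased.
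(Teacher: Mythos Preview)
Your proof is correct and follows essentially the same line as the paper's: both reduce to the observation that on each cylinder $\pi_z$ is the linear projection of $\mf a$ onto $\R u$ parallel to $z_\gamma\in\ker(\alpha_0)$, and that this projection is $1$-Lipschitz for $\mf F$; you spell out the key inequality $|\alpha_0|\leq\mf F$ more carefully via the Weyl-orbit maximization and symmetry. One cosmetic point: the clean target for the final comparison is $\mf F(d\pi_z v)=|\alpha|\,\mf F(u)$ (since the gluing identifies $u$ with $\gamma'$), which is exactly the bound $|\alpha|\,\alpha_0(u)$ you obtained, so the detour through the specific value $\mf F(u)=2$ and the arc-length assumption is unnecessary, though harmless.
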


    \begin{proof}
        By definition, the restriction of our projection map $\pi_{z}: S_{z}\to S$ to each flat cylinder $C_\gamma=\{tu+sz_\gamma\}/\ell_S(\gamma)u$ comes from the linear projection of $\mf a$ onto the line spanned by $u$, parallel to the direction $z_\gamma\in \ker(\alpha_0)$.
        To conclude it suffices to note that this projection is $1$-Lipschitz for the non-Euclidean norm on $\mf a$, which was defined using~$\alpha_0$~(see \eqref{eq:mfF}).
    \end{proof}

    \subsection{Particular case of an amalgamated product}

    In this section we explain briefly the construction of the following two Sections~\ref{sec:graphofgroups} and~\ref{sec-HG} in the special case where $\gamma^*$ has only one component and is separating.

    Let $\Sigma$ be a closed orientable smooth surface of genus at least $2$ 
    and let $\gamma^*\subset\Sigma$ be a separating simple closed curve.
    Then $\gamma^*$ splits $\Sigma$ into two subsurfaces $\Sigma_1$ and $\Sigma_2$, and~$\pi_1(\Sigma)$ can be written as an amalgamated product $\pi_1(\Sigma_1)\underset{\gamma^*}{*}\pi_2(\Sigma_2)$.

    Consider a discrete and faithful representation $\rho:\pi_1(S)\to\PSL_2(\R)\overset{\tau}{\to}G$ such that $\rho(\gamma^*)=\exp(\ell_\rho(\gamma^*)u)$ where $u=d\tau\hsl$ is the special direction of $\mf a$.
    Let $z\in\mf a$ be a grafting parameter.
    The \emph{Hitchin grafting representation} $\rho_z:\pi_1(S)\to G$ is defined 
    by requiring that $\rho_z(\gamma)=\rho(\gamma)$ for any 
    $\gamma\in\pi_1(\Sigma_1)$ and $\rho_z(\gamma)=\exp(z) \cdot \rho(\gamma) \cdot \exp(-z)$ for any $\gamma\in\pi_1(\Sigma_2)$.

    One can then define an 
    immersion $Q_z:S_z\to \rho_z\backslash \X$ whose restriction to any of the hyperbolic pieces of $S_z$ and to the flat
    cylinder is totally geodesic.
    Indeed, $\rho_z(\pi_1(\Sigma_1))=\rho(\pi_1(\Sigma_1))$ preserves $\wt\Sigma_1\subset\H^2\subset\X$, inducing a totally geodesic embedding $\Sigma_1=\rho_z(\pi_1(\Sigma_1))\backslash \wt\Sigma_1\hookrightarrow \rho_z\backslash \X$.
    Similarly, if one identifies $\Sigma_2$ with 
    $\rho(\pi_1(\Sigma_2))\backslash \wt\Sigma_2$ where $\wt\Sigma_2\subset\H^2\subset\X$, then 
    $\rho_z(\pi_1(\Sigma_2))=\exp(z) \cdot \rho(\pi_1(\Sigma_2))\cdot \exp(-z)$ preserves 
    $\exp(z)\wt\Sigma_2$ and hence induces a totally geodesic embedding $\Sigma_2\hookrightarrow \rho_z\backslash \X$.
    In general the image of the boundary components $\partial\Sigma_1$ and $\partial\Sigma_2$ in 
    $\rho_z\backslash \X$ are disjoint.
    However they can be connected by the natural totally geodesic embedding of the cylinder $C=\{tu+sz:t\in\R,\ s\in[0,1]\}/\rho_z(\gamma^*)$ into $\rho_z\backslash \X$.
    Gluing these three embeddings yield a piecewise totally geodesic embedding 
    of $S_z=(\Sigma_1\cup C\cup \Sigma_2)/\sim$ into $\rho_z\backslash \X$.

    \subsection{Graphs of groups decomposition and bending}\label{sec:graphofgroups}

    A classical reference for the theory of graph of groups is~\cite{BassSerre}. We collect some facts we need.
    Let $\Sigma$ be a closed orientable smooth surface and let $\gamma^*\subset\Sigma$ be a simple multi-curve.
    The multi-curve determines the following graph of groups decomposition of $\pi_1(\Sigma)$, 
    which will be used to define a family of Hitchin representations.
    
    Let $G_{\gamma^*}$ be the oriented graph such that each vertex $v\in V$ corresponds to a component $\Sigma_v$ of $\Sigma-\gamma^*$, and each edge $e\in E$ corresponds to an oriented component $\vec \gamma_e$ of $\gamma^*$. Given an edge $e\in E$, we denote by $\bar e$ the opposite edge of $e$, for which $\vec \gamma_{\bar e}$ corresponds to the curve $\vec\gamma_e$ with the reverse orientation.
    The oriented edge $e$ is adjacent to the two (not necessarily distinct) components $\Sigma_{o(e)},
    \Sigma_{t(e)}$
    of $\Sigma-\gamma^*$ which contain $\vec \gamma_e$ in their boundary.
    One can embed $G_{\gamma^*}$ into the surface $\Sigma$ such that each vertex $v$ lies in the interior of $\Sigma_v$ and each edge $e$ connects $o(e)$ to $t(e)$, crossing transversally $\vec\gamma_e$ once.
    Since we assume that $\Sigma$ is oriented, 
    choosing an orientation on $\gamma^*$ is the same as choosing for each pair of opposite 
    edges $e,\bar e\in E$ a preferred one by declaring that the ordered pair $(u_1,u_2)$ consisting of the 
    oriented tangent $u_1$ of the oriented edge $e$ at $x_e$ and the oriented 
    tangent of the oriented geodesic $\gamma^*$  defines the orientation of $\Sigma$. 
    
    The graph of groups decomposition of $\pi_1(S)$ defined by this datum associates to each 
    vertex $v\in V$ the fundamental group 
    $A_v:=\pi_1(\Sigma_v,v)$ where $v$ is seen as a point in the interior of 
    $\Sigma_v$. 
    To each edge $e$ is associated the fundamental group 
    $A_e:=\pi_1(\vec \gamma_e,x_e)\simeq\Z$ of~$\vec \gamma_e$, where $x_e$ is the intersection point of~$\vec\gamma_e$ with $e$ (which is seen as an arc in $\Sigma$ transverse to $\vec\gamma_e$).
    The inclusions $\vec\gamma_e\hookrightarrow \Sigma_{o(e)}$, $\vec\gamma_e\hookrightarrow \Sigma_{t(e)}$ determine the following monomorphisms, by connecting $x_e$ to respectively $o(e)$ and $t(e)$ via $e$.
    \[\alpha_{o(e)}:A_e=\pi_1(\vec \gamma_e,x_e)\hookrightarrow A_{o(e)}=\pi_1(\Sigma_{o(e)},{o(e)}) \text{ and } 
    \alpha_{t(e)}:A_e\hookrightarrow A_{t(e)}.\] 
    Note that $\alpha_{o(e)}(\vec\gamma_e)=\alpha_{t(\bar e)}(\vec\gamma_{\bar e})^{-1}$.

    That this construction indeed defines a decomposition of $\pi_1(\Sigma)$ as graph of groups with cyclic edge groups is well known. More precisely,  
    choose a spanning tree $T\subset G_{\gamma*}$ of~$G_{\gamma^*}$,
    with edge set $E_T\subset E$ invariant under the orientation reversing map $e\mapsto\bar e$.
    For a vertex $v\in V$ put $A_v$, and for an edge $e\in E$ put $A_e$. 
    Denote by $\vec \gamma_e$ the oriented geodesic defined by the oriented edge $e$.
    
    Let $\pi_1(G_{\gamma^*},T)$ be the quotient group 
    $$\pi_1(G_{\gamma^*},T)=\faktor{\left(*_v A_v\right)*F_E}{R}$$
    where 
    $*$ denote the free product, $F_E$ is the free group generated by the edge set $E$,  and $R$ is the normal subgroup of $\left(*_vA_v\right)*F_E$ generated by the union of the sets
    \begin{itemize}
        \item $e \cdot \bar e$ for all $e\in E$,
        \item $ e $ for all $e \in E_T$,
        \item $e \alpha_{o(e)} (g)e^{-1} \alpha_{t(e)}(g)^{-1}$ for all $e\in E$         
        and $g\in A_e$, which we think of as $e\alpha_{o(e)}(g)e^{-1}\equiv \alpha_{t(e)}(g)$.
    \end{itemize}
    Thus  $\pi_1(\Sigma)$ is obtained from simultaneous HNN-extension of the tree of groups defined by the spanning tree $T$.

    Recall that the isomorphism between $\pi_1(G_{\gamma^*},T)$ and $\pi_1(S)$ is constructed by choosing a basepoint $v_0\in V$, embedding each vertex group $A_v$ into $\pi_1(S,v_0)$ by connecting $v$ to $v_0$ via the spanning tree $T$, and mapping $F_E$ into  $\pi_1(S,v_0)$ by connecting the endpoints $o(e)$ and $t(e)$ of each $e$  to $v_0$ via the tree $T$.

    Take a discrete and faithful representation $\rho\colon\pi_1(G_{\gamma^*},T)\to \PSL_2(\R)\xrightarrow[]{\tau} G$ which factors through the embedding  
    $\tau:\PSL_2(\mathbb{R})\to \PSL_d(\mathbb{R})$. 
    We use the graphs of groups decomposition of $\pi_1(\Sigma)$ to perform a bending of the representation in $G$ with parameter $z=(z_\gamma)_{\gamma\in\gamma^*}\in \mf a^{\gamma^*}$.
    This construction can be thought of as
    bending the surface $S$ along the geodesic multicurve $\gamma^*$ in the space of representations into $G$.

    Let $\wt \rho:(*_vA_v)*F_E\to G$ be the composition of $\rho$ with the projection 
    \begin{equation}\label{alpha}
    (*_vA_v)*F_E\to \pi_1(G_{\gamma^*},T).\end{equation}
    Fix an orientation on $\gamma^*$, so that for every $\gamma\in\gamma^*$,  we get a preferred edge $e\in E$.

    Then there exists $B\in \PSL_d(\R)$ such that $\wt\rho(\alpha_{o(e)}(\vec\gamma_e))=B\exp(\ell_\rho(\gamma)u)B^{-1}$, where $u=d\tau\hsl$ is the special direction of $\mf a$ (note that the $\ell_\rho$-length of $\gamma$ does not depend on the orientation of $\gamma$ since $\alpha_0$ was taken symmetric).

    Set $\zeta_e=B\exp(z_\gamma)B^{-1}$, so that $\zeta_e$ commutes with $\wt\rho(\alpha_{o(e)}(\vec\gamma_e))$.
    Note that by definition of our relations $R$ and $\alpha_{o(\bar e)}(\vec\gamma_{\bar e})=\alpha_{t(e)}(\vec\gamma_{e})^{-1}$ we have 
    $$\wt\rho(\alpha_{o(\bar e)}(\vec\gamma_{\bar e}))=\wt\rho(e)\wt\rho(\alpha_{o(e)}(\vec\gamma_{e}))^{-1}\wt\rho(e)^{-1}=\wt\rho(e)B\exp(-\ell_\rho(\gamma)u)B^{-1}\wt\rho(e)^{-1}.$$
    Set $\zeta_{\bar e}=\wt\rho(e)B\exp(-z_\gamma)B^{-1}\wt\rho(e)^{-1}=\wt\rho(e)\zeta_e^{-1}\wt\rho(e)^{-1}$, that commutes with $\wt\rho(\alpha_{o(\bar e)}(\vec\gamma_{\bar e}))$ and satisfies $\wt\rho({\bar e})\zeta_{\bar e}\wt\rho({e})\zeta_{e}=1$.

    Geometrically, the group $A_{e}$ acts on $\H^2$ as a translation on a geodesic $\wt\gamma$. 
    By \thref{fact:unique-flat}, the image of $\wt\gamma\subset\H^2$ in $\X$ is contained in a unique 
    maximal flat,
    and $\zeta_{e}$ preserves this flat and acts on it as a translation. 
    
    A Hitchin grafting representation is obtained by performing a partial 
    conjugation of $\pi_1(G_{\gamma^*},T)$ by the elements $\zeta=(\zeta_e)_{e\in E}$.
    Fix a basepoint $v_0\in V$. 
    For any $v\in V$, we denote by 
    $$\omega_v=\zeta_{e_1}\cdots \zeta_{e_n}$$
    where $(e_1\cdots e_n)$ is an oriented path in the tree
    $T$ from $v_0$ to $v$. Since $\zeta_{\bar e}=\zeta_e^{-1}$ when $e$ is in $E_T$ and $T$ is a tree, 
    the value of $\omega_v$ does not depend on the chosen path. 
    
    Then define the representation $\wt\rho_z\colon\left(*_vA_v\right)*F_E\to G$ by
    \begin{enumerate}[i)]
        \item $\wt\rho_z(g)=\omega_v\wt\rho(g)\omega_v^{-1}$ for all $v\in V$ and $g\in A_v=\pi_1(\Sigma_v)$,
        \item $\wt\rho_z(e)=\omega_{o(e)}\wt\rho(e)\zeta_e\omega_{t(e)}^{-1}$ for all $e\in E$.
    \end{enumerate}
    
    \begin{lemma}
        The representation $\wt\rho_z$ contains $R$ in its kernel.
    \end{lemma}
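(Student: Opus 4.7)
The plan is to verify directly that $\wt\rho_z$ sends each of the three families of generators of $R$ to the identity: (i) the words $e\cdot\bar e$ for $e\in E$, (ii) the single edges $e$ for $e\in E_T$, and (iii) the HNN-type commutators $e\,\alpha_{o(e)}(g)\,e^{-1}\,\alpha_{t(e)}(g)^{-1}$ for $e\in E$, $g\in A_e$. Three structural facts drive the computation: (a) $\zeta_e$ commutes with $\wt\rho(\alpha_{o(e)}(\vec\gamma_e))$, hence with $\wt\rho(\alpha_{o(e)}(g))$ for every $g\in A_e$, since by construction $\zeta_e\in B\cdot A\cdot B^{-1}$ where $B$ diagonalises $\wt\rho(\alpha_{o(e)}(\vec\gamma_e))$; (b) the identity $\wt\rho(\bar e)\zeta_{\bar e}\wt\rho(e)\zeta_e=1$ recorded just above the statement; and (c) the fact that $\wt\rho$ itself factors through $\pi_1(G_{\gamma^*},T)$, so that the HNN relation $\wt\rho(e)\wt\rho(\alpha_{o(e)}(g))\wt\rho(e)^{-1}=\wt\rho(\alpha_{t(e)}(g))$ already holds in $G$.

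For type (i), I would expand
\[
\wt\rho_z(e)\wt\rho_z(\bar e)
=\omega_{o(e)}\wt\rho(e)\zeta_e\omega_{t(e)}^{-1}\omega_{t(e)}\wt\rho(\bar e)\zeta_{\bar e}\omega_{o(e)}^{-1},
\]
observe that the two middle $\omega_{t(e)}$'s cancel, and reduce the vanishing to $\wt\rho(e)\zeta_e\wt\rho(\bar e)\zeta_{\bar e}=1$, which follows from (b) combined with $\wt\rho(\bar e)=\wt\rho(e)^{-1}$ and $\zeta_{\bar e}=\wt\rho(e)\zeta_e^{-1}\wt\rho(e)^{-1}$. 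For type (ii), the key observation is that when $e\in E_T$ the tree path from $v_0$ to $t(e)$ extends the one to $o(e)$ by the single edge $e$, so $\omega_{t(e)}=\omega_{o(e)}\zeta_e$; together with $\wt\rho(e)=1$ this immediately telescopes $\wt\rho_z(e)=\omega_{o(e)}\zeta_e(\omega_{o(e)}\zeta_e)^{-1}=1$.

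For type (iii), I would first dispatch the tree-edge case $e\in E_T$: substituting $\omega_{t(e)}^{-1}\omega_{o(e)}=\zeta_e^{-1}$ into the expansion of $\wt\rho_z(e)\wt\rho_z(\alpha_{o(e)}(g))\wt\rho_z(e)^{-1}$ and using $\wt\rho(e)=1$ reduces the identity to $\zeta_e$ commuting with $\wt\rho(\alpha_{o(e)}(g))$, which is (a). For a non-tree edge $e$, setting $\nu:=\omega_{o(e)}^{-1}\omega_{t(e)}$, a direct computation reduces the desired equality to
\[
\wt\rho(e)\,\zeta_e\,\nu^{-1}\wt\rho(\alpha_{o(e)}(g))\nu\,\zeta_e^{-1}\wt\rho(e)^{-1}
=\nu^{-1}\wt\rho(\alpha_{t(e)}(g))\nu,
\]
and I would show this by combining the commutation (a) with the HNN relation (c) applied to~$\wt\rho$, carefully tracking how the $\omega$-factors arrange along the tree paths.

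The main obstacle is this last step: unlike the tree-edge case, the factor $\nu=\omega_{o(e)}^{-1}\omega_{t(e)}$ is a nontrivial product of $\zeta$'s along two distinct tree paths meeting at a common ancestor of $o(e)$ and $t(e)$, and it does not commute with the edge-group images in any obvious way. I expect the verification to hinge on a bookkeeping argument that treats the factors along the shared initial segment of the two tree paths together, so that they cancel before interacting with $\wt\rho(e)$ and $\zeta_e$, leaving only terms that commute through by virtue of (a) and~(c).
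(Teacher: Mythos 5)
Your cases (i) and (ii) match the paper exactly. The trouble is in case (iii), where you take the HNN relation literally as stated with $\alpha_{o(e)}(g)\in A_{o(e)}$ in the middle of the conjugation. When you expand $\wt\rho_z(e)\wt\rho_z(\alpha_{o(e)}(g))\wt\rho_z(e)^{-1}$, the middle factor is $\omega_{o(e)}\wt\rho(\alpha_{o(e)}(g))\omega_{o(e)}^{-1}$, while $\wt\rho_z(e)$ ends with $\omega_{t(e)}^{-1}$; these do not cancel, and the mismatch $\nu=\omega_{o(e)}^{-1}\omega_{t(e)}$ appears, as you observed. Your plan to resolve this by ``bookkeeping'' along tree paths is not going to work: $\nu$ is a product of $\zeta_{e'}$ for \emph{other} edges $e'$, and none of those commute with $\wt\rho(\alpha_{o(e)}(g))$, so the conjugation by $\nu$ does not pass through. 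This is a genuine gap, not a detail to be filled in.

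The paper sidesteps this entirely by verifying instead $\wt\rho_z\bigl(e\,\alpha_{t(e)}(g)\,e^{-1}\bigr)=\wt\rho_z(\alpha_{o(e)}(g))$, that is, the relation with the \emph{terminus}-side embedding $\alpha_{t(e)}(g)\in A_{t(e)}$ conjugated by $e$ (equivalently, the relation indexed by $\bar e$; since $E$ is closed under $e\mapsto\bar e$, this covers the same set of generators of $R$, and is also directly equivalent to your version once $\wt\rho_z(e\bar e)=1$ is known). With this choice the middle factor in the expansion is $\omega_{t(e)}\wt\rho(\alpha_{t(e)}(g))\omega_{t(e)}^{-1}$, and the $\omega_{t(e)}$'s cancel cleanly against the ones coming from $\wt\rho_z(e)^{\pm1}=\bigl(\omega_{o(e)}\wt\rho(e)\zeta_e\omega_{t(e)}^{-1}\bigr)^{\pm1}$, leaving no $\nu$ at all. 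What remains is exactly your ingredients (a) and (c): $\zeta_e$ slides past the edge-group image, then the relation already satisfied by $\wt\rho$ finishes in one more line. So the right move is not to keep $\nu$ and track it, but to choose the orientation of the edge so that $\nu$ never arises. (Minor: even granting your approach, the right-hand side of your displayed reduction should be $\nu\,\wt\rho(\alpha_{t(e)}(g))\,\nu^{-1}$, not $\nu^{-1}\,\wt\rho(\alpha_{t(e)}(g))\,\nu$.)
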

    
    \begin{proof}
        For all $e\in E$, we have 
        $$\wt\rho_z(e\bar e) = \left(\omega_{o(e)}\wt\rho(e)\zeta_e\omega_{t(\bar e)}^{-1}\right)\left(\omega_{o(e)}\wt\rho(\bar e)\zeta_{\bar e}\omega_{t(\bar e)}^{-1}\right) = 1$$
        since $\omega_{t(\bar e)}=\omega_{o(e)}$ and $\wt\rho(\bar e)\zeta_{\bar e}\rho(e)\zeta_e=1$.

        For all $e\in E_T$, we have $\wt\rho(e)=1$ and $\omega_{t(e)}=\omega_{o(e)}\zeta_e$, so 
        $$\wt\rho_z(e)=\omega_{o(e)}\wt\rho(e)\zeta_e\omega_{t(e)}^{-1}=1$$
        
        Take $e\in E$ and $g\in A_e$. 
        Then
        \begin{align*}
            \wt\rho_z(e\alpha_e(g)e^{-1}) 
                &= \left(\omega_{o(e)}\wt\rho(e)\zeta_e\omega_{t(e)}^{-1}\right) \left(\omega_{t(e)}\wt\rho(\alpha_e(g))\omega_{t(e)}^{-1}\right) \left(\omega_{t(e)}\zeta_e^{-1}\wt\rho(e)^{-1}\omega_{o(e)}^{-1}\right) \\
                &= \omega_{o(e)}\wt\rho(e)\zeta_e\wt\rho(\alpha_e(g))\zeta_e^{-1}\rho(e)^{-1}\omega_{o(e)}^{-1} \\
                &= \omega_{o(e)}\wt\rho(e)\wt\rho(\alpha_e(g))\wt\rho(e)^{-1}\omega_{o(e)}^{-1} \qquad \text{ since $\zeta_e$ and $\wt\rho(\alpha_e(g))$ commute} \\
                &= \omega_{o(e)}\wt\rho(e\alpha_e(g)\bar e)\omega_{o(e)}^{-1}\\
                &= \omega_{t(\bar e)}\wt\rho(\alpha_{\bar e}(g))\omega_{t(\bar e)}^{-1} =\wt\rho_z(\alpha_{\bar e}(g))\qedhere
        \end{align*}
    \end{proof}

    \begin{definition}\label{def:hitchin grafting rep}
        We denote by $\mathrm{Gr}_z^{\gamma^*}\rho\colon\pi_1(G_{\gamma^*},T)\to G$ the representation induced by $\wt\rho_z$, 
        and sometimes just $\rho_z$ if there is only one hyperbolic structure involved. 
        We call it the \emph{Hitchin grafting representation} with data $z$ along $\gamma^*$.
    \end{definition}
    
    Up to conjugation, the representation $\rho_z$ does not depend the choices made for the graph of group decomposition.

    \subsection{The characteristic surface for Hitchin grafting representations}\label{sec-HG}

    Consider a Fuchsian representation $\rho:\pi_1(S)\to \PSL_2(\R)\to G$ and 
    denote by $S$ the hyperbolic surface defined by this representation.
    Choose some grafting datum $z$ and let 
    $\rho_z$ be the Hithin grafted representation defined by $\rho$ and $z$.
    As this representation is contained 
    in the Hitchin component, it follows from Labourie~\cite{Lab06} and Fock--Goncharov~\cite{FG06} that $\rho_z$ is 
    faithful, with discrete image. 
    In particular, the quotient manifold $\rho_z\backslash \X$ is homotopy equivalent to $S$; 
    in fact $\rho$ induces a natural homotopy class of homotopy equivalences between 
    $\rho_z\backslash \X$ and $S$.

    The goal of this subsection is to construct a geometrically controlled homotopy equivalence from an 
    abstract grafted surface into $\rho_z\backslash \X$. 
    The following 
    proposition is the main result of this subsection.

    \begin{proposition}\thlabel{piecewise-isometry}
        Consider a Hitchin grafting representation $\rho_z$ obtained from $\rho$ and with 
        grafting datum $z$.
        Recall that $S_{z}$ denotes the  abstract grafting of $S$ from Definition~\ref{def:abstractgraf}, with universal covering $\wt S_{z}$.
        Then there exists a piecewise totally geodesic immersed surface $\wt S^\iota_{z}\subset\X$ and a $\rho_z$-equivariant 
        immersion 
        $\wt Q_z\colon\wt S_{z}\to\wt S_{z}^\iota\subset\X$.
        
        The map $\wt Q_z$ is a path isometry for the Riemannian (resp.\ Finsler) metric on $\wt S_z$ and the induced path metric on $\wt S^\iota_{z}$ from the Riemannian (resp.\ Finsler) metric on $\X$.
    \end{proposition}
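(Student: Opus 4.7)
The plan is to define $\wt Q_z$ piece by piece on the decomposition of $\wt S_z$ dual to the Bass--Serre tree $\mathcal T$ of the graph-of-groups structure from Section~\ref{sec:graphofgroups}. Removing the cores of the cylinders in $\wt S_z$ yields a union of lifts $g\wt\Sigma_v$ of the hyperbolic pieces $\Sigma_v$ ($v\in V$) and lifts $g\wt C_e$ of the flat cylinders $C_{\gamma_e}$ ($e\in E$), with $g\in\pi_1(S)$ ranging over cosets of the vertex/edge stabilizers.

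For each vertex $v\in V$, I fix a $\rho|_{A_v}$-equivariant isometric embedding $\phi_v:\wt\Sigma_v\hookrightarrow\bH^2\subset\X$ and set $\wt Q_z(g\cdot x):=\wt\rho_z(g)\,\omega_v\,\phi_v(x)$ on $g\wt\Sigma_v$; the identity $\wt\rho_z(h)=\omega_v\wt\rho(h)\omega_v^{-1}$ for $h\in A_v$ makes this well defined. For each edge $e\in E$, I use the element $B\in G$ of Section~\ref{sec:graphofgroups} satisfying $\wt\rho(\alpha_{o(e)}(\vec\gamma_e))=B\exp(\ell_\rho(\gamma_e)u)B^{-1}$: by Fact~\ref{fact:unique-flat} the corresponding axis in $\bH^2$ lies in a unique maximal flat $BA\cdot\basepoint\subset\X$, on which $\zeta_e$ acts by translation. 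On a lift $g\wt C_e$, parametrised by the strip $\{tu+sz_{\gamma_e}:t\in\R,\ s\in[0,1]\}/\ell_\rho(\gamma_e)u$, I set $g\cdot [tu+sz_{\gamma_e}]\mapsto \wt\rho_z(g)\,\omega_{o(e)}\,B\exp(tu+sz_{\gamma_e})\cdot\basepoint$.

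The principal task, and the main obstacle, is to verify that these piecewise definitions agree on the boundary geodesics where hyperbolic pieces meet cylinders, and assemble into a globally $\rho_z$-equivariant map. Along the $s=0$ boundary of $\wt C_e$, both the cylinder map and the hyperbolic-piece map land on the axis of $\wt\rho_z(\alpha_{o(e)}(\vec\gamma_e))$ inside the flat $\omega_{o(e)}BA\cdot\basepoint$, with matching arclength parametrisation because $\rho(\alpha_{o(e)}(\vec\gamma_e))=B\exp(\ell_\rho(\gamma_e)u)B^{-1}$. The analogous matching at $s=1$, between $\wt C_e$ and the lift of $\wt\Sigma_{t(e)}$ across the edge $e$ of $\mathcal T$, is guaranteed by the identity $\wt\rho_z(e)=\omega_{o(e)}\wt\rho(e)\zeta_e\omega_{t(e)}^{-1}$ together with the commutation $[\zeta_e,\wt\rho(\alpha_{o(e)}(\vec\gamma_e))]=1$; these identities are precisely the reason the elements $\omega_v$ and $\zeta_e$ were chosen as they were.

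Consistency then propagates across $\mathcal T$ by induction starting from a basepoint lift, and $\rho_z$-equivariance is immediate from the construction. Each piece of the image $\wt S_z^\iota:=\wt Q_z(\wt S_z)$ is a translate of either $\bH^2$ or a maximal flat, hence totally geodesic in $\X$ for both the Riemannian and Finsler metrics (the second by \thref{ex:admissible finsler}(ii)). Since the gluings occur along common geodesic seams and each piece is sent isometrically, $\wt Q_z$ is a path isometry for both metrics, giving the claim.
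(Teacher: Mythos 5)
Your proposal is correct and takes essentially the same approach as the paper: decompose $\wt S_z$ into hyperbolic pieces and flat strips dual to the Bass--Serre tree, define $\wt Q_z$ piece by piece using the partial conjugators $\omega_v$ and the elements $\zeta_e$, $B$ from the graph-of-groups construction, and verify agreement along the boundary geodesics via the defining relations of $\wt\rho_z$. The paper phrases the seam-matching by conjugating with an auxiliary element $h$ that normalizes the picture (putting the boundary axis of a hyperbolic piece in standard position in $\H^2\cap\mathfrak a$) and then checking $h\rho_z(A_w)h^{-1}=\exp(z_\gamma)\,hg_v\rho(A_w)g_v^{-1}h^{-1}\exp(-z_\gamma)$, whereas you carry out the same computation in the original coordinates; the content is identical.
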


    Before we prove the proposition, note that the surface 
    $\wt S_{z}^\iota$ is $\rho_z$-invariant and hence descends to 
    compact piecewise smooth immersed surface $S_{z}^\iota\subset \rho_z\backslash \X$. 
    We call this surface \emph{characteristic}. 
    We shall show in Proposition~\ref{prop:embedding} that the corresponding map 
    $S_{z}\to S_{z}^\iota\subset \rho_z\backslash \X$ is actually an embedding, 
    and hence that $S_{z}^\iota$ is not only an immersed surface but an embedded one.

    Recall that the  Riemannian (resp. Finsler) \emph{cylinder height} of the Hitchin grafting representation $\rho_z$ 
    is the minimum of all 
    $d(z_\gamma,\R u)$ (resp. $d^{\mf F}(z_\gamma,\R u)$) for all $\gamma\in\gamma^*$, where $u=d\tau\hsl$ is the special direction in $\mf a$.

    \begin{proof}[Proof of Proposition~\ref{piecewise-isometry}]
     Denote by $\wt S$ and $\wt\gamma^*$, respectively, the universal cover of $S$ and the pre-image of $\gamma^*$ in $\wt S$. 
    Using the hyperbolic metric, we can fix an identification 
    $S=\pi_1(S)\backslash \H^2 $ so that $\wt S=\H^2$. 

    Let $\wt S_{z}$ be the universal cover of the abstract grafted surface $S_{z}$.
    This surface consists of a countable union $S_z^{hyp}$  
    of simply connected hyperbolic surfaces with geodesic boundary, called hyperbolic pieces in the 
    sequel, and a countable union of flat strips 
    separating these hyperbolic pieces. 
    Let ${\mathcal T}\subset \wt S$ be an embedded graph with one vertex $v$ in the interior
    of each of the hyperbolic pieces $\wt\Sigma_v$ of $\wt S-\wt\gamma^*$ and where two such points are connected by an edge $e$ if the
    pieces containing them are separated by a single 
    component $\tilde\gamma_e$ of $\wt\gamma^*$.
    To each vertex $v$ of $\mathcal T$ 
    is also associated a hyperbolic piece $\wt \Sigma^z_v\subset \wt S_z$ which is naturally isometric to $\wt \Sigma_v$.

    By construction, for any vertex $v$ of $\mc T$ the stabilizer $A_v:=\Stab_{\pi_1(S)}(\wt\Sigma_v)$ is mapped by $\rho_z$ onto a conjugate $g_v\rho(A_v)g_v^{-1}$ of $\rho(A_v)$ in $G$ 
    and hence it stabilises a unique totally geodesic embedded bordered surface $\widehat\Sigma^z_v=g_v\wt \Sigma_v\subset\X$ which is naturally isometric to $\wt \Sigma_v$ and $\wt \Sigma_v^z$.  
    Define $(\wt Q_z)_{\vert \wt \Sigma_v^s}:\wt \Sigma_v\to \widehat \Sigma^z_v$ 
    to be this natural isometry.
    By the construction of $\rho_z$,
    the thus defined map $\wt Q_z:\wt S_z^{hyp}\to \X$ is equivariant
    with respect to the representation $\rho_z$. 

    Consider an edge $e$ of $\mc T$ between two vertices $v=o(e)$ and $w=t(e)$ that projects onto a component $\gamma\subset\gamma^*$ matching the fixed orientation on $\gamma^*$.
    We also call $e\in\pi_1(S)$ the preferred generator of the stabilizer of $\wt\gamma_e=\wt\Sigma_v\cap\wt\Sigma_w$.
    Its holonomy $\rho_z(e)$ acts cocompactly by translation on boundary components $\wt c_1$ and $\wt c_2$ of $\widehat\Sigma^z_v$ and $\widehat\Sigma^z_w$, respectively.
    
    Let as before $u=d\tau\hsl$ be the special direction of $\mf a$.
    By construction, there exists a unique $h\in\PSL_d(\R)$ such that 
    \begin{itemize}
    \item 
    $h\rho_z(A_v)h^{-1}\subset\PSL_2(\R)\subset\PSL_d(\R)$; 
    \item $h\widehat\Sigma_v^z\subset\H^2\subset\X$;  
    \item $h\rho_z(e)h^{-1}=\exp(\ell_\rho(e)u)$;
    \item
    $h\wt c_1\subset\H^2\cap \mf a\subset\X$ is an axis of $\exp(u)$, that is, it is invariant under
    $\exp(u)$ and $\exp(u)$ acts on it as a translation.
    \end{itemize}
    
    Recall that $h\rho_z(A_v)h^{-1}=hg_v\rho(A_v)g_v^{-1}h^{-1}$ and $hg_v\in\PSL_2(\R)$.
    One can then check the following formula for the holonomy of the adjacent stabilizer $A_w$:
    $$
    h\rho_z(A_w)h^{-1}=\exp(z_\gamma)hg_v\rho(A_w)g_v^{-1}h^{-1}\exp(-z_\gamma),
    $$
    and hence $h\widehat \Sigma^z_w\subset\exp(z_\gamma)\H^2$ and $\wt c_2=\exp(z_\gamma)\wt c_1\subset\mf a$ is another axis of $\exp(u)$.
    Thus the flat strip $h^{-1}\{tu+sz_\gamma\}$ is $\rho_z(e)$-invariant, connects $\widehat\Sigma^z_v$ to $\widehat\Sigma^z_w$, is the only such flat strip, and is naturally isometric to the flat strip between $\wt\Sigma^z_v$ and $\wt\Sigma_w^z$ in $\wt S_z$.

 Doing this for all flat strips in $\wt S_{z}$ yields an extended map 
 $\wt Q_z\colon\wt S_{z}\to\X$, which is an isometry on each hyperbolic and flat piece. Furthermore,
 by construction, the map $\wt Q_z$ is continuous and $\rho_z$-equivariant.
           
   \begin{figure}
        \begin{center}
            \begin{picture}(123,50)(0,0)
            \put(0,0){\includegraphics[height=50mm]{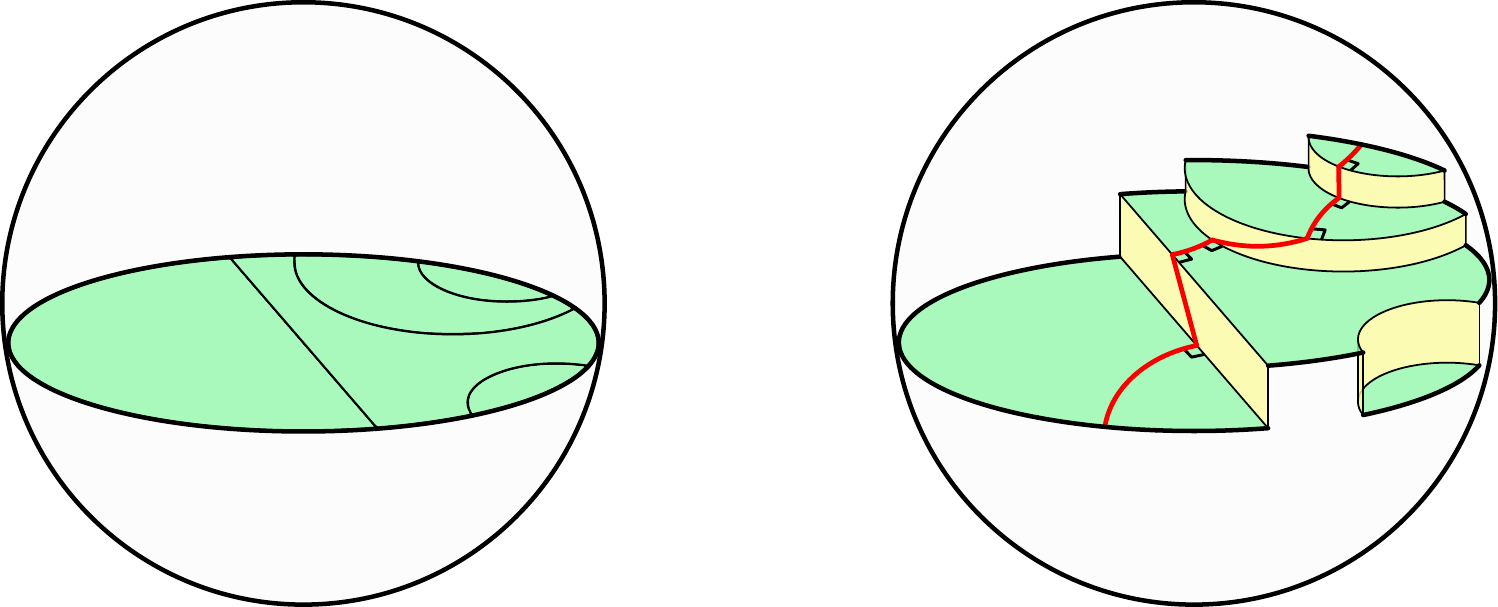}}
            \put(35,40){$\X$}
            \put(15,10){$\wb\H^2$}
            \put(30,17.5){$\pi(\wt\gamma^*)$}
            \end{picture}
        \end{center}
        \caption{Geometric description of the Hitchin representation: the hyperbolic part in green, the flat part in yellow and an admissible path in red. The choices of directions of the flat parts (up or down) are arbitrary; recall that $\gamma^*$ is a multicurve.}
        \label{fig-hitchin-rep}
    \end{figure}

    \end{proof}

\subsection{Admissible paths}\label{sec:admissible-paths}

In Section~\ref{QI-proof} we shall show that the characteristic surface not only is embedded, but 
it also can be used effectively to compare the large scale geometry of the locally symmetric manifold 
$\rho_z\backslash \X$ to the large scale geometry of the grafted surface. This comparison relies on the analysis of 
some specific paths which we introduce now.

 \subsubsection{Admissible paths in abstract grafted surfaces}
    
   We begin with introducing a family of paths in grafted surfaces, called \emph{admissible paths}, which are from a technical point of view easier to handle than geodesics.
    
    \begin{figure}
        \begin{center}
            \begin{picture}(80,21)(0,0)
            \put(0,0){\includegraphics[width=80mm]{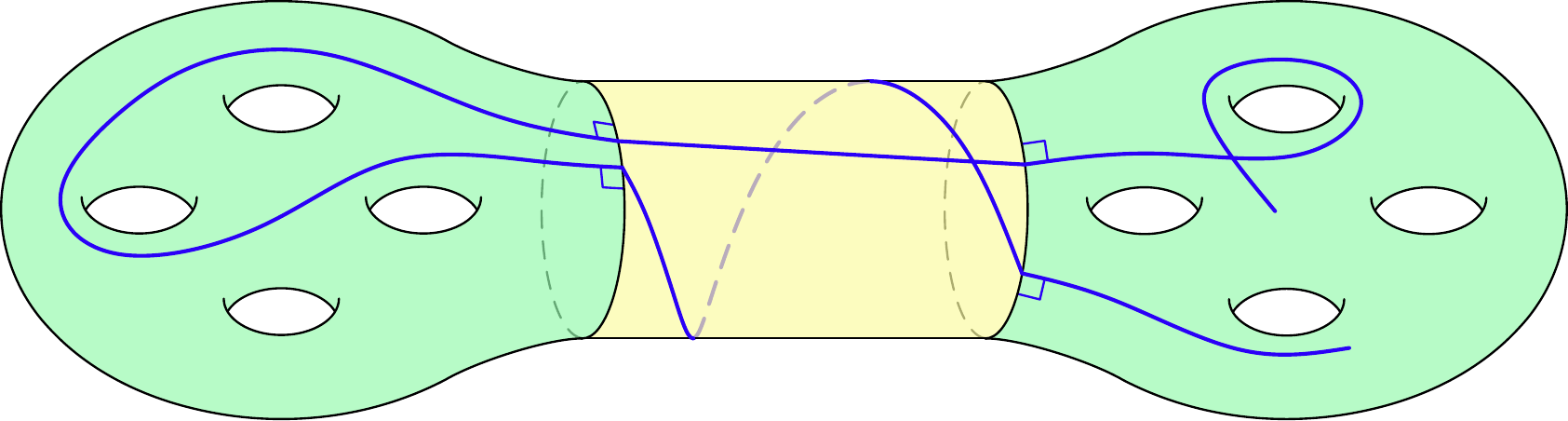}}
            \end{picture}
        \end{center}
        \caption{Admissible path in the closed surface obtained as an abstract grafting.}
        \label{fig-adm-path}
    \end{figure}

    Let $S_z$ be an abstract grafted surface with hyperbolic part $S^\hyp$ and cylinder part~$\mc C$.
    In a nutshell, an admissible path $c$ is a continuous path of $S_z$ which is a geodesic everywhere 
    except possibly at $S^\hyp\cap \mc C$, where it might have a singularity.
    Moreover, we require that the "hyperbolic part'' $c\cap S^\hyp$ of the path $c$ is orthogonal to $S^\hyp\cap\mc C$ where it meets it.

    It is clear that lifts of admissible paths to the universal cover are quasi-geodesics (although we will not need it).
    Our goal will be to show that the images of admissible paths under the map constructed in Proposition~\ref{piecewise-isometry}
    are quasi-geodesics of the symmetric space, with control on the multiplicative constant.

    \begin{definition}
     Consider a closed hyperbolic surface $S$, a  multicurve $\gamma^*\subset S$ and a grafting parameter $z$. 
     Then $S_z$ is the abstract grafted surface with hyperbolic part $S^\hyp\subset S_z$ and flat (cylindrical) part $\mc C\subset S_z$.
     An \emph{admissible path} in $S_z$ is a continuous path $c\subset S_z$ such that
     \begin{itemize}
         \item $c$ is geodesic outside $\mu=S^\hyp\cap\mc C$; 
         \item the hyperbolic part $c\cap S^\hyp$ intersects $\gamma^*$ orthogonally;
         \item a component of the flat part $c\cap \mc C$ connects the two distinct
         boundary components of the flat cylinder containing it.
     \end{itemize}
     Similarly one can define \emph{admissible loops}.
    \end{definition}

    Note that if $z$ is trivial then $S_z=S$ and the above definition still makes sense. 
    The flat part $\mc C$ is just $\gamma^*$, and the path is allowed to contain arcs in $\gamma^*$ 
    separating two geodesic arcs which emanate to the two distinct sides of $\gamma^*$ in a 
    tubular neighborhood of $\gamma^*$.

    An \emph{admissible path} in the universal cover $\tilde S_z$ is the lift of an admissible path in $S_z$.

    Note that any two points of $\tilde S_z$ are connected by a unique admissible path; in other words, any path of $S_z$ is homotopic (with fixed endpoints) to a unique admissible path.

    Similarly, any loop in $S_z$ not homotopic to a component of $\gamma^*$  
    is freely homotopic to a unique admissible loop.

    \begin{observation}\label{obs:corresp admissible paths in S and Sz}
        The image under $\pi_z:S_z\to S$ (or the lift $\wt S_z\to \wt S$) of admissible paths in $S_z$ are admissible paths in $S$.

        In fact, this induces a correspondence in the sense that any admissible path in $S$ is the image under $\pi_z$ of a unique admissible path in $S_z$.
    \end{observation}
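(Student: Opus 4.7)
The plan is to prove both directions by tracking how $\pi_z$ acts on each piece of an admissible path, using the explicit description from Section~\ref{sec:Abstract grafting}: on the hyperbolic piece $S'\subset S_z$, the map $\pi_z$ extends the identification $S-\gamma^*\hookrightarrow S$, while on each cylinder $C_\gamma$ it sends $[tu+sz_\gamma]$ to $\gamma(t)$, i.e.\ it is the linear projection onto the $u$-line parallel to $z_\gamma$.

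For the forward direction, let $c\subset S_z$ be admissible. Its hyperbolic components lie in $S^\hyp$ and are sent isometrically by $\pi_z$ to geodesic arcs of $S-\gamma^*$; since they meet the boundary curves $S^\hyp\cap\mc C$ orthogonally in $S_z$ and $\pi_z$ identifies these curves isometrically with $\gamma^*\subset S$, their images still intersect $\gamma^*$ orthogonally. Each flat component of $c$ lies in some cylinder $C_\gamma$ and connects its two boundary components, so its image is an arc of $\gamma=\pi_z(C_\gamma)$; moreover the two hyperbolic arcs of $c$ adjacent to this flat component enter $C_\gamma$ through the two distinct boundary circles, so after projection they emanate into the two distinct sides of $\gamma\subset S$. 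This matches the definition of an admissible path in $S$, establishing that $\pi_z(c)$ is admissible in $S$.

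For the converse, given an admissible path $c'\subset S$, one builds the lift piecewise: each hyperbolic arc of $c'$ has a canonical lift to $S^\hyp\subset S_z$, and each subarc of $c'$ contained in some $\gamma\subset \gamma^*$, going from $p$ to $q$ and separating hyperbolic arcs on the two sides of $\gamma$, is replaced by the unique flat geodesic of $C_\gamma$ from the lift of $p$ on one boundary component to the lift of $q$ on the other. Uniqueness then follows because once the hyperbolic arcs are pinned down, the entry and exit points on each cylinder boundary are determined, and flat geodesics between two fixed points in a flat strip are unique. The one non-automatic point, and the main bookkeeping obstacle, is to observe that the "two distinct sides'' clause in the definition of admissibility in $S$ is precisely what forces the lifted arc in $S_z$ to traverse the full width of the cylinder rather than remain on one boundary component, ensuring that the prescription genuinely produces an admissible path in $S_z$ and that the correspondence is bijective.
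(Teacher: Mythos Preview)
Your argument is correct. The paper states this as an observation without proof, and your proposal unpacks the piecewise description of $\pi_z$ from Section~\ref{sec:Abstract grafting} to verify both directions; your identification of the ``two distinct sides'' clause as precisely the condition forcing the lifted flat arc to cross the full cylinder is the right point for the converse. One small clarification: uniqueness of the flat geodesic between two fixed boundary points holds literally in a flat strip (i.e.\ in $\wt S_z$), whereas in the cylinder $C_\gamma$ there are infinitely many such geodesics, differing by winding number; however, the specified arc of $c'$ along $\gamma$ determines a homotopy class rel endpoints and hence singles out a unique geodesic in $C_\gamma$, so your conclusion stands.
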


    \subsubsection{Admissible paths in the symmetric space: geometric description}

    There are complete analogs of admissible paths in grafted surfaces for 
    the symmetric space $\X$ of $G$, which are also called admissible paths.
    Such paths include the image of all admissible paths in $\tilde S_{z}$ under a path isometry
    $Q_\zeta:\tilde S_{z}\to\X$ constructed in Proposition~\ref{piecewise-isometry}.


    Roughly speaking, admissible paths are piecewise geodesics that alternate between following a geodesic of the same type as the geodesics in the embedded $\H^2\hookrightarrow \X$, and then following a geodesic in a flat, orthogonal to the previous geodesic, and then following a $\H^2$-type geodesic orthogonal to the previous flat... etc, see Figure~\ref{fig-adm-path}.
    
    The above description is not quite correct, in particular because it does not 
    encapsulate the positivity assumption which is crucial in our proofs.
    There are several ways to define rigorously admissible paths.
    We are going to start with a geometric definition, which is easier to picture, and then in the next section we will give an algebraic definition.
    In the sequel the geometric definition will never be used, instead all the proofs will use the algebraic one, in particular because the positivity property of admissible paths is more naturally encoded in the algebraic definition.

    Recall that $\H^2$ embeds isometrically into $\X$.
    In fact there are many isometric embeddings, and $\PGL_d(\R)$ acts transitively on the set of all isometric embeddings.
    Let us call an \emph{$\H^2$-frame} the datum of a point $x\in\X$ and a pair of orthogonal unit tangent vectors $(v,w)$ which are tangent to a common embedded $\H^2$.
    Let $Y$ be the space of $\H^2$-frames, on which $\PGL_d(\R)$ also acts transitively. This action
    is even simply transitive since $\PGL_d(\R)$ is real split.

    On $Y$ there is a natural geodesic flow $(\mr{geod}_t)_{t\in \R}$: given $(x,v,w)\in Y$ one can follow the geodesic ray spanned by $v$ and parallel transport $v$ and $w$ along it. In other words, this action is 
    the action of the one-parameter group of transvections on $\X$ along the geodesic ray spanned by $v$.

    There is also a natural action of $\mf a$, which we shall call the ``orthogonal sliding action'' and denote $(\mr{slide}_z)_{z\in\mf a}$: given $(x,v,w)\in Y$ there is a unique maximal flat $F$ containing $w$, and a unique identification of the tangent space of 
    $F$ with $\mf a$ such that $w$ is sent into~$\mf a^+$. 
    Thus given $z\in \mf a$ one can follow the geodesic ray spanned by the associated vector in~$F$ and parallel transport $v$ and $w$ along it. Note that the image of the vector $v$ under this 
    sliding action remains orthogonal to the flat $F$. 

    We define an $(\omega,L)$-admissible path in $\X$ to be a path obtained by choosing an~$\H^2$-frame and pushing it via the geodesic flow for some time at least $\omega$, and then sliding orthogonally via $(\mr{silde}_{tz})_t$ for some time at least $L$ using some direction $z\in\mf a$, and then pushing along the geodesic flow again for time at least $\omega$...\ etc.

    In particular, an admissible path does not backtrack in any obvious way because it remembers directions along which 
    the path can be continued. This is the property which can be thought of as a geometric interpretation of positivity
    in the sense of~\cite{FG06}.
    In fact we get \emph{quantitative} positivity properties from the lower bound $\omega$ on the times we push along the geodesic flow.

    \subsubsection{Admissible paths in the symmetric space: algebraic definition}

    Let us now give an algebraic definition of admissible paths in $\X$.
    For this we will first define admissible paths in $G$. The description of these paths uses a 
    basepoint for the action of $G$ which is determined by the Fuchsian representation $\tau$.

    \begin{notation}\label{nota:a't}
     We set
     \begin{itemize}
      \item $a_t:=\tau\begin{psmallmatrix}e^t&0\\0&e^{-t}\end{psmallmatrix}\in G$;
      \item $r_\theta:=\tau \begin{psmallmatrix}\cos(\theta/2) & \sin(\theta/2) \\ -\sin(\theta/2) & \cos(\theta/2)\end{psmallmatrix}\in G$;
      \item $a'_t:=r_{\pi/2}\cdot a_t\cdot r_{\pi/2}^{-1}\in G$;
      \item for every $t\in\R\cup\{\infty\}=\partial\H^2$ we write $\xi_t=\partial_\infty \tau(t)$.
     \end{itemize}
    \end{notation}

    The group $G=\PSL_d(\R)$ identifies with one component of the space of $\H^2$-frames $Y$ introduced in the previous section via the orbit map $G\to Y$; $g\mapsto g\cdot F_o$, where $F_o=(o,v_o,w_o)$ is a fixed $\H^2$-frame, so that $o$ is fixed by $r_\theta$,  and $v_o=\tfrac{d}{dt}_{|t=0}a'_t\cdot o$ and $w_o=\tfrac{d}{dt}_{|t=0}a_t\cdot o$ are tangent to the axes of $a'_t$ and $a_t$, respectively.

    Under this identification, the geodesic flow on $Y$ corresponds to the multiplication on the right by $a'_t$: i.e.\ $\mr{geod}_t(gF_o)=(ga'_t)F_o$.
    On the other hand, the orthogonal sliding flow corresponds to the multiplication on the right by 
    $\exp(z)$: that is,\ $\mr{slide}_z(gF_o)=(g \cdot \exp(z))F_o$ for any $z\in \mf a$.
    This leads us to the following definition of admissible path.

    \begin{definition}\label{def:admissible in G}
        A path $c:[0,T]\to G$ or $c:[0,\infty)\to G$ is said to be of 
        \begin{itemize}
            \item \emph{flat type} if $c(t)=g \cdot \exp(tz)$ for some $g\in G$ and $z\in \mathfrak a$ of norm $1$ for the Finsler metric $\mf F$;
            \item \emph{hyperbolic type} if $c(t)=ga'_t$ for some $g\in G$.
        \end{itemize}

        An \emph{admissible path} of $G$ is a \emph{continuous} (possibly infinite) concatenation of paths of flat and hyperbolic type.

        It is moreover called \emph{$(\omega,L)$-admissible} for some parameters $\omega,L>0$ if all hyperbolic (resp.\ flat) pieces, except maybe the first and last pieces, have length at least $\omega$ (resp.\ $L$).

        A \emph{$(\omega,L)$-admissible path in $\X$} is a path of the form $t\mapsto c(t)\cdot \basepoint$ where $c$ is a $(\omega,L)$-admissible path of $G$; note that it is piecewise geodesic.
    \end{definition}

    \begin{remark}
        Another way to describe admissible paths in $G$ is the following: a path $c:[0,T]\to G$ is admissible, starting with a hyperbolic piece, if there exist $t_0=0<t_1<t_2<\dots<t_{n-1}<t_n=T$ and $z_1, z_3,\dots,z_k\in\mf a$ of norm $1$ (where $k$ is the biggest odd integer $<n$) such that for any $0\leq i<n$, for any $t\in [0,t_{i+1}-t_i]$, 
        \begin{itemize}
            \item if $i$ is even then $c(t_i+t)=c(t_i)\cdot a'_t$,
            \item if $i$ is odd then $c(t_i+t)=c(t_i)\cdot \exp(tz_i)$.
        \end{itemize}
    \end{remark}

The following is fairly immediate from the definition of the construction of the characteristic surface of a Hitchin
grafting representation $\rho_z$ and the map $\tilde Q_z$ from Proposition~\ref{piecewise-isometry}. In its formulation,
the \emph{collar size} of a simple closed multi-geodesic $\gamma^*\subset S$ is the supremum of all numbers $r>0$ such that
the tubular neighborhood of radius $r$ about $\gamma^*$ is a union of annuli about the components of $\gamma^*$. 
By hyperbolic geometry, an upper bound on the length of the components of $\gamma^*$ yields a lower bound on the collar size 
of $\gamma^*$.
    
    \begin{observation}\label{obs:admissiblemapstoadmissible}
     Consider a closed hyperbolic surface $S$, a multicurve $\gamma^*\subset S$ with collar size $\omega$ and a grafting parameter $z$ such that all cylinder heights are at least $L$. 
     Then the image under the grafting map $\tilde Q_z$ of any admissible path of $\tilde S_z$ is a $(\omega,L)$-admissible path of $\X$.
    \end{observation}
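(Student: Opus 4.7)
The proof proceeds by unpacking the structure of an admissible path in $\wt S_z$ piece by piece and tracking each piece through the grafting map $\wt Q_z$, using Proposition~\ref{piecewise-isometry}.

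First I would fix an admissible path $c\subset\wt S_z$ and decompose it into its hyperbolic and flat pieces. By definition, each hyperbolic piece is a geodesic segment in a hyperbolic component $\wt\Sigma_v^z$ of $\wt S_z$ which meets the boundary lifts of $\gamma^*$ orthogonally, while each flat piece is a geodesic segment in a flat cylinder $\Cyl(\ell_S(\gamma),z_\gamma)$ joining its two boundary components. Since $\wt Q_z$ is continuous, the image is a continuous concatenation, so it only remains to identify each piece and bound its length.

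Next I would show that hyperbolic pieces map to paths of hyperbolic type. By Proposition~\ref{piecewise-isometry}, the restriction of $\wt Q_z$ to $\wt \Sigma_v^z$ is an isometry onto a totally geodesic copy of $\H^2$ inside $\X$, and the construction arranges the boundary lifts to be mapped to axes of translations $\exp(\R u)$, with $u=d\tau\hsl$. A geodesic of $\H^2$ meeting such an axis orthogonally is, up to reparameterization and a left translation by $G$, an orbit of $a'_t=r_{\pi/2}a_t r_{\pi/2}^{-1}$, where $a_t=\exp(tu)$; hence the image is a hyperbolic-type piece in the sense of Definition~\ref{def:admissible in G}. For the flat pieces, Proposition~\ref{piecewise-isometry} identifies the image of each cylinder $\Cyl(\ell_S(\gamma),z_\gamma)$ isometrically with the strip $h^{-1}\{tu+sz_\gamma\}$ inside a maximal flat, so a Finsler geodesic crossing it from one boundary to the other is of the form $t\mapsto g\exp(tz)$ for some unit (for $\mf F$) vector $z\in\mf a$; that is, a flat-type piece.

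Finally I would verify the length bounds, which is the step most likely to require care. For a flat piece strictly between two hyperbolic pieces, the endpoints lie on the two boundary circles of a cylinder whose Finsler height is by hypothesis at least $L$; by the definition~\eqref{eq:cylinder height} of the cylinder height any curve joining the two boundaries has Finsler length at least $L$, and by Proposition~\ref{piecewise-isometry} $\wt Q_z$ preserves Finsler arc length, so the image piece has Finsler length at least $L$. For a hyperbolic piece strictly between two flat pieces, both endpoints lie on distinct lifts of components of $\gamma^*$; by the definition of the collar size $\omega$, the $\omega$-tubular neighborhoods of distinct lifts of $\gamma^*$ are disjoint in $\wt S$, so the hyperbolic length of the piece is at least $2\omega\geq\omega$, and this length is preserved by $\wt Q_z$ (using that the embedding $\H^2\hookrightarrow\X$ is isometric for both the Riemannian and the Finsler metric by our normalization of $\alpha_0$). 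Combining these observations shows that the image is $(\omega,L)$-admissible.
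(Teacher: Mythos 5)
Your proof is correct, and it fills in the details that the paper leaves implicit (the paper states this as an \textit{Observation} and remarks only that it is ``fairly immediate from the definition of the construction of the characteristic surface,'' without giving an argument). The decomposition into pieces, the identification of hyperbolic-type and flat-type pieces via Proposition~\ref{piecewise-isometry}, and the length bounds are all what the authors have in mind.

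One small normalization point is worth flagging, though it does not affect the conclusion. In Definition~\ref{def:admissible in G} a hyperbolic piece is parameterized as $t\mapsto ga'_t$, and with the paper's convention $a_t=\tau\begin{psmallmatrix}e^t&0\\0&e^{-t}\end{psmallmatrix}$ the curve $t\mapsto a'_t\cdot\basepoint$ has \emph{speed two} (translation length $2t$ in the hyperbolic metric, consistent with $\|d\tau\hsl\|=2$). So an $\H^2$-arc of length $\ell$ corresponds to a parameter interval of length $\ell/2$, and the requirement ``hyperbolic pieces have length at least $\omega$'' refers to the parameter, not the arc length. Your disjoint-collars argument correctly produces an arc-length lower bound of $2\omega$ on each interior hyperbolic piece (using that the perpendicular between two distinct lifts of $\gamma^*$ has its middle point at distance $\geq\omega$ from each lift), and this is exactly what translates into a parameter lower bound of $\omega$; the step ``$2\omega\geq\omega$'' as you wrote it invokes the wrong inequality, but the outcome is the same. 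Everything else, including the bound $\geq L$ on the Finsler length of flat pieces from the definition~\eqref{eq:cylinder height} of cylinder height, is as intended.
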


    \begin{numremark}\thlabel{rem:admissible closed}
        We define \emph{admissible loops} of a quotient of $\X$ as quotients of periodic infinite admissible paths.
    \end{numremark}

In Section~\ref{sec:fockgoncharov} we recall the notion of positivity in $G$ in the language of Lusztig~\cite{lusztig} and some basic results, and see that admissible paths have interesting positivity properties, coming from the fact that $(a'_t)_{t>0}$ are totally positive matrices, and $(\exp(z))_{z\in\mf a}$ are totally nonnegative matrices.

\section{A Morse-type lemma in the symmetric space}\label{sec:Morse}

The goal of this section is to establish a Morse-type lemma in the symmetric space.
Recall that in $\delta$-hyperbolic geodesic metric spaces, the Morse lemma says that any $(\lambda,C)$-quasi-geodesic is at distance at most $C'$ from a geodesic, where $C'$ depends on $\delta,\lambda,C$.
In $\R^2$, equipped with any norm, this lemma does not hold true anymore for all quasi-geodesics, and the same can be said for
the higher-rank symmetric spaces endowed with a Finsler or Riemannian metric, since they contain totally geodesic copies of $\R^2$.
Recall that a path~$c(t)$ in a metric space is a $(\lambda,C)$-quasi-geodesic if for all times $t,s$ we have
\[\lambda^{-1}|t-s|-C\leq d(c(t),c(s))\leq \lambda|t-s|+C.\]

Kapovich--Leeb--Porti~\cite{KLPMorse} (see also Section 12.1 of~\cite{KL18}) proved a Morse lemma
for certain families of well-behaved quasi-geodesics in 
symmetric spaces of arbitrary rank, equipped with the standard Riemannian metric, 
and in Euclidean buildings, see also Section 7 of~\cite{BPS19}. 
There is also a version of the Morse Lemma for quasi-flats instead of quasi-geodesics, see~\cite{KleinerLeeb,EskinFarb}.
We propose here a different approach to a generalization of the Morse lemma: we prove that nearby every \emph{Finsler} $(1,C)$-quasi-geodesic, so with \emph{multiplicative} error term of $1$, there is at least one {Finsler} geodesic.
Other Finsler geodesics could be far, as in $(\X,d^{\mf F})$ there are Finsler geodesics with the same endpoints and  
arbitrarily large Hausdorff distance.

We first present our result using the notion of quasi-ruled paths as in~\cite{BHM11}, and then translate it in terms of $(1,C)$-quasi-geodesics.
A \emph{$C$-quasi-ruled path} 
in a metric space $(X,d_X)$ is a map $c:[0,T]\to X$ 
such that for any $0\leq t\leq s\leq u\leq T$, 
$$d_X(c(t),c(s))+d_X(c(s),c(u))\leq d_X(c(t),c(u))+C.$$
Note that any reparameterization of a quasi-ruled path is quasi-ruled.

\begin{theorem}\label{thm:morse}
    For any $C>0$ there exists $C'>0$ such that 
    any Finsler $C$-quasi-ruled continuous path $c:[0,T]\to\X$
    is at Hausdorff distance at most $C'$ from a {Finsler} geodesic in $(\X,d^{\mf F})$
    connecting $c(0)$ to $c(T)$.
\end{theorem}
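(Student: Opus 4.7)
The plan is to combine the polyhedral structure of the Finsler unit ball with a Busemann-function argument, reducing first to the case of the Cartan subalgebra $\mf a$ and then lifting to $\X$. The polyhedral shape of Finsler balls is essential: the failure of Theorem~\ref{thm:morse} for the Riemannian metric confirms that no purely metric argument will suffice, and one must exploit the flat faces of the Finsler spheres.

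First I would establish the analogue of Theorem~\ref{thm:morse} in the flat model $\mf a$ equipped with the polyhedral norm induced by $\alpha_0$. The dual norm has unit ball the convex hull of the orbit $\weyl \cdot \alpha_0^\myhash$, and the unit ball of the norm itself is bounded by hyperplanes normal to those vectors. Being $C$-quasi-ruled for a polyhedral norm is equivalent to being approximately monotone with respect to each supporting functional $g \cdot \alpha_0$ for $g \in \weyl$: indeed, additivity of the distance $d^{\mf F}(x,y) = \alpha_0(\kappa(x,y))$ along a geodesic is exactly tested by a collection of such linear functionals. This forces the path to lie in a bounded neighborhood of the polyhedral order interval between $c(0)$ and $c(T)$ and, by explicit coordinate-wise monotonization, yields a straight-line Finsler geodesic within a uniformly bounded Hausdorff distance.

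To move from the flat case to $\X$, I would use the Finsler Busemann functions $b^{\mf F}_\xi$, which are convex and $1$-Lipschitz. For each $t \in [0,T]$, and for any Weyl chamber $\xi$ containing $c(T)$ in its Finsler horoball through $c(t)$, the quasi-ruled property
\[ d^{\mf F}(c(0),c(t)) + d^{\mf F}(c(t),c(T)) \leq d^{\mf F}(c(0),c(T)) + C \]
translates, by \eqref{busemann} and \eqref{eq:nonreg busemann}, into monotonicity of $t \mapsto b^{\mf F}_\xi(c(0),c(t))$ with rate essentially~$1$. Choosing a minimal collection of Weyl chambers $\{\xi_+^{(i)}\}$ and $\{\xi_-^{(j)}\}$ realising the Finsler horofunction limits at $c(T)$ and $c(0)$ respectively, the family of these monotonicities traps each $c(t)$ in a bounded neighborhood of the Finsler diamond $D(c(0),c(T)) := \{z : d^{\mf F}(c(0),z) + d^{\mf F}(z,c(T)) = d^{\mf F}(c(0),c(T))\}$. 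Inside $D(c(0),c(T))$, the polyhedral/diamond structure is assembled from flat pieces each isometric to a subset of $\mf a$; invoking the flat model on the relevant flat (or on each flat piece in the singular case) produces the sought Finsler geodesic and concludes the proof.

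The main obstacle is the singular-direction case, where the Cartan projection $\kappa(c(0),c(T))$ lies on a wall of $\overline{\mf a^+}$. Then $D(c(0),c(T))$ is genuinely higher-dimensional, the Finsler horoballs degenerate into finite intersections of Riemannian horoballs (equation~\eqref{eq:nonreg busemann}), and the choice of the Weyl chambers $\xi_\pm^{(i)}$ along which one monotonizes is not unique. The delicate point is to choose these chambers \emph{uniformly in $t$} so that the resulting geodesic segments concatenate into a single Finsler geodesic from $c(0)$ to $c(T)$, and to ensure the constant $C'$ depends only on $C$ and $G$. This requires careful bookkeeping of which facets of $D(c(0),c(T))$ are entered and exited as $t$ varies, guided by the Busemann monotonicity established above.
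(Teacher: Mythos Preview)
Your two-step architecture---first prove the result in the flat model $(\mf a,\mf F)$, then reduce the $\X$ case to the flat one via the diamond $D(c(0),c(T))$---matches the paper exactly. But both steps, as you have written them, contain genuine gaps.

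For the flat case, the claim that being $C$-quasi-ruled is ``equivalent to being approximately monotone with respect to each supporting functional $g\cdot\alpha_0$'' and that this yields the result by ``explicit coordinate-wise monotonization'' is too optimistic. Approximate monotonicity with respect to all the functionals $g\cdot\alpha_0$ for $g\in\weyl$ only tells you that $c(t)$ lies near the diamond (this is Lemma~\ref{lem:diamondtech}); it does not by itself produce a geodesic close to $c$. The paper's proof here (Proposition~\ref{prop:finding a geod at bdd dist}) is a genuine induction on dimension: if $c$ passes near the crown of the diamond, one projects to a lower-dimensional face; if it stays far from the crown, one uses that the core separates the tips and splits the path in two. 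You would need to supply a comparable argument.

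For the reduction step, your Busemann-function argument is confused. The points $c(0),c(T)$ are in $\X$, not at infinity, so there is no Finsler horofunction ``at $c(T)$'' and the phrase ``Weyl chamber $\xi$ containing $c(T)$ in its Finsler horoball through $c(t)$'' does not parse. The quasi-ruled inequality is a statement about finite distances, and equations~\eqref{busemann} and~\eqref{eq:nonreg busemann} only apply after a limit is taken. The paper (Proposition~\ref{prop:coarse diamond}) makes exactly this limit precise: it argues by contradiction, takes a sequence $(x_n,y_n,z_n)$ with $z_n$ arbitrarily far from $D(x_n,y_n)$, normalizes so that $x_n,y_n$ go to infinity in opposite Weyl cones of a fixed flat, and \emph{then} uses horoballs together with a Tits-distance argument (Lemmas~\ref{lem-Tits-distance-bounded} and~\ref{lem-Tits-distance-pi}) to show the limiting ray through $z_n$ must lie in that flat. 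Your direct monotonicity approach skips this limiting procedure, and without it there is no way to invoke Busemann functions.

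Finally, your identified ``main obstacle''---the singular case where $D(c(0),c(T))$ is assembled from several flat pieces---is not actually an obstacle: for the regular Finsler metrics considered here ($\alpha_0^\myhash\in\mf a^+$), the diamond $D(x,y)$ is always contained in a single maximal flat, regardless of whether $\kappa(x,y)$ is singular.
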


One can translate the above theorem in terms of $(1,C)$-quasi-geodesic paths, using the following lemma, which is probably well-known to experts. We provide a proof in Subsection~\ref{sec:rem qgeod}.

 \begin{lemma}\label{fact:qruled vs qgeod} 
     Let $(X,d)$ be a geodesic metric space and $C\geq 0$.
     Any $(1,C)$-quasi-geodesic in $X$ is $3C$-quasi-ruled and is at Hausdorff distance at most 
     $1+C$ from a continuous $(1,2(1+C))$-quasi-geodesic with the same endpoints.

     Conversely, any continuous $C$-quasi-ruled path is at Hausdorff distance at most $3C$ from a $(1,3C)$-quasi-geodesic.
 \end{lemma}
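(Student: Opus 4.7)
The first claim (that $(1,C)$-quasi-geodesics are $3C$-quasi-ruled) follows immediately from the definitions: for $t \leq s \leq u$, summing the upper bounds gives $d(c(t),c(s)) + d(c(s),c(u)) \leq (u-t) + 2C$, and the lower bound $d(c(t),c(u)) \geq (u-t) - C$ then yields the desired inequality $d(c(t),c(s)) + d(c(s),c(u)) \leq d(c(t),c(u)) + 3C$. For the continuous approximation, I would sample $c$ at the integer times $t_i = \min(i,T)$ and let $c'$ be the arc-length parameterized piecewise geodesic through $c(t_0), c(t_1), \ldots, c(t_n)$. Each piece has length $d(c(t_i), c(t_{i+1})) \leq 1+C$ by the $(1,C)$-quasi-geodesic upper bound, so for any $t$ in the unit interval $[t_i, t_{i+1}]$, both $c(t)$ and the corresponding arc of $c'$ stay within $1+C$ of the common sample $c(t_i)$, giving the Hausdorff bound. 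For the $(1, 2(1+C))$-quasi-geodesic constants of $c'$, the upper bound $d(c'(s), c'(t)) \leq |s-t|$ is automatic from arc-length parameterization, while the lower bound reduces to showing that the arc-length-minus-distance defect $L_{i \to j} - d(c(t_i), c(t_j))$ is bounded uniformly by $2(1+C)$, a quantitative fact extracted from the $(1,C)$-quasi-geodesic constraint limiting the total backtracking between samples.

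For the converse, given a continuous $C$-quasi-ruled path $c : [0,T] \to X$, the plan is to reparameterize via the progress function $f(t) = d(c(0), c(t))$. The quasi-ruled inequality applied to the triple $(0, s, t)$ for $s \leq t$ yields $f(s) + d(c(s), c(t)) \leq f(t) + C$, so $f$ is continuous and $C$-almost non-decreasing. Its running maximum $\tilde f(t) = \max_{r \leq t} f(r)$ is continuous, non-decreasing, and satisfies $0 \leq \tilde f - f \leq C$. I would reparameterize $c$ via $\tilde f$, collapsing its plateaus to single points (on each plateau, the quasi-ruled inequality restricted to the plateau gives $d(c(s), c(t)) \leq f(t) - f(s) + C \leq 2C$, so $c$ oscillates by at most $2C$ there), and interpolate by short geodesic arcs where needed to restore continuity and to correct the terminal endpoint. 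The resulting path $c'$ is then a continuous $(1, 3C)$-quasi-geodesic at Hausdorff distance at most $3C$ from $c$ and with the same endpoints, where the constants come from combining the $2C$ plateau bound with the $C$-defect between $f$ and $\tilde f$ and the $C$ in the quasi-ruled inequality.

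The main obstacle is quantitative bookkeeping: the constants $1+C$, $2(1+C)$, and $3C$ are tight enough that each use of the triangle or quasi-ruled inequality must be executed with no slack. Concretely, the uniform bound $2(1+C)$ on the arc-length excess in the forward direction is the core estimate and requires using the $(1,C)$-quasi-geodesic inequality not only on neighboring samples but also between widely separated ones to control the total backtracking; and in the converse direction, handling a possibly large number of plateaus of $\tilde f$ while keeping the quasi-geodesic constants bounded by $3C$ requires showing that the inserted geodesic interpolations, although numerous, remain confined to a band of width $O(C)$ around the progress-parameterized path, so their cumulative effect stays within the stated bounds.
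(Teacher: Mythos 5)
Your first paragraph contains a genuine error. You propose to take the piecewise geodesic through the integer-time samples $c(t_0),\dots,c(t_n)$ and then \emph{arc-length parameterize} it. That changes the multiplicative constant, not just the additive one, and destroys the bound. Concretely, each piece has length up to $1+C$, so the arc length from $c(t_i)$ to $c(t_j)$ can be as large as $(j-i)(1+C)$, while $d(c(t_i),c(t_j))$ need only be about $j-i$; the ``defect'' $L_{i\to j}-d(c(t_i),c(t_j))$ is therefore \emph{not} uniformly bounded (it can grow like $(j-i)C$). For example, in $\R$ with $C\geq4$ the step function $c$ that takes values $0,1+C,0,1+C,0,\dots$ at integers is a $(1,C)$-quasi-geodesic, yet the arc-length parameterized interpolation from time $0$ to time $4$ covers length $4(1+C)$ to return to its starting point, so it is not a $(1,2(1+C))$-quasi-geodesic. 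The fix — which is exactly what the paper's citation to Bridson--Haefliger III.H.1.11 uses — is to \emph{keep the original parameter interval} and define $c'$ on each $[t_i,t_{i+1}]$ as the affinely parameterized geodesic from $c(t_i)$ to $c(t_{i+1})$. Then $c'$ is pointwise within $1+C$ of $c$, and one deduces the quasi-geodesic constants from the Hausdorff bound rather than from arc length.

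For the converse you take a genuinely different route from the paper. You work with the progress function $f(t)=d(c(0),c(t))$ on the \emph{parameter interval}, take its running maximum, and then reparameterize while collapsing plateaus and inserting short geodesics. This plan is plausible but, as you yourself flag, the bookkeeping on the collapsed plateaus and inserted arcs is not carried out, so there is a gap. The paper instead considers $f$ as a map on the \emph{image} $c[0,T]\subset X$, namely $f(z)=d(c(0),z)$. The quasi-ruled inequality applied to the triple $(0,s,t)$ shows directly that $f$ is a $(1,C)$-quasi-isometric embedding of $c[0,T]$ into $[0,d(x,y)+C]$, continuity plus the Intermediate Value Theorem give $C$-quasi-surjectivity onto $[0,d(x,y)]$, and a $(1,3C)$-quasi-inverse $g:[0,d(x,y)]\to c[0,T]$ is the desired quasi-geodesic. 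Working on the image quotients out the backtracking automatically, so there is no plateau-collapsing step to control; this is both shorter and gives the stated constants without case analysis. (Note also that the statement does not require the converse quasi-geodesic to be continuous or to have the same endpoints, so those extra demands in your sketch are unnecessary.)
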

 Note that for $C=0$ this lemma says that any $(1,0)$-quasi-geodesic is a (continuous, $0$-quasi-ruled) geodesic, 
 and that for any continuous $0$-quasi-ruled path $c:[0,T]\to X$ from $x$ to $y$ there exists a geodesic $c':[0,d(x,y)]\to X$ from $x$ to $y$ whose image is exactly the same as that of $c$. 

\begin{remark} 
Theorem~\ref{thm:morse}  is false for the Euclidean metric on $\mathbb{R}^d$ $(d\geq 2)$, and for the Riemannian metric on $\X$,
as can be seen as follows.

Let $\ell:\mathbb{R}\to  \mathbb{R}^d$ be a line through 
$\ell(0)=0$ parameterized by arc length for the Euclidean metric. For $n\geq 1$ put 
$x_n=\ell(-n),y_n=\ell(n)$. Consider the balls $B_n^-,B^+_n$ of radius $n$ about 
$x_n,y_n$. As $n\to \infty$, the boundaries $\partial B_n^{\pm}$ 
of the balls $B_n^{\pm}$ converge in the 
pointed Gromov Hausdorff topology of $(\mathbb{R}^d,0)$ to the hyperplane through $0$ 
orthogonal to $\ell$. Thus for any $m>0$ and sufficiently large $n$, there are points 
$z_n^{\pm}$ on $\partial B_n^{\pm}$ of distance $m$ to $\ell$ with 
$d(z_n^-,z_n^+)\leq 1$ (here $d$ is the euclidean distance). 
Since the subsegment of 
$\ell $ connecting $x_n$ to $y_n$ is the unique euclidean geodesic between these points, 
the piecewise geodesics connecting $x_n$ to $y_n$ with breakpoints at 
$z_n^-,z_n^+$ violate the conclusion of 
Theorem~\ref{thm:morse}.
\end{remark}

This section is subdivided into four subsections. 
The first subsection is very short and provides a proof of Lemma~\ref{fact:qruled vs qgeod} for the reader's convenience.
In the second subsection, which is the longest, we establish
Theorem~\ref{thm:morse} for polyhedral norms on $\mathbb{R}^d$, that is, norms whose norm one ball is a finite sided
symmetric convex polyhedron. 
In the third subsection, we prove that any quasi-ruled continuous path in the symmetric space lies near a flat, using a description of Finsler horoballs of Kapovich--Leeb~\cite{KL18}.
Finally the last section, which is very brief, contains the proof of Theorem~\ref{thm:morse}.

\subsection{Proof of Lemma~\ref{fact:qruled vs qgeod}}\label{sec:rem qgeod}
Let $(X,d)$ be a geodesic metric space. Then for any $(1,C)$-quasi-geodesic 
$c:[0,T]\to X$ the following holds true.
\begin{itemize}
\item $c$ is $3C$-quasi-ruled.
\item $c$ is at Hausdorff distance at most $1+C$ from a continuous $(1,2+2C)$-quasi-geodesic with the same endpoints.
\end{itemize}

The first property is an elementary computation which is left to the reader, and the second property
is Lemma 1.11 in Chapter III.H of~\cite{bridson99}. The two properties together yields the first part of 
the lemma. 

    Let us prove the second part of the lemma. For an arbitrary $C\geq 0$
    consider a continuous $C$-quasi-ruled path $c:[0,T]\to X$ from $x$ to $y$.

    We now use an idea we found in the arXiv version of~\cite{BHM11}, Lemma A.2.
    Observe that the map
    $$f:c[0,T]\to [0,d(x,y)+C];\quad z\mapsto d(x,z)$$
    is continuous and is a $(1,C)$-quasi-isometry.
    Indeed, if $0\leq t\leq s\leq T$ then
    $$|f(c(t))-f(c(s))|=|d(x,c(t))-d(x,c(s))|\leq d(c(t),c(s)),$$
    and
    $$d(c(t),c(s))\leq d(x,c(s))-d(x,c(t))+C\leq |f(c(t))-f(c(s))|+C.$$
    Moreover, since $c[0,T]$ is path-connected, and $f$ is continuous and attains the values $0$ and $d(x,y)$, by the Intermediate Value Theorem $f$ attains all values in $[0,d(x,y)]$ and hence $f$ is $C$-quasi-surjective.
    By a classical result from coarse geometry $f$ admits a $(1,3C)$-quasi-inverse $g:[0,d(x,y)]\to c[0,T]\subset X$.

\subsection{A Morse-type lemma for normed vector spaces}\label{sec:MorseRd}

This subsection is entirely devoted to the study of the geometry of $\mathbb{R}^d$, equipped with 
a Finsler metric defined by a translation invariant norm on $T\mathbb{R}^d$. 
We begin with defining the Finsler metrics we are interested in.
To this end call a cone in $\mathbb{R}^d$ \emph{properly convex} if it is convex and its closure does not contain any affine subspace of $\R^d$ of dimension at least 1.

A (symmetric) polyhedral norm $|\cdot|$ on $\R^d$ is a norm of the form 
$$|v|=\max\{\alpha(v):\alpha\in \mc A\},$$
where $\mc A$ is a finite set of nonzero linear forms which spans $(\R^d)^*$, and which is symmetric in the sense that $-\mc A=\mc A$. 
This norm induces a metric $d(x,y)=|x-y|$ on $\R^d$ that is invariant under translations.
The goal of this section is to show.

\begin{proposition}\label{prop:finding a geod at bdd dist}
    For any polyhedral norm $\vert \cdot \vert$ on $\R^d$, there exists $\mu>0$ such that for any $C\geq 1$,
     any $C$-quasi-ruled continuous path $c:[0,T]\to\R^d$ is at Hausdorff distance at most $\mu C$ from a geodesic in $(\R^d, \vert \cdot\vert)$ connecting $c(0)$ to $c(T)$.
\end{proposition}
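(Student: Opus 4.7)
After translating so that $c(0) = 0$, set $v := c(T)$ and $\mc A_v := \{\alpha \in \mc A : \alpha(v) = |v|\}$, the set of linear forms attaining the norm on $v$. Applying the quasi-ruled inequality at the triple $(0, t, T)$ yields
\[ |c(t)| + |v - c(t)| \;\leq\; |v| + C. \]
For each $\alpha \in \mc A_v$, combining this with the identity $\alpha(c(t)) + \alpha(v - c(t)) = |v|$ and with $\alpha \leq |\cdot|$ splits the defect into two nonnegative terms, each bounded by $C$:
\[ |c(t)| - \alpha(c(t)) \;\leq\; C, \qquad |v - c(t)| - \alpha(v - c(t)) \;\leq\; C. \]

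\textbf{Proximity to a Finsler diamond via Hoffman.} Fix $\alpha \in \mc A_v$ and consider the polyhedral cone $K_\alpha := \{z : \alpha(z) = |z|\} = \bigcap_{\beta \in \mc A} \{(\beta - \alpha)(z) \leq 0\}$ together with the ``$\alpha$-diamond'' $D_\alpha := K_\alpha \cap (v - K_\alpha)$: a point lies in $D_\alpha$ exactly when it sits on some $|\cdot|$-geodesic from $0$ to $v$ whose length at that point is read off by $\alpha$. The slack estimates above express that $c(t)$ satisfies each defining linear inequality of $D_\alpha$ with slack at most $C$; Hoffman's bound for linear systems then furnishes a constant $\mu_H > 0$ depending only on the coefficient matrix $\{\beta - \alpha : \beta \in \mc A\}$---hence only on the norm---such that
\[ d(c(t), D_\alpha) \;\leq\; \mu_H C \quad\text{for all } t. \]

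\textbf{Building a nearby geodesic.} By continuity of $\alpha \circ c$ and the intermediate value theorem, for each $s \in [0, |v|]$ pick a time $t(s) \in [0, T]$ with $\alpha(c(t(s))) = s$, and set $\tilde c(s) := c(t(s))$. Reusing the quasi-ruled inequality at $(0, t(s_2), t(s_1))$ together with the slack estimates gives
\[ s_1 - s_2 \;\leq\; |\tilde c(s_1) - \tilde c(s_2)| \;\leq\; (s_1 - s_2) + 2C \quad\text{for } s_1 \geq s_2, \]
and one checks symmetrically that $c$ lies within Hausdorff distance $2C$ of $\tilde c([0, |v|])$. It remains to replace $\tilde c$ by an actual geodesic. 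Writing $\tilde c(s) = s \bar v + w(s)$ with $\bar v := v/|v|$ and $w(s) \in \ker \alpha$, the failure of $\tilde c$ to be a geodesic is captured by the failure of the increment $w(s_1) - w(s_2)$ to lie in the ``transverse Lipschitz cone'' $(s_1 - s_2) L$, where $L := \{u \in \ker \alpha : \bar v + u \in K_\alpha\}$; the previous estimates combined with Hoffman applied to $K_\alpha$ force this increment to lie within $O(C)$ of $(s_1 - s_2) L$. A polyhedral Lipschitz regularization (e.g.\ an inf-convolution of $w$ against $L$) then produces $w^\sharp$ with genuine $L$-increments, endpoints $w^\sharp(0) = w^\sharp(|v|) = 0$, and $\sup_s |w(s) - w^\sharp(s)| = O(C)$, all constants depending only on the norm. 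The curve $\delta(s) := s \bar v + w^\sharp(s)$ is then an honest $|\cdot|$-geodesic from $0$ to $v$ at Hausdorff distance $O(C)$ from $c$, giving the claimed $\mu$.

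\textbf{Main obstacle.} The delicate step is the last one: the polyhedral Lipschitz regularization of the transverse component $w$. Its key geometric input is that $L$ is a polyhedral cone with nonempty interior in $\ker \alpha$, which when $v$ lies on a face of positive codimension of the unit ball (i.e., $|\mc A_v| > 1$) requires a careful choice of $\alpha \in \mc A_v$---roughly, one relative-interior to the combinatorial face carrying $v$. This is precisely the point at which polyhedrality is essential: for a smooth norm such as the Euclidean one the cone $L$ collapses to a single ray, the Lipschitz room vanishes, and as the remark after Theorem~\ref{thm:morse} recalls, the conclusion of the proposition genuinely fails in that setting.
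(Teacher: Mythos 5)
Your first two steps — the slack estimates and the appeal to Hoffman's error bound — are sound and offer a clean alternative to the paper's Lemma~\ref{lem:diamondtech}, which the paper instead derives from Euclidean projections and dual cones (Lemma~\ref{lem:dist cone and hyperplane}). One small caveat: to control $d(c(t), D(c(0),c(T)))$ rather than just $d(c(t),D_\alpha)$ for a single $\alpha$, you should apply Hoffman to the full inequality system cut out by all $\alpha\in\mc A_v$ at once; the coefficient matrix then depends only on the finite datum $\mc A_v$, so the constant remains uniform.

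The third step, however, is a genuine gap. The ``polyhedral Lipschitz regularization'' you invoke is not a standard off-the-shelf lemma, and on inspection the statement you would need is comparable in difficulty to the Proposition itself. Note first a terminological slip: $L=\{u\in\ker\alpha:\bar v+u\in K_\alpha\}$ is the $(-\bar v)$-translate of the cross-section $K_\alpha\cap\{\alpha=1\}$, hence a compact polytope, not a cone — which is why scaling it by $(s_1-s_2)$ is meaningful. Granting that, parameterizing a $\vert\cdot\vert$-geodesic from $0$ to $v$ as $s\mapsto s\bar v+w^\sharp(s)$ with increments $w^\sharp(s_1)-w^\sharp(s_2)\in(s_1-s_2)L$ converts the quasi-ruled hypothesis on $c$ into ``$w$ has increments in $(s_1-s_2)L$ up to $O(C)$ error,'' and the regularization you then need — produce $w^\sharp$ with \emph{exact} $L$-increments, prescribed endpoints $w^\sharp(0)=w^\sharp(\vert v\vert)=0$, and $\sup\vert w-w^\sharp\vert=O(C)$ with a constant uniform over all faces of the unit ball on which $v$ can sit — is precisely a Morse-type lemma for $L$-Lipschitz maps. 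An inf-convolution smooths in one direction only and does not obviously pin down both endpoints, nor does it address the degeneration when $\vert\mc A_v\vert>1$, in which case the genuine diamond $D(c(0),c(T))$ is lower-dimensional and $w^\sharp$ must first be forced into the corresponding affine subspace before any Lipschitz-cone argument can begin. You rightly flag this in your ``Main obstacle'' paragraph, but flagging the obstacle does not surmount it: the paper's double induction (on the ambient dimension, to handle non-generic crowns; and on the coarse distance $d(x,y)$, to propagate the estimate) is exactly the machinery that absorbs this case analysis, and your sketch supplies no substitute for it. Until the regularization lemma is stated precisely and proved with uniform constants, the argument is incomplete.
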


Note that this statement is false for a Euclidean norm on $\R^d$.

Proposition~\ref{prop:finding a geod at bdd dist} has the following reformulation in terms of $(1,C)$-quasi-geodesics, thanks to Lemma~\ref{fact:qruled vs qgeod}.

\begin{corollary}\label{cor:finding a geod at bdd dist}
    For any polyhedral norm $\vert \cdot \vert$ on $\R^d$, there exists $\mu>0$ such that for any $C\geq 1$,
    any $(1,C)$-quasi-geodesic $c:[0,T]\to\R^d$ is at Hausdorff distance at most 
    $\mu C$ from a geodesic in $(\R^d, \vert \cdot\vert)$ from $c(0)$ to $c(T)$.
\end{corollary}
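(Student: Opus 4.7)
After translating so $c(0) = 0$, let $y = c(T)$ and $\mc A' := \{\alpha \in \mc A : \alpha(y) = |y|\}$ (which we may assume, WLOG, consists only of facet-defining forms of the unit ball). The plan is organized around the \emph{diamond} $D(0,y) := \{z \in \R^d : |z| + |y-z| = |y|\}$, a convex polytope equal to the union of all geodesic segments from $0$ to $y$, and the cone $\hat C := \bigcap_{\alpha \in \mc A'} \{z : \alpha(z) = |z|\}$ over the face $F$ of the unit ball containing $y/|y|$; geodesics from $0$ to $y$ are exactly the paths with velocity almost everywhere in $\hat C$.

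The first step is to show that every $c(s)$ lies within $O(C)$ of $D(0,y)$. Applying the quasi-ruled inequality at $(t,u) = (0,T)$ gives $|c(s)| + |y-c(s)| \leq |y|+C$. For any $\alpha \in \mc A'$, rewriting this as $(|c(s)| - \alpha(c(s))) + (|y-c(s)| - \alpha(y-c(s))) \leq C$ and using non-negativity of both summands forces $c(s)$ into the $O(C)$-fattening of the polyhedral cone $C_\alpha := \{z : \alpha(z) = |z|\}$, and symmetrically for $y - c(s)$. Intersecting over $\alpha \in \mc A'$, and using that each $C_\alpha$ is a full-dimensional polyhedral cone with properly convex closure, yields $d(c(s), D(0,y)) = O(C)$ with implicit constant depending only on the norm.

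The second step is to produce a geodesic $\gamma$ from $0$ to $y$ at Hausdorff distance $O(C)$ from $c$. A second application of the quasi-ruled inequality, combined with the bounds from step one, shows that each $f_\alpha := \alpha \circ c$ ($\alpha \in \mc A'$) is almost monotonically increasing: $f_\alpha(u) \leq f_\alpha(s) + O(C)$ for $u \leq s$. The plan is to build $\gamma$ using the monotone envelopes $\hat f_\alpha(s) := \max_{u \leq s} f_\alpha(u)$: parameterize the diamond by a progress coordinate (some fixed $\alpha_0 \in \mc A'$) together with a transverse coordinate in $F$, set the progress coordinate of $\gamma(s)$ to $\hat f_{\alpha_0}(s)$, and choose the transverse coordinate as a continuous selection close to the transverse projection of $c(s)$ onto $D(0,y)$, arranged so that the resulting velocities stay in $\hat C$ and $\gamma$ is therefore a geodesic.

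The main obstacle is making this transverse selection compatible with the geodesic condition $\dot\gamma \in \hat C$. When $y/|y|$ lies in the interior of a facet of the unit ball, the diamond has transverse extent comparable to $|y|$, and the straight-line geodesic $t \mapsto t y/|y|$ is \emph{not} $O(C)$-close to $c$ in general — the geodesic must bend inside $D(0,y)$ to follow the transverse motion of $c$. Controlling this bending continuously while keeping $\dot\gamma$ in $\hat C$, and tracking the quantitative dependence of $\mu$ on the combinatorics of the face $F$ (number of vertices, angle spread, etc.), is where the bulk of the technical work lies.
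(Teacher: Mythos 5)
Your Step 1 is, up to reorganization, the paper's Lemma~\ref{lem:diamondtech}: showing that the defect in the triangle inequality controls the distance to the diamond $D(c(0),c(T))$. The paper's argument is not by "intersecting $O(C)$-neighborhoods of the cones $C_\alpha$" --- which is the delicate point you elide --- but by applying Lemma~\ref{lem:dist cone and hyperplane} to the Euclidean projection of $c(s)$ onto the compact polyhedral set $B_x\cap B_y$, after reducing to the dual-cone situation. Closeness to each $C_\alpha$ (and each $y-C_\alpha$) individually does not automatically give closeness to the intersection, and this is exactly where the uniformity over the (finitely many) possible active sets $\mc A'$ has to be earned; your sketch asserts it rather than proving it.

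The more serious gap is in Step 2, which you yourself flag as the place "where the bulk of the technical work lies." Having confined $c$ to the $O(C)$-neighborhood of $D(0,y)$, you propose to build a geodesic by choosing a progress coordinate $\hat f_{\alpha_0}$ and a continuous transverse selection in the face $F$, and then you observe that keeping $\dot\gamma$ in the cone $\hat C$ while tracking the transverse drift of $c$ is the real difficulty --- but you do not resolve it. This is not a routine detail: a $C$-quasi-ruled path can oscillate transversely arbitrarily fast on scales $\ll C$, and your envelope construction controls only the progress coordinate, not the ratio of transverse speed to forward speed, which is what the cone constraint $\dot\gamma\in\hat C$ demands. The paper avoids this obstacle entirely by never constructing the geodesic explicitly: Proposition~\ref{prop:finding a geod at bdd dist} is proved by a double induction --- on the ambient dimension and on $d(c(0),c(T))$ --- using the crown and core of the diamond. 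Either the path stays far from the crown (then one splits at a midpoint inside the diamond and uses induction on distance), or it passes near the crown (then both halves project onto non-generic diamonds of lower dimension, and one uses induction on dimension). The corollary itself then follows from the proposition by the quasi-ruled/quasi-geodesic translation in Lemma~\ref{fact:qruled vs qgeod}. As it stands, your proposal states a plan for the hard half of the argument but does not supply the argument.
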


\subsubsection{Diamonds}

In this section we introduce several geometric objects relative to our polyhedral norm, including diamonds.
We prove that $(1,C)$-quasi-geodesics stay at bounded distance from diamonds, which is the main technical
step towards the proof of Proposition~\ref{prop:finding a geod at bdd dist}.

For any $\alpha\in\mc A$, the set $\mc C_\alpha=\{v\in\R^d:|v|=\alpha(v)\}$ is a polyhedral convex cone based at $0$.
Note that $\mc C_{-\alpha}=-\mc C_{\alpha}$.
Up to removing unnecessary elements of $\mc A$, we may assume that $\mc C_\alpha$ has nonempty interior.
We call \emph{special cones} (based at $x\in\R^d$) the cones of $\R^d$ that are translates of a cone $\mc C_\alpha$ (by the translation $y\mapsto y+x$).

The unit closed ball $\bar B(0,1)$, and more generally any closed ball $\bar B(x,r)$ for such a norm, is a polyhedral convex set, that is,  a finite intersection of (affine) half-spaces of $\R^d$.
More precisely,
$$\bar B(x,r)=\bigcap_{\alpha\in\mc A} \{y\in\R^d:\alpha(y-x)\leq r\}=x+r\cdot \bar B(0,1).$$
The codimension-1 faces of $\bar B(x,r)$ are the intersections of its boundary $\partial B(x,r)$ with the special cones based at $x$.

\begin{definition}
    Denote by $\mc C(x\to y)$ the intersection of all special cones based at $x$ that contain $y$.
    We define the \emph{diamond} of the pair $x,y$ to be $D(x,y)=\mc C(x\to y)\cap \mc C(y\to x)$ (see Figures~\ref{fig-morse-proof} and~\ref{fig-morse-lemma} for illustrations).
\end{definition}

Note that  $\mc C(y\to x)=y-x-\mc C(x\to y)$.
This follows from the fact that for any $\alpha\in\mc A$, the special cone $x+\mc C_\alpha$ based at $x$ contains $y$ if and only if the special cone $y+\mc C_{-\alpha}$ based at $y$ contains $x$.

\begin{lemma}\label{lem: concatenation of geod}
    For any $x,y\in\R^d$, we have
    \begin{align*}
        D(x,y)
            &=\{z\in \R^d\mid d(x,z)+d(z,y)=d(x,y)\} \\
            &=\cup\{\text{geodesics from $x$ to $y$}\}
    \end{align*}
In particular, 
for any $z\in D(x,y)$, the concatenation of a geodesic from $x$ to $z$ with a geodesic from $z$ to $y$ is a geodesic from $x$ to $y$. 
\end{lemma}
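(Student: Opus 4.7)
The plan is to reformulate the diamond $D(x,y)$ in terms of the set of linear forms that attain the norm on $y-x$, and then deduce the characterization by the triangle equality via a one-line linearity argument. The equivalence with the union of geodesics is then almost immediate because line segments are geodesics in any normed vector space.

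First I would introduce the subset $\mc A(x,y):=\{\alpha\in\mc A:\alpha(y-x)=|y-x|\}$ of ``witnessing'' forms, and unfold the definitions: a special cone $x+\mc C_\alpha$ based at $x$ contains $y$ iff $\alpha\in\mc A(x,y)$, so
$$\mc C(x\to y)=\bigcap_{\alpha\in\mc A(x,y)}(x+\mc C_\alpha)=\{z:\alpha(z-x)=|z-x|\ \text{for all}\ \alpha\in\mc A(x,y)\}.$$
The symmetry assumption $-\mc A=\mc A$ implies $\mc A(y,x)=-\mc A(x,y)$, hence
$$\mc C(y\to x)=\{z:\alpha(y-z)=|y-z|\ \text{for all}\ \alpha\in\mc A(x,y)\}.$$

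For the inclusion $D(x,y)\subseteq\{z:d(x,z)+d(z,y)=d(x,y)\}$, I would pick $z\in D(x,y)$ and any $\alpha\in\mc A(x,y)$. The three identities above, together with linearity applied to $y-x=(z-x)+(y-z)$, give
$$|y-x|=\alpha(y-x)=\alpha(z-x)+\alpha(y-z)=|z-x|+|y-z|.$$
For the converse, I would start from $|z-x|+|y-z|=|y-x|$ and, for each $\alpha\in\mc A(x,y)$, bound $\alpha(z-x)\leq|z-x|$ and $\alpha(y-z)\leq|y-z|$; since their sum equals $\alpha(y-x)=|y-x|=|z-x|+|y-z|$, both inequalities must be equalities, which is exactly the condition $z\in\mc C(x\to y)\cap\mc C(y\to x)=D(x,y)$.

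Finally, for the identification with the union of geodesics, I would use that in the normed space $(\R^d,|\cdot|)$ every line segment is length-minimizing. Hence if $z$ satisfies $d(x,z)+d(z,y)=d(x,y)$, the concatenation of the segments $[x,z]$ and $[z,y]$ is a path of length $d(x,y)$ from $x$ to $y$, i.e.\ a geodesic passing through $z$; conversely, any $z$ lying on a geodesic from $x$ to $y$ trivially satisfies the triangle equality by splitting the parameterization at $z$. The last assertion of the lemma is then an immediate consequence of this concatenation construction. The only real subtlety is the symmetry trick $\mc A(y,x)=-\mc A(x,y)$ that transports the condition defining $\mc C(y\to x)$ back to the same test set $\mc A(x,y)$; once this is set up the rest is bookkeeping.
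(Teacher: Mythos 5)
Your proof is correct and follows essentially the same route as the paper: characterize membership in $\mc C(x\to y)$ and $\mc C(y\to x)$ by equality of the norm with linear forms that attain it on $y-x$, then read off the triangle equality from linearity. The only cosmetic difference is that the paper packages the defining set of forms into a single averaged functional $\alpha_{\mc C}$ (using the fact that $\alpha_{\mc C}\leq|\cdot|$ with equality exactly on $\mc C$), whereas you quantify directly over every $\alpha\in\mc A(x,y)$ and use the symmetry $\mc A(y,x)=-\mc A(x,y)$; both amount to the same argument.
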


Given a cone $\mc C=\mc C(0\to y)$, set $\alpha_{\mc C}$ to be the mean of all $\alpha\in\mc A$ for which $\mc C_\alpha$ contains~$\mc C$.
It follows from the definition that $|z|\geq \alpha_{\mc C}(z)$ holds for all $z\in\R^d$, with equality exactly on $\mc C$.

\begin{proof}
For a cone $\mc C=\mc C(x\to y)$, consider the form $\alpha_{\mc C}$ defined above. The point $z$ belongs to $D(x,y)$ if and only if  $|z-x|=\alpha_{\mc C(x\to y)}(z-x)$ and $|z-y|=\alpha_{\mc C(y\to x)}(z-y)=-\alpha_{\mc C(x\to y)}(z-y)$. 
This implies 
$$d(x,y)\leq d(x,z)+d(z,y) = \alpha_{\mc C(x\to y)}(z-x) + \alpha_{\mc C(x\to y)}(y-z) = \alpha_{\mc C(x\to y)}(y-x)= d(x,y)$$
and so $d(x,y) = d(z,x)+d(z,y)$.

Conversely if  $|z-x|>\alpha_{\mc C(x\to y)}(z-x)$ or $|z-y|>-\alpha_{\mc C(x\to y)}(y-z)$, then the above inequality yields $d(z,x)+d(z,y)>d(x,y)$.

It is clear that if $z$ lies on a geodesic from $x$ to $y$, then $d(x,z)+d(z,y)=d(x,y)$. Reciprocally, if $d(x,z)+d(z,y)=d(x,y)$ then the concatenation of any geodesic from $x$ to $z$ and any geodesic from $z$ to $y$ is a geodesic from $x$ to $y$.
\end{proof}

From the previous lemma and the triangle inequality we infer that for any point $z$ not too far from a diamond $D(x,y)$ we almost have the triangle equality $d(x,z)+d(z,y)\simeq d(x,y)$.
The following lemma is the key technical result towards the 
proof of Proposiion~\ref{prop:finding a geod at bdd dist}; it says that the converse also holds.

\begin{lemma}\label{lem:diamondtech}
    There is $\lambda_1>0$ such that for all $x,y,z\in\R^d$ it holds
    $$d(z,D(x,y))\leq \lambda_1(d(x,z)+d(z,y)-d(x,y)).$$
\end{lemma}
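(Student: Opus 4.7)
By translation invariance we may take $x = 0$, so the task is to bound $d(z, D(0,y))$ by $\lambda_1 \delta$ where $\delta := |z| + |y-z| - |y|$. The plan is to express $D(0,y)$ as an explicit polyhedron, estimate the failure of $z$ to satisfy its defining linear inequalities in terms of $\delta$, and conclude by Hoffman's lemma from linear programming, which converts inequality violations into distance bounds.

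To set up the polyhedral description, let $\mc A_y := \{\alpha \in \mc A : \alpha(y) = |y|\}$. Since a special cone $\mc C_\alpha = \{v : \alpha(v) \geq \beta(v)\ \forall \beta \in \mc A\}$ contains $y$ precisely when $\alpha \in \mc A_y$, the definition of $\mc C(0 \to y)$ as an intersection of such cones, combined with the identity $\mc C(y \to 0) = y - \mc C(0 \to y)$, yields
\[
D(0,y) = \bigl\{w \in \R^d : (\beta - \alpha)(w) \leq 0 \text{ and } (\alpha - \beta)(w) \leq |y| - \beta(y),\ \forall \alpha \in \mc A_y,\ \forall \beta \in \mc A\bigr\}.
\]
For every such pair $\alpha, \beta$, the equality $\alpha(z) + \alpha(y-z) = \alpha(y) = |y|$ combined with $\alpha(y-z) \leq |y-z|$ gives $|z| - \alpha(z) \leq \delta$, and a symmetric argument gives $|y-z| - \alpha(y-z) \leq \delta$. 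Since $\beta(z) \leq |z|$, this implies $(\beta - \alpha)(z) \leq \delta$; analogously $(\alpha - \beta)(z) - (|y| - \beta(y)) = \beta(y-z) - \alpha(y-z) \leq \delta$. Thus $z$ violates each defining inequality of $D(0,y)$ by at most $\delta$.

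Finally, Hoffman's lemma provides a constant $\lambda(A)$, depending only on the coefficient matrix $A$ of a linear system (and on the ambient polyhedral norm on $\R^d$), such that $d(z, P) \leq \lambda(A)\max_i \bigl[(Az)_i - b_i\bigr]_+$ for every $z \in \R^d$ and every nonempty polyhedron $P = \{w : Aw \leq b\}$. The coefficient matrix for $D(0,y)$ depends only on the subset $\mc A_y \subseteq \mc A$, and there are only finitely many such subsets; taking $\lambda_1$ to be the maximum Hoffman constant over these finitely many matrices produces the required uniform bound $d(z, D(0,y)) \leq \lambda_1 \delta$. The main delicacy is precisely this uniformity: the right-hand side vector $b$ varies freely with $y$, but Hoffman's constant depends only on $A$, so finiteness of $2^{\mc A}$ delivers a single $\lambda_1$ valid for every $y$. (The degenerate case $y = 0$, where $D(0,0) = \{0\}$ and $\delta = 2|z|$, is immediate.)
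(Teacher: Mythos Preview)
Your proof is correct and takes a genuinely different route from the paper's. The paper argues geometrically: it fixes an auxiliary Euclidean metric, projects $z$ onto the (lower-dimensional) intersection $B_x\cap B_y$ of two polyhedral balls, identifies the local cone structure of this intersection at the projection point $p$, and then invokes a hand-proved lemma (Lemma~\ref{lem:dist cone and hyperplane}) that bounds the distance from a point in the dual cone to a polyhedral cone by the maximal distance to its facets. Your argument bypasses all of this by writing $D(0,y)$ directly as a linear system $Aw\leq b$, showing that the residual $(Az-b)_+$ is bounded by $\delta$ in $\ell^\infty$, and then citing Hoffman's error bound. The crucial observation---that the matrix $A$ ranges over a \emph{finite} set indexed by $\mc A_y\subseteq\mc A$ while $b$ varies freely---is exactly what makes Hoffman applicable uniformly, and you identify this cleanly.

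What each buys: the paper's proof is fully self-contained, and its Lemma~\ref{lem:dist cone and hyperplane} is essentially a special case of Hoffman's bound (for $b=0$) proved by homogeneity and compactness. Your proof is shorter and more conceptual, at the cost of importing a classical (if perhaps less familiar to geometers) result from linear optimization. Both handle the key difficulty---uniformity in $y$---via the same finiteness principle, just packaged differently.
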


The two terms are equal to zero when $z$ belongs to $D(x,y)$. 
One can think of the lemma in the following way: 
$z\mapsto f_{x,y}(z)=d(x,z)+d(z,y)-d(x,y)$ is convex, non-negative, and piecewise affine. 
Take a point $z\in\partial D(x,y)$ and follow a ray $\{z+tv,t\geq 0\}$ for a 
unit vector $v$ for $\vert \cdot\vert$ at $z$ whose euclidean angle (for some fixed euclidean
inner product) to $D(x,y)$ is at least $\pi/2$. By this we mean the angle between $v$ and
any line segment in $D(x,y)$ starting at $z$.
The restriction of $f_{x,y}$ to the ray is convex, piecewise affine and is equal to zero exactly at $z$. 
It follows that it grows at least linearly in~$t$, the slope being given by the derivative at $t=0$. 
And so for $z'=z+tv$, one has $f_{x,y}(z')\geq t\cdot f_{x,y}'(0)\geq \mathrm {Cst}f_{x,y}'(0)\cdot d(z',D(x,y))$.

The issue is that the slope does not vary continuously in $z$, not even lower semi-continuously, so one can not hope 
to use a compactness argument to obtain a uniform bound on the union of the rays. 
One might study carefully the combinatorics of the map~$f$ to obtain a uniform bound on the slope. 
We instead take a slightly different approach, which requires one intermediate lemma.

Let us fix a Euclidean inner product $\langle , \rangle$ defining the 
Euclidean metric $d_\eucl$ on $\R^d$.
By "orthogonal projection" to a closed convex set $C$ 
we will mean closest-point projection for $d_\eucl$ to $C$, which is well defined by convexity of $d_\eucl$.

Given a cone $\mc C\subset\R^d$ based at $0$, define the dual cone of $\mc C$ to be the set 
$${\mc C}^\prime=\{x\in\R^d, \langle x,{\mc C}\rangle \leq 0\}
=\{x\in\R^d \text{ whose orthogonal projection to $\mc C$ is } 0\}.$$

\begin{lemma}\label{lem:dist cone and hyperplane}
 Let $\mc C$ be a polyhedral convex cone of $\R^d$ based at $0$, that is, the intersection of finitely many closed half-spaces $H_1,\dots,H_n$ containing $0$ in their boundary.
 Let $\mc C'\subset\R^d$ be the polyhedral convex dual cone to $\mc C$.
 Then there exists $\lambda>0$ such that for any $x\in\mc C'$,
 $$d_{\eucl}(x,\mc C)\leq \lambda \max\left(d_{\eucl}(x,H_1),\dots,d_{\eucl}(x,H_n)\right).$$
\end{lemma}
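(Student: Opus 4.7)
The plan is to exploit compactness on the unit sphere intersected with $\mc C'$. The key observation is that by the very definition of the dual cone $\mc C'$, for any $x\in\mc C'$ the closest point of $\mc C$ to $x$ is $0$, so
\[
d_{\eucl}(x,\mc C) \;=\; \lVert x\rVert_{\eucl}.
\]
Hence the inequality to prove reduces to the scale-invariant statement
\[
\lVert x\rVert_{\eucl} \;\leq\; \lambda \,\max_{i=1,\dots,n} d_{\eucl}(x,H_i),\qquad x\in\mc C'.
\]

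First I would introduce the continuous function $f(x) := \max_i d_{\eucl}(x,H_i)$ on $\R^d$, and restrict it to the compact set $S := \{x\in\mc C' : \lVert x\rVert_{\eucl}=1\}$. The map $f$ is continuous since each $x\mapsto d_{\eucl}(x,H_i)$ is $1$-Lipschitz. The next step is to show that $f$ is strictly positive on $S$. If $f(x)=0$ at some $x\in S$, then $d_{\eucl}(x,H_i)=0$ for every $i$, so $x\in\bigcap_i H_i = \mc C$. But then $x\in\mc C\cap\mc C'$, and the latter intersection is exactly $\{0\}$: if $y\in\mc C\cap\mc C'$, then the Euclidean projection of $y$ to $\mc C$ is $y$ (because $y\in\mc C$) and also $0$ (by the definition of $\mc C'$), forcing $y=0$. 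This contradicts $\lVert x\rVert_{\eucl}=1$.

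By compactness of $S$ and continuity of $f$, there exists $\lambda>0$ such that $f(x)\geq 1/\lambda$ for every $x\in S$. For arbitrary $x\in\mc C'\setminus\{0\}$, applying this to $x/\lVert x\rVert_{\eucl}$ gives, using the positive homogeneity of both $\lVert\cdot\rVert_{\eucl}$ and $d_{\eucl}(\cdot,H_i)$ (the latter because each $H_i$ is a half-space whose bounding hyperplane passes through $0$),
\[
\max_i d_{\eucl}(x,H_i) \;=\; \lVert x\rVert_{\eucl}\cdot\max_i d_{\eucl}\!\bigl(x/\lVert x\rVert_{\eucl},H_i\bigr) \;\geq\; \lVert x\rVert_{\eucl}/\lambda \;=\; d_{\eucl}(x,\mc C)/\lambda,
\]
which is the desired inequality. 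The case $x=0$ is trivial.

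There is no real obstacle here — the only subtle point is verifying that $\mc C\cap\mc C'=\{0\}$, and this follows directly from the characterization of $\mc C'$ as the set of points whose Euclidean projection to $\mc C$ is the origin. The rest is compactness plus homogeneity.
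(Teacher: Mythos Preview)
Your proof is correct and follows essentially the same approach as the paper's: both arguments observe that $f(x)=\max_i d_{\eucl}(x,H_i)$ is continuous, positively homogeneous, and strictly positive on $\mc C'\setminus\{0\}$, then invoke compactness on the unit sphere. You simply fill in more detail---in particular the reduction $d_{\eucl}(x,\mc C)=\lVert x\rVert_{\eucl}$ for $x\in\mc C'$ and the verification that $\mc C\cap\mc C'=\{0\}$---whereas the paper compresses the whole argument into a single sentence.
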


The results holds true for all $x\in\R^d$ (for a bigger constant $\lambda$), but the special case $x\in\mc C'$ is shorter to prove.

\begin{proof}
    This is an immediate consequence of the fact that the function 
    $$f(x)= \max\left(d_{\eucl}(x,H_1),\dots,d_{\eucl}(x,H_n)\right)$$
    is homogeneous, continuous, and positive on $\mc C'-\{0\}$. 
\end{proof}

Note that in the previous lemma we allow $\mc C$ to have empty interior, or be reduced to~$\{0\}$, or to be the entire space $\R^d$ (but this last case is not very interesting since then the dual $\mc C'$ is just $\{0\}$).

\begin{proof}[Proof of Lemma~\ref{lem:diamondtech}]
Denote by $\{H_\alpha,\alpha\in\mc A\}$ the finite family of closed 
half spaces given by $H_\alpha=\{w\in\R^d, \alpha(w)\leq 0\}$.

For any subset $S$ of $\mc A$, the intersection
\[\mc C_S=\cap_{\alpha\in S}H_\alpha\] 
is a polyhedral convex cone. 
Let $K_S$ be the dual cone to $\mc C_S$.
We can apply Lemma~\ref{lem:dist cone and hyperplane} to~$K_S$, and get a number $\lambda_S>0$.
Let $\lambda=\max \{\lambda_S\mid S\subset\mc A\}$.

We now prove the inequality for $x,y,z\in\R^d$ fixed.
When $z$ belongs to $D(x,y)$, we have $d(x,z)+d(z,y)- d(x,y)=0= d(z,D(x,y))$ by Lemma~\ref{lem: concatenation of geod} and so the inequality holds.

Suppose that $z\not\in D(x,y)$.
If $d(x,z)\geq d(x,y)$ holds, then one has 
\[d(x,z)+d(z,y)- d(x,y)\geq d(z,y)\geq d(z,D(x,y))\] since $y\in D(x,y)$.
So by symmetry in $x,y$ we may assume that $r:=d(x,z)$ is smaller than $R:=d(x,y)$.

Let $B_x=\bar B(x,r)$ and $B_y=\bar B(y,R-r)$ be closed balls for the polyhedral norm $|\cdot |$, illustrated in Figure~\ref{fig-morse-proof}.
They are polyhedral convex sets, \ie finite intersections of affine half-spaces. More precisely 
$$B_x=H_1\cap\dots\cap H_n \quad \text{and} \quad B_y=H_1'\cap \dots\cap H_n'$$
where $H_i=\{w\in\R^d,\alpha_i(w-x)\leq r\}$  and $H_j'=\{w\in\R^d,\alpha'_j(w-y)\leq R-r\}$ for some orderings $\mc A=\{\alpha_1,\dots,\alpha_n\}=\{\alpha_1',\dots,\alpha'_n\}$ (it will be convenient later to have two different orderings).

\begin{figure}
    \begin{center}
        \begin{picture}(100,83)(0,0)
        \put(0,0){\includegraphics[width=100mm]{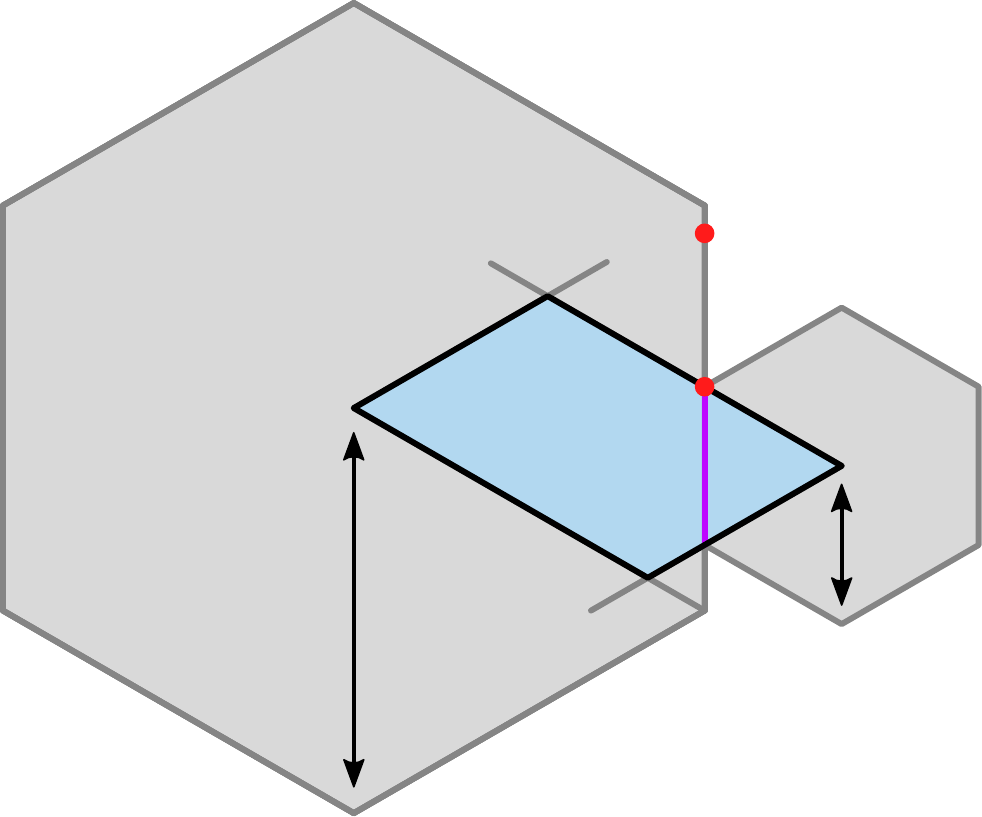}}
        \put(32.5, 41){$x$}
        \put(87, 35){$y$}
        \put(48, 41){$D(x,y)$}
        \put(73.5, 47){$p$}
        \put(73.5, 58.5){$z$}
        \put(32, 19){$r$}
        \put(87, 26.5){$R-r$}
        \put(64.8, 33){$B_x\cap B_y$}
        \end{picture}
    \end{center}
    \caption{Illustration of the proof of Lemma~\ref{lem:diamondtech}. Three points $x,y,z$ and a fourth point $p\in D(x,y)$ with $d(z,p)\leq\lambda(d(x,z)+d(y,z)-d(x,y))$.}
    \label{fig-morse-proof}
\end{figure}

The intersection 
$$B_x\cap B_y=H_1\cap\dots\cap H_n\cap H_1'\cap\dots\cap H_n'$$
is a closed polyhedral convex subset of the diamond $D(x,y)$ by Lemma~\ref{lem: concatenation of geod}, 
with empty interior, and it is not empty (one can verify that $B_x\cap B_y$ contains the point $\frac{r}{R}y+\frac{R-r}{R}x$). 

Let $p$ be the Euclidean closest-point projection of $z$ to $B_x\cap B_y$.
Up to translation, we may assume that $p=0$ to be able to use Lemma~\ref{lem:dist cone and hyperplane}.
Up to reordering we may also assume that the half-spaces containing $p$ in their boundary are $H_1,\dots,H_k$ and $H'_1,\dots,H'_\ell$; note that $k$ and $\ell$ are both positive since $p\in\de B_x\cap\de  B_y$.
Since $p=0$ we have $H_i=\{w:\alpha_i(w)\leq 0\}$ for $i\leq k$ and $H_j'=\{w:\alpha'_{j}(w)\leq 0\}$ for $j\leq l$.
Let $S=\{\alpha_i,i\leq k\}\cup\{\alpha'_j,j\leq \ell\}\subset\mc A$.
Then using the notation introduced at the beginning of the proof we have
$$B_x\cap B_y\subset H_1\cap\dots\cap H_k\cap H_1'\cap\dots\cap H'_\ell=\mc C_S.$$

Observe that $p$ is also the Euclidean closest-point projection of $z$ to $\mc C_S$.
Indeed if by contradiction there was $p'\in \mc C_S$ closer to $z$, 
then by convexity of the euclidean distance function,
any point of the line segment $(p,p']$ would be closer to $z$. But any point of $(p,p']$ close enough to $p$ is 
contained in each $H_{k+1},\dots,H_n$ and $H_{\ell+1}',\dots,H_n'$ (since $p$ is in their interior), and hence is in $B_x\cap B_y$, which contradicts that $p$ is the Euclidean closest-point projection of $z$ on $B_x\cap B_y$. In particular, $z$ is 
contained in the dual cone $K_S$ to $\mc C_S$.

By Lemma~\ref{lem:dist cone and hyperplane}, the distance $d(z,p)$ is comparable to the distance between $z$ and one of the half spaces $H_1,\dots,H_k,H_1',\dots,H'_\ell$. But since $z\in B_x$, it must be 
contained in every $H_i$, so Lemma~\ref{lem:dist cone and hyperplane} implies that there exists a half space $H'_j$, $j\in\{1,\dots,\ell\}$, for which 
\begin{align*}
    d_\eucl(z,D(x,y)) 
        &\leq d_\eucl(z,p) \\
        &\leq \lambda_Sd_\eucl(z,H_j') \\
        &\leq \lambda d_\eucl(z,B_y).    
\end{align*}

Let us translate what this means for the polyhedral norm, using a constant $\nu$ such that $\nu^{-1}d\leq d_\eucl\leq \nu d$. The previous equation yields 
$$d(z,D(x,y))\leq \nu^2\lambda d(z,B_y).$$

Let $q$ be the intersection point of $[y,z]$ with $\partial B_y$, which satisfies $d(z,q)=d(z,B_y)$.
Indeed for any $q'\in \partial B_y$ we have $d(q',y)=d(q,y)$, and so
$$d(z,q')=d(z,q')+d(q',y)-d(q',y)\geq d(z,y)-d(q,y)=d(z,q)=R-r$$
since $[y,z]$ is a geodesic for $d$.
Then 
\begin{align*}
    d(x,z)+d(z,y)-d(x,y) & = r + d(z,y) - R \\
    & = d(z,q) + d(q,y) +r - R \\
    & = d(z,q) = d(z, B_y)\\ 
    & \geq \nu^{-2} \lambda^{-1} d(z,D(x,y)).\qedhere
\end{align*}
\end{proof}


\subsubsection{Proof of Proposition~\ref{prop:finding a geod at bdd dist}}

In addition to diamonds we shall consider two other geometric objects: crowns and cores of diamonds. 
We refer to~\cite{KLPMorse} for closely related constructions and to Figure~\ref{fig-morse-lemma} for an illustration.
\begin{itemize}
    \item $x,y\in\R^d$ are called \emph{generic} if the diamond between them has nonempty interior, 
    that is, if there is only one $\alpha\in\mc A$ such that $|x-y|=\alpha(x-y)$.
    \item The \emph{Crown} $\Cr(x,y)$ is $\partial \mc C(x\to y)\cap \partial \mc C(y\to x)$ if
    $x,y$ are generic, and otherwise it is just $D(x,y)$. Note that if $z\in \Cr(x,y)$ then the pair $(x,z)$ is \emph{not} generic.
    \item The \emph{Core} $\Co(x,y)$ is the convex hull of the crown, which is just $D(x,y)$ if $x$ and $y$ are not generic.

\end{itemize}
Note that for generic $x,y$, the core ${\rm Co}(x,y)$ separates $x$ from $y$ in $D(x,y)$, in the sense that $x$ and $y$ are in different connected components of $D(x,y)\smallsetminus\Co(x,y)$.

Note also that for generic $x,y$ the intersection of the core $\Co(x,y)$ with the boundary $\partial D(x,y)$ of the diamond is exactly the crown $\Cr(x,y)$.

\begin{figure}
    \begin{center}
        \begin{picture}(140,65)(0,0)
        \put(0,0){\includegraphics[width=140mm]{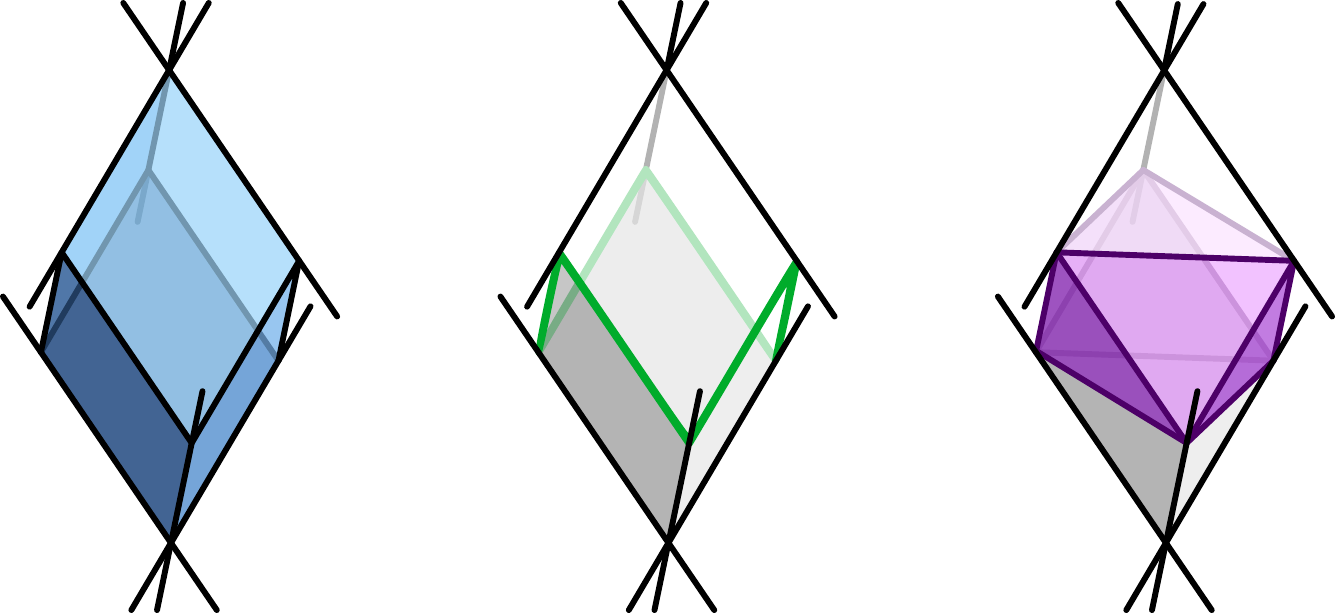}}
        \put(14, 6){$x$}
        \put(14, 56){$y$}
        \color[RGB]{37, 110, 178}
        \put(29, 20){$D(x,y)$}
        \color[RGB]{0, 172, 43}
        \put(82, 20){$\Cr(x,y)$}
        \color[RGB]{77, 0, 104}
        \put(133, 20){$\Co(x,y)$}
        \end{picture}
    \end{center}
    \caption{Illustration of the diamond, crown and core of two points $x,y\in\R^d$ in generic position.}
    \label{fig-morse-lemma}
\end{figure}

We will need the following elementary result on properly convex cones, which implies that if a tip of a diamond is not too close to the crown then it is not too close to the core.

Given a set $K\subset\R^d$, denote by $\Conv(K)$ the convex hull of $K$.

\begin{lemma}\label{lem:prop3}
Let $\mc C\subset \R^r$ be a closed properly convex cone with vertex $0$.
Then there exists $\lambda_2>0$ such that for any compact set $K\subset \mc C$, we have
\[d(0,K)\leq \lambda_2 d(0,\Conv(K)).\]
\end{lemma}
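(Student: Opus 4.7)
The plan is to exploit proper convexity to find a linear functional which is comparable to the norm on $\mc C$, then apply it to a Carathéodory decomposition of a minimizer.

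First, I would use proper convexity as follows. Since $\mc C$ is closed, properly convex with vertex $0$, its closure contains no line through $0$, so by a Hahn--Banach type argument there exists a linear functional $\ell$ on $\R^r$ with $\ell(v)>0$ for every $v\in \mc C\setminus\{0\}$. By homogeneity of $\mc C$ and $\ell$, and by compactness of $\mc C\cap\{|v|=1\}$, there exists $c>0$ such that
\[\ell(v)\geq c\,|v|\quad \text{for all }v\in \mc C.\]
Let also $M=\|\ell\|$ be the operator norm of $\ell$, so that $\ell(w)\leq M|w|$ for every $w\in\R^r$.

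Next, given a compact $K\subset\mc C$, I would pick a point $p\in\Conv(K)$ realising $|p|=d(0,\Conv(K))$ (which exists since $\Conv(K)$ is compact by Carathéodory's theorem). Writing $p=\sum_{i=1}^{m}t_iv_i$ with $v_i\in K$, $t_i\geq 0$ and $\sum t_i=1$, I compute $\ell(p)$ in two ways:
\[\ell(p)=\sum_{i=1}^m t_i\,\ell(v_i)\geq c\sum_{i=1}^m t_i\,|v_i|\geq c\min_i|v_i|\geq c\,d(0,K),\]
and on the other hand $\ell(p)\leq M|p|=M\,d(0,\Conv(K))$. Combining the two inequalities yields the conclusion with $\lambda_2=M/c$.

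No serious obstacle is expected: the only subtlety is ensuring that the separating functional $\ell$ dominates the norm uniformly on $\mc C$, which is exactly where proper convexity is used (for a cone which fails to be properly convex, no such $\ell$ exists, and indeed the statement fails — e.g.\ if $\mc C=\R^r$, one can take $K=\{v,-v\}$ with $|v|$ arbitrarily large while $0\in\Conv(K)$). The degenerate case $0\in\Conv(K)$ is automatically handled since then $p=0$, forcing $\ell(v_i)=0$ and hence $v_i=0$ for all $i$ with $t_i>0$, so $0\in K$ and both sides vanish.
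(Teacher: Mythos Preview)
Your proof is correct but follows a different route from the paper's. The paper argues by homogeneity: since $0$ is an extremal point of $\mc C$, one has $0\notin\Conv(\mc C\setminus B(0,1))$, and $\lambda_2^{-1}$ is taken to be the distance from $0$ to this convex hull; then for any compact $K\subset\mc C$ not containing $0$, rescaling by $a=d(0,K)^{-1}$ pushes $K$ into $\mc C\setminus B(0,1)$, so $\Conv(aK)\subset\Conv(\mc C\setminus B(0,1))$ stays at distance at least $\lambda_2^{-1}$ from $0$, and unwinding the scaling gives the result. Your approach instead linearizes the problem via a supporting functional and then exploits that $\ell$ is affine on convex combinations. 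Both arguments hinge on the same geometric input (proper convexity gives a uniform positive lower bound for a homogeneous quantity on $\mc C\cap\{|v|=1\}$), but the paper's scaling argument avoids invoking Hahn--Banach and Carath\'eodory explicitly, while yours has the virtue of making the constant $\lambda_2=M/c$ completely transparent in terms of the functional.
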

\begin{proof} 
Since $\mc C$ is properly convex, $0$ is an extremal point of it and does not belong to the convex hull $\Conv(\mc C- B(0,1))$ where $B(0,1)$ is the open ball of radius one around $0$ for the metric $d$.
Let $\lambda^{-1}=d(0,\Conv(\mc C- B(0,1)))$ be the distance from $0$ to $\Conv(\mc C- B(0,1))$.

Let $K\subset\mc C$ be a compact subset.
If $0\in K$ then $d(0,K)=0\leq \lambda d(0,\Conv(K))$.

Suppose $0\not\in K$ and and put $a=d(0,K)^{-1}$.
Then the compact $a\cdot K$ is included in $\mc C-B(0,1)$ and so $\Conv(a\cdot K)\subset \Conv(\mc C-B(0,1))$, which yields $d(0,\Conv(a\cdot K))\geq \lambda^{-1}$.
As $\Conv(a \cdot K)=a\cdot \Conv(K)$, one has
\begin{equation*}d(0,\Conv(K))=d(0,K)\cdot d(0,a\cdot\Conv(K))\geq \lambda^{-1} d(0,K).\qedhere\end{equation*}
\end{proof}

Finally we will need the following observation about quasi-ruled paths, whose proof is a simple calculation.
\begin{observation}\label{obs:finite distance to qruled}
Let $a,b:[0,T]\to X$ and $c:[T,T']\to X$ be paths in a metric space.
    \begin{enumerate}
        \item If $a$ is $C$-quasi-ruled and $d(a(t),b(t))\leq C'$ for all $t$ then $b$ is $C+6C'$-quasi-ruled.
        \item If $a$ is $C$-quasi-ruled and if $d(c(t),a(T))\leq C'$ for all  $t$ then the concatenation of the path $a$ with $c$ is $C+2C'$-quasi-ruled.
    \end{enumerate}
\end{observation}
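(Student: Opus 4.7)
The plan is to treat the two parts separately, each by a short computation using the triangle inequality combined with the hypothesis of proximity.

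For part (1), I would fix $0\leq t\leq s\leq u\leq T$ and estimate the three distances $d(b(t),b(s))$, $d(b(s),b(u))$, and $d(b(t),b(u))$ in terms of the corresponding distances along $a$, each time paying a cost of $2C'$ from two applications of the triangle inequality through the $C'$-close points of $a$. More precisely,
\[
d(b(t),b(s))\leq d(a(t),a(s))+2C',\quad d(b(s),b(u))\leq d(a(s),a(u))+2C',
\]
and $d(a(t),a(u))\leq d(b(t),b(u))+2C'$. Summing the first two and applying the $C$-quasi-ruled property of $a$ to $d(a(t),a(s))+d(a(s),a(u))\leq d(a(t),a(u))+C$, then substituting the third inequality, produces the bound $d(b(t),b(s))+d(b(s),b(u))\leq d(b(t),b(u))+C+6C'$, as desired.

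For part (2), denote the concatenation by $p$ and fix $0\leq t\leq s\leq u\leq T'$. I would split into four cases depending on where $s$ and $u$ sit relative to $T$. When $t,s,u\in[0,T]$ the quasi-ruled property of $a$ gives the bound immediately. When $t,s\in[0,T]$ and $u\in[T,T']$, I rewrite $d(a(s),c(u))\leq d(a(s),a(T))+C'$ and $d(a(t),c(u))\geq d(a(t),a(T))-C'$, then apply the $C$-quasi-ruled property of $a$ to the triple $(t,s,T)$; this gives the loss $C+2C'$. The cases $t\in[0,T]$, $s,u\in[T,T']$ and $t,s,u\in[T,T']$ are handled analogously, the key point being that any distance of the form $d(c(\cdot),c(\cdot))$ is at most $2C'$ by the triangle inequality through $a(T)$, while any distance of the form $d(a(\cdot),c(\cdot))$ differs from $d(a(\cdot),a(T))$ by at most $C'$, so replacing the $c$-coordinates by $a(T)$ in the estimate costs at most $2C'$ in total.

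The argument is entirely elementary, so no step is a real obstacle; the only bookkeeping is the case split in part (2) and keeping track of how many triangle inequalities are used so that the additive constant comes out correctly.
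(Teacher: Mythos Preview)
The paper does not give a proof of this observation --- it only says ``whose proof is a simple calculation'' --- so your approach is exactly the intended one, and part~(1) is carried out correctly.

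For part~(2), your case split is right and the first two cases are fine, but your final bookkeeping claim is slightly off. In the case $t\in[0,T]$, $s,u\in[T,T']$ one gets
\[
d(a(t),c(s))+d(c(s),c(u))-d(a(t),c(u))\leq 2\,d(c(s),c(u))\leq 4C',
\]
and similarly the case $t,s,u\in[T,T']$ gives a defect of at most $4C'$; neither collapses to $2C'$ in general (take $a\equiv 0$ on $[0,1]$ in $\R$ and $c$ oscillating between $\pm C'$ on $[1,T']$ to see that $4C'$ is sharp). So the constant actually produced by this argument is $\max(C+2C',\,4C')$, not $C+2C'$. This is harmless for the paper: in the only place the observation is used, the appended path $c$ is a geodesic from $a(T)$ to a nearby point, and in that situation a direct computation (using $d(c(s),c(u))=d(a(T),c(u))-d(a(T),c(s))$) does recover the bound $C+2C'$. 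Either note that extra hypothesis, or state the conclusion with $\max(C+2C',4C')$; nothing downstream is affected.
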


\begin{proof}[Proof of Proposition~\ref{prop:finding a geod at bdd dist}]
We proceed by induction on the dimension $d$. 
In the case $d=1$ there is nothing to show, so assume that for some $d\geq 2$, 
the claim holds true 
for all dimensions $<d$, with a constant depending on the dimension and on the 
polyhedral norm.

Let $\vert \cdot\vert$ be a polyhedral norm on $\R^d$. 
Note that the restriction of  $\vert \cdot\vert$ to any 
linear subspace is polyhedral.
Thus by the induction assumption, there
exists a constant $\mu>1$ so that the proposition is valid with this constant
for paths contained in the linear subspaces $\{\alpha_{i_1}=\cdots =\alpha_{i_k}\mid 
\alpha_{i_j}\in {\mc A}\}$ which are the linear spans of the faces of 
the special cones ${\mc C}_\alpha$ for $\alpha \in {\mc A}$.

Let $\lambda_1>0$ be as in  Lemma~\ref{lem:diamondtech}, $\nu>1$ be such that $\nu^{-1}d\leq d_\eucl\leq \nu d$
where $d(x,y)=\vert x-y\vert$, and $\lambda_1'=\nu^2\lambda_1$.
Then for all $x,y\in \R^d$, if 
$$\Pi_{xy}:\R^d\to D(x,y)$$
is the Euclidean closest-point projection onto $D(x,y)$ (which is well defined continuous since $D(x,y)$ is compact and convex, 
contrarily to the closest-point projection for $d$), then by Lemma~\ref{lem:diamondtech} we get
\begin{equation}\label{eq:diamondtech}
d(z,\Pi_{xy}(z))\leq \nu d_\eucl(z,\Pi_{xy}(z))
= \nu d_\eucl(z,D(x,y)) 
\leq \lambda_1' (d(x,z)+d(z,y)-d(x,y)).\end{equation}

Let $\lambda_2>0$ be the maximum of the constants 
from Lemma~\ref{lem:prop3}, applied to the special cones $\mc C_\alpha$, $\alpha \in\mc A$.
Then for all $x,y\in\R^d$ we have
\begin{equation}\label{eq:crown and core}d(x,\Cr(x,y))\leq \lambda_2d(x,\Co(x,y)).\end{equation}

We claim that the statement of the proposition holds true for 
$(\R^d,\vert \cdot\vert)$ with the constant $\mu'=(1+2\lambda)(\mu(1+6\lambda_1')+\lambda_1')$, where $\lambda=\max(\lambda_1',\lambda_2)$.

To this end we proceed by induction on $k$ where 
$d(x,y)\in (k-1,k]$.

Let $C\geq 1, k=1$ and let $c$ be a $C$-quasi-ruled continuous path from $x$ to $y$ such that $d(x,y)\leq 1$. Then the segment $[x,y]$ is at distance at most $C+1$ from $c$ and hence the claim holds true in this case.

Let $k\geq 2$ and assume that
the claim holds true for all continuous $C$-quasi-ruled paths from $x$ to $y$ such that $d(x,y)\leq k-1$.
Let $c$ be a $C$-quasi-ruled continuous path from $x$ to $y$ such that $d(x,y)\in(k-1,k]$.

There are two possible cases. 
In the first case, 
$c$ stays $C\lambda $-far from the crown $\Cr(x,y)$.

Fix a point $z\in {\rm Co}(x,y)$ which is in the interior of the diamond $D(x,y)$.
Let $Z\subset {\rm Co}(x,y)$ be the union of segments $[z,p]$ where $p\in {\rm Cr}(x,y)$.
Then $Z\cap \partial D(x,y)={\rm Cr}(x,y)$ and $Z$ separates $x$ from $y$ in $D(x,y)$: if $a(t)\in D(x,y)$ is a continuous path from $x$ to $y$ then it must cross $Z$. 
If it crosses $z$ there is nothing to prove.
Otherwise, for each $t$  the ray from $z$ passing through $a(t)$ must cross $\partial D(x,y)$ at some point $b(t)$ that depends continuously on $t$, and at some time $t$ we have $b(t)\in {\rm Cr}(x,y)$ hence $a(t)\in[b(t),z]\subset Z$.

The projection $\Pi_{xy}\circ c$ is a continuous path from $x$ to $y$ in $D(x,y)$, 
so it must cross $Z$ at some time $t$.

Note that we have $\Pi_{xy}(c(t))=c(t)$.
Namely, otherwise $\Pi_{xy}(c(t))$ is contained in the boundary $\partial D(x,y)$, 
and hence contained in the crown. 
But since $\Pi_{xy}(c(t))$ is $C\lambda_1' $-close to $c(t)$ by Inequality~(\ref{eq:diamondtech}),
then $c(t)$ is $C\lambda_1' $-close and hence $C\lambda $-close to the crown, which contradicts our assumption.

Moreover, we must also have the inequalities 
$$d(x,c(t))\geq d(x,\Co(x,y))\geq 1 \text{ and } d(y,c(t))\geq 1.$$
Otherwise by Equation~(\ref{eq:crown and core}),
it holds $d(x,\Cr(x,y))\leq \lambda_2\leq \lambda\leq C\lambda$, which contradicts our assumption 
that $c$ stays $C\lambda$-away from the crown. Lemma~\ref{lem: concatenation of geod} yields that 
$$d(x,c(t))=d(x,y)-d(y,c(t))\leq k-1$$
and similarly $d(y,c(t))\leq k-1$.
We can apply the induction hypothesis (on $k$) to the 
path $c[0,t]$ and the path $c[t,T]$. 
As the concatenation of a geodesic connecting 
$x$ to~$c(t)\in D(x,y)$ and $c(t)$ to $y$ is a geodesic, this suffices for the induction step.

In the second case, $c$ passes at some time $t$ at distance less than $C\lambda$ from the crown.
Let $p\in \Cr(x,y)$ be such that $d(p,c(t))\leq C\lambda$.
Recall that this means the pairs $(x,p)$ and~$(y,p)$ are not generic.

Concatenate $c_1=c[0,t]$ with a geodesic from $c(t)$ to $p$, to get a continuous 
$(1+2\lambda)C$-quasi-ruled path $c'_1$ from $x$ to $p$ by Observation~\ref{obs:finite distance to qruled}.
By Inequality~(\ref{eq:diamondtech}), the projection $\Pi_{xp}\circ c'_1$ 
is at distance at most $\lambda_1'(1+2\lambda)C$ from $c_1$. Using again  Observation~\ref{obs:finite distance to qruled},
it is a continuous $(1+2\lambda)C(1+6\lambda_1')$-quasi-ruled path  
in $D(x,p)$ from $x$ to $p$.

As the pair $(x,p)$ is not generic, we can apply 
our induction on the dimension and deduce that 
there is a geodesic $c_1''\subset D(x,p)$ 
at distance at most $\mu C(1+2\lambda)(1+6\lambda_1')$ from $\Pi_{xp}\circ c_1'$, which is then at distance at most 
$C(1+2\lambda)(\mu(1+6\lambda_1')+\lambda_1')$
from the original path $c$.

With a similar construction for $c_2=c[t,T]$, one obtains a geodesic $c_2''$ from $p$ to $y$ which is at distance at most 
$C(1+2\lambda)(\mu(1+6\lambda_1')+\lambda_1')$
from $c$.

The concatenation of $c_1''$ and $c_2''$ is by Lemma~\ref{lem: concatenation of geod} a geodesic from $x$ to $y$ at distance at most 
$C(1+2\lambda)(\mu(1+6\lambda_1')+\lambda_1')$
from $c$, which concludes the proof.
\end{proof}

\subsection{Projecting to a flat}\label{sec:proj on diamond}

In this subsection we extend Proposition~\ref{prop:finding a geod at bdd dist} 
to the symmetric space $\X=\PSL_d(\R)/\PSO(d)$ 
equipped with the Finsler metric
$d^{\mf F}$. We begin with extending the geometric 
notions from Section~\ref{sec:MorseRd} to $\X$.

Recall, for instance from~\cite{KL18}, that the \emph{diamond} between two points $x,y\in\X$ is defined as follows.
A \emph{Weyl cone} of a flat of $\X$ is the translation under an element of $\PSL_d(\R)$ 
of the standard 
Weyl cone $\exp \mf a^+\subset \exp \mf a$ based at the basepoint $\basepoint$ of the standard flat $\exp \mf a$.
Consider a flat $F$ containing $x$ and $y$, a Weyl cone 
$W\subset F$ based at $x$ and containing $y$, and the opposite Weyl cone $W'$ based at $y$ (which automatically contains $x$).
Then the \emph{diamond} $D(x,y)$ is defined as
\[D(x,y)=W\cap W^\prime.\]
It does not depend on choices, and as an intersection of convex subsets of 
$\X$, it is convex. 
The analog of Lemma~\ref{lem: concatenation of geod} holds true.

\begin{proposition}[{Lemma 5.10 of~\cite{KL18}}]\label{prop:diamond}
    For all $x,y\in\X$, the diamond $D(x,y)$ is the set of points $z\in \X$ such that $d^{\mf F}(x,z)+d^{\mf F}(z,y)=d^{\mf F}(x,y)$.
\end{proposition}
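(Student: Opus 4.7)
The plan is to prove the two inclusions separately. The forward inclusion reduces to a statement about polyhedral norms in a flat; the reverse inclusion is the substantive content, which I would attack via the Finsler Busemann function associated with the Weyl chamber at infinity of the diamond.

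For the forward inclusion, let $F$ be a maximal flat containing $x$ and $y$, so $D(x,y)\subset F$. Identifying $F$ with $\mf a$ via $x\mapsto 0$, the Cartan projection $\kappa(v)$ is the unique $\weyl$-orbit representative of $v$ in $\overline{\mf a^+}$, so
\[
 d^{\mf F}(0, v) = \alpha_0(\kappa(v)) = \max_{w \in \weyl}\alpha_0(w v),
\]
which is precisely a polyhedral norm in the sense of Section~\ref{sec:MorseRd}. By Proposition~\ref{ex:admissible finsler}, the Riemannian straight lines in $F$ are Finsler geodesics, so $F$ is totally geodesic in $(\X, d^{\mf F})$ and the intrinsic polyhedral metric on $F$ coincides with the restriction of $d^{\mf F}$. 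The Weyl cones $W$ and $W^\prime$ defining $D(x,y)$ are then special cones of this polyhedral norm based at $x$ and $y$, so $D(x,y)$ is exactly the polyhedral diamond of Section~\ref{sec:MorseRd}, and Lemma~\ref{lem: concatenation of geod} gives the triangle equality.

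For the reverse inclusion, suppose $d^{\mf F}(x,z)+d^{\mf F}(z,y)=d^{\mf F}(x,y)$. Let $\xi\in\mc F$ be the Weyl chamber at infinity that is the boundary of $W$, and $\xi^-$ the opposite chamber (boundary of $W^\prime$). Since $y$ lies in the Weyl cone at $x$ with boundary $\xi$, in the identification $F\cong\mf a$ sending $x$ to $0$ and $\xi$ to the standard positive chamber at infinity, $y$ corresponds to the vector $\kappa(x,y)\in\overline{\mf a^+}$, so $b^{\mf a}_\xi(x,y)=\kappa(x,y)$ and
\[
b^{\mf F}_\xi(x,y) = \alpha_0(\kappa(x,y)) = d^{\mf F}(x,y).
\]
As $b^{\mf F}_\xi$ is a cocycle and is $1$-Lipschitz with respect to $d^{\mf F}$ (both immediate from~\eqref{busemann}),
\[
d^{\mf F}(x,y) = b^{\mf F}_\xi(x,y) = b^{\mf F}_\xi(x,z)+b^{\mf F}_\xi(z,y) \leq d^{\mf F}(x,z)+d^{\mf F}(z,y) = d^{\mf F}(x,y),
\]
so equality forces $b^{\mf F}_\xi(x,z)=d^{\mf F}(x,z)$, and symmetrically $b^{\mf F}_{\xi^-}(y,z)=d^{\mf F}(y,z)$.

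The main step and principal obstacle is to deduce from $b^{\mf F}_\xi(x,z)=d^{\mf F}(x,z)$ that $z\in W$, whence by symmetry $z\in W\cap W^\prime=D(x,y)$. I would approach this by writing $z = u\cdot z^\prime$ via the $U_\xi$-fibration $\X = U_\xi\cdot F$ (with $U_\xi$ the unipotent radical stabilising $\xi$), so that $z^\prime$ is the unique intersection of the $U_\xi$-orbit of $z$ with $F$. The $U_\xi$-invariance of $b^{\mf F}_\xi(x,\cdot)$---Finsler horospheres at $\xi$ being $U_\xi$-orbits---reduces the equality to $\alpha_0(z^\prime - x) = d^{\mf F}(x,z)$. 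Combining the defining maximality of $\alpha_0$ on the positive Weyl chamber (so that $\alpha_0(z^\prime-x)\leq \alpha_0(\kappa(z^\prime-x))=d^{\mf F}(x,z^\prime)$ with equality exactly when $z^\prime\in W$) with a comparison between $d^{\mf F}(x,z)$ and $d^{\mf F}(x,z^\prime)$---which one establishes via the contraction property~\eqref{eq:basic contraction} of $U_\xi$ under conjugation by $a_t = \exp(tv_0)$ for $v_0\in\mf a^+$, together with the description of limiting Finsler balls in~\eqref{eq:nonreg busemann}---forces $z = z^\prime \in W$, completing the proof.
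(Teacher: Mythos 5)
The paper gives no proof of this proposition---it is cited verbatim as Lemma~5.10 of~\cite{KL18}---so I assess your argument on its own. Your forward inclusion is correct: Proposition~\ref{ex:admissible finsler} makes the maximal flat $F$ totally geodesic for $d^{\mf F}$, the restriction of $d^{\mf F}$ to $F$ is the polyhedral norm $v\mapsto\max_{w\in\weyl}\alpha_0(wv)$ under which $W,W'$ are special cones, and Lemma~\ref{lem: concatenation of geod} applies. Your Busemann set-up for the reverse inclusion is also sound: the cocycle and $1$-Lipschitz properties of $b^{\mf F}_\xi$ do propagate the assumed triangle equality to $b^{\mf F}_\xi(x,z)=d^{\mf F}(x,z)$ and $b^{\mf F}_{\xi^-}(y,z)=d^{\mf F}(y,z)$.

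The gap is at what you yourself flag as the main step. Writing $z=u\cdot z'$ with $u\in U_\xi$, $z'\in F$, the Busemann equality becomes $\alpha_0(z'-x)=d^{\mf F}(x,z)$, and maximality of $\alpha_0$ gives $\alpha_0(z'-x)\leq d^{\mf F}(x,z')$ with equality iff $z'\in W$; so far so good. But two further facts are needed that neither~\eqref{eq:basic contraction} nor~\eqref{eq:nonreg busemann} supplies. (a)~The comparison $d^{\mf F}(x,z')\leq d^{\mf F}(x,z)$ is an instance of Kostant's convexity theorem: $b^{\mf a}_\xi(x,z)\in\mathrm{conv}(\weyl\cdot\kappa(x,z))$, hence $\alpha_0(\kappa(z'-x))\leq\alpha_0(\kappa(x,z))$. (b)~More seriously, even granting~(a), the chain of equalities only yields $z'\in W$ and $b^{\mf a}_\xi(x,z)=\kappa(x,z)$; the phrase ``forces $z=z'$'' glosses over a genuine rigidity statement, namely that the Iwasawa projection of a point equals its Cartan projection \emph{only if} the unipotent factor $u$ is trivial. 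This is true---for $\SL_d(\R)$ one can check that $\mathrm{tr}\bigl((a\,u^{\mathsf T}u\,a)^2\bigr)=\sum_i a_i^4$ forces $u^{\mathsf T}u=\mathrm{Id}$, and then $u=\mathrm{Id}$ since $U\cap K=\{1\}$---but it is a separate, nontrivial fact (a strict form of Kostant convexity), not a consequence of the contraction of $U_\xi$ or of the limiting-ball description. Both~(a) and~(b) would need to be proved or cited for the reverse inclusion to be complete; as written, the argument stops exactly where the real content of the lemma begins.
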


The following non-uniform version of Lemma~\ref{lem:diamondtech} for $(\X,d^{\mf F})$ 
is used to reduce Theorem~\ref{thm:morse} to 
Proposition~\ref{prop:finding a geod at bdd dist}. 

\begin{proposition}\label{prop:coarse diamond}
    For any $C\geq 1$ there exists $C'>0$ such that for all $x,y\in\X$, any $z\in\X$ such that $d^{\mf F}(x,z)+d^{\mf F}(z,y)\leq d^{\mf F}(x,y)+C$ is at distance at most $C'$ from $D(x,y)$.
\end{proposition}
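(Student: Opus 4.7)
We argue by contradiction using an ultralimit/asymptotic-cone argument. Suppose the conclusion fails for some $C\geq 1$: there exist sequences $(x_n), (y_n), (z_n)$ in $\X$ with $d^{\mf F}(x_n, z_n)+d^{\mf F}(z_n, y_n)-d^{\mf F}(x_n,y_n)\leq C$ yet $R_n := d^{\mf F}(z_n, D(x_n, y_n))\to \infty$. By $G$-invariance of $d^{\mf F}$ and of the diamond construction we may normalize $z_n=\basepoint$, and we pick $p_n\in D(x_n,y_n)$ realizing the distance $R_n$.

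The starting observation is that $d^{\mf F}$ restricted to any flat $F\subset \X$, after identifying $F$ with $\mf a$, is the polyhedral norm $v\mapsto\alpha_0(\kappa(v))=\max_{w\in\weyl}\alpha_0(w\cdot v)$; in particular, when $z,x,y$ all lie in a common flat, Lemma~\ref{lem:diamondtech} already yields the desired (even linear) estimate. The strategy is to reduce the general case to this by rescaling $(\X, d^{\mf F})$ by $R_n^{-1}$ and passing, along a non-principal ultrafilter, to a pointed asymptotic cone $(X_\omega, d^\omega,\basepoint_\omega)$. This space is an affine building carrying the induced polyhedral Finsler metric, inside which the analog of Proposition~\ref{prop:diamond} still holds, since it reduces to a statement on a single apartment.

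In the regime where $x_n$ and $y_n$ remain at distance $O(R_n)$ from $\basepoint$, they converge in $X_\omega$ to points $x_\omega, y_\omega$, and $p_n$ converges to a point $p_\omega\in D(x_\omega,y_\omega)$ with $d^\omega(\basepoint_\omega, p_\omega)=1$. Dividing the hypothesis by $R_n$ and passing to the limit (with $C/R_n\to 0$) gives the triangle equality $d^\omega(x_\omega,\basepoint_\omega)+d^\omega(\basepoint_\omega,y_\omega)=d^\omega(x_\omega,y_\omega)$, whence $\basepoint_\omega\in D(x_\omega,y_\omega)$ by the building analog of Proposition~\ref{prop:diamond}. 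This contradicts $d^\omega(\basepoint_\omega, D(x_\omega,y_\omega))=1$, since the ultralimit of the diamonds $D(x_n,y_n)$ is exactly $D(x_\omega,y_\omega)$. In the complementary regime where (say) $d^{\mf F}(\basepoint, x_n)/R_n\to\infty$, the point $x_n$ escapes to infinity in the rescaled space and we must replace it by the asymptotic horofunction it determines: extracting further and invoking Kapovich--Leeb's description \eqref{eq:nonreg busemann} of Finsler horofunctions, the function $d^{\mf F}(\cdot,x_n)-d^{\mf F}(\basepoint,x_n)$ converges to a maximum of Riemannian Busemann functions centered at the Weyl chambers of a finite subset $\mc B'_x\subset\mc F$ sharing a common face; treating $y_n$ symmetrically if needed, the defect condition rewrites, in the limit, as a constraint on these horofunctions that again forces $\basepoint_\omega$ into a diamond realised inside a single apartment of $X_\omega$, from which Lemma~\ref{lem:diamondtech} yields the contradiction.

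\textbf{Main obstacle.} The delicate point is the non-regular case of the second regime: when $x_n$ tends to a singular boundary point the Finsler horofunction is a genuine maximum of several Riemannian Busemann functions rather than a single one. Here one must use the polyhedral structure of $d^{\mf F}$ --- the fact that Finsler balls near infinity are intersections of Riemannian horoballs --- to show that the combinatorial flexibility provided by this maximum still localizes the limiting picture to a single apartment where the flat polyhedral estimate of Lemma~\ref{lem:diamondtech} is available.
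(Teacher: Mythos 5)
Your asymptotic-cone approach is genuinely different from the paper's, which argues directly in $\X$: the paper normalizes so that a well-chosen point $w_n$ of $D(x_n,y_n)$ (taken at parameter distance $d(x_n,z_n)$ from $x_n$ and closest to $z_n$ among such points) sits at the basepoint in the standard flat, then studies the Riemannian geodesic ray $w_n(\cdot)$ from $\basepoint$ through $z_n$, passes to a subsequential limit ray $w(\cdot)$ in $\X$, and shows $w$ stays in the standard flat using Finsler horoballs and the two Tits-distance lemmas (Lemmas~\ref{lem-Tits-distance-bounded} and~\ref{lem-Tits-distance-pi}); then Lemma~\ref{lem:diamondtech} in the flat gives the contradiction. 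That argument handles all growth regimes of $x_n,y_n$ uniformly, whereas you split into regimes depending on whether $d^{\mf F}(\basepoint,x_n)/R_n$ stays bounded, and it avoids setting up Euclidean buildings, ultralimits of Weyl cones, and the building analogues of Proposition~\ref{prop:diamond} and the diamond construction.

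The problem is that your second regime is not actually proved. You acknowledge under ``Main obstacle'' that you need to localize the limit picture to a single apartment, but that is precisely the content the paper supplies with the Tits-distance lemmas, and nothing in your text does the corresponding work. Invoking Kapovich--Leeb's description~\eqref{eq:nonreg busemann} of singular Finsler horofunctions gives you a max of Riemannian Busemann functions, but you never explain how the vanishing of the rescaled defect together with this max forces the relevant points into a common apartment of $X_\omega$, nor how the resulting diamond in a single apartment relates to the ultralimit of $D(x_n,y_n)$ when $x_n$ escapes superlinearly. This is a genuine gap, not a routine verification: the paper devotes the bulk of its proof (Claim~\eqref{claim: ray in flat} plus the two Tits lemmas) to exactly this localization. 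In addition, even the first regime tacitly uses two unchecked facts: that $D(x_\omega,y_\omega)$ is contained in the ultralimit of $D(x_n,y_n)$ (so that $\basepoint_\omega\in D(x_\omega,y_\omega)$ contradicts $d^\omega(\basepoint_\omega,\mathrm{ultralim}\,D)=1$), and that the building analogue of Proposition~\ref{prop:diamond} is available. Both are believable but need an argument. If you want to pursue the cone approach, the path of least resistance is to mirror the paper: replace the ray $w_n(\cdot)$ by the corresponding limit ray in the cone and re-prove, inside the building, that it lies in a single apartment using the Finsler-horofunction description; but at that point you have essentially reconstructed the paper's proof in a heavier framework.
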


\begin{proof}
For the purpose of this proof, let us simplify notations by writing $d$ instead of $\df$.
   
  Suppose for a contradiction that there exists sequences $(x_n)_n$, $(y_n)_n$ and $(z_n)_n$ such that $d(x_n,z_n)+d(z_n,y_n)\leq d(x_n,y_n)+C$ for all $n$ but the distance from $z_n$ to $D(x_n,y_n)$ tend to infinity as $n\to\infty$.

  First note that $d(x_n,z_n)\geq d(z_n,D(x_n,y_n))\to\infty$, and similarly $d(z_n,y_n)\to\infty$.
  Since $d(x_n,y_n)\geq d(x_n,z_n)+d(z_n,y_n)-C$ we have $d(x_n,y_n)>d(x_n,z_n)$ for large enough $n$.

  Let $w_n\in D(x_n,y_n)$ be at distance exactly $d(x_n,z_n)$ from $x_n$. Such a point exists by compactness of $D(x_n,y_n)$. 
  Up to translating everything, we can assume that $w_n$ equals the basepoint $\basepoint$ of the symmetric space
  for all $n$, and that $x_n,y_n$ are contained 
  in the standard flat $A\subset \X$. It follows from the explicit construction of the diamond $D(x_n,y_n)$ that the points $x_n,y_n$ 
  are contained in antipodal Weyl cones with vertex $\basepoint$, that is, we may assume that 
  $y_n$ is contained in the standard (closed) Weyl cone $A^+$, and $x_n$ is contained in the opposite 
  (closed) Weyl cone $A^-$.

  Consider a Riemannian geodesic ray $(w_n(t))_{t\geq0}$ (with unit \emph{Finsler} speed) starting at $\basepoint$ and passing through $z_n$, say at time $R_n=d(\basepoint,z_n)$.

  We claim that for any $0\leq t\leq R_n$,
  \begin{equation}\label{claim: almost triangle equality}
    d(x_n,w_n(t))+d(w_n(t),y_n)- d(x_n,y_n)\leq C 
  \end{equation}
   Namely, since by 
   \cite[Eq.\,(5.4)]{KL18} Finsler balls of fixed radius are convex, the segment $t\to w_n(t)$ $(t\in [0,R])$ 
   stays in the Finsler balls around $x_n$ and $y_n$ of respective radius $d(x_n,z_n)$ and $d(y_n,z_n)$.

  Up to extracting a subsequence, we can assume that the sequence of rays $w_n(t)$ converge to a ray $w(t)$ starting at $\basepoint$ and ending at some $w(\infty)\in\partial_\infty \X$.
  We now claim that 
  \begin{equation}\label{claim: ray in flat}
      w(t) \text{ is contained in the flat } A.
  \end{equation}
  Let us prove it.
  The ball of radius $d(y_n,z_n)$ around $y_n$ contains $w_n(t)$ for any $t\leq R_n$, and this ball converges as $n\to\infty$ to a Finsler horoball, which hence contains $w(t)$ for all $t$.
  By the paragraph about Busemann functions in Section~\ref{sec:lietheoryI}, this Finsler horoball is contained in a Riemannian horoball $B_+$ around the point $\xi_+=\alpha_0^\myhash\in\partial_\infty A^+$ (see Notation~\ref{nota:alpha}).

  Similarly, $w(t)$ is contained in a Riemannian horoball $B_-$ around $\xi_-\in \partial_\infty A^-\cap G\cdot \alpha_0^\myhash$, which is antipodal to $\xi_+$ as $\alpha_0$ was chosen symmetric with respect to the Cartan involution.

  By Lemma~\ref{lem-Tits-distance-bounded} below, the \emph{Tits distance} between $\xi_\pm$ and $w(\infty)$ is at most $\tfrac\pi2$.
  Recall that the Tits distance between $a,b\in\partial_\infty\X$ is the angle between two geodesic rays going to respectively $a$ and $b$ and that are contained in the same flat (see for instance~\cite[Part II, Ch.\,9]{bridson99} for an account on the Tits metric).

  Since $\xi^-$ and $\xi^+$ are antipodal, their Tits distance is exactly $\pi$.
  It follows from Lemma~\ref{lem-Tits-distance-pi} that the limit point $w(\infty)$ lies on the boundary at infinity of the unique flat which contains $\xi^-$ and $\xi^+$ in its boundary, that is the standard flat $A$.
  Since $w(0)$ is also in that flat, the whole geodesic ray $(w(t))_t$ remains in the same flat $A$, and this ends the proof of the claim \eqref{claim: ray in flat}.

  Now we combine \eqref{claim: almost triangle equality}, \eqref{claim: ray in flat} and Lemma~\ref{lem:diamondtech} to conclude.
  Fix $\lambda>0$ such that for all $x,y,z\in A$ we have 
  $$d(z,D(x,y))\leq \lambda(d(x,z)+d(z,y)-d(x,y)).$$
  Then for any $t$, taking $n$ large enough we have $d(x_n,w(t))+d(w(t),y_n)- d(x_n,y_n)\leq C+1$ and hence
  $$d(w(t),D(x_n,y_n))\leq \lambda(C+1),$$
  since $w(t)\in A$.
  Coming back to $w_n(t)$ we deduce, for $n$ large enough, that 
    $$d(w_n(t),D(x_n,y_n))\leq \lambda(C+1)+1.$$
  But this should be a contradiction, as $w_n(t)$ is a ray going to $z_n$ which is very far from $D(x_n,y_n)$.
  To get a true contradiction, one must be careful in the choice of $w_n$ we made at the beginning: one must choose a
  point of the compact set 
  $\partial B(x_n,d(x_n,z_n))\cap D(x_n,y_n)$ which is closest to $z_n$ for the Finsler metric. Then for any $t\leq R_n$ it holds
  $$d(w_n(t),\partial B(x_n,d(x_n,z_n))\cap D(x_n,y_n))=t,$$
  from which one easily deduces, using $d(x_n,z_n)-C\leq d(x_n,w_n(t))\leq d(x_n,z_n)$, that
  $$d(w_n(t), D(x_n,y_n))\geq \frac t2 - C,$$
  whence a contradiction.
\end{proof}

\begin{lemma}\label{lem-Tits-distance-bounded}
  If a Riemannian geodesic ray $(w(t))_{t\geq 0}$ is contained in a Riemannian horoball centered at $\xi\in\partial_\infty\X$, then the Tits distance between $\xi$ and the limit of $(w(t))_t$ inside $\partial_\infty\X$ is at most $\tfrac\pi2$.
\end{lemma}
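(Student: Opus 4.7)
The plan is to exploit the convexity of the Riemannian Busemann function on the CAT(0) space $\X$ together with the first-variation formula: this yields a pointwise upper bound on Alexandrov angles along the ray $w$, which then passes to the Tits distance by a standard CAT(0) boundary argument.

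First I would reparametrize $w$ to have unit Riemannian speed and set $\eta:=\lim_{t\to\infty} w(t)\in\partial_\infty\X$. Since $\X$ is a nonpositively curved, simply connected Riemannian manifold, hence CAT(0), the Riemannian Busemann function $b_\xi$ is convex and $C^1$, and its gradient at any point $x\in\X$ is the unit tangent vector opposite to the direction from $x$ to $\xi$. The hypothesis is precisely that $f(t):=b_\xi(w(t))$ is bounded above on $[0,\infty)$; combined with convexity of $f$ along the geodesic $w$, this forces the right-derivative to satisfy $f'_+(t)\leq 0$ for every $t\geq 0$ (otherwise $f(s)\geq f(t)+(s-t)f'_+(t)\to+\infty$ by convexity).

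Next, the first-variation formula gives
\[ f'_+(t)=\langle \nabla b_\xi(w(t)),\,w'(t)\rangle=-\cos\bigl(\angle_{w(t)}(\xi,\eta)\bigr), \]
where $\angle_{w(t)}(\xi,\eta)$ denotes the Riemannian angle at $w(t)$ between the ray from $w(t)$ to $\xi$ and the sub-ray of $w$ starting at $w(t)$, which is asymptotic to $\eta$. From $f'_+(t)\leq 0$ we therefore obtain $\angle_{w(t)}(\xi,\eta)\leq \pi/2$ for every $t\geq 0$.

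Finally I would invoke the standard CAT(0) fact that for a geodesic ray $c$ with endpoint $\eta$ and any $\xi\in\partial_\infty\X$, the angles $\angle_{c(t)}(\xi,\eta)$ are monotone non-decreasing in $t$ and converge to the Tits distance $\angle_{\mathrm{Tits}}(\xi,\eta)$ as $t\to\infty$ (see \cite[Part II, Ch.~9]{bridson99}). Passing to the limit in the angle bound then yields $\angle_{\mathrm{Tits}}(\xi,\eta)\leq \pi/2$. I expect this last step to be the main technical point, since the identification of $\lim_{t}\angle_{w(t)}(\xi,\eta)$ with the Tits distance requires the CAT(0) comparison machinery; as a concrete fallback one can instead invoke the asymptotic formula $\lim_{t\to\infty}t^{-1}b_\xi(w(t))=-\cos\bigl(\angle_{\mathrm{Tits}}(\xi,\eta)\bigr)$, which follows from the same convexity arguments, and deduce the result from the fact that the asymptotic slope is $\leq 0$.
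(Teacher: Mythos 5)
Your proposal is correct and is essentially the paper's proof: both hinge on the first-variation identity $\tfrac{d}{dt}\,b_\xi(w(t))=-\cos\angle_{w(t)}(\xi,\eta)$, the convergence of this angle to the Tits distance (Bridson--Haefliger, Part II, Prop.~9.8), and the observation that boundedness of $b_\xi\circ w$ rules out a positive asymptotic derivative. The only cosmetic difference is that you first use convexity to get $f'_+(t)\leq 0$ pointwise and then pass to the limit, whereas the paper passes to the limit of the derivative directly; both are valid and yield the same bound.
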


\begin{proof}
  Denote by $b_\xi$ the Riemannian Busemann function. 
  The derivative at time $t$ of $b_\xi(w(t),\basepoint)$ is $-\cos$ of the Riemannian angle at $w(t)$ between the ray $w$ and the ray from $w(t)$ to $p$ (see for instance~\cite[\S3.1]{KL18}).
  When $t\to\infty$, this angle converges to the Tits distance between 
  the endpoint $w(\infty)\in \partial_\infty \X$ of the ray and $\xi$ (see for instance~\cite[Part II, Prop.\,9.8]{bridson99}).
  If we want $b_\xi(w(t),\basepoint)$ to remain bounded from above, its derivative cannot converge to a positive number.
  As a consequence, the Tits distance between $w(\infty)$ and $\xi$ is at most $\tfrac\pi2$.
\end{proof}

Denote by $d^{\mathrm{Tits}}$ the Tits distance on $\partial_\infty\X$.

\begin{lemma}\label{lem-Tits-distance-pi}
  Let $\xi_1,\xi_2,\zeta\in\partial_\infty\X$ be three points whose Tits distance satisfy the following 
  $$d^{\mathrm{Tits}}(\xi_1,\zeta)+d^{\mathrm{Tits}}(\zeta,\xi_2)=d^{\mathrm{Tits}}(\xi_1,\xi_2)=\pi$$
  Suppose that $\xi_1$ (and equivalently $\xi_2$) is a regular point and let $F$ be the unique flat that contains $\xi_1$ and $\xi_2$ in its boundary at infinity $\partial_\infty F$.
   
  Then $\zeta$ belongs to $\partial_\infty F$.
\end{lemma}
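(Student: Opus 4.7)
The plan is to realize the Tits-geodesic relation between $\xi_1,\zeta,\xi_2$ by honest Riemannian angles at a point of $F$, and then invoke angle rigidity for symmetric spaces of non-compact type.

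First I would concatenate a Tits geodesic from $\xi_1$ to $\zeta$ with one from $\zeta$ to $\xi_2$; since the lengths add up to $\pi=d^{\mathrm{Tits}}(\xi_1,\xi_2)$, the concatenation is a Tits geodesic $\gamma\colon[0,\pi]\to\partial_\infty\X$ from $\xi_1$ to $\xi_2$ passing through $\zeta$. Next I would fix any basepoint $p\in F$. Because $\xi_1$ and $\xi_2$ are antipodal in the round sphere $\partial_\infty F$, they are the two endpoints of a Riemannian geodesic line in $F$ through $p$, and hence the Alexandrov angle satisfies $\angle_p(\xi_1,\xi_2)=\pi$. Using the triangle inequality for angles at $p$ combined with the standard CAT(0) bound $\angle_p(\,\cdot\,,\,\cdot\,)\leq d^{\mathrm{Tits}}(\,\cdot\,,\,\cdot\,)$, one then has the chain
\[\pi=\angle_p(\xi_1,\xi_2)\;\leq\;\angle_p(\xi_1,\zeta)+\angle_p(\zeta,\xi_2)\;\leq\;d^{\mathrm{Tits}}(\xi_1,\zeta)+d^{\mathrm{Tits}}(\zeta,\xi_2)=\pi,\]
so all intermediate inequalities are equalities. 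In particular $\angle_p(\xi_1,\zeta)=d^{\mathrm{Tits}}(\xi_1,\zeta)$.

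The last step is to invoke the classical angle-rigidity theorem for symmetric spaces of non-compact type (see e.g.\ Eberlein's \emph{Geometry of Nonpositively Curved Manifolds}, or Kleiner--Leeb): whenever the Alexandrov angle at some $p\in\X$ between two points at infinity equals their Tits distance, the two Riemannian rays from $p$ bound a flat Euclidean sector, and in particular they are contained in a common two-dimensional flat $F'$ through $p$. Since $\xi_1$ is regular, the ray from $p$ to $\xi_1$ is a regular geodesic; a standard computation (the centraliser in $\mathfrak{p}$ of a regular tangent vector is the unique Cartan subspace containing it) shows that any flat through $p$ containing this ray is contained in the unique maximal flat through $p$ asymptotic to $\xi_1$, which is $F$. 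Thus $F'\subset F$, so the ray from $p$ to $\zeta$ lies in $F$, giving $\zeta\in\partial_\infty F$.

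The main obstacle is this last step, namely upgrading the scalar equality $\angle_p=d^{\mathrm{Tits}}$ to the geometric conclusion that the two rays bound a flat sector. This is a genuinely non-trivial rigidity property of symmetric spaces, failing in general CAT(0) spaces. An alternative route would be via the spherical building structure of $\partial_\infty\X$: one checks that a regular point has a unique antipode and that antipodal chambers lie in a unique common apartment, and then argues that the entire Tits geodesic $\gamma$ is forced to remain in that apartment, which coincides with $\partial_\infty F$. Either approach hinges on the same flavour of symmetric-space rigidity, and the regularity of $\xi_1$ is crucial to single out $F$ among the possibly many maximal flats produced by the rigidity statement.
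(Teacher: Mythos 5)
Your proof is correct, but it takes a genuinely different route from the paper's. The paper's argument stays entirely in the Tits boundary $\partial_\infty\X$, viewed as a $\mathrm{CAT}(1)$ space: it concatenates the two Tits geodesics into a single geodesic $\gamma\colon[0,\pi]\to\partial_\infty\X$ from $\xi_1$ to $\xi_2$, uses regularity of $\xi_1$ to conclude that $\gamma(t)$ stays in the Weyl chamber of $\partial_\infty F$ containing $\xi_1$ for small $t>0$, and then invokes uniqueness of $\mathrm{CAT}(1)$-geodesics of length $<\pi$ together with the fact that $\partial_\infty F$ is totally geodesic for the Tits metric to force the rest of $\gamma$ into $\partial_\infty F$. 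You instead descend into the symmetric space: fixing $p\in F$ and using the triangle inequality for Riemannian angles at $p$ with the bound $\angle_p\le d^{\mathrm{Tits}}$, you extract the equality $\angle_p(\xi_1,\zeta)=d^{\mathrm{Tits}}(\xi_1,\zeta)=\angle_{\mathrm{Tits}}(\xi_1,\zeta)$, feed it to the flat sector theorem, and then use the Lie-theoretic fact that the centraliser of a regular vector in $\mathfrak p$ is the unique Cartan subspace to trap the sector inside $F$. Both arguments pivot on regularity of $\xi_1$ singling out the flat $F$; yours passes through the ambient Riemannian geometry and needs the flat sector theorem plus a bit of Lie algebra, while the paper's is a shorter boundary-level argument closer to the spherical building picture. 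One small inaccuracy in your commentary: the flat sector theorem (equality $\angle_p=\angle_{\mathrm{Tits}}<\pi$ implies a flat convex sector) is not special to symmetric spaces — it holds in all complete $\mathrm{CAT}(0)$ spaces via monotonicity of comparison angles; what is genuinely specific to symmetric spaces is the final step, where the regular direction forces the sector's tangent plane into the unique Cartan subspace.
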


\begin{proof}
  Assume that $\zeta$ is distinct from $\xi_1$ and $\xi_2$, otherwise it is trivial.
  The Tits metric is CAT(1) (see~\cite[Part II, Th.\,9.13]{bridson99}). It implies that each pair of points at distance $d<\pi$ are joined by a unique geodesic of length $d$.
   
  Let $\gamma_i:[0,\tfrac\pi2]\to\partial_\infty\X$ be the unique geodesic between $\xi_i$ and $\zeta$. 
  Since for the triple $\xi_1,\zeta,\xi_2$, equality holds in the triangle inequality, 
  the concatenation of $\gamma_1$ and $\gamma_2$ (traveled backward) is a geodesic $\gamma\colon[0,\pi]\to\partial_\infty\X$ from $\xi_1$ to $\xi_2$. Since $\xi_1$ is regular, it lies in the interior of a Weyl chamber of $\partial_\infty F$. The definition of Tits distance forces $\gamma(t)$ to remains in the same Weyl chamber for small values of $t>0$. In particular $\gamma(t)$ belongs to $\partial_\infty F$ for small $t$.
   
  The points $\gamma(t)$ and $\xi_2$ are at distance $\pi-t<\pi$, and so there is a unique geodesic from $\gamma(t)$ to $\xi_2$. The boundary at infinity $\partial_\infty F$ is totally geodesic for the Tits metric. So by uniqueness, the geodesic ($\gamma([t,\pi])$) remains in the boundary of $\partial_\infty F$. It follows that $\zeta=\gamma(d^{\mathrm{Tits}}(\xi_1,\zeta))$ lies in $\partial_\infty F$.
\end{proof}

\subsection{Proof of Theorem~\ref{thm:morse}}

We are now ready for the proof of 
Theorem~\ref{thm:morse}.

Consider a continuous path $c:[0,T]\to\X$ such that for any $0\leq t\leq s\leq u\leq T$, we have $d^{\mf F}(c(t),c(s))+d^{\mf F}(c(s),c(u))\leq d^{\mf F}(c(t),c(u))+C$.

Let $F$ be a flat containing $c(0)$ and $c(T)$, so that it also contains the diamond $D(c(0),c(T))$.
By Proposition~\ref{prop:coarse diamond} there exists $C'>0$ only depending on $C$ such that for any $0\leq t\leq T$,  
there exists $a(t)\in D(c(0),c(T))$ at distance at most $C'$ from $c(t)$, with $a(0)=c(0)$ and $a(T)=c(T)$.
By the triangle inequality, the path $t\to a(t)$ is $C+6C^\prime$ quasi-ruled. By Lemma~\ref{fact:qruled vs qgeod}, up to enlarging $C+6C^\prime$
to a constant which also only depends on $C$, we may assume that $t\to a(t)$ is continuous. 

Thus we 
can apply Proposition~\ref{prop:finding a geod at bdd dist} to $a$, to find a geodesic $b:[0,T]\to F$ such that $d^{\mf F}(a(t),b(t))\leq C''$ for some $C''>0$ which depends only on $C$. Recall that geodesics for the restricted metric on $F$ are also geodesics for the metric on $\X$.

We conclude that $d^{\mf F}(c(t),b(t))\leq C''+C'$ for any $t$ where $C''+C'$ depends on $C$ but not on the path $c$.

\section{Fock--Goncharov positivity}\label{sec:fockgoncharov}

This section is devoted to a geometric interpretation of positivity as introduced
by Lusztig~\cite{lusztig} and imported into the context of Hitchin representations by Fock and Goncharov
\cite{FG06}. We collect the relevant algebraic results and relate them
to admissible paths on the characteristic surface of a Hitchin grafting representation.
Throughout this section, we put $G=\SL_d(\R)$ although most of the discussion 
is valid for all split real simple Lie groups and although ultimately we are interested in 
$\PSL_d(\R)$. 
For completeness, note that for $G=\SL_d(\R)$,
most of Lusztig's results and concepts 
were already known (see for instance~\cite{ando}), but we still use Lusztig's notation and formalism.
    In particular, we will use Lusztig's work to introduce the subsets 
\[G_{>0}\subset G_{\geq 0}\subset G \text{  and }\mc F_{>0}\subset \mc F_{\geq0}\subset\mc F\] 
and some of their basic properties.

As $G=\SL_d(\R)$, 
the subset $G_{>0}\subset G$ is the set of 
\emph{totally positive matrices}, which are the matrices $A\in \SL_d(\R)$ such that for any $1\leq k\leq d-1$ the exterior product $\bwedge^kA\in\SL(\bwedge^k\R)=\SL_{d_k}(\R)$ is positive, i.e.\ all its entries are positive.
For general split Lie groups the definition of $G_{>0}$ is more complicated, see Sections 2.2, 2.12, 5.10 and 8.8 of~\cite{lusztig}.


    One can check that $\tau:\PSL_2(\R)\to \PSL_d(\R)$ is positive in the sense that it maps 
    projectivizations of positive matrices to projectivizations of totally positive matrices.
    Note also that Lusztig did not introduce the concept of positive representation from $\PSL_2(\R)$ into $G$: this is due to Fock--Goncharov~\cite{FG06}, who used it to prove among other things that all Hitchin representations are discrete.
    Many ideas in this section are inspired by the work of Fock--Goncharov.

    Finally, the concept of positivity also exists in certain \emph{nonsplit} real Lie group: see~\cite{BH12,GW18}.
    However in these settings the positive cone lies in a partial flag manifold instead of the full flag manifold, which is not enough for our purposes.

\subsection{Reminders on positivity}

The following summarizes the results from~\cite{lusztig} we are going to use. As before, ${\mc F}$ denotes the variety
of full flags in $\R^d$. 

    \begin{theorem}[\cite{lusztig}]\label{fact:positivity}
        There exist semigroups $G_{>0}\subset G_{\geq0}\subset G$ and a subset $\mc F_{> 0}\subset \mc F$ with the following properties.
        \begin{enumerate}
            \item \label{itempos:exp a in pos} {[By definition, see \S 2.2]} $\exp({\mathfrak a})\subset G_{\geq 0}$, in particular $1\in G_{\geq 0}$.
            \item \label{itempos:pos compo} {[Th.\,4.8]} For the standard embedding 
            $\tau:\SL_2(\R)\to \SL_d(\R)$, the set 
            $G_{>0}$ is an (open) connected component of 
            $$\{g\in G:g\partial \tau(\infty)\pitchfork \partial \tau(\infty)\, \text{ and } 
            \, g\partial \tau(0)\pitchfork \partial \tau(0)\}.$$
            \item \label{fact:poslox} [Th.\,5.6] Every element of $G_{>0}$ is loxodromic, and in particular does not fix any point of $\X$.
            \item {[Th.\,4.3 \& Rem.\,4.4]} $G_{\geq0}$ is the closure of $G_{>0}$.
            \item \label{nestedcon} {[before Prop.\,2.13]} $G_{>0}G_{\geq 0}\subset G_{>0}$ and $G_{\geq 0}G_{> 0}\subset G_{>0}$.
            \item \label{itempos:cone compo} {[Prop.\,8.14]} $\mc F_{> 0}$ is an (open) connected component of $\partial \tau(\infty)^\pitchfork \cap \partial \tau(0)^\pitchfork$.
            \item \label{itempos:cone preserved} {[Prop.\,8.12]} $G_{\geq0}\mc F_{> 0}\subset\mc F_{> 0}$.
            \item \label{itempos:cone is pos orbit} {[By \eqref{itempos:cone preserved} above and by definition, see Th.\,8.7]} $\mc F_{>0}=G_{>0}\cdot \partial \tau(\infty)$.
            \end{enumerate}
    \end{theorem}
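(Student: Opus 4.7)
The plan is less to prove this theorem from scratch than to organize the citations, since the statement is a compendium of results due to Lusztig (with some items also appearing earlier in the type $A$ case, e.g.\ in Gantmacher--Krein and Ando). My approach would be to verify each item one at a time, proceeding from the easiest to the most substantive, keeping in mind that for $G=\mathrm{SL}_d(\mathbb{R})$ positivity means ``$\bwedge^k A$ has all entries positive'' so that everything becomes a statement about minors.

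I would begin with the two items that are essentially tautological for $\mathrm{SL}_d(\mathbb{R})$. For (1), an element $a=\exp(\mathrm{diag}(t_1,\dots,t_d))$ has $\bwedge^k a$ diagonal with entries $\exp(t_{i_1}+\cdots+t_{i_k})>0$ and zeros elsewhere, hence $a\in G_{\geq 0}$. For (5), the semigroup property follows from the Cauchy--Binet formula $(\bwedge^k AB)_{IJ}=\sum_K (\bwedge^k A)_{IK}(\bwedge^k B)_{KJ}$: a product of totally positive and totally nonnegative matrices has a strictly positive minor whenever any of the summands is strictly positive, which is automatic as soon as one factor is in $G_{>0}$. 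Item (4) (that $G_{\geq 0}=\overline{G_{>0}}$) is also a direct consequence of continuity of minors once one knows $G_{>0}$ is nonempty and open.

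The substantive input is (3), which I would prove by invoking the classical Perron--Frobenius-type theorem for totally positive matrices (Gantmacher--Krein): every totally positive $d\times d$ matrix has $d$ distinct strictly positive real eigenvalues $\lambda_1>\cdots>\lambda_d>0$, and the flag spanned by the first $k$ eigenvectors is totally transverse to the flag spanned by the last $d-k$. Translated into our language this says exactly that the Jordan projection lies in the interior of $\overline{\mathfrak a^+}$ and that the attracting/repelling flags are transverse, i.e.\ the element is loxodromic in the sense of Section~\ref{sec:lietheoryI}.

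The remaining items (2), (6), (7), (8) about $\mc F_{>0}$ I would extract from Lusztig's parameterization of the totally positive part via positive coordinates attached to reduced expressions of the longest element of the Weyl group. Concretely: $G_{>0}$ is a cell diffeomorphic to $\mathbb{R}_{>0}^{\dim U^-\mathfrak a U^+}$ and in particular connected and open in $\{g:g\partial\tau(\infty)\pitchfork\partial\tau(\infty),\ g\partial\tau(0)\pitchfork\partial\tau(0)\}$; the same holds for $\mc F_{>0}$ inside the double-transversality locus of $\mc F$, giving (2) and (6). Then (8) is the statement that $G_{>0}$ acts simply transitively on $\mc F_{>0}$, which comes from fixing a base flag $\partial\tau(\infty)$ and using the parameterization, and (7) follows by combining (8) with the semigroup property (5). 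The main obstacle in a self-contained treatment is really this last block: digesting Lusztig's parameterization and the connected-component argument, because ``open connected component of the transversality locus'' is not obvious just from the minor definition — one needs to know that any continuous deformation of a totally positive matrix that remains doubly-transverse cannot accidentally kill an interior minor without leaving the transversality locus first.
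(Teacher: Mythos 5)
The paper's "proof" of this statement consists entirely of the bracketed citations to specific results in Lusztig's paper, so your approach of organizing and verifying the citations is the same in spirit. Two small points are worth correcting. First, your claim that item (4), $G_{\geq 0}=\overline{G_{>0}}$, is a "direct consequence of continuity of minors once one knows $G_{>0}$ is nonempty and open" only gives the inclusion $\overline{G_{>0}}\subset G_{\geq 0}$; the reverse inclusion, namely that every totally nonnegative matrix is a limit of totally positive ones, is a genuinely nontrivial density theorem (due to Whitney in the $\SL_d$ case, and to Lusztig Th.~4.3 \& Rem.~4.4 in general), usually proved by factoring totally nonnegative elements into products of nonnegative elementary generators and perturbing those. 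Continuity alone will not deliver it. Second, in your treatment of (8) you describe $\mc F_{>0}=G_{>0}\cdot\partial\tau(\infty)$ as "$G_{>0}$ acts simply transitively on $\mc F_{>0}$"; that cannot be correct, since $\dim G_{>0}=\dim G>\dim\mc F$. The orbit statement is transitivity on $\mc F_{>0}$, not simple transitivity, and the stabilizer of $\partial\tau(\infty)$ in $G_{>0}$ is the totally positive part of the Borel $P$. Apart from these two slips, the reduction of (1), (5) to Cauchy--Binet, of (3) to Gantmacher--Krein, and of (2), (6), (7), (8) to Lusztig's cell parameterization matches the paper's citation structure, including (up to a harmless reordering of the dependency between (7) and (8)).
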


\begin{example}    
    In the special case $G=\SL_3(\R)$, one can visualise $\mc F_{>0}$: namely, recall 
    that in this case, the flag variety $\mc F$ is identified with the set of pairs $(p,\ell)$ where $p$ is a point of~$\R\PP^2$ and $\ell$ is a line containing $p$.
    
    Let $x$, $y$ and $z\in\R\PP^2$ be the image of the canonical basis of $\R^3$, let $[x,y]$, $[y,z]$ and $[z,x]\subset\R\PP^2$ be the image of the segments between the vectors of the canonical basis, and let $T\subset \R\PP^2$ be the triangle enclosed by these segments.
    Then $\mc F_{>0}$ is the set of pairs $(p,\ell)$ such that $p$ is 
    contained in the interior of $T$ and $\ell$ intersects the (relative) interior of the segments $[x,y]$ and $[y,z]$.
    Then 
    \[G_{>0}=\{g\in \SL_3(\R)\mid g\overline{\mc F_{>0}}\subset\mc F_{>0}\}.\]
    One can see that $g\in G_{>0}$ 
    maps $\overline T$ into its interior (this corresponds to the fact that the entries of $g$, i.e.\ the minors of size $1$, are positive), and hence $g$ has an attracting fixed point in $T$.
    In general it is true that any totally positive matrix is diagonalisable with distinct positive eigenvalues.
\end{example}

We will also use the following, which should be well-known to the experts, but for which we did not find a reference.

    \begin{lemma}\label{factfact}
    \begin{enumerate}\setcounter{enumi}{8}
        \item \label{itempos:interior} $\mc F_{>0}$ is the interior of its closure, denoted by $\mc F_{\geq 0}$.
        \item \label{itempos:contracts} $G_{>0}\mc F_{\geq 0}\subset \mc F_{>0}$.
        \item \label{itempos:transverse}  Let us denote $G_{<0}:=(G_{>0})^{-1}$ and 
        $\mc F_{<0}=G_{<0}\cdot \partial \tau(\infty)$, and $G_{\leq 0}$ and $\mc F_{\leq 0}$ their respective closures.
        Then any pair in $\mc F_{<0}\times\mc F_{\geq 0}$ is transverse.
    \end{enumerate}
    \end{lemma}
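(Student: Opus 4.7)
My plan is to prove the three parts in the order (10), (11), (9), with (10) and (11) being essentially equivalent and forming the technical heart; (9) then follows quickly. Throughout I use (1)--(8) freely.

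For the equivalence of (10) and (11): given $g\in G_{>0}$ and $\xi\in\mc F_{\geq 0}$, the image $g\mc F_{\geq 0}$ is compact and connected, contained in $\mc F_{\geq 0}$ by (7) and continuity, and it contains $g\partial\tau(\infty)\in\mc F_{>0}$ by (8). Since $\mc F_{>0}$ is a connected component of the open set $\mc F^*:=\partial\tau(\infty)^\pitchfork\cap\partial\tau(0)^\pitchfork$ by (6), the inclusion $g\mc F_{\geq 0}\subset\mc F_{>0}$ will follow from $g\mc F_{\geq 0}\subset\mc F^*$. By $G$-equivariance of transversality, this amounts to every $\xi_+\in\mc F_{\geq 0}$ being transverse to $g^{-1}\partial\tau(\infty)\in\mc F_{<0}$ and to $g^{-1}\partial\tau(0)$; the latter condition is handled symmetrically by conjugating by the Weyl element $w_0=\tau\begin{psmallmatrix}0 & -1 \\ 1 & 0\end{psmallmatrix}$, which exchanges $\partial\tau(\infty)$ and $\partial\tau(0)$ and, by Lusztig's theory, normalizes $G_{>0}$. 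Thus (10) reduces to (11) together with its symmetric version; conversely, (11) follows from (10) immediately, since $g\xi_+\in\mc F_{>0}\subset\partial\tau(\infty)^\pitchfork$ yields $\xi_+\pitchfork g^{-1}\partial\tau(\infty)$.

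For (11) itself, I approximate $\xi_+$ by $\xi_+^{(n)}=h_n\partial\tau(\infty)$ with $h_n\in G_{>0}$; by (5), $gh_n\in G_{>0}$, so $g\xi_+^{(n)}=gh_n\partial\tau(\infty)\in\mc F_{>0}\subset\partial\tau(\infty)^\pitchfork$ by (6) and (8), giving transversality at each finite stage. The main obstacle is that transversality is an open but not closed condition, so the limit $g\xi_+^{(n)}\to g\xi_+$ could a priori be non-transverse. To rule this out, my plan is to appeal to Lusztig's explicit parameterization of $\mc F_{\geq 0}$ by non-negative coordinates (see Theorem 8.7 and \S8 of \cite{lusztig}): under this parameterization, left multiplication by $g\in G_{>0}$ transforms coordinates by strictly positive maps uniformly bounded below on the compact set $\mc F_{\geq 0}$, so the image $g\mc F_{\geq 0}$ stays uniformly away from the non-transversality locus of $\partial\tau(\infty)$, ensuring transversality of $g\xi_+$ with $\partial\tau(\infty)$ even in the limit.

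For (9), the inclusion $\mc F_{>0}\subset\mathrm{int}(\mc F_{\geq 0})$ is immediate since $\mc F_{>0}$ is open and contained in $\mc F_{\geq 0}$. Conversely, take $\xi\in\mathrm{int}(\mc F_{\geq 0})$; by (4), the identity $1$ lies in $G_{\geq 0}=\overline{G_{>0}}$, so there exists a sequence $g_n\in G_{>0}$ with $g_n\to 1$. Then $g_n^{-1}\xi\to\xi$, and since $\xi$ has an open neighborhood inside $\mc F_{\geq 0}$, for $n$ large enough $g_n^{-1}\xi\in\mathrm{int}(\mc F_{\geq 0})\subset\mc F_{\geq 0}$. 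Applying (10), $\xi=g_n(g_n^{-1}\xi)\in G_{>0}\cdot\mc F_{\geq 0}\subset\mc F_{>0}$, which completes the argument.
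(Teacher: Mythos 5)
Your deduction of (9) from (10), using $g_n\to\mathrm{id}$ with $g_n\in G_{>0}$, is correct. But your proof of (10)/(11), which you rightly call the technical heart, contains a factual error and a genuine gap. The error: you claim $w_0 = r_\pi = \tau\begin{psmallmatrix}0&-1\\1&0\end{psmallmatrix}$ normalizes $G_{>0}$. This is false: Lemma~\ref{fact:positivity tau}.\ref{itempos:rpi conj} of the paper proves $G_{<0}=r_\pi G_{>0} r_\pi$, and indeed one checks directly in $\SL_2$ that $r_\pi A r_\pi^{-1}=(A^T)^{-1}$, which does not preserve positivity of entries. So your symmetric reduction of the $g^{-1}\partial\tau(0)$-transversality condition needs to be rethought; writing $g^{-1}\partial\tau(0)=w_0(w_0^{-1}g^{-1}w_0)\partial\tau(\infty)$ does give $w_0^{-1}g^{-1}w_0\in G_{>0}$, but this flag then lands in $w_0\mc F_{>0}$, not $\mc F_{<0}$. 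The gap: your approximation argument for (11) stops exactly where the difficulty lies. You correctly observe that transversality is open but not closed, and you propose to control the limit $g\xi_+^{(n)}\to g\xi_+$ via Lusztig's coordinate parameterization of $\mc F_{\geq 0}$, claiming left-multiplication by $g\in G_{>0}$ keeps $g\mc F_{\geq 0}$ uniformly away from the non-transversality locus. But this is only asserted, and proving it carefully is essentially proving the lemma itself; as written the argument is circular.

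The paper breaks the circle by proving (9) first, via an argument structurally different from anything in your proposal, after which (10) and (11) become short. For (9), it notes that the complement of $Z=\partial\tau(\infty)^\pitchfork\cap\partial\tau(0)^\pitchfork$ is a union of codimension-one projective subvarieties (complements of Bruhat cells), so by Theorem~\ref{fact:positivity}.\ref{itempos:cone compo} the claim reduces to showing the component $V=\mc F_{>0}$ of $Z$ is regular open, i.e.\ $V=\mc F-\overline{\mc F-\overline{V}}$; it then exhibits $V$ as a finite intersection $\bigcap_j U_j$ of explicitly regular-open sets $U_j$, each defined by a coherence-of-orientation condition on wedge products $e\wedge\omega_0^{d-j}$ and $e\wedge\omega_\infty^{d-j}$. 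With (9) in hand, $G_{>0}\mc F_{\geq 0}$ is open (a union of open $G_{>0}$-orbits, by Theorem~\ref{fact:positivity}.\ref{itempos:pos compo}) and contained in $\mc F_{\geq 0}$ (Theorem~\ref{fact:positivity}.\ref{itempos:cone preserved}), hence contained in its interior $\mc F_{>0}$, which is (10); and (11) follows by a two-step translation: first choose $g\in G_{>0}$ near the identity so that $g\xi\in\mc F_{<0}$ and $g\eta\in\mc F_{>0}$, then choose $h\in G_{>0}$ with $hg\xi=\xi_\infty$, so that $hg\eta\in h\mc F_{>0}\subset\mc F_{>0}\subset\xi_\infty^\pitchfork$.
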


\begin{remark}
    It is clear that $\mc F_{\geq 0}$ contains $G_{\geq 0}\cdot \de\tau(\infty)$ but they are not equal in general. 
    For instance consider the case $\SL_3(\R)$. 
    Denote the usual basis of $\R^3$ by $(e_1,e_2,e_3)$.
    Consider the flags $\de\tau(\infty)=(span(e_3),span(e_2,e_3))$
    and 
    $F=(span(e_2),span(e_2, e_3-e_1))$.
    
    Then $F$
    lies in $\mc F_{\geq 0}$ but not in $G_{\geq 0}\cdot \de\tau(\infty)$.
    Indeed one can check $F\in\mc F_{\geq 0}$ by computing that $F=\lim_{\lambda\to0^+}A_\lambda\de\tau(\infty)$
    where \(A_\lambda=\begin{psmallmatrix}
        1 & 3\lambda & \lambda \\
        \lambda & 2 & 1 \\
        \lambda^3 & \lambda & \lambda
    \end{psmallmatrix}\)
    is totally positive for small positive values of $\lambda$.
    (One can renormalize $A_\lambda$ to make it determinant $1$.)

    But if a matrix $B\in G_{\geq 0}$ sends $\de\tau(\infty)$ to the flag $F$, then it sends $e_2$ to a point $B\cdot e_2$ in $ span(e_2, e_3-e_1)\setminus span(e_2)$. Thus one can write $B\cdot e_2=a e_2 +b(e_3-e_1)$ with $b\neq 0$. Then one of the coefficients of $B$, in position $(3,1)$ or $(3,3)$, has a negative entry. 
    It contradicts the fact that $B\in G_{\geq 0}$, and that matrices in $G_{\geq 0}$ have non-negative entries. 
    The flag $F$ may be thought as corresponding to a point at infinity of $G_{\geq 0}$ (for the compactification $\SL_3(\R)\hookrightarrow\PGL_3(\R)$).
\end{remark}

    \begin{proof}[Proof of Lemma~\ref{factfact}]
        \emph{Proof of \eqref{itempos:interior}.} 
     Put $\xi_\infty=\partial \tau(\infty)$ and $\xi_0=\partial \tau(0)$. By definition, the sets
        $\xi_\infty^\pitchfork$ and $\xi_0^\pitchfork$ are open Bruhat cells in 
        $\mc F$ and hence $Z=\xi_{\infty}^\pitchfork \cap \xi_0^\pitchfork$ is open. 
        The complements of the Bruhat cells $\xi_\infty^\pitchfork$ and $\xi_0^\pitchfork$
        are (real) connected projective varieties all of whose
        irreducible components are of codimension one. 
        Then every point of the boundary of $\xi_\infty^\pitchfork \cap \xi_0^\pitchfork$ is 
        contained in an irreducible subvariety of codimension one. Thus 
        by property \eqref{itempos:cone compo}, the statement\eqref{itempos:interior}
        is equivalent to the following. Let $V$ be a component of $Z$. Denoting by 
        $\overline{U}$ the closure of a set $U$, it holds
        \begin{equation}\label{takeclosure}V={\mc F}-\overline{{\mc F}-\overline{V}}.\end{equation}

        As for any two open sets $U_1,U_2$ we have $\overline{U_1\cup U_2}=\overline{U_1}\cup \overline{U_2}$, 
        it suffices to write $V$ as a finite intersection $V=\cap_{j=1}^{d-1}U_j$ 
        where each of the sets $U_j$ has property (\ref{takeclosure}).
        To construct such sets note that 
        if $\xi_\iota=(\xi_\iota^1\subset \cdots \subset \xi_\iota^{d-1})$ $(\iota=\infty,0)$ 
        then the linear hyperplanes $\xi_\infty^{d-1},\xi_0^{d-1}$ are transverse and hence
        they decomposes $\mathbb{R}^d$ into four connected components which are paired be the 
        reflection $x\to -x$, say the components $A,-A,B,-B$. The closures of the components
        $A,-A$ and $B,-B$ intersect in a linear subspace of codimension~$2$.
        A component $V$ of 
        $Z=\xi_\infty^\pitchfork \cap \xi_0^\pitchfork$ consists of flags 
        $\zeta=\zeta_1\subset \cdots \subset \zeta_{d-1}$ with the property that $\zeta_1$ is transverse
        to both $\xi_\infty^{d-1},\xi_0^{d-1}$. But this means that for either $A$ or $B$, say for $A$, any nonzero 
        point on the line $\zeta_1$ is contained in $A\cup -A$. As a consequence, up to exchanging 
        $A$ and $B$, if we define 
        $U_1$ to be the set of all flags $\zeta=\zeta_1\subset \cdots \subset \zeta_{d-1}$ such that
        $\zeta_1-\{0\}\subset A\cup -A$ then $V\subset U_1$, and $U_1$ is an open set with property
        (\ref{takeclosure}). 

        Now note that $U_1$ can also be described as follows. Choose a generator $\omega_\iota$ of 
        $\wedge^{d-1} \xi^{d-1}_\iota$ $(\iota =0,\infty$) and define $U_1$ to be the set of all flags
        $\zeta$ with the property that for some basis element $e$ of $\zeta_1$, the wedge products 
        $e\wedge \omega_0,e\wedge \omega_\infty$ define the same (or the opposite) orientation of 
        $\R^d$. Then $U_1$ is one of the sets described in the previous paragraph.

        For $j\leq d-1$ define the set $U_j$ as the set of all flags $\zeta$ so that for some generator
        $e$ of $\bwedge^j \zeta_j$ and some generators $\omega_0^{d-j},\omega_\infty^{d-j}$ of 
        $\bwedge^{d-j}\xi_0^{d-j},\bwedge^{d-j}\xi_\infty^j$ the orientations defined by 
        $e\wedge \omega_0^{d-j},e\wedge \omega_\infty^{d-j}$ coincide (or are opposite). For suitably choices of the
        sets $U_j$, we then have $V=\cap_j U_j$. Together with the
        first paragraph of this proof, \eqref{itempos:interior} follows.

        \emph{Proof of \eqref{itempos:contracts}.}
        Since $G_{>0}$ is open (by \eqref{itempos:pos compo}), every $G_{>0}$-orbit in $\mc F$ is open.
        Being a union of such orbits, $G_{>0}\mc F_{\geq 0}$ is also open.
        Moreover it is contained in $\mc F_{\geq 0}$ (by \eqref{itempos:cone preserved}) and hence it is contained in its interior, which is precisely $\mc F_{>0}$ by \eqref{itempos:interior}.

        \emph{Proof of \eqref{itempos:transverse}.}
        Consider $(\xi,\eta)\in\mc F_{<0}\times \mc F_{\geq 0}$.
        Since $\mc F_{>0}$ is open, $G_{>0}$ contains $1$ in its closure, and $G_{>0}\mc F_{\geq 0}\subset \mc F_{>0}$, there exists $g\in G_{>0}$ such that $g\xi\in \mc F_{<0}$ and $g\eta\in \mc F_{>0}$.

        By definition, there exists $h\in G_{>0}$ such that $g\xi=h^{-1}\xi_\infty$.
        Then $hg\xi=\xi_\infty$ and $hg\eta\in h\mc F_{>0}\subset\mc F_{>0}\subset \xi_\infty^\pitchfork$ by \eqref{itempos:cone compo}.
        Therefore $hg\xi$ and $hg\eta$ are transverse, and so are $\xi$ and~$\eta$.
    \end{proof}

\subsection{Positivity and injectivity of admissible paths}

    We now explain the assumption that the representation $\tau:\SL_2(\R)\to \SL_d(\R)$ is 
    positive: it means that $\tau$ maps every $2\times 2$ matrix with positive entries into $G_{>0}$.
    It has the following consequences, which should be well-known to experts.

    \begin{lemma}\label{fact:positivity tau}
    We have the following.
        \begin{enumerate}
            \item \label{itempos:a't} $a'_t\in G_{>0}$ for any $t>0$.
            \item \label{itempos:rpi conj} $G_{<0}=r_\pi G_{>0} r_\pi$ (see Notation~\ref{nota:a't}).
        \end{enumerate}
    \end{lemma}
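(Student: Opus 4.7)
For part (1), I would compute $a'_t$ directly as the image under $\tau$ of an explicit element of $\SL_2(\R)$. Writing $r := r_{\pi/2}$ and unfolding the definition $a'_t = r\, a_t\, r^{-1}$, a two-line matrix multiplication yields (modulo the sign conventions used for $r$)
\[
a'_t \;=\; \tau\begin{pmatrix}\cosh t & \sinh t \\ \sinh t & \cosh t\end{pmatrix}.
\]
For $t>0$ the inner matrix has strictly positive entries and therefore lies in $\SL_2(\R)_{>0}$. Positivity of the principal embedding $\tau$, \emph{i.e.}\ the fact that $\tau$ sends totally positive matrices in $\SL_2(\R)$ to $G_{>0}$, then gives $a'_t \in G_{>0}$.

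For part (2), I would show that the continuous involution $\Phi(g) = r_\pi g\, r_\pi^{-1}$ sends $G_{>0}$ onto $G_{<0}$. The key geometric input is that $r_\pi$ acts on $\partial_\infty\H^2 = \R\cup\{\infty\}$ by $z\mapsto -1/z$, so it interchanges the distinguished flags $\partial\tau(0)$ and $\partial\tau(\infty)$. Consequently $\Phi$ preserves the set
\[
T \;:=\; \bigl\{g\in G : g\,\partial\tau(0) \pitchfork \partial\tau(0),\ g\,\partial\tau(\infty) \pitchfork \partial\tau(\infty)\bigr\}.
\]
By Theorem~\ref{fact:positivity}(\ref{itempos:pos compo}), $G_{>0}$ is a connected component of $T$; likewise $G_{<0} = (G_{>0})^{-1}$ is a component of $T$, since inversion is a self-homeomorphism of $T$. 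Being the continuous image of a connected set, $\Phi(G_{>0})$ lies in a single component of $T$.

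To identify which component, I would test $\Phi$ on the principal $\SL_2$-subgroup. A direct calculation in $\SL_2(\R)$ gives $r_\pi\, M\, r_\pi^{-1} = (M^T)^{-1}$ for every $M$; since transposition preserves $\SL_2(\R)_{>0}$, this shows $r_\pi\,\SL_2(\R)_{>0}\,r_\pi^{-1} \subset \SL_2(\R)_{<0}$. Applying $\tau$, and using that $\tau(\SL_2(\R)_{<0}) \subset G_{<0}$ (obtained by inverting the positivity property of $\tau$), we get that $\Phi(G_{>0})$ meets $G_{<0}$, forcing $\Phi(G_{>0}) \subset G_{<0}$. The reverse inclusion follows by running the same argument with $r_\pi^{-1}$ in place of $r_\pi$. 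Since in $\PSL_d(\R)$ the element $r_\pi$ is its own inverse, we obtain the stated equality $G_{<0} = r_\pi\, G_{>0}\, r_\pi$.

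The only real subtlety is bookkeeping of sign conventions in the $\SL_2$-computation underlying (1); once that is correct, both parts reduce to combining positivity of $\tau$ with an elementary connectedness argument together with the basic observation that $r_\pi$ swaps the two basepoints in the flag variety.
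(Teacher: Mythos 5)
Your proof is correct and takes essentially the same route as the paper's: an explicit $\SL_2$-computation combined with positivity of $\tau$ for part (1), and a connectedness argument on the transversality set, anchored by a single test element, for part (2) (you test a generic positive $\SL_2$-matrix via $r_\pi M r_\pi^{-1} = (M^T)^{-1}$, while the paper instead observes that $a_1'$ is fixed by $g \mapsto r_\pi g^{-1} r_\pi$; these are interchangeable). Your caution about sign conventions in (1) is in fact well founded: with the paper's literal conventions for $r_\theta$ (with $-\sin(\theta/2)$ in the lower-left entry) and $a'_t = r_{\pi/2}\, a_t\, r_{\pi/2}^{-1}$, the conjugation works out to $\tau\begin{psmallmatrix}\cosh t & -\sinh t \\ -\sinh t & \cosh t\end{psmallmatrix}$ rather than the totally positive $\begin{psmallmatrix}\cosh t & \sinh t \\ \sinh t & \cosh t\end{psmallmatrix}$ you record, so one of the two sign conventions must be reversed for the displayed product to have positive entries as asserted; this is a cosmetic issue and the lemma itself is unaffected.
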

    \begin{proof}
        \emph{Proof of \eqref{itempos:a't}.}
        This is an immediate consequence of the fact that 
        $$\begin{pmatrix}
            \frac{\sqrt{2}}2 & \frac{\sqrt{2}}2 \\
            -\frac{\sqrt{2}}2 & \frac{\sqrt{2}}2
        \end{pmatrix}
        \begin{pmatrix}
            e^t & 0 \\
            0 & e^{-t}
        \end{pmatrix}
        \begin{pmatrix}
            \frac{\sqrt{2}}2 & -\frac{\sqrt{2}}2 \\
            \frac{\sqrt{2}}2 & \frac{\sqrt{2}}2
        \end{pmatrix}$$
        has positive entries.

        \emph{Proof of \eqref{itempos:rpi conj}.}
        Let us prove that $r_\pi G_{>0}^{-1} r_\pi=G_{>0}$.
        The maps $g\mapsto g^{-1}$ and $g\mapsto r_\pi gr_\pi$ both preserve 
        $$\{g\in G:g\partial \tau(\infty)\pitchfork \partial \tau(\infty)\ \text{ and } 
        \ g\partial \tau(0)\pitchfork \partial \tau(0)\},$$
        and hence permute the connected components, and consequently so does $g\mapsto r_\pi g^{-1}r_\pi$.
        To prove that this map preserves the connected component $G_{>0}$ (see Theorem~\ref{fact:positivity} \eqref{itempos:pos compo}), it suffices to show that it fixes a point of $G_{>0}$.
        It is clear that it fixes for instance $a'_1\in G_{>0}$.
    \end{proof}
    
     
     
    

    A first important consequence of all the facts about positivity that we have listed is the following.

    \begin{corollary}\label{cor:admissible paths are positive}
        For any admissible path $c:[0,T]\to G$, for all $0\leq s<t\leq T$ we have $c(s)^{-1}c(t)\in G_{\geq 0}$.
    \end{corollary}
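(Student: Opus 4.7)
The plan is to exploit the fact that $G_{\geq 0}$ is a semigroup, together with the explicit description of admissible paths as concatenations of flat and hyperbolic pieces, and the two facts that each individual piece lies in $G_{\geq 0}$.

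More concretely, fix $0 \leq s < t \leq T$ and recall that by Definition~\ref{def:admissible in G}, the path $c$ decomposes as a continuous concatenation of pieces of flat or hyperbolic type. Let $s = t_0 < t_1 < \cdots < t_n = t$ be the (finitely many) breakpoints of this decomposition lying in $[s,t]$, so that on each subinterval $[t_{i-1},t_i]$ the path $c$ is of pure flat or hyperbolic type. Writing
\[c(s)^{-1}c(t) = \bigl(c(t_0)^{-1}c(t_1)\bigr)\bigl(c(t_1)^{-1}c(t_2)\bigr)\cdots \bigl(c(t_{n-1})^{-1}c(t_n)\bigr),\]
it suffices to show each factor lies in $G_{\geq 0}$ and then invoke that $G_{\geq 0}$ is a semigroup (Theorem~\ref{fact:positivity}\eqref{nestedcon}, which gives that $G_{\geq 0}\cdot G_{\geq 0}\subset G_{\geq 0}$ after taking closures of the given inclusions, or directly since $G_{\geq 0}=\overline{G_{>0}}$).

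For the individual factors: if the restriction of $c$ to $[t_{i-1},t_i]$ is of flat type with direction $z\in\mathfrak{a}$, then $c(t_{i-1})^{-1}c(t_i) = \exp((t_i-t_{i-1})z)\in\exp(\mathfrak{a})\subset G_{\geq 0}$ by Theorem~\ref{fact:positivity}\eqref{itempos:exp a in pos}; if it is of hyperbolic type, then $c(t_{i-1})^{-1}c(t_i) = a'_{t_i-t_{i-1}}$, which lies in $G_{>0}\subset G_{\geq 0}$ since $t_i - t_{i-1} > 0$ by Lemma~\ref{fact:positivity tau}\eqref{itempos:a't}. In either case the factor belongs to $G_{\geq 0}$, and the product of such factors remains in $G_{\geq 0}$, proving the corollary.

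I do not expect a real obstacle here: the statement is a direct bookkeeping consequence of the definition of admissibility together with the two positivity facts (\ref{fact:positivity}\eqref{itempos:exp a in pos}) and (\ref{fact:positivity tau}\eqref{itempos:a't}) and the semigroup property. The only mild care needed is to ensure that breakpoints are finite in number on $[s,t]$, which follows from the definition (only countably many pieces, and between any two times only finitely many can be traversed because each interior piece has a positive length by the $(\omega,L)$-admissibility, or more generally because $c$ is defined piecewise on an interval decomposition). A stronger statement — that $c(s)^{-1}c(t)\in G_{>0}$ as soon as $[s,t]$ contains an interior hyperbolic subinterval — would follow identically using the stronger absorption property $G_{>0}G_{\geq 0}\cup G_{\geq 0}G_{>0}\subset G_{>0}$, though this is not asked of the statement.
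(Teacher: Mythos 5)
Your proof is correct and follows essentially the same route as the paper's: decompose $c(s)^{-1}c(t)$ into a product of flat-type factors $\exp(\cdot)\in\exp(\mathfrak a)$ and hyperbolic-type factors $a'_r\in G_{>0}$, invoke Theorem~\ref{fact:positivity}\eqref{itempos:exp a in pos} and Lemma~\ref{fact:positivity tau}\eqref{itempos:a't} for each factor, and conclude by the semigroup property of $G_{\geq 0}$. The only cosmetic difference is that the paper's proof is more terse and cites directly that $G_{\geq 0}$ is a semigroup (as stated in the preamble of Theorem~\ref{fact:positivity}) rather than deriving it from item~\eqref{nestedcon}.
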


    \begin{proof}
        By definition, $c(s)^{-1}c(t)$ is a product of elements of $G$ of the form $a'_r$ for some $r>0$ or 
        $\exp(v) $ for some $v\in\mathfrak a$.
        All these elements belong to $G_{\geq 0}$ by Theorem~\ref{fact:positivity} and 
        Lemma~\ref{fact:positivity tau}, and so does their product since $G_{\geq0}$ is a semigroup.
    \end{proof}

The previous result, combined with the fact that totally positive matrices are not the identity, tells us that admissible paths are injective, as explained below.
    
    \begin{proposition}\label{lem:admissible injective}
        Any admissible path in $\X$ is injective.
    \end{proposition}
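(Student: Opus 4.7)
The plan is to argue by contradiction, using the alternating structure of admissible paths, Lusztig's absorption property for the semigroup $G_{\geq 0}$, and the fact that totally positive elements are loxodromic.

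Suppose the admissible path $t\mapsto c(t)\cdot\basepoint$ fails to be injective, and pick $0\leq s<t$ with $c(s)\cdot\basepoint=c(t)\cdot\basepoint$. Then $g:=c(s)^{-1}c(t)$ lies in the compact stabilizer $K$ of $\basepoint$. By the alternating description recalled in the remark after Definition~\ref{def:admissible in G}, the element $g$ is a finite, nonempty product of consecutive nontrivial factors of alternating types, each either hyperbolic of the form $a'_r$ with $r>0$ or flat of the form $\exp(rz)$ with $r>0$ and $z\in\mf a$ of Finsler norm one.

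If only one factor appears, then either $g=a'_r\in G_{>0}$ by Lemma~\ref{fact:positivity tau}\eqref{itempos:a't}, which is loxodromic by Theorem~\ref{fact:positivity}\eqref{fact:poslox} and hence does not fix $\basepoint$; or $g=\exp(rz)$ is a nontrivial element of $\exp(\mf a)$, which is disjoint from $K$ since $\exp(\mf a)\cap K=\{1\}$. If at least two factors appear, alternation forces at least one of them to be a hyperbolic piece $a'_{r_k}\in G_{>0}$, while the remaining factors all lie in $G_{\geq 0}$ by Theorem~\ref{fact:positivity}\eqref{itempos:exp a in pos} and Lemma~\ref{fact:positivity tau}\eqref{itempos:a't}. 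Iterating the absorption property $G_{\geq 0}G_{>0}\cup G_{>0}G_{\geq 0}\subset G_{>0}$ of Theorem~\ref{fact:positivity}\eqref{nestedcon}, we conclude that $g\in G_{>0}$, and applying Theorem~\ref{fact:positivity}\eqref{fact:poslox} once more contradicts $g\in K$.

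There is no substantial obstacle: the entire argument hinges on the alternation built into the definition of admissible paths and on Lusztig's absorption property. Without alternation, two consecutive flat factors pointing in opposing directions could combine into the identity, which would defeat injectivity; with alternation, any multi-piece product contains at least one $a'_r\in G_{>0}$, and the absorption property then forces the whole product into $G_{>0}$, where loxodromicity prevents any fixed point in $\X$.
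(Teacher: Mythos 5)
Your proof is correct and follows essentially the same route as the paper: split into the case where $c(s)^{-1}c(t)$ is a single flat factor (handled because $\exp(\mf a)\cap K=\{1\}$) and the case where at least one hyperbolic factor $a'_r\in G_{>0}$ appears, then use Lusztig's absorption $G_{\geq 0}G_{>0}\cup G_{>0}G_{\geq 0}\subset G_{>0}$ and the fact that totally positive elements are loxodromic, hence fix no point of $\X$. The only cosmetic difference is that you phrase the argument by contradiction via $g\in K$, whereas the paper argues directly that $c(s)^{-1}c(t)\basepoint\neq\basepoint$; the underlying case analysis and the lemmas invoked are identical.
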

    \begin{proof}
        Let $c:[0,T]\to G$ be an admissible path and let $\basepoint \in \X$ be an arbitrarily fixed point.
        Consider $0\leq s<t\leq T$, and let us prove that $c(s)\basepoint \neq c(t)\basepoint$, \ie that $c(s)^{-1}c(t)\basepoint\neq\basepoint$.
        By definition, $c(s)^{-1}c(t)$ is a product of elements of $G$ of the form $a'_r$ for some $r>0$ 
        or $\exp(v)$ for some $v\in\mathfrak a$.
        There are two cases.
    
        Case 1: $c(s)^{-1}c(t)=\exp(v)$ for some nonzero $v\in\mathfrak a$, then it is clear that 
        $\exp(v)\basepoint\neq \basepoint$.
    
        Case 2: $c(s)^{-1}c(t)$ is a product of elements of $G$ of the form $a'_r$ for some $r>0$ or 
        $\exp(v)$ for some $v\in\mathfrak a$, with at least one element of the form $a'_r$.
        All the $\exp(v)$'s belong to~$G_{\geq 0}$ by Theorem~\ref{fact:positivity}.\ref{itempos:exp a in pos}, 
        and all the $a'_r$'s belong to $G_{>0}$ by Lemma~\ref{fact:positivity tau}.\ref{itempos:a't}, and so $c(s)^{-1}c(t)$ belongs to $G_{>0}$ by Theorem~\ref{fact:positivity}.\ref{nestedcon}.
        Therefore $c(s)^{-1}c(t)$ does not fix any point of $\X$ by Theorem~\ref{fact:positivity}.\ref{fact:poslox}, and $c(s)^{-1}c(t)\basepoint\neq\basepoint$.
    \end{proof}
    
    This implies that the characteristic surface we have constructed in Section~\ref{sec-HG} is embedded.
    
    \begin{corollary}\label{prop:embedding}
        The map $Q_z:S_z\to \rho_z\backslash \X$ constructed in Proposition~\ref{piecewise-isometry} is injective.
    \end{corollary}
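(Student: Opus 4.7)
It suffices to prove that the lift $\tilde Q_z : \tilde S_z \to \X$ is injective. Indeed, by $\rho_z$-equivariance of $\tilde Q_z$, if $Q_z([x]) = Q_z([y])$, then $\tilde Q_z(x) = \rho_z(\gamma)\tilde Q_z(y) = \tilde Q_z(\gamma \cdot y)$ for some $\gamma \in \pi_1(S_z)$, and injectivity of $\tilde Q_z$ would force $x = \gamma \cdot y$, hence $[x]=[y]$.

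To prove injectivity of $\tilde Q_z$, fix distinct $x, y \in \tilde S_z$ and let $c : [0,T] \to \tilde S_z$ be the unique admissible path joining them (uniqueness was noted after Definition~\ref{def:abstractgraf}). The image $\tilde Q_z \circ c$ is then a concatenation of geodesic segments: by construction in Proposition~\ref{piecewise-isometry}, each hyperbolic piece of $c$ is sent isometrically into a totally geodesic copy of $\mathbb{H}^2$ embedded in $\X$, while each flat piece is sent isometrically into a maximal flat. The orthogonality condition in the definition of admissible paths, together with the fact that each flat cylinder is glued along an axis of its boundary holonomy, ensures that consecutive pieces match the alternation between the geodesic flow on $\H^2$-frames and the orthogonal sliding flow in $\mathfrak{a}$. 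Thus $\tilde Q_z \circ c$ is an admissible path in $\X$ in the sense of Definition~\ref{def:admissible in G}; this is precisely Observation~\ref{obs:admissiblemapstoadmissible}.

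The proof is then concluded by invoking Proposition~\ref{lem:admissible injective}: admissible paths in $\X$ are injective, so $\tilde Q_z(x) = (\tilde Q_z \circ c)(0) \neq (\tilde Q_z \circ c)(T) = \tilde Q_z(y)$. The main engine is thus the positivity statement: a product of factors of the form $a'_r$ ($r>0$) and $\exp(v)$ ($v\in\mathfrak{a}$) lies in $G_{\geq 0}$, and as soon as one factor $a'_r$ appears it lies in $G_{>0}$, whose elements are loxodromic by Theorem~\ref{fact:positivity}\eqref{fact:poslox} and therefore move every point of $\X$.

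The only cases that require a moment's thought are degenerate ones: if $c$ is entirely contained in a single hyperbolic piece, or in a single flat cylinder, then $\tilde Q_z \circ c$ is a single non-constant geodesic segment in $\X$ (either in an embedded $\mathbb H^2$ or in a flat), and injectivity is immediate; the same is true if $c$ consists of a single hyperbolic piece between two boundary components of $\gamma^*$. No genuine obstacle arises, since the argument of Proposition~\ref{lem:admissible injective} only requires that the total composition $c(0)^{-1}c(T)$ be a non-trivial product of the above factors, which holds whenever $x \neq y$.
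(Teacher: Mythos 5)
Your proof is correct and follows the paper's approach almost exactly: both reduce to Observation~\ref{obs:admissiblemapstoadmissible} (admissible paths in $\tilde S_z$ map to admissible paths in $\X$) and Proposition~\ref{lem:admissible injective} (admissible paths in $\X$ are injective), using $\rho_z$-equivariance to pass from $Q_z$ to $\tilde Q_z$. The only difference is cosmetic — you phrase the reduction as ``injectivity of $\tilde Q_z$ plus equivariance suffices,'' while the paper runs the same equivariance/lift computation inline and concludes the admissible path has length $T=0$.
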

    \begin{proof}
        Consider $x,y\in S_z$ such that $Q_z(x)=Q_z(y)$, and let us prove that $x=y$.
    
        Consider two lifts $\tilde x$ and $\tilde y\in\tilde S_z$ of respectively $x$ and $y$.
        Then $\tilde Q_z(\tilde x)$ and $\tilde Q_z(\tilde y)$ have the same projection in 
        $\rho_z\backslash \X$, which means that there exists $\gamma\in\pi_1(S)$ such that 
        $$\tilde Q_z(\tilde x)=\rho_z(\gamma)\tilde Q_z(\tilde y)=\tilde Q_z(\gamma\tilde y).$$
    
        Consider an admissible path $c:[0,T]\to\tilde S_z$ from $\tilde x$ to $\gamma\tilde y$.
        Then by Observation~\ref{obs:admissiblemapstoadmissible} $\tilde Q_z\circ c:[0,T]\to\X$ is an admissible path from $\tilde Q_z(\tilde x)$ to itself.
    
        Since admissible paths of $\X$ are injective by Proposition~\ref{lem:admissible injective}, this means that $T=0$ and $\tilde x=\gamma\tilde y$, and hence that $x=y$.
    \end{proof}

\subsection{Positivity gives a control on default of the triangle inequality}\label{sec:pos implies triangle eq}

The goal of this section is to prove the following result about totally positive transformations.
We then use it to prove that admissible paths are (Finsler) quasi-ruled, quasi-convex and quasi-geodesics, which implies that the characteristic surface in the symmetric space associated to a Hitchin grafting representation is Finsler quasi-convex.

As before, we write $G=\SL_d(\R)$, and we choose a basepoint $\basepoint \in \X=G/K$, thought
of as the projection of the identity in $G$.

\begin{lemma}\label{lem:triangle equality pos}
    For any $\omega>0$,
    there exists $C_\omega>0$ such that
    for all $g_+\in G_{\geq 0}$ and $g_-\in a'_{-\omega}G_{\leq 0}$, we have
    $$d^{\mf F}(g_-\basepoint,\basepoint) + d^{\mf F}(\basepoint,g_+\basepoint) 
    \leq d^{\mf F}(g_-\basepoint,g_+\basepoint) + C_\omega.$$
\end{lemma}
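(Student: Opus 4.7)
The plan is to use Finsler Busemann functions. For any flag $\xi \in \mc F$, the cocycle identity
\[b_\xi^{\mf F}(g_-\basepoint,g_+\basepoint) = b_\xi^{\mf F}(g_-\basepoint,\basepoint) + b_\xi^{\mf F}(\basepoint,g_+\basepoint),\]
combined with the universal $1$-Lipschitz bound $b_\xi^{\mf F}(x,y) \leq d^{\mf F}(x,y)$ (a direct consequence of the limit definition of $b_\xi^{\mf F}$ and the triangle inequality), reduces the lemma to producing a single flag $\xi$ (depending on $(g_+, g_-)$) satisfying
\[b_\xi^{\mf F}(\basepoint, g_+\basepoint) \geq d^{\mf F}(\basepoint, g_+\basepoint) - C_\omega/2 \text{ \ and \ } b_\xi^{\mf F}(g_-\basepoint, \basepoint) \geq d^{\mf F}(g_-\basepoint, \basepoint) - C_\omega/2.\]
Geometrically, these two estimates together express that $g_-\basepoint$, $\basepoint$, and $g_+\basepoint$ lie approximately on a common Finsler geodesic ray with ideal endpoint $\xi$, so $\xi$ should serve as a common positive attracting direction.

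To construct $\xi$, I would first exploit positivity. Writing $g_- = a'_{-\omega} \cdot g$ with $g \in G_{\leq 0} = G_{\geq 0}^{-1}$ gives $g_-^{-1} = g^{-1} \cdot a'_\omega$ with $g^{-1} \in G_{\geq 0}$; combining $a'_\omega \in G_{>0}$ (Lemma~\ref{fact:positivity tau}.\ref{itempos:a't}) with the semigroup properties of Theorem~\ref{fact:positivity}.\ref{nestedcon} yields $g_-^{-1} \in G_{>0}$. Consequently $\eta := g_-^{-1} g_+ \in G_{>0} \cdot G_{\geq 0} \subset G_{>0}$ is loxodromic by Theorem~\ref{fact:positivity}.\ref{fact:poslox}, with attracting fixed flag $\eta^+ \in \mc F_{>0}$ transverse to every flag in $\mc F_{\leq 0}$ by Lemma~\ref{factfact}.\ref{itempos:transverse}. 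My candidate is $\xi := \eta^+$. The $a'_\omega$ factor forces $\eta^+$ to sit uniformly in the interior of $\mc F_{>0}$, bounded away from its boundary by an amount depending only on $\omega$, which is the source of the $\omega$-dependence in $C_\omega$.

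The hard part will be verifying the two Busemann estimates quantitatively: the basepoint $\basepoint$ need not lie on the axis of $\eta$, so the naive translation-length interpretation fails. I would argue by contradiction and compactness: if no uniform $C_\omega$ existed, a sequence $(g_+^n, g_-^n)$ along which the defect tends to infinity would, after extracting convergent subsequences of the relevant flags, produce a limiting pair of flags in $\overline{\mc F_{\geq 0}}$ and $\overline{\mc F_{\leq 0}}$ violating the quantitative transversality guaranteed by the $a'_\omega$ factor. Alternatively, a direct algebraic route would combine the $U$-contraction principle~\eqref{eq:basic contraction} with a Gauss-type decomposition of totally positive matrices to reduce the two estimates to an explicit computation in the Cartan subalgebra $\mf a$, where the inequality becomes a comparison of Cartan projections of the form $\alpha_0(\kappa(\eta)) \geq \alpha_0(\kappa(g_-^{-1})) + \alpha_0(\kappa(g_+)) - C_\omega$.
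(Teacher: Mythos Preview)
Your Busemann reduction is valid, but the choice $\xi=\eta^+$ fails and neither sketched completion rescues it. First, the claim that the $a'_\omega$ factor keeps $\eta^+$ uniformly inside $\mc F_{>0}$ is false: with $g_+=a'_s$ and $g_-=a'_{-\omega}e^{-nv}$ for $v\in\mf a^+$, one has $\eta=e^{nv}a'_{\omega+s}$, and $\eta^+\to\partial\tau(\infty)\in\partial\mc F_{>0}$ as $n\to\infty$ (with $s$ fixed). More decisively, your first Busemann estimate fails for this~$\xi$: the point $a'_s\basepoint$ lies on the $\H^2$-geodesic through $\basepoint$ orthogonal to the axis of $(a_t)_t$, so it never enters the Weyl cone from $\basepoint$ toward $\partial\tau(\infty)$, and one computes (already in rank one) that $b^{\mf F}_{\partial\tau(\infty)}(\basepoint,a'_s\basepoint)\to-\infty$ while $d^{\mf F}(\basepoint,a'_s\basepoint)=2s\to+\infty$. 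By continuity of $b^{\mf F}_\xi$ in $\xi$ the estimate also fails at $\eta^+$ once $n$ is large. The underlying problem is that $\eta^+$ is governed by whichever of $g_-^{-1},g_+$ is more loxodromic, not by the direction of $g_+\basepoint$ seen from $\basepoint$. Finally, your ``algebraic route'' is circular: since $d^{\mf F}(x,y)=\alpha_0(\kappa(x,y))$, the displayed inequality on Cartan projections is exactly the statement of the lemma.

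The paper instead picks \emph{two} flags, built from the compact data in the Cartan decompositions $g_\pm=k_\pm e^{\kappa(g_\pm)}\ell_\pm$: one chooses $\xi_+\in a'_\omega\mc F_{\geq 0}$ and $\xi_-\in a'_{-\omega}\mc F_{\leq 0}$ adapted to $\ell_\pm\in K$ so that Lemma~\ref{lem:dist to weylcone} controls the position of $g_\pm\basepoint$ relative to the Weyl cones toward $g_\pm\xi_\pm$. Positivity then forces $g_+\xi_+\in\mc F_{\geq 0}$ and $g_-\xi_-\in a'_{-\omega}\mc F_{\leq 0}$; every pair in the compact set $a'_{-\omega}\mc F_{\leq 0}\times\mc F_{\geq 0}$ is transverse, so the flat through $g_\pm\xi_\pm$ passes uniformly near $\basepoint$, and one concludes via Proposition~\ref{prop:diamond}. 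What your approach misses is that a working flag must track the Cartan direction of $g_+$, a noncompact parameter; the paper extracts it from~$\ell_+\in K$.
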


To prove it we will need the following technical result. 
In its formulation, we use a $K$-invariant metric $d_{\mc F}$ on the flag variety ${\mc F}$.
Furthermore, we denote by $\xi_0\in {\mc F}$ the simplex $\partial \tau(\infty)=\exp({\mf a}^+)\in {\mc F}$.
Distances are taken with respect to the distance function $d$ defined by the symmetric Riemannian metric.

\begin{lemma}\label{lem:dist to weylcone}
    For every $\epsilon >0$ there exists a number $C_\epsilon>0$ only depending on $\epsilon$ with the 
    following property. 
    Consider $g\in G$ decomposed as $g=k\exp(\kappa(g))\ell$ with $k,\ell\in K$ 
    (the maximal compact subgroup) and $\kappa(g)\in\mf a^+$.
    Let $\xi\in\mc F$ be at $d_{\mc F}$-distance at least $\epsilon>0$ from $k\xi_0^{\not\pitchfork}$.
    Then $\basepoint$ is at distance at most $C_\epsilon$ from the Weyl cone with vertex at
    $g\basepoint$ and boundary at infinity the simplex in $\partial_\infty \X$ corresponding to 
    $\xi$.
\end{lemma}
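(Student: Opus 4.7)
The plan is to reduce via $K$-invariance to the case $g = a = \exp(\kappa(g)) \in \overline{A^+}$: the $KAK$ decomposition $g = k\exp(\kappa(g))\ell$ lets me absorb $k^{-1}$ on the left (which fixes $\basepoint$ and sends $k\xi_0^{\not\pitchfork}$ to $\xi_0^{\not\pitchfork}$) and $\ell^{-1}$ on the right (harmless since $\ell\basepoint=\basepoint$), after which the hypothesis becomes $d_{\mc F}(\xi,\xi_0^{\not\pitchfork})\geq\epsilon$ with $\xi_0=\partial\tau(\infty)$. By the Bruhat parametrization analogous to~\eqref{eq:param transv} (with the roles of $\partial\tau(0)$ and $\partial\tau(\infty)$ swapped), the open Schubert cell $\xi_0^\pitchfork$ is homeomorphic to $U := \exp(\bigoplus_{\alpha \in \mc R^+} \mf g_\alpha)$ via $u \mapsto u \cdot \partial\tau(0)$, and the $\epsilon$-transversality condition forces the parameter $u = u(\xi)$ to lie in a compact subset $K_\epsilon \subset U$ depending only on $\epsilon$.

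Next, I would introduce the unique maximal flat $F := uA\basepoint$, which has $\xi_0$ and $\xi = u\cdot\partial\tau(0)$ as opposite Weyl chambers at infinity, and produce the candidate nearby point $q := ua\basepoint \in F$. The estimate
\[
d(q, a\basepoint) = d\bigl((a^{-1}ua)\basepoint, \basepoint\bigr) \leq C_\epsilon
\]
follows because $\mr{Ad}(a^{-1})$ acts on $\bigoplus_{\alpha \in \mc R^+}\mf g_\alpha$ by weights $e^{-\alpha(\log a)} \leq 1$ for $\log a \in \overline{\mf a^+}$, so that $a^{-1}ua$ remains in a compact subset of $U$ determined only by $K_\epsilon$; this is the bounded (non-contracting) version of~\eqref{eq:basic contraction}, valid even when $\kappa(g)$ lies on a wall. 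Inside $F$, the $\xi$-Weyl cone based at $q$ equals $uaA^-\basepoint$ with $A^- := \exp(-\overline{\mf a^+})$, and the choice of parameter $a^{-1}\in A^-$ (legitimate since $a\in A^+$) places $u\basepoint$ on this cone; compactness of $K_\epsilon$ gives $d(\basepoint,u\basepoint)\leq C_\epsilon$.

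Finally, to transfer the estimate to the Weyl cone $W(a\basepoint,\xi)$ of the lemma's statement, let $\xi_*\in\xi\subset\partial_\infty\X$ be the endpoint of the ray in $F$ from $q$ through $u\basepoint$ (which lies in the closed chamber $\xi$ whether or not $\kappa(g)$ is on a wall), and let $c$ be the unique CAT(0) geodesic ray from $a\basepoint$ to $\xi_*$, which lies in $W(a\basepoint,\xi)$. Convexity of the Riemannian distance in the CAT(0) space $\X$, applied to the pair of rays asymptotic to $\xi_*$ with starting points $q$ and $a\basepoint$, gives $d(c(d(q,u\basepoint)),u\basepoint)\leq d(q,a\basepoint)\leq C_\epsilon$, whence $\basepoint$ is within $2C_\epsilon$ of $W(a\basepoint,\xi)$. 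The main technical care will be verifying the compactness of $K_\epsilon$ rigorously from the explicit $K$-invariant metric on $\mc F$, and ensuring the conjugation bound on $\mr{Ad}(a^{-1})$ is uniform in $a\in\overline{A^+}$ including on the walls; the CAT(0) transfer in the last step is standard.
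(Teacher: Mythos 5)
Your argument is correct, and it takes a genuinely different route from the paper's. The paper works entirely in the flat $F = F(\xi, k\xi_0)$: by compactness it finds $x \in F$ near $\basepoint$, observes that $g\basepoint$ lies in the Weyl cone from $\basepoint$ to $k\xi_0$, then applies the CAT(0) convexity-of-distance argument \emph{twice} — once to asymptotic rays toward a point $p \in k\xi_0$ to find $y \in F$ near $g\basepoint$, and once more to rays toward the opposite point $q \in \xi$ to show the ray from $g\basepoint$ to $q$ passes near $x$ (and hence near $\basepoint$). You instead reduce via $K$-equivariance to $k = \ell = 1$, parametrize $\xi = u\partial\tau(0)$ by the Bruhat cell $U \cong \xi_0^\pitchfork$, use $\epsilon$-transversality to confine $u$ to a compact $K_\epsilon \subset U$, and then exploit the non-expansion of $\mr{Ad}(a^{-1})$ on $\bigoplus_{\alpha\in\mc R^+}\mf g_\alpha$ to bound $d(ua\basepoint, a\basepoint)$ directly; a \emph{single} CAT(0) comparison of asymptotic rays then transfers the conclusion from the cone based at $ua\basepoint$ to the one based at $a\basepoint$. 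The paper's argument is purely geometric and never invokes the Bruhat parametrization or $\mr{Ad}$-contraction; yours is more algebraic and makes the dependence on $\epsilon$ concrete (through $K_\epsilon$), at the cost of the extra reduction step and the verification that $a^{-1}K_\epsilon a$ stays bounded. Both are sound; yours is arguably a more explicit route to the same constant, while the paper's avoids coordinates on $\mc F$ entirely. One small point: the lemma as written says $\kappa(g) \in \mf a^+$, but as you note the argument (either one) works for $\kappa(g) \in \overline{\mf a^+}$, which is what the Cartan decomposition actually delivers.
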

\begin{proof}
    %
    Since $d_{\mc F}(\xi,k\xi_0^{\not \pitchfork})\geq \epsilon$, the simplices $\xi,k\xi_0$ are transverse and  
    hence they are contained in a unique maximal flat $F$. We claim that 
    there is $x\in F$ at distance at most $C_\epsilon>0$ from $\basepoint$, where $C_\epsilon$ only depends on $\epsilon$. 
    Indeed, this follows from the compactness of the set $\{(\xi,\eta)\in\mc F^2,d_{\mc F}(\xi,\eta)\geq \epsilon\}$ 
    and continuity of the map which associates to two transverse flags the unique maximal flat 
    whose visual boundary contains the Weyl chambers that corresponds to the two flags.

    Note that $g\basepoint$ is contained in the Weyl cone connecting 
    $\basepoint$ to $k\xi_0$ (because $k\basepoint=\basepoint, \ell \basepoint=\basepoint$ and 
    $\exp(\kappa(g))\basepoint$ is contained in the Weyl Cone 
    connecting $\basepoint$ to $\xi_0$).
    Thus as Weyl cones are convex cones, 
    the endpoint $p\in\partial_\infty\X$ of the geodesic ray 
    $[\basepoint,p)$ starting at $\basepoint$ 
    and passing through $g\basepoint$ is contained in the Weyl Chamber $k\xi_0$.
    The ray $[x,p)$ is contained in the Weyl Cone from~$x $ to $k\xi_0$.
    
    We now apply the ${\rm CAT}(0)$-property for the Riemannian symmetric metric 
    to the asymptotic rays $[x,p)$ and $[\basepoint,p)$. It yields that the point $y\in [x,p)$ 
    at distance exactly $d(\basepoint,g \basepoint)$ from $x$ is of distance 
    at most  $d(\basepoint,x)\leq C_\epsilon$ from $g\basepoint$.

    By construction, the geodesic ray $[x,p)$ is contained in the flat $F$, and its endpoint is contained in the 
    Weyl chamber $k\xi_0$. The unique geodesic line $\eta$ extending $[x,p)$ is contained in $F$ and 
    is backward asymptotic to a point $q$ in the unique Weyl chamber $\xi$ in the visual boundary of $F$ which 
    is transverse to $k\xi_0$. Recall that $\eta$ passes through the $C_\epsilon$-neighborhood of 
    $\basepoint$. 
    
    Using once more the ${\rm CAT}(0)$-property, this time applied to the subray of 
    $\eta$ which connects $y$ to $q$ and the geodesic ray $\zeta$ connecting $g\basepoint$ to $q$, we conclude
    that $\zeta$ passes through the $C_\epsilon$-neighborhood of $x$ and hence through the 
    $2C_\epsilon$-neighborhood of $\basepoint$. On the other hand, by construction, this ray 
    is contained in the Weyl cone connecting $g\basepoint$ to $\xi$. Together this is what we
    wanted to show.
\end{proof}

\begin{proof}[Proof of Lemma~\ref{lem:triangle equality pos}]
  
Decompose $g_\pm=k_\pm e^{\kappa(g_\pm )}\ell_\pm $ with $k_\pm,\ell_\pm\in K$ (the maximal compact subgroup) and $\kappa(g_\pm)\in \mf a^+$.
    The plan is to use positivity and Lemma~\ref{lem:dist to weylcone} to find Weyl Chambers $\xi_\pm$ such that their images $g_\pm\xi_\pm$ are transverse, the flat $F$ through them passes near $\basepoint$, and the Weyl Cone from 
    $\basepoint$ to $g_\pm\xi_\pm$ passes near $g_\pm\basepoint$.
    
    Recall the definition of the set ${\mc F}_{\geq 0}$, and for $\omega >0$ the element $a_\omega^\prime$. 
    Since $a'_\omega \mc F_{\geq 0}$ has nonempty interior and $\xi_0^{\not \pitchfork}$ is a closed set with empty interior, there exists $\epsilon=\epsilon_\omega>0$ such that for every $k\in K$ there is $\xi\in a'_\omega \mc F_{\geq 0}$ at $d_{\mc F}$-distance at least $\epsilon$ from $k\xi_0^{\not\pitchfork}$.
    Similarly, for every $k\in K$ there is $\xi\in a'_{-\omega} \mc F_{\leq 0}$ at $d_{\mc F}$-distance at least $\epsilon$ from $k\xi_0^{\not\pitchfork}$.

    Let $\xi_+\in a'_\omega \mc F_{\geq 0}$ be at $d_{\mc F}$-distance at least $\epsilon$ from 
    $\ell_+^{-1}\xi_0^{\not\pitchfork}$ and $\xi_-\in a'_{-\omega} \mc F_{\leq 0}$ be at $d_{\mc F}$-distance 
    at least $\epsilon$ from $\ell_-^{-1}\xi_0^{\not\pitchfork}$.

    We know $\xi_+\in\mc F_{\geq 0}$ and $g_+\in G_{\geq 0}$, so by Theorem~\ref{fact:positivity}.\ref{itempos:cone preserved} we have $g_+\xi_+\in\mc F_{\geq 0}$, and similarly  $g_-\xi_-\in a'_{-\omega}\mc F_{\leq 0}$.
    In particular, by Lemma~\ref{factfact}.\ref{itempos:transverse} $g_+\xi_+$ and $g_-\xi_-$ are transverse.
    More precisely, if we denote as before by $d$ the distance function of the symmetric metric, then 
    the flat $F=F(g_-\xi_-,g_+\xi_+)$ through them contains a point $x$ with $d(x,\basepoint)\leq q_\omega$
    for some $q_\omega>0$ only depending on $\omega$, because every pair in the set $a'_{-\omega}\mc F_{\leq 0}\times \mc F_{\geq 0}$ is transverse and this set is compact.

    Since $d_{\mc F}(\xi_\pm,\ell_\pm^{-1}\xi_0^{\not\pitchfork})\geq\epsilon$, Lemma~\ref{lem:dist to weylcone} 
    implies that $g_\pm\basepoint$ is at Riemannian distance at most $C_\epsilon$ to the Weyl Cone connecting
    $\basepoint$ to $g_\pm\xi_\pm$.  
    By the CAT(0) property of $(\X,d)$, applied to the geodesics connecting $\basepoint$ and $x$ to all points in 
    $g_{\pm}\xi_{\pm}$, the Hausdorff distance (for $d$) between 
    this Weyl cone and the Weyl cone $W_\pm\subset F$ connecting $x$ to $g_\pm\xi_\pm$ is 
    at most $d(\basepoint,x)\leq q_\omega$. As a consequence, there is $x_\pm\in W_\pm$ 
    with 
    \[d(x_{\pm},g_{\pm} \basepoint)\leq 
    q_\omega+C_\epsilon.\]

    Since $W_+$ and $W_-$ are two opposite Weyl Cones in $F$ based at $x$, and since $x_\pm\in W_\pm$, we deduce from Proposition~\ref{prop:diamond} that 
    $$d^{\mf F}(x_-,x)+d^{\mf F}(x,x_+)=d^{\mf F}(x_-,x_+).$$
    
    To conclude, recall that the Riemannian and Finsler metrics are comparable; that is,
    there exists $\lambda>0$ such that $\lambda^{-1}d\leq d^{\mf F}\leq \lambda d$.
    Then by the triangle inequality
    \begin{align*}
        d^{\mf F}(g_-\basepoint,\basepoint)& +d^{\mf F}(\basepoint,g_+\basepoint)-
        d^{\mf F}(g_-\basepoint,g_+\basepoint)\\
        &\leq d^{\mf F}(x_-,x)+d^{\mf F}(x,x_+)-d^{\mf F}(x_-,x_+) + 6\lambda(q_\omega+C_\epsilon) \\
        &\leq 6\lambda(q_\omega+C_\epsilon).\qedhere
    \end{align*}
   \end{proof}

We now use the previous result to prove that admissible paths are quasi-ruled.
For this we will need the following general fact about quasi-ruled paths.

\begin{lemma}\label{lem:quasiruled for concats}
 Let $(X,d)$ be a metric space and $x_1,\dots,x_n\in X$ be such that for some constant $C>0$ we have $d(x_i,x_j)+d(x_j,x_k)\leq d(x_i,x_k)+C$ for all $i<j<k$.
Then any concatenation of geodesics $[x_1,x_2], [x_2,x_3],\dots,[x_{n-1},x_n]$ is $4C$-quasi-ruled. 
\end{lemma}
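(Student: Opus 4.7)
The approach is to express each pairwise distance $d(p,q)$, $d(q,r)$, $d(p,r)$ in terms of the distances between consecutive breakpoints $x_i$ plus controlled error terms, so that the quasi-ruled inequality reduces to the hypothesis applied to a single triple of breakpoints.

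I would parametrize: write $p \in [x_a, x_{a+1}]$, $q \in [x_b, x_{b+1}]$, $r \in [x_c, x_{c+1}]$ with $a \leq b \leq c$, and set $\alpha = d(x_a, p)$, $\beta = d(x_b, q)$, $\gamma = d(x_c, r)$. The key technical lemma to establish is the following formula, valid when $a < b$:
$$d(p, q) = d(x_a, x_b) - \alpha + \beta + \delta_{pq}, \qquad |\delta_{pq}| \leq C,$$
together with analogous formulas for $d(q, r)$ when $b < c$, and for $d(p, r)$ when $a < c$. For the upper bound on $d(p,q)$, I would combine the triangle inequality $d(p,q) \leq d(p,x_{a+1}) + d(x_{a+1},x_b) + \beta$ with $d(p, x_{a+1}) = d(x_a, x_{a+1}) - \alpha$ and the hypothesis $d(x_a, x_{a+1}) + d(x_{a+1}, x_b) \leq d(x_a, x_b) + C$ (trivial if $b = a+1$). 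For the lower bound, I would bound $d(p,q) \geq d(q, x_a) - \alpha \geq d(x_a, x_{b+1}) - d(x_{b+1}, q) - \alpha$ and then invoke the hypothesis on the triple $(x_a, x_b, x_{b+1})$ to obtain $d(x_a, x_{b+1}) \geq d(x_a, x_b) + d(x_b, x_{b+1}) - C$, producing the desired lower bound after simplification. When $a = b$, the two points lie on a common geodesic segment and the formula holds exactly with $\delta_{pq} = 0$.

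With these formulas in hand, the conclusion follows by a single computation. In the generic case $a < b < c$,
$$d(p, q) + d(q, r) - d(p, r) = \bigl[d(x_a, x_b) + d(x_b, x_c) - d(x_a, x_c)\bigr] + \delta_{pq} + \delta_{qr} - \delta_{pr} \leq C + 3C = 4C,$$
since the bracketed term is at most $C$ by the hypothesis on $(x_a, x_b, x_c)$, and the three error terms contribute at most $|\delta_{pq}| + |\delta_{qr}| + |\delta_{pr}| \leq 3C$. In the degenerate cases where two of $a,b,c$ coincide, the bracket vanishes and at most two error terms survive, yielding the strictly better bound $2C$; when $a=b=c$ the three points lie on a common geodesic and $d(p,q) + d(q,r) = d(p,r)$.

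I do not expect a serious obstacle: the argument is essentially algebraic once the key comparison formula for $d(p,q)$ is in place. The only point requiring care is bookkeeping through the edge cases where indices coincide, and the sub-case $b = a+1$, in which the hypothesis on the triple $(x_a, x_{a+1}, x_b)$ reduces to a tautology but the formula still holds (with $\delta_{pq} \leq 0$) directly from the triangle inequality.
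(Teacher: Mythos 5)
Your proof is correct and, at the level of the underlying estimates, is essentially the paper's: both combine the triangle inequality with the hypothesis applied to the same four triples of indices, $(a,a+1,b)$, $(b,b+1,c)$, $(a,b,c)$, and $(a,c,c+1)$, to produce the $4C$. The paper writes this as a single chain of inequalities, whereas you modularize the bookkeeping by first establishing the comparison formula $d(p,q) = d(x_a,x_b) - \alpha + \beta + \delta_{pq}$ with $|\delta_{pq}|\le C$ and then reducing to the hypothesis on $(a,b,c)$; this is a cleaner presentation of the same argument rather than a genuinely different route.
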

\begin{proof}
 Let $x,y,z$ be three points on such a concatenation, in this order, and let us check that $d(x,y)+d(y,z)\leq d(x,z)+4C$.
 Let $[x_i,x_{i+1}]$, $[x_j,x_{j+1}]$ and $[x_k,x_{k+1}]$ be three geodesic pieces of the concatenation containing respectively $x,y,z$, such that $i\leq j\leq k$.
 Let us assume for the rest of the proof that $i<j<k$; if instead we have $i=j<k$ or $i<j=k$ then the proof is similar (in fact easier), and the case $i=j=k$ is obvious.

 Using the triangle inequality and our assumption, and denoting $(a,b)=d(a,b)$ to lighten our estimates on distances, we have the following, which concludes the proof.
\begin{align*}
(x&,y)+(y,z) 
\leq (x,x_{i+1})+(x_{i+1},x_j)+(x_j,y)+(y,x_{j+1}) + (x_{j+1},x_k)+(x_k,z)\\
&\leq -(x_i,x)+(x_i,x_{i+1})+(x_{i+1},x_j)+(x_j,x_{j+1}) + (x_{j+1},x_k)+(x_k,x_{k+1})-(z,x_{k+1})\\
&\leq -(x_i,x)+(x_i,x_j)+C+(x_j,x_k)+C+(x_k,x_{k+1})-(z,x_{k+1})\\
&\leq 2C -(x_i,x)+(x_i,x_k)+C+(x_k,x_{k+1})-(z,x_{k+1})\\
&\leq 3C -(x_i,x)+(x_i,x_{k+1})+C-(z,x_{k+1})\\
&\leq 4C +(x,z).\qedhere
\end{align*}
\end{proof}

We now prove that admissible paths are quasi-ruled.

\begin{proposition}\label{prop:triangle equality for admissible paths}
    For any $\omega>0$ there exists $C_\omega$ such that  any $(\omega,0)$-admissible path $c$ in $\X$ is Finsler $C_\omega$-quasi-ruled, and hence is at Hausdorff distance at most $C'_\omega$ from some Finsler geodesic by Theorem~\ref{thm:morse}, where $C'_\omega$ only depends on $C_\omega$.
\end{proposition}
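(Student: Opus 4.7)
The plan is to realise the admissible path as a geodesic concatenation and apply Lemma~\ref{lem:quasiruled for concats} after verifying its triangle-inequality hypothesis at every triple of breakpoints; the bound on Hausdorff distance to a Finsler geodesic then follows immediately from Theorem~\ref{thm:morse}.

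Lifting $c:[0,T]\to\X$ to a path $\hat c:[0,T]\to G$, I would write it as an alternating concatenation of hyperbolic and flat pieces $P_0,\ldots,P_{N-1}$ with breakpoints $0=\tau_0<\dots<\tau_N=T$ and set $g_m:=\hat c(\tau_m)$, $x_m:=g_m\basepoint$. Each piece is a Finsler geodesic, so $c$ coincides with the geodesic concatenation through $x_0,\dots,x_N$, and Lemma~\ref{lem:quasiruled for concats} reduces the whole matter to producing a constant $C=C(\omega)$ such that
\[d^{\mf F}(x_i,x_j)+d^{\mf F}(x_j,x_k)\leq d^{\mf F}(x_i,x_k)+C\quad\text{for every triple }i<j<k.\]

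At each interior breakpoint $\tau_j$ exactly one of the adjacent pieces $P_{j-1},P_j$ is hyperbolic by alternation; call it $P_\ast$, of length $\ell$. When $P_\ast$ is not the first or last piece of $c$, admissibility forces $\ell\geq\omega$. In the subcase $P_\ast=P_{j-1}$, Corollary~\ref{cor:admissible paths are positive} yields $g_i^{-1}g_j=A\cdot a'_\ell$ with $A\in G_{\geq 0}$, whence
\[g_j^{-1}g_i=a'_{-\omega}\bigl(a'_{-(\ell-\omega)}A^{-1}\bigr)\in a'_{-\omega}G_{\leq 0},\]
using that $G_{\leq 0}$ is a semigroup, while $g_j^{-1}g_k\in G_{\geq 0}$ is automatic. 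Lemma~\ref{lem:triangle equality pos} then gives the triangle-inequality bound with the constant $C_\omega$ of that lemma. The subcase $P_\ast=P_j$ is handled by conjugating the inequality of Lemma~\ref{lem:triangle equality pos} by $r_\pi\in K$; since $r_\pi$ fixes $\basepoint$, and since $r_\pi a'_\omega r_\pi^{-1}=a'_{-\omega}$ and $r_\pi G_{\geq 0}r_\pi^{-1}=G_{\leq 0}$ by Lemma~\ref{fact:positivity tau}.\ref{itempos:rpi conj}, one obtains the symmetric version of Lemma~\ref{lem:triangle equality pos} applicable when $g_-\in G_{\leq 0}$ and $g_+\in a'_\omega G_{\geq 0}$, which is exactly the configuration forced by $P_\ast=P_j$.

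The only remaining case is that $P_\ast$ is a short ($\ell<\omega$) first or last piece of $c$, which forces $j\in\{1,N-1\}$ and the offending side to be $i=0$ or $k=N$; the corresponding distance $d^{\mf F}(x_i,x_j)$ or $d^{\mf F}(x_j,x_k)$ is then below $\omega$, and the desired bound follows directly from the triangle inequality with constant $2\omega$. Taking $C:=\max(C_\omega,2\omega)$, Lemma~\ref{lem:quasiruled for concats} upgrades these pointwise bounds to a $4C$-quasi-ruled estimate for the whole path $c$, after which Theorem~\ref{thm:morse} produces the required Finsler geodesic at bounded Hausdorff distance. I expect the only delicate point to be the positivity bookkeeping in the main case, specifically the step of rearranging the product of admissible factors into the precise normal form $a'_{-\omega}G_{\leq 0}$ (or, after conjugation by $r_\pi$, the form $a'_\omega G_{\geq 0}$) demanded by Lemma~\ref{lem:triangle equality pos}; the deep analytic content has already been absorbed by that lemma and by the Morse-type Theorem~\ref{thm:morse}.
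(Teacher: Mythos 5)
Your proof is correct and follows essentially the same strategy as the paper: reduce to triangle inequalities at triples of breakpoints via Lemma~\ref{lem:quasiruled for concats}, use Corollary~\ref{cor:admissible paths are positive} to place the two sides in $G_{\geq 0}$ and $G_{\leq 0}$ with the adjacent hyperbolic piece providing the $a'_{\pm\omega}$ factor, dispose of the short first/last piece case by the triangle inequality, and invoke Lemma~\ref{lem:triangle equality pos}. Your explicit $r_\pi$-conjugation to produce the symmetric version of Lemma~\ref{lem:triangle equality pos} in the case $P_\ast = P_j$ (where $g_+\in a'_\omega G_{\geq 0}$ but only $g_-\in G_{\leq 0}$) is a detail the paper leaves implicit, since the hypotheses of that lemma as stated demand $g_-\in a'_{-\omega}G_{\leq 0}$ and only $g_+\in G_{\geq 0}$.
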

\begin{proof}  
By definition $c(r)=a(r)\basepoint$ for any $r$, where $a(r)$ is an admissible path in $G$.

Take $0\leq t< s< u\leq T$, and let us show $d(c(t),c(s))+d(c(s),c(u))\leq d(c(t),c(u))+C_\omega$ for a well-chosen $C_\omega$.
By Lemma~\ref{lem:quasiruled for concats} above we may assume that each of the points $a(t),a(s),a(u)$ is at the junctions of two pieces of the admissible path $a$, one of hyperbolic type, and the other of flat type (see Definition~\ref{def:admissible in G}).

 By Corollary~\ref{cor:admissible paths are positive}, $a(s)^{-1}a(u)\in G_{\geq0}$ and $a(s)^{-1}a(t)\in G_{\leq 0}$.

Note that $a(s)$ is adjacent to a hyperbolic-type piece of $a$, which has length at least $\omega$, unless this piece is the first or last piece of $a$.
If this hyperbolic-type piece is first or last and has length less than $\omega$, then $c(s)$ is $\omega$-close to either $c(t)$ or $c(u)$, and one conclude easily with the triangle inequality (taking $C_\omega\geq \omega$).
Let us assume this hyperbolic-type piece has length at least $\omega$.

If this piece is after $a(s)$ then $a(s)^{-1}a(u)\in a'_{\omega} G_{\geq0}$.
If on the contrary this piece is before $a(s)$ then $a(s)^{-1}a(t)\in a'_{-\omega}G_{\leq 0}$. 

In any case, by Lemma~\ref{lem:triangle equality pos}, we can conclude:
    \begin{equation*}d^{\mf F}(c(t),c(s))+d^{\mf F}(c(s),c(u)) \leq d^{\mf F}(c(t),c(u)) + C_\omega.\qedhere\end{equation*}
\end{proof}

Finally, we deduce that the characteristic surface associated to a Hitchin grafting representation is quasi-convex.

\begin{corollary}\label{cor:quasiconvex}
    Consider the map $\wt Q_z:\wt S_z\to \wt S_z^\iota\subset \X$ constructed in Proposition~\ref{piecewise-isometry}, such that the grafting locus $\gamma^*\subset S$ has collar size at least $\omega>0$.
    Then $\wt S_z^\iota$ is Finsler $C_\omega$-quasi-convex for some $C_\omega$ depending on $\omega$, in the sense that any two points of $\wt S_z^\iota$ can be connected by a Finsler geodesic that stays at distance at most $C_\omega$ from $\wt S_z^\iota$.
\end{corollary}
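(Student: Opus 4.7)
The plan is to reduce this statement to the Morse-type lemma (Theorem~\ref{thm:morse}) combined with the quasi-ruled property of admissible paths (Proposition~\ref{prop:triangle equality for admissible paths}). The idea is that any two points of $\wt S_z^\iota$ can be connected inside $\wt S_z^\iota$ by the image of an admissible path, and admissible paths are quasi-ruled, hence fellow-travel a Finsler geodesic.

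Concretely, given $x,y\in\wt S_z^\iota$, I would first pick lifts $\tilde x,\tilde y\in\wt S_z$ with $\tilde Q_z(\tilde x)=x$ and $\tilde Q_z(\tilde y)=y$. Next I would use the fact noted just after the definition of admissible path that any two points of $\tilde S_z$ are joined by a (unique) admissible path $c\colon[0,T]\to\wt S_z$. Observe that here we allow the first and last pieces of $c$ to be arbitrarily short, which will be compatible with the $(\omega,0)$-admissibility requirement below.

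Then I would push this path to the symmetric space. By Observation~\ref{obs:admissiblemapstoadmissible}, since $\gamma^*$ has collar size at least $\omega$, the image $\tilde Q_z\circ c\colon[0,T]\to\X$ is an $(\omega,L)$-admissible path, where $L$ is the minimum cylinder height (possibly zero). In particular, $\tilde Q_z\circ c$ is $(\omega,0)$-admissible. By construction its image lies entirely in $\wt S_z^\iota$ and joins $x$ to $y$.

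Finally, by Proposition~\ref{prop:triangle equality for admissible paths}, the path $\tilde Q_z\circ c$ is Finsler $C_\omega$-quasi-ruled, so Theorem~\ref{thm:morse} produces a Finsler geodesic from $x$ to $y$ at Hausdorff distance at most $C'_\omega$ from $\tilde Q_z\circ c$. Since the admissible path is contained in $\wt S_z^\iota$, this geodesic lies within $C'_\omega$ of $\wt S_z^\iota$, which is exactly the claim with quasi-convexity constant $C_\omega:=C'_\omega$. I do not expect a serious obstacle, since all the substantive work was done in Section~\ref{sec:Morse} and in Proposition~\ref{prop:triangle equality for admissible paths}; the only subtlety is the book-keeping that the $(\omega,L)$-admissibility requirement allows the extremal pieces to have arbitrary length, which matches the freedom we have when connecting arbitrary points $\tilde x,\tilde y$ of $\wt S_z$ by an admissible path.
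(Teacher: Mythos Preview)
Your proof is correct and follows exactly the same approach as the paper's own proof, which simply cites Proposition~\ref{prop:triangle equality for admissible paths} and Observation~\ref{obs:admissiblemapstoadmissible}. You have merely unpacked the argument (including the implicit step of connecting two arbitrary points of $\wt S_z$ by an admissible path, and the application of Theorem~\ref{thm:morse}, which is already bundled into the statement of Proposition~\ref{prop:triangle equality for admissible paths}).
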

\begin{proof}
    This is an immediate consequence of Proposition~\ref{prop:triangle equality for admissible paths} and Observation~\ref{obs:admissiblemapstoadmissible}.
\end{proof}

\subsection{Estimates on eigenvalues of products of totally positive matrices}

    We will also need a quantitative version of the classical result that all elements of $G_{>0}$ are loxodromic.
    This quantitative result is probably well known to experts; since we did not find a precise reference for what we need,
    we give a proof (in the case $G=\SL_d(\R)$).
    We will not need in the present paper the estimates on angles presented below, but they will be useful in the companion paper \cite{BHMM25}.

    Recall that for any matrix $g$ and $1\leq k\leq d$, we denote by $\lambda_k(g)$ the logarithm of the norm of the $k$-th eigenvalue of $g$, such that $\lambda_1(g)\geq \lambda_2(g)\geq\dots\geq\lambda_d(g)$.
    Recall that $g$ is loxodromic if we have strict inequalities; in this case this gives a natural ordering on the eigenspaces of $g$ (which are eigenlines).

\begin{proposition}\label{fact:quantitative Pos are Lox}
    For any $g\in G_{>0}$ there exists $\omega, \theta>0$ such that the following holds.
    \begin{itemize}
      \item for any $h\in G_{\geq 0}$, for any $1\leq k<d$, we have $\lambda_k(gh)\geq \lambda_{k+1}(gh)+\omega$ (so $gh$ is loxodromic) and the angle between the sum of the $k$ first eigenlines of $gh$ and the sum of remaining eigenlines is at least $\theta$.
      \item for all $h_1,\dots,h_n\in G_{\geq 0}$, denoting  $h=gh_1gh_2\cdots gh_n$ we have $\lambda_k(h)\geq \lambda_{k+1}(h) + n\omega$ for any $1\leq k \leq d-1$.

          In particular, $d^{\mf F}(\basepoint,h\basepoint)\geq n\omega'$ for some constant $\omega'$ that only depends on $\omega$.
      \item for all $h,h'\in gG_{\geq 0}$, for any $k$, the angle between the sum of the $k$ first eigenlines of $h$ and the sum of last $d-k$ eigenlines of $h'$ is at least $\theta$.
    \end{itemize}

\end{proposition}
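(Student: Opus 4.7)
The plan is to convert Fock--Goncharov positivity into a dynamical contraction on projective Grassmannians and then extract the eigenvalue gap via the Birkhoff--Hopf / Perron--Frobenius theory, in the spirit of Benoist's quantitative arguments for positive semigroups.

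Step 1 (Contraction in exterior powers). For each $1\leq k\leq d-1$ let $\pi_k\colon\mc F\to\Gr_k(\mathbb R^d)$ send a flag to its $k$-plane, and set $\Gr_k^{\geq 0}=\pi_k(\mc F_{\geq 0})$, $\Gr_k^{>0}=\pi_k(\mc F_{>0})$. Since $\bwedge^k$ of a totally positive matrix is totally positive (Gantmacher--Krein), $\Gr_k^{\geq 0}$ is the projectivization of an open properly convex cone $C_k\subset\bwedge^k\mathbb R^d$ preserved by $\bwedge^k G_{\geq 0}$, and Lemma~\ref{factfact}\eqref{itempos:contracts} ensures that $\bwedge^k g$ maps $\overline{C_k}\setminus\{0\}$ strictly inside the interior of $C_k$. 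By Birkhoff's inequality, $\bwedge^k g$ then contracts the Hilbert metric on $\mathbb P(C_k)$ by some factor $\theta_k=\theta_k(g)<1$, while $\bwedge^k h$ is non-expanding for $h\in G_{\geq 0}$. Consequently $\bwedge^k(gh)$ is a $\theta_k$-contraction of $\mathbb P(C_k)$ uniformly in $h\in G_{\geq 0}$.

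Step 2 (Spectral gap and long products). Birkhoff's spectral theorem converts Step 1 into a uniform eigenvalue gap: the dominant eigenvalue $\mu_k^+$ of $\bwedge^k(gh)$ is simple and satisfies $|\mu'|/\mu_k^+\leq\theta_k$ for every other eigenvalue $\mu'$. Using $\log\mu_k^+=\lambda_1(gh)+\cdots+\lambda_k(gh)$ and observing that the second-largest eigenvalue of $\bwedge^k(gh)$ corresponds to replacing $\lambda_k$ by $\lambda_{k+1}$, this translates to
\[ \lambda_k(gh)-\lambda_{k+1}(gh)\geq -\log\theta_k, \]
uniformly in $h\in G_{\geq 0}$; taking $\omega=\min_k(-\log\theta_k)>0$ proves the first bullet. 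For a product $h=gh_1\cdots gh_n$, each factor $gh_i$ is a $\theta_k$-contraction, so $\bwedge^k h$ is a $\theta_k^n$-contraction; the same argument yields $\lambda_k(h)-\lambda_{k+1}(h)\geq n\omega$. The Finsler distance bound then follows because $d^{\mf F}(\basepoint,h\basepoint)\geq\alpha_0(\lambda(h))$ up to a universal constant, and $\alpha_0$ is strictly positive on $\mf a^+$.

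Step 3 (Angle bounds). The attracting $k$-plane of any $h\in gG_{\geq 0}$ is the unique fixed point of the contraction $\bwedge^k h$ on $\mathbb P(C_k)$, hence lies in $g\overline{\Gr_k^{\geq 0}}$, a fixed compact subset of $\Gr_k^{>0}$. For the repelling $(d-k)$-plane of $h'\in gG_{\geq 0}$, apply the symmetric analysis to $(h')^{-1}$: using $(G_{\geq 0})^{-1}=G_{\leq 0}$ together with the involution $r_\pi(\cdot)r_\pi$ exchanging $G_{>0}$ and $G_{<0}$ (Lemma~\ref{fact:positivity tau}\eqref{itempos:rpi conj}) and the conjugation identity $(h')^{-1}=h_2^{-1}g^{-1}\sim g^{-1}h_2^{-1}\in G_{<0}\cdot G_{\leq 0}\subset G_{<0}$, a contraction argument on the symmetric cone $C_{d-k}^-\subset\bwedge^{d-k}\mathbb R^d$ places the repelling $(d-k)$-plane in a fixed compact subset of $\Gr_{d-k}^{<0}$. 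Uniform transversality of pairs in $\Gr_k^{>0}\times\Gr_{d-k}^{<0}$, coming from Lemma~\ref{factfact}\eqref{itempos:transverse}, together with compactness of the two sets, yields the required uniform lower bound $\theta>0$ on the angle. The main obstacle is the bookkeeping in Step 3: carefully localising the repelling direction in a compact subset of the open set $\Gr_{d-k}^{<0}$, which requires tracking the interaction of the inverse and $r_\pi$-conjugation on the positive/negative semigroups; by contrast, Steps 1--2 are essentially standard Perron--Frobenius arguments once total positivity of exterior powers is invoked.
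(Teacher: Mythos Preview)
Your overall strategy---pass to exterior powers, use that $\bwedge^k g$ has positive entries and hence strictly contracts the Hilbert metric on the projectivized positive orthant, then read off the eigenvalue gap---is exactly the route the paper takes. The paper first proves the lemma for \emph{positive} (not totally positive) matrices acting on the simplex $\Omega=\mathbb P(\R^d_{>0})$ and then applies it to each $\bwedge^k$; you go directly to $\bwedge^k$, but the content is the same, and your Steps~1--2 are essentially correct. One minor imprecision: the cone $C_k$ you want is the full positive orthant in $\bwedge^k\R^d$, not the (non-convex) cone over $\pi_k(\mc F_{\geq 0})$; what matters is only that $\bwedge^k G_{\geq 0}$ has nonnegative entries and $\bwedge^k g$ has positive entries, which gives the Birkhoff contraction on $\mathbb P(C_k)$.

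Step~3, however, has a genuine gap. You locate the repelling $(d-k)$-plane of $h'=gh_2$ by analyzing $(h')^{-1}=h_2^{-1}g^{-1}$ and invoking the conjugacy $h_2^{-1}g^{-1}\sim g^{-1}h_2^{-1}\in G_{<0}$. But conjugation \emph{moves eigenspaces}: the attracting direction of $g^{-1}h_2^{-1}$ lies in a fixed compact (since $g^{-1}$ is applied last), but the attracting direction of $h_2^{-1}g^{-1}$ is obtained from it by applying $h_2^{-1}$, which varies, so you lose the fixed compact. Running the contraction directly on $(h')^{-1}=h_2^{-1}g^{-1}$ does not help either, since the final map $h_2^{-1}$ only sends the cone into itself, not into a fixed compact.

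The paper's fix is much simpler and avoids inverses altogether. Working in $\mathbb P(\bwedge^k\R^d)$: since $\bwedge^k(gh')$ contracts $\mathbb P(C_k)$ to its attracting fixed point, the repelling hyperplane of $\bwedge^k(gh')$ \emph{cannot meet} the open set $\mathbb P(C_k)$ (any point of the intersection would fail to converge to the attractor). So repelling hyperplanes range over the compact set of hyperplanes disjoint from $\mathbb P(C_k)$, while attracting lines range over the fixed compact $\bwedge^k g\cdot\overline{\mathbb P(C_k)}\subset\mathbb P(C_k)$; since no such line lies in any such hyperplane, compactness gives the uniform angle bound, for both $h=h'$ and $h\neq h'$.
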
 

To prove Proposition~\ref{fact:quantitative Pos are Lox}, 
we first establish an intermediate result about positive matrices and use the fact that a matrix of size $d$ is totally positive if and only if all its exterior products, seen as matrices of size $d_k=\dim(\bwedge^k\R^d)$ where $1\leq k\leq d-1$, are \emph{positive}, i.e.\ with positive entries.

\begin{lemma}
    For any positive matrix $g$ of size $d$, there exists  $\omega>0$ such that the following holds.
    \begin{itemize}
        \item for any nonegative $h$, we have $\lambda_1(gh)\geq \lambda_{2}(gh)+\omega$ and the angle between the attracting eigenline of $gh$ and the repelling hyperplane is at least $\theta$.
        \item for all $A_1,\dots,A_n$ nonnegative, if $A=gA_1\cdots gA_n$ then $\lambda_1(A)\geq \lambda_{2}(A) + n\omega$.
        \item for all $h,h'$ nonnegative, for any $k$, the angle between the sum of the attracting eigenline of $gh$ and the repelling hyperplane of $gh'$ is at least $\theta$.
    \end{itemize}
    
\end{lemma}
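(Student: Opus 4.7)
The approach is Birkhoff's contraction theorem for the Hilbert projective metric $d_H$ on the open positive cone $\R^d_{>0}$. Recall that $d_H([x],[y])=\log\max_i(x_i/y_i)-\log\min_i(x_i/y_i)$, that any nonnegative invertible matrix preserving the positive cone is $d_H$-$1$-Lipschitz, and that a positive matrix $g$ strictly contracts $d_H$ with a Birkhoff coefficient $\tau_g\in(0,1)$ depending only on $g$. Moreover, after projectivization, the image $g(\R^d_{\geq 0})$ is a compact subset $K_g$ of the open positive cone, so I can fix $\epsilon>0$ such that every unit vector whose projectivization lies in $K_g$ has all coordinates $\geq\epsilon$.

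To obtain the spectral gap in the first two bullets, I would use that $h, A_1,\dots,A_n$ are nonnegative and invertible, hence $d_H$-$1$-Lipschitz. Submultiplicativity of the Birkhoff coefficient under composition then shows that $gh$ contracts $d_H$ by at most $\tau_g$, and that $A=gA_1\cdots gA_n$ contracts $d_H$ by at most $\tau_g^n$. The classical consequence of Birkhoff's theorem, that $|\lambda_2(M)|/\lambda_1(M)\leq \tau(M)$ for any positive matrix $M$, then yields $\lambda_1(A)-\lambda_2(A)\geq -n\log\tau_g$; setting $\omega:=-\log\tau_g>0$ gives $\lambda_1(A)-\lambda_2(A)\geq n\omega$, and the case $n=1$ settles the spectral gap part of the first bullet.

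For the angle estimates, the attracting eigenline of $gh$ lies in $gh(\R^d_{\geq 0})\subset g(\R^d_{\geq 0})$, so has a unit generator $v$ with all coordinates $\geq\epsilon$. Dually, the entries of $(gh')^T=(h')^T g^T$ equal $\sum_k h'_{ki}g_{jk}$, which is strictly positive since each $g_{jk}>0$ and no column of $h'$ is identically zero (as $\det h'=1$). Hence $(gh')^T$ is a positive matrix, and its Perron eigenvector $n$---the normal to the repelling hyperplane of $gh'$---has strictly positive coordinates. For any unit $v$ with entries $\geq\epsilon$ and any nonzero nonnegative $n$, the elementary inequality $\sum_i n_i \geq \|n\|_2$ gives
\[
\bigl\langle v,\, n/\|n\|_2\bigr\rangle \;\geq\; \epsilon\sum_i n_i/\|n\|_2 \;\geq\; \epsilon,
\]
so the angle between $v$ and the hyperplane $n^\perp$ is at least $\theta:=\arcsin\epsilon>0$, uniformly in $h$ and $h'$. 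The same argument with $h=h'$ gives the angle bound in the first bullet.

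The main obstacle is invoking the precise quantitative link $|\lambda_2(M)|/\lambda_1(M)\leq \tau(M)$ between the Birkhoff contraction coefficient and the subdominant eigenvalue of a positive matrix $M$; this is a standard but nontrivial consequence of Perron--Frobenius together with Birkhoff's theorem, and what is crucial here is that the resulting bounds depend only on $g$ and not on $h$, $h'$, or the factors $A_i$.
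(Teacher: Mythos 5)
Your proof is correct and rests on the same foundation as the paper's---Hilbert projective metric contraction on the projectivized positive cone---but differs at the step converting contraction into a spectral gap. You invoke the Birkhoff--Hopf inequality (the ratio of the modulus of the subdominant eigenvalue of a positive matrix to its Perron root is at most its Birkhoff contraction coefficient) as a quoted theorem, after which submultiplicativity of the contraction coefficient and the $1$-Lipschitzness of nonnegative invertible factors settle the first two bullets at once. The paper instead establishes a weaker version by hand: local comparability of the Hilbert and Euclidean projective metrics on the compact set $\overline{g\Omega}$ gives $e^{\lambda_2(A)-\lambda_1(A)}\le C r^n$ for $A=gA_1\cdots gA_n$, with an a priori unknown constant $C$, which is then eliminated by applying the same estimate to $A^k$ and letting $k\to\infty$. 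Your route is shorter but leans on a nontrivial quoted result; the paper's is self-contained at the cost of the extra power trick. On the angle bound your argument is actually more explicit: you identify the normal to the repelling hyperplane as the Perron vector of the positive matrix $(gh')^T$ and obtain the quantitative $\theta=\arcsin\epsilon$ by a direct inner-product estimate, whereas the paper argues only by compactness (the attracting line lies in the compact set $\overline{g\Omega}\subset\Omega$, while the repelling hyperplane is disjoint from $\Omega$) and does not produce a numerical value of $\theta$. Both arguments are correct.
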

\begin{proof}
    We will prove all three points at the same time.
    By density of positive matrices it suffices to prove the lemma for $A_1,\dots,A_n$ positive.

    Let $\mc C\subset\R^d$ be the open convex cone of positive vectors, and
    $\Omega\subset \R\PP^{d-1}$ its projectivisation, which is a properly convex domain in the sense that there is an affine chart of~$\R\PP^{d-1}$ containing $\Omega$ and in which $\Omega$ is bounded and convex.
    
    Note that $A\overline{\mc C}\subset {\mc C}\cup\{0\}$ for any positive matrix $A$, so $A\overline\Omega\subset \Omega$.
    
    Any properly convex domain $\Omega'\subset\R\PP^{d-1}$ can be endowed with a classical Finsler metric $d_{\Omega'}$ called the Hilbert metric, locally equivalent to the usual Riemannian metric of $\R\PP^{d-1}$, such that (see~\cite{Birkhoff_perronfrob}, or see~\cite{HilbertHandbook} for a broad introduction to Hilbert geometry)
    \begin{enumerate}
        \item it is projectively equivariant: $d_{h\Omega'}\circ h=d_{\Omega'}$ for any projective transformation $h$;
        \item it is monotone with respect to inclusion: $d_{\Omega'}\leq d_{\Omega''}$ (on $\Omega''$) for any $\Omega''\subset \Omega'$;
        \item if $\overline\Omega''\subset\Omega'$ then there is $r<1$ such that $d_{\Omega'}\leq r d_{\Omega''}$ (on $\Omega''$).
    \end{enumerate}

    Let $g$ be a positive matrix: As $g\overline{\Omega}\subset \Omega$ there is $r<1$ such that $d_{\Omega}\leq r d_{g\Omega}$.
    Then $g:\Omega\to\Omega$ is $r$-Lipschitz  for $d_\Omega$ --- hence it is a contraction --- by equivariance of the Hilbert metric.
    In fact, $gA:\Omega\to\Omega$ also is an $r$-$d_\Omega$-Lipschitz map for any positive matrix $A$ since $gA\Omega\subset g\Omega$.

    By the Banach fixed-point theorem, $gA$ has a fixed point $p\in\Omega$ such that $(gA)^nx\to p$ for any $x\in \Omega$.
    In particular, $gA$ is proximal with attracting line $p\in g\Omega$, and the repelling hyperplane does not intersect $\Omega$.
    
    This implies the existence of a positive lower bound $\theta$ on the angle (for the standard Euclidean metric) 
    between the attracting line of $gA$ and the repelling hyperplane of $gA'$ for all positive matrices $A,A'$.
    
    This settles our claims about angles in the statement of the lemma.
    It remains to prove the estimates on the gaps between the first two eigenvalues.
    This will be done by reinterpreting this gap as a contraction rate for the action of proximal transformations at their attracting eigenline and using our observation above that positive matrices contract the cone of positive vectors.

    One can observe that
    for any proximal matrix $h$ 
    such that the angle between the attracting line and the 
    repelling hyperplane is at least $\theta$, 
    the number $e^{\lambda_2(h)-\lambda_1(h)}$ is comparable to the contraction rate of $h$ at its attracting fixed point in $\R\PP^{d-1}$ for the usual Riemannian metric, where the comparison error 
    only depends on $\theta$.
    In other words, if $h$ is $R$-Lipschitz (for the Riemannian metric) at its attracting eigenline then
    $$
    e^{\lambda_2(h)-\lambda_1(h)}\leq C_1R
    $$
    for some constant $C_1$ depending on $\theta$.

    Moreove, as we already mentioned, the restriction of the Hilbert metric $d_\Omega$ to the compact subset 
    $\overline{g\Omega}$ is uniformly comparable to the standard Riemannian metric on $\R\PP^{d-1}$.
    Thus for  any  transformation $h:\Omega\to g\Omega$,  if $h$ is $R$-$d_\Omega$-Lipschitz on $g\Omega$ then it is $RC_2$-Riemannian-Lipschitz for some constant $C_2$ that depends on $g\Omega$ and $\Omega$.
    In particular it is $RC_2$-Lipschitz at its attracting eigenline, and hence
    $$
    e^{\lambda_2(h)-\lambda_1(h)}\leq CR
    $$
    where $C=C_1C_2$.

    In particular, for all $A_1,\dots, A_n$ positive, $A=gA_1\cdots gA_n$ is $r^n$-$d_\Omega$-Lipschitz so
    $$
    \lambda_1(A)-\lambda_2(A)\geq -\log(Cr^n).
    $$
    In fact, for any $k$, the transformation $A^k$ is $r^{nk}$-$d_\Omega$-Lipschitz so
    $$
     \lambda_1(A)-\lambda_2(A)=\frac1k (\lambda_1(A^k)-\lambda_2(A^k))\geq \frac{-\log C}k+n\log\left(\frac1r\right),
    $$
    and letting $k\to\infty$ we get
    $$
    \lambda_1(A)-\lambda_2(A)\geq n\log\left(\frac1r\right).
    $$
    Hence $\omega=\log\frac1r$ is the positive number we were looking for. 
\end{proof}

\begin{proof}[Proof of Proposition~\ref{fact:quantitative Pos are Lox}]
    This is an immediate consequence of the previous lemma, the fact that a matrix $g\in\GL_d(\R)$ is totally positive if and only if $\bwedge^kg\in\GL_{d_k}(\R)$ is positive for any $1\leq k\leq d$, and the fact that 
    \begin{equation*}
    \lambda_{k}(g)-\lambda_{k+1}(g)=\lambda_1(\bwedge^kg)-\lambda_2(\bwedge^kg).
    \end{equation*}
    We also used that the $\bwedge^{d-k}\R^d$ is naturally the dual to $\bwedge^k\R^d$, and that given a $k$-plane spanned by $v_1,\dots,v_k\in\R^d$ and a transverse $d-k$-plane spanned by $w_1,\dots,w_{d-k}$, the angle between these two subspaces is equal to the angle in $\bwedge^k\R^d$ between the vector $v_1\wedge\dots\wedge v_k$ and the hyperplane kernel of $w_1\wedge\dots\wedge w_{d-k}\in\bwedge^{d-k}\R^d$ seen as a linear form on $\bwedge^k\R^d$.

    That it holds $d^{\mf F}(\basepoint,h\basepoint)\geq \tfrac n{C'}$ comes from  the fact that 
    $d^{\mf F}(\basepoint,h\basepoint)$ is bounded from below by the Finsler translation length of $h$ acting on $\X$, which is given by $\alpha_0(\lambda_1(h),\dots,\lambda_d(h))$. Furthermore, the restriction of
    $\alpha_0$ to the set of diagonal matrices with ordered diagonal entries 
    $(v_1,\dots,v_d)$ such that $v_k\geq v_{k+1}$ for each $k$ is uniformly comparable to the maximums norm on the diagonal 
    entries. 
\end{proof}

Proposition~\ref{fact:quantitative Pos are Lox} has the following consequence in terms of admissible paths.

    \begin{proposition}\label{prop:inj quantique}
        For any $\omega>0$ there exists $C_\omega>0$ such that 
        for any $(\omega,0)$-admissible path $c:[0,T]\to G$,
        we have
        $$
        d^{\mf F}(c(0)\cdot \basepoint,c(T)\cdot\basepoint)\geq \frac{k-2}{C_\omega},
        $$
        where $k$ is the number of singularities (i.e.\ $k+1$ is the number of geodesic pieces of $c$).
    \end{proposition}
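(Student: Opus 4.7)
The plan is to reduce the statement to Proposition~\ref{fact:quantitative Pos are Lox} by reading off an appropriate algebraic factorization of $c(0)^{-1}c(T)$ from the admissible structure of $c$, and then passing from this factorization to a lower bound on the Finsler translation length, which in turn bounds the Finsler displacement from below.

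Concretely, first I would decompose $c(0)^{-1}c(T)=e_1 e_2\cdots e_{k+1}$ where each $e_i$ is the right factor of the $i$-th geodesic piece of $c$: $e_i=\exp(t_iz_i)$ for flat pieces and $e_i=a'_{t_i}$ for hyperbolic pieces. By Theorem~\ref{fact:positivity}\eqref{itempos:exp a in pos} and Lemma~\ref{fact:positivity tau}\eqref{itempos:a't}, each $e_i$ lies in $G_{\geq 0}$, and for every interior hyperbolic piece (all of which have length $\geq \omega$ by the $(\omega,0)$-admissibility hypothesis) we can split
\[e_i \;=\; a'_{\omega}\cdot a'_{t_i-\omega}, \qquad a'_{\omega}\in G_{>0},\ a'_{t_i-\omega}\in G_{\geq 0}.\]
A counting argument on the alternating pattern of pieces shows that the number $m$ of interior hyperbolic pieces is at least $\lfloor (k-1)/2\rfloor\geq (k-2)/2$. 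Absorbing all remaining factors ($e_j$ for $j$ not a long hyperbolic index, and the residues $a'_{t_i-\omega}$) into $G_{\geq 0}$-valued factors, I obtain a factorization
\[c(0)^{-1}c(T) \;=\; N_0\cdot a'_\omega N_1\cdot a'_\omega N_2\cdots a'_\omega N_m, \qquad N_0,N_1,\dots,N_m\in G_{\geq 0}.\]

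Next I would conjugate by $N_0$ to shift it to the right end: the element
\[h \;:=\; N_0^{-1} c(0)^{-1}c(T)\, N_0 \;=\; (a'_\omega N_1)(a'_\omega N_2)\cdots (a'_\omega N_{m-1})(a'_\omega\, N_m N_0)\]
now has exactly the shape $g h_1 g h_2\cdots g h_m$ with $g=a'_\omega\in G_{>0}$ and $h_1,\dots,h_m\in G_{\geq 0}$. Applying Proposition~\ref{fact:quantitative Pos are Lox} to $g=a'_\omega$ yields a constant $\omega'>0$ depending only on $\omega$ such that $d^{\mf F}(\basepoint, h\basepoint)\geq m\omega'$; in particular, $\ell^{\mf F}(h)\geq m\omega'$ since the statement of that proposition is derived from control on eigenvalue gaps, which is precisely the Jordan data of $h$.

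Finally, since the Jordan projection and hence $\ell^{\mf F}$ are conjugation-invariant, $\ell^{\mf F}(c(0)^{-1}c(T))=\ell^{\mf F}(h)\geq m\omega'$, and the basic inequality $d^{\mf F}(\basepoint, g\basepoint)\geq \ell^{\mf F}(g)$ (valid for any $g\in G$) gives
\[d^{\mf F}(c(0)\basepoint, c(T)\basepoint)\;=\;d^{\mf F}(\basepoint, c(0)^{-1}c(T)\basepoint)\;\geq\; m\omega'\;\geq\;\frac{k-2}{2}\omega',\]
valid for $k\geq 2$ (and trivial for $k<2$). Setting $C_\omega=2/\omega'$ finishes the proof. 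The only delicate point is the bookkeeping for the counting of long interior hyperbolic pieces and the observation that a single cyclic conjugation suffices to convert the factorization into exactly the normal form required by Proposition~\ref{fact:quantitative Pos are Lox}; I do not foresee any serious obstacle beyond that, as all the hard work on quantitative proximality of products of positive matrices has already been carried out.
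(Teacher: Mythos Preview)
Your proof is correct and follows essentially the same approach as the paper's: factor $c(0)^{-1}c(T)$ to exhibit at least $(k-2)/2$ copies of $a'_\omega$ interspersed with elements of $G_{\geq 0}$, then invoke Proposition~\ref{fact:quantitative Pos are Lox}. The paper's argument is terser --- it assumes $c(0)=1$, counts all hyperbolic pieces of length $\geq\omega$ (not just interior ones), and cites the proposition directly --- while you make explicit two points the paper leaves implicit: the cyclic conjugation by $N_0$ needed to match the exact normal form $gh_1gh_2\cdots gh_m$ in the proposition, and the passage through the conjugation-invariant translation length $\ell^{\mf F}$ to transfer the bound back to the original element.
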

    Observe that we need the $-2$ term in $(k-2)/C_\omega$ because we allow the first and last pieces of $c$ to have length less than $\omega$.
    \begin{proof}
        Without loss of generality we can assume $k\geq 3$, so that $c$ contains at least one piece of hyperbolic type of length at least $\omega$, and that $c(0)=1$.
        Let $r$ be the number of hyperbolic pieces of $c$ of length at least $\omega$; note that 
        $$\frac{k-2}2\leq r\leq \frac{k+2}2.$$

        Then by definition of admissible (Definition~\ref{def:admissible in G}) and Theorem~\ref{fact:positivity}, we can write $c(T)\in G_{>0}$ as the following product:
        $$
        c(T)=g_0a'_\omega g_1 a'_\omega g_2\cdots g_{r-1} a'_\omega g_r,
        $$
        where $g_0,\dots,g_r\in G_{\geq 0}$.

        By Proposition~\ref{fact:quantitative Pos are Lox}, there is $C>0$ only depending on $\omega$ such that
        \begin{equation*}
        d^{\mf F}(c(0)\basepoint,c(T)\basepoint) \geq \frac rC \geq \frac {k-2}{2C}.\qedhere
        \end{equation*}
    \end{proof}

\section{Geometric control: Uniform quasi-isometry}\label{QI-proof}

    This section contains the main geometric results of this article. 
    Recall that $S$ is a hyperbolic closed surface, let
    $G=\PSL_d(\R)$
    and $\tau:\PSL_2(\R)\to G$ be the usual irreducible representation.
    In Section~\ref{sec:graphofgroups}, given a collection of disjoint closed curves on $S$ and an element of the Cartan subspace of $G$ for each of these curves, we have recalled the definition of bending $\tau(\pi_1(S))$ inside $G$ along these closed curves via the elements of the Cartan subspace.
    Moreover, in Section~\ref{sec-HG}, we 
    associated to such a bending an abstract grafting $S_z$ of $S$ 
    (where $z$ is the grafting parameter) and an equivariant, 1-Lipschitz, and piecewise totally geodesic map 
    \[\tilde Q_z:\tilde S_z\to \X\] from its universal covering 
    $\tilde S_z$ to the symmetric space $\X$ of $G$ which projects to a map $Q_z:S_z\to \rho_z\backslash \X$.

    Note that $G$ is real split and $\tau$ is a regular and positive representation in the sense that it maps 
    (projectivizations of) positive matrices in $\PSL_2(\R)$ to 
    (projectivizations of) totally positive matrices in $G$ (see Section~\ref{sec:fockgoncharov}).
    Then the bent representation of $\pi_1(S)$ is Hitchin, which implies by independent (and different) work of Labourie~\cite{Lab06} and Fock--Goncharov~\cite{FG06} that our equivariant map $\tilde S_z\to\X$ is a quasi-isometric embedding.
    
    In this section we give an upper bound for the multiplicative error of this quasi-isometric embedding and establish
    a more precise version of Theorem \ref{main1}. 
    Our proof does not rely directly on 
    the work of Labourie  and Fock--Goncharov, but follows from the results on totally positive matrices proved in Section~\ref{sec:fockgoncharov}, which were greatly inspired by Fock--Goncharov's use of positivity. 

    \begin{theorem}\label{thm:quasiisom}
     For every $\sigma>0$, 
     there exists $C_{\sigma}>0$ such that
     the following holds.
    
     Consider a closed hyperbolic surface $S$, a multicurve $\gamma^*\subset S$ whose components have length at most $\sigma$, and a grafting parameter $z$ such that all cylinder heights of the abstract grafting $S_z$ are bounded 
     from below by 
     some number $L>0$.
     
     Let us endow $\X$ with the $G$-invariant 
      admissible Finsler metric $\mf F$ and $S_z$ with the pullback of this metric under $Q_z$, denoted by 
      $d_{\tilde S_z}^{\mf F}$.
     Then the grafting map $\tilde Q_z:\wt S_z\rightarrow\X$ is an injective quasi-isometric embedding with multiplicative constant $(1+C_\sigma/(L+1))$ and additive constant $C_{\sigma}$; more precisely, for all $x,y\in\tilde S_z$ we have
     $$\left(1+\frac{C_\sigma}{L+1}\right)^{-1}d^{\mf F}_{\tilde S_z}(x,y)-C_{\sigma}
     \leq d^{\mf F}(\tilde Q_z(x),\tilde Q_z(y)) \leq d^{\mf F}_{\tilde S_z}(x,y).$$
     Moreover, the image $\tilde S_z^\iota=\tilde Q_z(\tilde S_z)$ is $C_\sigma$-Finsler-quasiconvex in the sense that for all $x,y\in\tilde Q_z(\tilde S_z)$, there is a Finsler geodesic from $x$ to $y$ at distance at most $C_\sigma$ from $\tilde S_z^\iota$.
    \end{theorem}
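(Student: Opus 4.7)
\medskip

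\noindent\textbf{Plan.} Three of the four assertions come essentially for free. The 1-Lipschitz upper bound $d^{\mf F}(\tilde Q_z(x),\tilde Q_z(y))\leq d^{\mf F}_{\tilde S_z}(x,y)$ is immediate since $\tilde Q_z$ is a path isometry (Proposition~\ref{piecewise-isometry}). Injectivity is Corollary~\ref{prop:embedding}, and the Finsler $C_\sigma$-quasiconvexity of $\tilde S_z^\iota$ is Corollary~\ref{cor:quasiconvex}; the hypothesis $\ell_S(\gamma)\le\sigma$ for each component $\gamma\subset\gamma^*$ enters through the collar lemma of hyperbolic geometry, which furnishes a uniform lower bound $\omega=\omega(\sigma)>0$ on the collar width of $\gamma^*$.

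It then remains to prove the quantitative lower bound. I would begin with the unique admissible path $c$ from $x$ to $y$ in $\tilde S_z$, of Finsler length $\ell$; by definition of a path metric, $d^{\mf F}_{\tilde S_z}(x,y)\leq \ell$. Observation~\ref{obs:admissiblemapstoadmissible} ensures that $\tilde Q_z\circ c$ is an $(\omega,L)$-admissible path of $\X$, and Proposition~\ref{prop:triangle equality for admissible paths} then makes it $C_\omega$-Finsler-quasi-ruled for a constant $C_\omega$ depending only on $\sigma$. Writing $D:=d^{\mf F}(\tilde Q_z(x),\tilde Q_z(y))$ and iterating the quasi-ruled inequality over the $n-1$ junctions of the $n$ Finsler-geodesic pieces of $\tilde Q_z\circ c$ yields the basic estimate
\[
\ell\;\leq\;D+(n-1)\,C_\omega.
\]

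The next step is to control $n$. The pieces of $c$ alternate between hyperbolic and flat type: flat pieces are full cylinder crossings and hence of Finsler length $\geq L$, while every hyperbolic piece except possibly the first and the last has length $\geq 2\omega$, since two distinct lifts of components of $\gamma^*$ inside a hyperbolic piece are separated by the combined widths of their disjoint collars. Pairing each flat piece with an adjacent hyperbolic one gives $n\leq 2\ell/(L+2\omega)+O_\sigma(1)$, and since $(L+2\omega)/(L+1)$ lies in $[\min(1,2\omega),\max(1,2\omega)]$ for $L\geq 0$, this rewrites as $n\leq K_\sigma\ell/(L+1)+K_\sigma$ for some $K_\sigma$. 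Plugging this into the basic estimate and solving for $\ell$, in the regime $L+1\geq 2K_\sigma C_\omega$ one obtains
\[
\ell\;\leq\;\Bigl(1+\tfrac{K'_\sigma}{L+1}\Bigr)(D+K'_\sigma),
\]
which combined with $d^{\mf F}_{\tilde S_z}(x,y)\leq\ell$ is the desired inequality on this range.

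The main technical obstacle is the complementary regime $L+1<2K_\sigma C_\omega$, where the coefficient $1-K_\sigma C_\omega/(L+1)$ appearing after rearrangement may fail to be positive. Here I would invoke the quantitative positivity estimate Proposition~\ref{prop:inj quantique}: for an $(\omega,0)$-admissible path with $k$ singularities one has $D\geq (k-2)/C_\omega$, so that $n-1=k\leq C_\omega D+2$. Substituting into the basic estimate produces a \emph{uniform} linear bound $\ell\leq (1+C_\omega^2)\,D+O_\sigma(1)$, valid for every $L\geq 0$ with constants depending only on $\sigma$. In the small-$L$ regime the factor $1+C_\sigma/(L+1)$ is bounded below by a positive constant depending only on $\sigma$, so enlarging $C_\sigma$ absorbs this uniform bound into the target inequality; choosing a single $C_\sigma$ that handles both regimes simultaneously completes the proof.
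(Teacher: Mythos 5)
Your proof is correct, and the underlying strategy is the same as the paper's: you use Corollary~\ref{prop:embedding} for injectivity, Corollary~\ref{cor:quasiconvex} for quasi-convexity, the path-isometry property for the Lipschitz upper bound, and for the lower bound you combine (a) the quasi-ruled estimate from Proposition~\ref{prop:triangle equality for admissible paths} applied to the $(\omega,L)$-admissible image of the admissible path, (b) the bound on the number of pieces in terms of total length coming from the lower bound $L$ on flat piece lengths, and (c) the positivity estimate of Proposition~\ref{prop:inj quantique} to handle the small-$L$ regime. The paper packages (a)--(c) into a standalone Proposition~\ref{thm:quasigeod} (admissible paths are uniform quasi-geodesics) and isolates the algebra in Lemma~\ref{lem:horrible computations}, which internally performs exactly the same case split ($L\geq 2C$ vs.\ $L<2C$) that you carry out inline. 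Your use of the hyperbolic pieces having length $\geq 2\omega$ (from disjointness of collars) is a slightly sharper input than the paper's $\geq\omega$, and your packaging of the flat-length bound as $n\leq K_\sigma\ell/(L+1)+K_\sigma$ rather than $\Len^{\mf F}(c)\geq (k-2)L/2$ is a reorganization, but neither changes the substance; the constants just come out with different explicit expressions.
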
  
    The facts that $\tilde Q_z$ is injective and $\tilde S_z^\iota$ is quasi-convex have already been established in Corollaries~\ref{prop:embedding} and~\ref{cor:quasiconvex}.
    
    Note that the upper bound for $d^{\mf F}(\tilde Q_z(x),\tilde Q_z(y))$
    is an immediate consequence of the definition of $d^{\mf F}_{\tilde S_z}$ as the pullback of $d^{\mf F}$.

    The remaining estimate will be obtained as a consequence of Observation~\ref{obs:admissiblemapstoadmissible} 
    and an intermediate proposition stating that 
    the images of admissible paths in $\X$ 
    are quasi-geodesics, with control on the multiplicative error term.
    This intermediate result will be proved using Proposition~\ref{prop:triangle equality for admissible paths} (that admissible paths are quasi-ruled) and Proposition~\ref{prop:inj quantique} (a lower bound on the displacement of admissible paths).

    The collar lemma for hyperbolic surfaces states that  
    for any $\sigma>0$, if 
    \[\omega=\sinh^{-1}\left(\frac{1}{\sinh(\sigma/2)}\right)\] then any simple 
    closed geodesic $\gamma^*\subset S$ of length at most $\sigma$ will have a \emph{collar size} bounded 
    from below by $\omega$, in the sense that $N_\omega(\gamma^*)=\{z\mid d(z,\gamma^*)\leq \omega\}$
    is an annulus. 
    Then Observation~\ref{obs:admissiblemapstoadmissible} says that for any multicurve $\gamma^*\subset S$  with components of length at most~$\sigma$, for any grafting parameter $z$, the image of an admissible path of $\wt S_z$ under $\wt Q_z:\wt S_z\to \X$ will be a $(\omega,0)$-admissible path of $\X$.



    We will also prove the following coarse estimates on lengths.

    \begin{theorem}\label{thm:quasiisom lengths}
        In the setting of Theorem~\ref{thm:quasiisom}, let $(\rho_z)_z$ be the associated family grafted Hitchin representations.
        Then there is $C_\sigma'$ only depending on $\sigma$ such that for any $\gamma\in\pi_1(S)$,
        $$
        \ell^{\mf F}(\rho_z(\gamma)) \geq \frac{L+1}{C'_\sigma}\iota(\gamma,\gamma^*).
        $$
        Moreover, recalling that $z$ is the datum of a vector $z_e\in\mf a$ for each component $e\subset\gamma^*$, then~$C'_\sigma$ may be chosen so that if  $z_e\in \ker(\alpha_0)$ for any $e$ then
        $$
        \ell^{\mf F}(\rho_{z}(\gamma)) \geq  \left(1+\frac{C'_{\sigma}}{L+1}\right)^{-1}\ell_S(\gamma),
        $$
        where $\ell_S(\gamma)$ is the length of $\gamma $ in $S$.
    \end{theorem}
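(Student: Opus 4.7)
The plan is to leverage Theorem \ref{thm:quasiisom} to transfer the lower bound for $\ell^{\mf F}(\rho_z(\gamma))$ into a lower bound for the Finsler translation length $\ell^{\mf F}_{\tilde S_z}(\gamma)$ on the abstract grafted surface (in the pulled-back Finsler metric). Applying Theorem \ref{thm:quasiisom} to pairs $(\tilde x,\gamma^n\cdot\tilde x)$, using the $\rho_z$-equivariance of $\tilde Q_z$, and dividing by $n$ as $n\to\infty$ yields the key inequality
\[
\ell^{\mf F}(\rho_z(\gamma)) \;\geq\; \bigl(1+C_\sigma/(L+1)\bigr)^{-1}\,\ell^{\mf F}_{\tilde S_z}(\gamma). \qquad (\star)
\]

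The second inequality then follows immediately from $(\star)$. Since all $z_e$ lie in $\ker(\alpha_0)$, Lemma \ref{lem:Lipschitz} guarantees that $\pi_z\colon\tilde S_z\to\tilde S$ is $1$-Lipschitz, so any representative of $\gamma$ in $\tilde S_z$ projects to a curve freely homotopic to $\gamma$ in $S$ of no greater Finsler length; taking infima gives $\ell^{\mf F}_{\tilde S_z}(\gamma)\geq \ell_S(\gamma)$, where we use that the Finsler metric $\mf F$ restricts isometrically to the hyperbolic metric on $\wb\H^2\subset\X$ (by the normalization stated after Proposition \ref{ex:admissible finsler}). Combining with $(\star)$ yields the desired bound.

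For the first inequality I would combine two complementary lower bounds on $\ell^{\mf F}(\rho_z(\gamma))$. Bound A uses $(\star)$: any representative of $\gamma$ in $\tilde S_z$ must cross the flat cylinders at least $\iota(\gamma,\gamma^*)$ times, since its projection is freely homotopic to $\gamma$ in $S$; each crossing contributes Finsler length at least $L$ by definition of cylinder height in Equation \eqref{eq:cylinder height}. Hence $\ell^{\mf F}_{\tilde S_z}(\gamma)\geq L\,\iota(\gamma,\gamma^*)$, and plugging into $(\star)$ gives
\[
\ell^{\mf F}(\rho_z(\gamma))\;\geq\;\frac{L(L+1)}{L+1+C_\sigma}\,\iota(\gamma,\gamma^*).
\]
Bound B uses quantitative injectivity: iterating the admissible loop representing $\gamma$ in $\tilde S_z$ $n$ times and pushing through $\tilde Q_z$ produces, by Observation \ref{obs:admissiblemapstoadmissible}, an $(\omega,0)$-admissible path in $\X$ with $2n\,\iota(\gamma,\gamma^*)$ singularities, where $\omega>0$ depends only on $\sigma$ via the collar lemma. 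Proposition \ref{prop:inj quantique} then yields $d^{\mf F}(\basepoint,\rho_z(\gamma)^n\basepoint)\geq (2n\iota(\gamma,\gamma^*)-2)/C_\omega$, hence $\ell^{\mf F}(\rho_z(\gamma))\geq 2\iota(\gamma,\gamma^*)/C_\omega$.

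Setting $C'_\sigma:=\max(2+C_\sigma,\,C_\omega)$, a short casework argument (splitting at $L=1$) shows the maximum of Bounds A and B is at least $\tfrac{L+1}{C'_\sigma}\iota(\gamma,\gamma^*)$: for $L\geq 1$, Bound A suffices since $\tfrac{L(L+1)}{L+1+C_\sigma}\geq\tfrac{L+1}{2+C_\sigma}$ when $L\geq 1$; for $L<1$, Bound B suffices since $\tfrac{2}{C_\omega}\geq\tfrac{L+1}{C_\omega}$. The main technical obstacle is precisely this regime where $L$ is small: Bound A alone degenerates as $L\to 0$, and it is essential that Proposition \ref{prop:inj quantique} supplies a uniform positive lower bound depending only on the collar size $\omega$, so that the two estimates can be patched together in the form required by the theorem.
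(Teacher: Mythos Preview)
Your proof is correct and follows essentially the same skeleton as the paper's: both derive the key inequality $(\star)$ (the paper's equation \eqref{eq:periodic qgeod}) by iterating and dividing by $n$, and both invoke Lemma~\ref{lem:Lipschitz} for the second estimate. The only real difference is in the first inequality. You bound $\ell^{\mf F}_{\tilde S_z}(\gamma)\geq L\,\iota(\gamma,\gamma^*)$ using only the flat crossings, which forces you to patch in Bound~B (Proposition~\ref{prop:inj quantique}) to handle small $L$. The paper instead works directly with the admissible loop $c$ and observes that its hyperbolic pieces each have length at least $\omega$ (the collar size), so that $\Len^{\mf F}(c)\geq \tfrac{L+\omega}{2}\iota(\gamma,\gamma^*)$; plugging this into \eqref{eq:periodic qgeod} gives the desired bound in a single stroke, with no case split. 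Your route is perfectly valid, but you could shorten it by noting that any closed representative of $\gamma$ in $S_z$ also picks up at least $\omega$ of hyperbolic length between consecutive cylinder crossings (again by the collar lemma), which would make Bound~B unnecessary.
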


    \subsection{Elementary observations}

The triangle inequality easily yields the following.
    
   \begin{observation}\label{obs:qruled concat of geod}
       For any piecewise geodesic curve $c:[0,T]\to (\X,d^{\mf F})$ with $m\geq 1$ geodesic pieces, that is additionally $C$-quasi-ruled, we have
       $$d^{\mf F}(c(0),c(T))\geq \Len^{\mf F}(c) - (m-1)C.$$
   \end{observation}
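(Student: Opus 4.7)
The plan is to induct on the number $m$ of geodesic pieces, which makes the statement essentially tautological. The base case $m=1$ is immediate: a single geodesic piece satisfies $d^{\mf F}(c(0),c(T))=\Len^{\mf F}(c)$, and $(m-1)C=0$, so equality holds.

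For the inductive step, let $0=t_0<t_1<\cdots<t_m=T$ be the breakpoints of $c$, so that each restriction $c|_{[t_{i-1},t_i]}$ is a geodesic and $\Len^{\mf F}(c)=\sum_{i=1}^m d^{\mf F}(c(t_{i-1}),c(t_i))$. Applying the $C$-quasi-ruled inequality to the triple $(0,t_{m-1},T)$ gives
\[
d^{\mf F}(c(0),c(t_{m-1}))+d^{\mf F}(c(t_{m-1}),c(T))\leq d^{\mf F}(c(0),c(T))+C.
\]
The restriction $c|_{[0,t_{m-1}]}$ is a piecewise geodesic with $m-1$ pieces and is still $C$-quasi-ruled (any subpath of a quasi-ruled path inherits the property), so the induction hypothesis yields $d^{\mf F}(c(0),c(t_{m-1}))\geq \Len^{\mf F}(c|_{[0,t_{m-1}]})-(m-2)C$. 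On the other hand $d^{\mf F}(c(t_{m-1}),c(T))=\Len^{\mf F}(c|_{[t_{m-1},T]})$ since this is a single geodesic piece. Substituting these into the displayed inequality and rearranging gives the desired bound.

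There is no real obstacle: this is purely a telescoping argument, and the only thing to observe is that subpaths of $C$-quasi-ruled paths remain $C$-quasi-ruled. Alternatively, one could apply the quasi-ruled inequality iteratively to the triples $(0,t_1,T)$, $(0,t_2,T)$, \dots, $(0,t_{m-1},T)$ and sum, but the inductive formulation is cleaner.
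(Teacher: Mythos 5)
Your proof is correct. The paper states this observation without a detailed proof, merely remarking that "the triangle inequality easily yields" it (though what really does the work is the quasi-ruled inequality, i.e.\ the defect-bounded reverse triangle inequality). Your inductive argument — peel off the last geodesic piece by applying the $C$-quasi-ruled condition at the triple $(0,t_{m-1},T)$, invoke the induction hypothesis on the subpath $c|_{[0,t_{m-1}]}$ (which, as you note, inherits the $C$-quasi-ruled property), and use that the last piece is a genuine geodesic — is the natural argument and is sound. One small caveat about your parenthetical alternative: summing the quasi-ruled inequality over the triples $(0,t_1,T),\dots,(0,t_{m-1},T)$ does not telescope to the desired bound; the iterative version that works applies the inequality to the triples $(t_{i-1},t_i,T)$ for $i=1,\dots,m-1$, successively expanding $d^{\mf F}(c(t_{i-1}),c(T)) \geq d^{\mf F}(c(t_{i-1}),c(t_i)) + d^{\mf F}(c(t_i),c(T)) - C$, which is of course just your induction written as a loop. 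This is a side remark and does not affect the validity of your main argument.
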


We will also need the following technical estimates.

\begin{lemma}\label{lem:horrible computations}
    Consider $a,b,k,L\geq 0$ and $C\geq 1$, such that
    $$a\geq b-kC \quad \text{and}\quad b\geq (k-2)L \quad \text{and}\quad a\geq\frac{k-2}C.$$
    Then
    $$a\geq \left(1+\frac{4(C+1)^3}{L+1}\right)^{-1}b-2C.$$
\end{lemma}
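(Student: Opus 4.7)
The plan is to rewrite the target inequality in the equivalent multiplicative form
$$(a+2C)\bigl(L+1+4(C+1)^3\bigr) \geq b(L+1), \quad \text{i.e.,} \quad (b-a-2C)(L+1) \leq 4(a+2C)(C+1)^3,$$
and then split into two cases. The easy case is $b \leq a+2C$, in which the left hand side is non-positive and the inequality is trivial.

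For the remaining case $b > a+2C$, the strategy is to bound the left hand side using all three hypotheses. From the first hypothesis one gets $b - a - 2C \leq (k-2)C$; multiplying by $(L+1)$ splits the problem into estimating $(k-2)CL$ and $(k-2)C$ separately. For the first, the second hypothesis yields $(k-2)L \leq b$, hence $(k-2)CL \leq bC$. For the second, the third hypothesis gives $(k-2) \leq aC$, hence $(k-2)C \leq aC^2$. Adding, one obtains
$$(b-a-2C)(L+1) \;\leq\; (k-2)C(L+1) \;\leq\; bC + aC^2.$$

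It therefore remains to verify the $L$-independent inequality $bC + aC^2 \leq 4(a+2C)(C+1)^3$. For this, combining the first and third hypotheses gives the crude a~priori bound $b \leq a(1+C^2) + 2C$, so
$$bC + aC^2 \;\leq\; aC + aC^3 + 2C^2 + aC^2 \;=\; a(C+C^2+C^3) + 2C^2,$$
which is then compared to $4(a+2C)(C+1)^3 = 4a(C+1)^3 + 8C(C+1)^3$ by a direct expansion. Since $4(C+1)^3 - (C+C^2+C^3) = 3C^3 + 11C^2 + 11C + 4 > 0$ and $8C(C+1)^3 \geq 2C^2$ for $C \geq 1$, the inequality holds.

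The only real subtlety is the role of the second hypothesis: without it one cannot control the growth of $(b-a-2C)(L+1)$ as $L \to \infty$, since $b$ is a priori independent of $L$. Using $(k-2)L \leq b$ is precisely what absorbs the factor of $L$ into a term already comparable to $b$. Beyond this, the proof is purely algebraic bookkeeping, with no optimisation needed to match the stated constants $4$ and $(C+1)^3$.
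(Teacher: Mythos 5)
Your proof is correct. Every step checks out: the equivalence of the target inequality with $(b-a-2C)(L+1)\leq 4(a+2C)(C+1)^3$ is a valid rearrangement (all multipliers positive), the trivial case $b\leq a+2C$ is disposed of correctly, and in the case $b>a+2C$ the three hypotheses are combined exactly as you say, reducing the problem to the $L$-free check $bC+aC^2\leq 4(a+2C)(C+1)^3$, which your expansion $3C^3+11C^2+11C+4>0$ together with $8C(C+1)^3\geq 8C(C+1)\geq 2C^2$ settles. (One small implicit point worth noting: in the nontrivial case the chain $0<b-a-2C\leq (k-2)C$ forces $k>2$, so $k-2$ is positive; this isn't strictly needed since you only ever multiply inequalities by $C\geq 0$, but it confirms the quantities have the expected signs.)

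The route you take is genuinely different from the paper's. The paper keeps the inequality in the form $a\geq(\cdots)^{-1}b-2C$ and cases on whether $L\geq 2C$ or $L<2C$: when $L$ is large it uses the first and second hypotheses to get $a\geq b(1-C/L)-2C$ and then bounds $1-C/L$ below by the target factor; when $L$ is small it uses the first and third hypotheses to get $a\geq b/(1+C^2)-2C$ and bounds $1/(1+C^2)$ below by the target factor, exploiting $L<2C$. So the paper's split is on the size of the input $L$, and the constant $4(C+1)^3$ is essentially whatever accommodates both branches. Your split is instead on whether the target is vacuously true ($b\leq a+2C$), and in the remaining case you clear denominators once and for all and reduce to a single polynomial inequality in $a,b,C$ free of $L$. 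Your version is more unified --- one algebraic estimate instead of two parallel ones --- and makes it transparent where the second hypothesis is load-bearing (absorbing the factor of $L$). The paper's version is perhaps closer to how one would discover the constant; yours verifies it more cleanly given that it is already stated.
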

\begin{proof}
We can assume $L>0$.
Use the second equation to get
$$k\leq \tfrac{b}{L}+2$$ 

Consider first the case that
$L\geq 2C\geq 2$. We 
By the first equation, we have
$$ a\geq b(1-\tfrac CL) - 2C$$
with the additional inequality
$$1-\tfrac CL\geq \left(1+2\tfrac{C}{L}\right)^{-1} \geq \left(1+\tfrac{4(C+1)^3}{L+1}\right)^{-1} $$
(indeed one can check that $0\leq x\leq \tfrac12$ implies $(1-x)(1+2x)\geq 1$).

If $L<2C$ then use the third equation to obtain
$$a\geq\frac{b-2C}{1+C^2}\geq\frac{b}{1+C^2}-2C$$
and insert 
\begin{equation*} \left(1+C^2\right)^{-1} \geq  \left(1+\tfrac{4(C+1)^3}{L+1}\right)^{-1}.\qedhere \end{equation*}
\end{proof}




    \subsection{Admissible paths are uniform quasi-geodesics}\label{sec:admissible pathsII}

    To prove that the embedding $\wt Q_z:\wt S_z\to X$ is quasi-isometric,
    by Observation~\ref{obs:admissiblemapstoadmissible} it suffices to show 
    that admissible paths in $\X$ are quasi-geodesics.
    We prove this
    now with uniform constants 
    using the consequences of positivity established in Section~\ref{sec:fockgoncharov} 
    and the elementary observations of the previous section.

    \begin{proposition}\label{thm:quasigeod}
        For all $\omega>0$, 
        there exist $C_{\omega}>0$ such that 
        for every $L\geq 0$,
        all $(\omega,L)$-admissible paths are $\left(1+\tfrac{C_\omega}{(L+1)},C_{\omega}\right)$-quasi-Finsler-geodesics (where the first constant is the multiplicative constant).
    \end{proposition}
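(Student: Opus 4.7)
The upper bound $d^{\mf F}(c(s),c(t))\leq |t-s|$ for $s\leq t$ is immediate since each piece of $c$ is a unit-speed Finsler geodesic. For the lower bound, it suffices by restriction to prove
\[
d^{\mf F}(c(0),c(T))\geq \left(1+\tfrac{C_\omega}{L+1}\right)^{-1}T-C_\omega,
\]
as any subpath $c|_{[s,t]}$ is itself $(\omega,L)$-admissible: the definition allows the first and last pieces of an $(\omega,L)$-admissible path to be arbitrarily short. Applying this endpoint estimate to every subpath will then yield the full quasi-geodesic conclusion.

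Set $a=d^{\mf F}(c(0),c(T))$, $b=T=\Len^{\mf F}(c)$, and let $k$ denote the number of singularities of $c$, so that $c$ has $k+1$ geodesic pieces alternating between hyperbolic and flat type. Three independent estimates will enter, each with a constant $C=C_\omega$ depending only on $\omega$:
\begin{itemize}
\item combining Proposition~\ref{prop:triangle equality for admissible paths} (admissible paths are $C$-quasi-ruled) with Observation~\ref{obs:qruled concat of geod} yields $a\geq b-kC$;
\item the $(\omega,L)$-admissibility and the alternation of piece types force at least a definite fraction of the pieces to be interior flat pieces (hence of length $\geq L$), giving $b\geq c_0(k-2)L$ for some universal $c_0>0$, which we rewrite as $b\geq (k-2)L'$ with $L'=c_0L$;
\item Proposition~\ref{prop:inj quantique}, applied to an admissible lift of $c$ in $G$, yields $a\geq(k-2)/C$.
\end{itemize}

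These three inequalities are exactly the hypotheses of Lemma~\ref{lem:horrible computations} (applied with $L'$ in place of $L$), whose conclusion reads
\[
a\geq \left(1+\tfrac{4(C+1)^{3}}{L'+1}\right)^{-1}b-2C.
\]
Using $(L'+1)^{-1}\leq c_0^{-1}(L+1)^{-1}$ and absorbing the explicit constants into a single $C_\omega'$ gives the desired quasi-geodesic estimate. Small values of $k$ (where the bound $b\geq c_0(k-2)L$ becomes vacuous) are handled separately by the first bullet alone: $a\geq b-kC$ with bounded $k$ gives a purely additive error. The main technical obstacle is the second bullet: one must verify that, regardless of which piece types occupy the first and last positions, enough interior pieces are flat to produce a $(k-2)L'$ lower bound. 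The boundary case $L=0$ is recovered by a symmetric argument using $\omega$ in place of $L$ (legitimate since $\omega$ is a fixed positive constant), which ensures the estimate degrades continuously as $L\downarrow 0$ to the form $(1+C_\omega,C_\omega)$-quasi-geodesic predicted by the proposition.
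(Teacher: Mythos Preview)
Your proposal is correct and follows essentially the same route as the paper: the three inputs (Proposition~\ref{prop:triangle equality for admissible paths} combined with Observation~\ref{obs:qruled concat of geod}; the count of interior flat pieces giving $\Len^{\mf F}(c)\geq (k-2)L/2$; and Proposition~\ref{prop:inj quantique}) are fed into Lemma~\ref{lem:horrible computations}, exactly as the paper does. Your extra caveats about small $k$ and about $L=0$ are unnecessary, however: Lemma~\ref{lem:horrible computations} is stated for all $k,L\geq 0$, and for $k\leq 2$ the hypotheses $b\geq (k-2)L'$ and $a\geq (k-2)/C$ are trivially satisfied, so no separate treatment is needed and no ``symmetric argument using $\omega$'' is required.
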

    \begin{proof}
        Let $c:[0,T]\to \X$ be an $(\omega,L)$-admissible path.
        It is clear that $d^{\mf F}(c(0),c(T))\leq \Len^{\mf F}(c)$, so we only need to obtain a converse inequality.
        
        By Proposition~\ref{prop:triangle equality for admissible paths}, $c$ is $C_\omega$-quasi-ruled, for some constant $C_\omega$ depending on $\omega$.
        By Observation~\ref{obs:qruled concat of geod} we get
        $$d^{\mf F}(c(0),c(T))\geq \Len^{\mf F}(c)-kC_\omega,$$
        where $k$ is the number of singularities of $c$ (i.e.\ $k+1$ is the number of geodesic pieces).

        Since $c$ contains at least $(k-2)/2$ geodesic pieces of flat type and length at least $L$, we also know that 
        $$\Len^{\mf F}(c)\geq (k-2)\tfrac L2.$$

        Finally by Proposition~\ref{prop:inj quantique} we also have
        $$d^{\mf F}(c(0),c(T))\geq \frac{k-2}{C'_\omega}$$
        for some constant $C'_\omega$ depending on $\omega$.

        We conclude thanks to Lemma~\ref{lem:horrible computations}.
    \end{proof}

\subsection{Proof of Theorem~\ref{thm:quasiisom}}

It is an immediate consequence of Observation~\ref{obs:admissiblemapstoadmissible}, Corollary~\ref{prop:embedding}, Corollary~\ref{cor:quasiconvex} and Proposition~\ref{thm:quasigeod}. 
More precisely, by compactness of $S$, Corollary~\ref{prop:embedding} ensures 
that the natural map $S_z\xrightarrow[]{Q_z}\rho_z\backslash \X$ is an embedding. 

Consider an equivariant lift $\tilde Q_z:\tilde S_z\to \X$ of $Q_z$. By  
Corollary~\ref{cor:quasiconvex}, any two points in $\tilde Q_z(\tilde S_z)$ can be
connected by a Finsler geodesic in $\X$ which remains at 
distance at most $C_\omega$ from $Q_z(S_z)$. 
This establishes the last statement in Theorem~\ref{thm:quasiisom}.
Note that 
these Finsler geodesics then project to 
Finsler geodesics in the quotient $\rho_z\backslash \X$ in the $C_\omega$-neighborhood of 
$Q_z(S_z)$.

To show the distance-length control, note that 
any admissible path in $S_z$ is sent by~$Q_z$ to an $(\omega,L)$-admissible path inside 
$\rho_z\backslash \X$ (Observation~\ref{obs:admissiblemapstoadmissible}), which is therefore a quasi-geodesic (Proposition~\ref{thm:quasigeod}).

\subsection{Proof of Theorem~\ref{thm:quasiisom lengths}}

Fix $[\gamma]\in[\pi_1(S)]$ transverse to $\gamma^*$ and recall that it corresponds to a free homotopy class in the characteristic surface $S_{z}^\iota\subset \rho_z\backslash \X$.
Let $c\subset S^\iota_{z}\subset \rho_z\backslash \X$ be the unique admissible loop in this free homotopy class.
It has a $\rho_z(\gamma)$-invariant lift $\tilde c:\R\to \X$ such that $0$ is a singularity and the geodesic piece of $\tilde c$ starting at time $0$ is of hyperbolic type.

Denote by $T$ the period of $c$, so that $\wt c(t+T)=\rho_z(\gamma)\wt c(t)$ for any $t$.
By Proposition~\ref{thm:quasigeod}, for any $n\geq 1$ we have
$$d^{\mf F}(\wt c(0),\rho_z(\gamma)^n\wt c(0))=d^{\mf F}(\wt c(0),\wt c(nT ))\geq \left(1+\frac{C_\sigma}{L+1}\right)^{-1}n\Len^{\mf F}(c)-C_\sigma.$$
Dividing by $n$ and letting $n\to\infty$ we get
\begin{equation}\label{eq:periodic qgeod}\ell^{\mf F}(\rho_z(\gamma))\geq \left(1+\frac{C_\sigma}{L+1}\right)^{-1}\Len^{\mf F}(c).\end{equation}

We may assume that $\iota(\gamma,\gamma^*)\geq 1$ (the case $\iota(\gamma,\gamma^*)=0$ is trivial). The number $2\iota(\gamma,\gamma^*)$ of singularities of $c$ is even and bounded from below by $2$, and the same is true for the number of geodesic pieces, half of which are of flat type and have length at least $L$, and the other half are of hyperbolic type and have length at least $\omega=\sinh^{-1}(\sinh(\sigma/2)^{-1})$. Consequently, 
$$\ell^{\mf F}(\rho_z(\gamma))\geq \left(1+\frac{C_\sigma}{L+1}\right)^{-1}\frac{L+\omega}2\iota(\gamma,\gamma^*) \geq \frac{L+\omega}{2+2C_\sigma'}\iota(\gamma,\gamma^*).$$

Finally, if for each component $e\subset\gamma^*$ the vector $z_e$ is taken in $\ker(\alpha_0)$ then we can apply Lemma~\ref{lem:Lipschitz}, which says that $\Len^{\mf F}(c)$ is bounded from below by the length of the image of $c$ under the projection maps $S_z\to S$, which is itself greater than or equal to~$\ell_S(\gamma)$, and this means Equation~\ref{eq:periodic qgeod} implies the desired inequality.

\subsection{Proof of Theorem~\ref{main2}}

We begin with the proof of the third part of Theorem \ref{main2}. 
Thus let $S$ be a closed surface with hyperbolic metric $h$ and let $S_0\subset S$ be an essential subsurface, 
bounded by simple closed geodesics $\partial S_0=\{\gamma_1,\dots,\gamma_k\}$.
Consider a one-parameter family $\rho_t$ of Hitchin grafting representations with 
grafting datum $tz$ for a tuple $z=(z_1,\dots,z_k)\in \mathfrak{a}^k$ 
whose components are linearly independent from the direction of 
a tangent vector of $\H\subset \X$. Then for each $t$, the bordered hyperbolic surface $S_0$ is totally geodesic 
embedded in $\rho_t\backslash \X$. 

Choose once and for all a basepoint $\basepoint\in S_0$ and view this as a basepoint in $\rho_t\backslash \X$ for all~$t$. 
By Theorem~\ref{thm:quasiisom lengths} and equivalence of the Riemannian and the Finsler metric, 
for each $R>0$ there exists a number $t=t(R)>0$ so that the shortest closed
geodesic in $\rho_t\backslash \X$ which is not contained in $S_0$ intersects the complement of the $R$-ball about 
$S_0$. Thus for $t>t(R)$, the normal injectivity radius of $S_0$ in $\rho_t\backslash \X$ 
for the locally symmetric Riemannian metric is at least $R$, and the 
ball $B(S_0,R)\subset \rho_t\backslash \X$ 
of radius $R$ about $S_0$ is homotopy equivalent to $S_0$.

By passing to a subsequence, we may assume that the pointed manifolds $(\rho_t\backslash \X,\basepoint)$ converge in the pointed
Gromov Hausdorff topology to a locally symmetric pointed manifold $(N,\basepoint)$. This manifold contains $S_0$ as a totally 
geodesic embedded surface of infinite normal injectivity radius. But this just means that 
$N$ equals the manifold defined by the Fuchsian representation $\rho\vert S_0$. This is precisely the statement of the
last part of Theorem~\ref{main2}.

To show the second part of Theorem \ref{main2},  
choose a non-principal ultrafilter $\omega$ on $\mathbb{R}$ converging to $+\infty$  
and a basepoint $\basepoint$ in $\X$, viewed as the projection of the identity
in $\PSL_d(\R)$. 
Let $\rho_t$ be a Hitchin grafting ray determined by a grafting parameter 
$z\in \mathfrak{a}$ and a simple closed geodesic $\gamma^*\subset S$.
Consider the pointed metric spaces $\X_t=(\X,\basepoint,\frac{1}{t}d^{\mf F})$.
We will see that for any $\gamma\in\pi_1(S)$, the distance in $\X_t$ between $\basepoint$ and $\rho_t(\gamma)\cdot\basepoint$ is bounded independently of $t$. 
Thus by passing to an $\omega$-ultralimit,
we obtain an action $\rho_\infty$
of $\pi_1(S)$ on 
the ultralimit $\X_\infty$ 
of the metric spaces $\X_t$ defined by $\omega$. 
This ultralimit is well known to be a
Euclidean building whose apartments correspond to the maximal flats in $\X$, and it is equipped 
with a Finsler metric. 

We first make some choice that will allow us to embed the Bass--Serre tree $\mc T$ of the graph of groups defined by $\gamma^*\subset S$ in $\X_t$.
This embedding will not be $\rho_t$-equivariant, but its limit as $t\to\infty$ will be natural (independent of the choices made) and equivariant.

Let $\tilde S\simeq \H^2$ be the universal cover of $S$, and $\tilde\gamma^*$ the preimage of $\gamma^*$.
Let $\Sigma$ be the surface with boundaries obtained by cutting $\tilde S$ along $\tilde\gamma^*$.
In each component $Y\subset \Sigma$ we pick a point $p(Y)$, and in each boundary component $B\subset \partial Y\subset \partial \Sigma$ we denote by $p(B)\in B$ the shortest distance projection of $p(Y)$.

For each $t$ we have an abstract grafted surface $\tilde S_t$, in which $\Sigma$ embeds isometrically, and we have a quasi-isometric embedding $Q_t:\tilde S_t\to \tilde S_t^\iota\subset\X$ that maps $p(Y_0)$ (where $Y_0$ is our preferred component of $\Sigma$) to the basepoint $\basepoint\in\X$.
For all $t$, $Y\subset\Sigma$ and $B\subset\partial Y$ we denote $p_t(Y)=Q_t(p(Y))$ and $p_t(B)=Q_t(p(B))$.
Let $\mc T_t\subset \tilde S_t^\iota$ be the union of segments of the form $[p_t(Y),p_t(B)]$ if $B\subset\partial Y$ and $[p_t(B_1),p_t(B_2)]$ if $B_1,B_2\subset\partial \Sigma$ project to the same component of $\tilde\gamma^*$ in $\tilde S$.
Note that $\mc T_t$ is a tree isomorphic to the Basse--Serre tree $\mc T$.

The length of  $[p_t(Y),p_t(B)]$ in $\X$ is independent of $t$, hence its length in $\X_t$ tends to zero as $t\to\infty$.
The segment $[p_t(B_1),p_t(B_2)]$ is conjugate to a segment of the form $[0,v+tz]\subset\mf a$ where $v$ depends on $B_1,B_2$, so its length in $\X_t$ is bounded by a constant independent of $t$.
As a consequence, every $p_t(Y)$, resp.\ $p_t(B)$, converge as $t\to\infty$ to a points $p_\infty(Y)$, resp.\ $p_\infty(B)$, in the building $\X_\infty$.
In fact if $B\subset\partial Y$ then $p_\infty(B)=p_\infty(Y)$.
Moreover, the whole tree $\mc T_t$ converges to a tree $\mc T_\infty\subset\X_\infty$ whose vertices are $(p_\infty(Y))_{Y\subset\Sigma}$, whose edges are straight segments conjugate to $[0,z]\subset\mf a$, and which is also isometric to $\mc T$.

Now observe that $\mc T_\infty$ is invariant under the action of any $\gamma\in\pi_1(S)$: for any $Y\subset\Sigma$ and $t$, the distance in $\X$ between $\rho_t(\gamma)p_t(Y)$ and $p_t(\gamma Y)$ is independent of $t$, and hence the distance in $\X_t$ tends to zero, leading to $\rho_\infty(\gamma)p_\infty(Y)=p_\infty(\gamma Y)$ (and this also proves $\rho_\infty$ is well defined).

Finally, note that $\mc T_\infty$ is Finsler-convex.
Indeed for all $Y,Z\subset\Sigma$ and $t$, the path in $\mc T_t$ from $p_t(Y)$ to $p_t(Z)$ and the admissible path connecting the same two points are within bounded distance independent of $t$.
By Proposition~\ref{prop:triangle equality for admissible paths}, the path in $\mc T_t$ is at distance $\leq K$ from an actual Finsler geodesic, where $K$ does not depend on $t$. Hence in $\X_t$ it is at distance $\leq K/t$ from a Finsler geodesic, and at the limit the path in $\mc T_\infty$ is a Finsler geodesic itself. This yields the statement of the second part of Theorem \ref{main2}. 

The first part of Theorem~\ref{main2} follows immediately.

\bigskip\bigskip

\bigskip\bigskip


	\printbibliography[heading=bibintoc]{}


\noindent
Pierre-Louis Blayac \\
Université de Strasbourg, IRMA, 
7 rue René-Descartes,
67084 Strasbourg, France\\
e-mail: blayac@unistra.fr
\medskip

\noindent
Ursula Hamenst\"adt\\
Math. Institut der Univ. Bonn, Endenicher Allee 60, 53115 Bonn, Germany\\
e-mail: ursula@math.uni-bonn.de

\medskip

\noindent
Théo Marty \\
Institut de mathématiques de Bourgogne, 
9 avenue Alain Savary,
21078 Dijon, France\\
e-mail: theo.marty@u-bourgogne.fr

\end{document}